\documentclass[12pt,a4paper]{amsart}
\usepackage[latin1]{inputenc}
\usepackage{amsmath}
\usepackage{amsfonts}
\usepackage{amssymb}
\usepackage{amsthm}
\usepackage{anysize}
\usepackage{enumitem}
\usepackage{amscd}
\usepackage{hyperref}
\usepackage[square, comma, numbers, sort&compress]{natbib}
\usepackage{indentfirst}
\marginsize{3.0cm}{3.0cm}{3.0cm}{3.0cm}
\setenumerate{label={\normalfont(\arabic*)}}
\newcommand{\coloneqq}{\mathrel{\mathop:}=}
\newcommand{\form}[1]{{\langle #1 \rangle }}
\newcommand{\pfister}[1]{{\langle \! \langle #1 \rangle \! \rangle}}
\newcommand{\mydim}[1]{{\mathrm{dim}\!\; #1}}
\newcommand{\witti}[2]{{\mathfrak{i}_{#1}(#2)}}
\newcommand{\wittj}[2]{{\mathfrak{j}_{#1}(#2)}}
\newcommand{\anispart}[1]{{#1_{\mathrm{an}}}}
\newcommand{\qp}[1]{{#1_{\mathrm{qp}}}}
\newcommand{\simform}[1]{{#1_{\mathrm{sim}}}}
\newcommand{\normform}[1]{{#1_{\mathrm{nor}}}}

\newtheorem{theorem}{Theorem}[section]

\newtheorem{lemma}[theorem]{Lemma}
\newtheorem{proposition}[theorem]{Proposition}
\newtheorem{corollary}[theorem]{Corollary}
\newtheorem{conjecture}[theorem]{Conjecture}

\theoremstyle{definition}

\newtheorem{example}[theorem]{Example}
\newtheorem{examples}[theorem]{Examples}

\newtheorem{question}[theorem]{Question}

\theoremstyle{remark}
\newtheorem{remark}[theorem]{Remark}
\newtheorem{remarks}[theorem]{Remarks}

\numberwithin{equation}{section}
\setcounter{section}{0}

\begin{document}

\title{Hoffmann's conjecture for totally singular forms of prime degree}
\author{Stephen Scully}
\address{Department of Mathematical and Statistical Sciences, University of Alberta, Edmonton AB T6G 2G1, Canada}
\email{stephenjscully@gmail.com}

\subjclass[2010]{11E04, 14E99, 15A03.}
\keywords{Quadratic forms, quasilinear $p$-forms, splitting patterns.}

\begin{abstract} One of the most significant discrete invariants of a quadratic form $\phi$ over a field $k$ is its (\emph{full}) \emph{splitting pattern}, a finite sequence of integers which describes the possible isotropy behaviour of $\phi$ under scalar extension to \emph{arbitrary} overfields of $k$. A similarly important, but more accessible variant of this notion is that of the \emph{Knebusch splitting pattern} of $\phi$, which captures the isotropy behaviour of $\phi$ as one passes over a certain \emph{prescribed tower} of $k$-overfields. In this paper, we determine all possible values of this latter invariant in the case where $\phi$ is \emph{totally singular}. This includes an extension of Karpenko's theorem (formerly Hoffmann's conjecture) on the possible values of the first Witt index to the totally singular case. Contrary to the existing approaches to this problem (in the nonsingular case), our results are achieved by means of a new structural result on the higher anisotropic kernels of totally singular quadratic forms. Moreover, the methods used here readily generalise to give analogous results for arbitrary Fermat-type forms of degree $p$ over fields of characteristic $p>0$. Related problems concerning the Knebusch splitting of symmetric bilinear forms over fields of characteristic 2 and the \emph{full} splitting of totally singular quadratic forms are also considered. \end{abstract}

\maketitle

\section{Introduction} \label{Introduction}

Let $k$ be a field, let $\phi$ be a nonzero quadratic form on a (nonzero) $k$-vector space $V$ of finite dimension and let $X_\phi \subseteq \mathbb{P}(V)$ denote the projective $k$-scheme defined by the vanishing of $\phi$. Given a $k$-linear subspace $W$ of $V$, we write $\phi|_W$ for the restriction of $\phi$ to $W$. In the case where $\phi|_W$ is the zero form, $W$ is said to be \emph{totally isotropic} (with respect to $\phi$). The largest integer among the dimensions of all totally isotropic subspaces of $V$ is called the \emph{isotropy index} of $\phi$, and is denoted by $\witti{0}{\phi}$. In the special case where $\phi$ is \emph{nonsingular} (i.e., where $X_\phi$ is smooth), $\witti{0}{\phi}$ is more commonly known as the \emph{Witt index} of $\phi$, and is bounded from above by the integer part of $\mydim{\phi}/2$ (where $\mydim{\phi}$ denotes the dimension of the $k$-vector space $V$). By contrast, in the opposite extreme where $\phi$ is \emph{totally singular} (i.e., where $X_\phi$ has no smooth points at all), $\witti{0}{\phi}$ may take any value between $0$ and $\mydim{\phi} - 1$.

Assume now that $\phi$ is anisotropic (i.e., that $\witti{0}{\phi}=0
$). A simple, yet fundamentally important invariant of $\phi$ is its (\emph{full}) \emph{splitting pattern}, which may be defined as the increasing sequence of nonzero isotropy indices attained by $\phi$ under scalar extension to \emph{every} overfield of $k$\footnote{\label{Footnote1}This terminology is not standard -- see, e.g., \cite{HoffmannLaghribi1}, where refined splitting pattern invariants are considered. Our definition does, however, agree (in content if not presentation) with those found in the literature when $\phi$ is nonsingular or totally singular (the only relevant cases here).}. Although this sequence appears to be somewhat intractable in general, its first entry (assuming the sequence is nonempty) may be computed explicitly as the isotropy index $\witti{0}{\phi_L}$ of $\phi$ extended to the function field $L = k(X_{\phi})$ of the (integral) quadric $X_\phi$. This (almost tautological) observation is the basic motivation underlying the following construction, originally due to Knebusch (cf. \cite{Knebusch1}): Let $k_0 = k$, $\phi_0 = \phi$, and inductively define $k_r = k_{r-1}(X_{\phi_{r-1}})$, $\phi_r = \anispart{(\phi_{k_r})}$\footnote{For any form $\psi$ over a field $K$, $\anispart{\psi}$ denotes the \emph{anisotropic kernel} of $\psi$, an anisotropic $K$-form uniquely determined up to isomorphism by the (refined) Witt decomposition of $\psi$ - see \cite[\S 2]{HoffmannLaghribi1}.}, with the understanding that this (finite) process stops when we reach the first nonnegative integer $h(\phi)$ such that $\mydim{\phi_{h(\phi)}} \leq 1$. The integer $h(\phi)$ and the tower of fields $k = k_0 \subset k_1 \subset \hdots \subset k_{h(\phi)}$ are known as the \emph{height} and \emph{Knebusch splitting tower} of $\phi$, respectively. For $1 \leq r \leq h(\phi)$, the anisotropic form $\phi_r$ is called the \emph{r-th higher anisotropic kernel} of $\phi$. The \emph{r-th higher isotropy index of $\phi$}, denoted $\witti{r}{\phi}$, is defined as the difference $\witti{0}{\phi_{k_r}} - \witti{0}{\phi_{k_{r-1}}}$. The sequence $\mathfrak{i}(\phi) = \big(\witti{1}{\phi},\hdots,\witti{h(\phi)}{\phi}\big)$ is called the \emph{Knebusch splitting pattern} of $\phi$\footnote{\label{Footnote2}The term \emph{standard splitting pattern} is also used in the literature. Footnote \ref{Footnote1} again applies here.}. Note that we have $\witti{r}{\phi} = \witti{1}{\phi_{r-1}}$ for every $r \geq 2$ by the inductive nature of the construction.

If $\phi$ is nonsingular, then its full and Knebusch splitting patterns are easily seen to determine one another (cf. \cite[Prop. 25.1]{EKM}). This need not be the case for (totally) singular forms (see Example \ref{Exnongeneric} below), but Knebusch's construction nevertheless offers a meaningful and practical way to pre-classify quadratic forms according to some notion of ``algebraic complexity''. By virtue of its definition, the Knebusch splitting pattern thus embodies a fundamental link between intrinsic algebraic properties of quadratic forms and the geometry of the algebraic varieties which are naturally associated to them. In recent years, the advent of effective new tools with which to study algebraic cycles on projective homogeneous varieties has, in this way, led to dramatic progress on many long-standing problems within the algebraic theory of quadratic forms. The impact of these developments has been felt most deeply in the characteristic $\neq 2$ theory, where (1) anisotropic forms of dimension $\geq 2$ are necessarily nonsingular, and (2) the geometric methods are better developed, even if we restrict our considerations to nonsingular forms only -- see \cite{EKM} for a thorough exposition of much of the work which has been done in this area in the last two decades. One of the central problems in the investigation of splitting properties of quadratic forms over general fields is the following:

\begin{question} \label{Quevalues} Let $\phi$ be an anisotropic quadratic form of dimension $\geq 2$ over a field. What are the possible values of the sequence $\mathfrak{i}(\phi)$? \end{question}

Since $\witti{r}{\phi} = \witti{1}{\phi_{r-1}}$ for all $2 \leq r \leq h(\phi)$, the natural first approximation to this problem is to determine the possible values of the invariant $\mathfrak{i}_1$ among all forms of a given dimension. To this end, we have the following general conjecture:

\begin{conjecture}[Hoffmann\footnote{This conjecture was originally stated by Hoffmann under the additional hypothesis that $\mathrm{char}(k) \neq 2$, but we expect that the assertion is also valid in characteristic 2.}] \label{ConjHoffmanns} Let $\phi$ be an anisotropic quadratic form of dimension $\geq 2$ over a field. Then $\witti{1}{\phi} - 1$ is the remainder modulo $2^s$ of $\mydim{\phi} -1$ for some $s < \mathrm{log}_2(\mydim{\phi})$\footnote{After passing to a purely transcendental extension of $k$ if necessary, all values of $\witti{1}{\phi}$ which are not excluded by the conjecture can be realised by making an appropriate choice of $\phi$. In all cases, $\phi$ may, in fact, be chosen to be either nonsingular or (in the case where $\mathrm{char}(k) = 2$) totally singular -- see \cite[\S 7.4]{Vishik4} and Proposition \ref{Propexistenceofvalues} below.}. \end{conjecture}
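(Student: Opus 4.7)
Since Karpenko's theorem already covers the nonsingular case, and every anisotropic quadratic form of dimension $\geq 2$ in characteristic $\neq 2$ is nonsingular, my plan is to treat the totally singular case in characteristic $2$. In this setting, $\phi$ admits a diagonalisation $\phi \simeq \form{a_1,\ldots,a_n}$ with $\phi(x) = \sum_i a_i x_i^2$, and its isomorphism class is determined by the $n$-dimensional $k^2$-subspace $D(\phi) = k^2 a_1 + \cdots + k^2 a_n \subseteq k$. The first higher anisotropic kernel $\phi_1$ corresponds to the $k(X_\phi)^2$-subspace $D(\phi)\cdot k(X_\phi)^2 \subseteq k(X_\phi)$, so that $\witti{1}{\phi} = \mydim{\phi} - \mydim{\phi_1}$ measures the drop in $k^2$-dimension upon passing to $k(X_\phi)^2$.

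I would begin by unwinding the conjecture into an equivalent numerical assertion. A direct analysis shows that ``$\witti{1}{\phi} - 1$ is the remainder of $\mydim{\phi} - 1$ modulo $2^s$ for some $s < \log_2(\mydim{\phi})$'' is equivalent to the existence of some $s$ with $2^s \geq \witti{1}{\phi}$ and $2^s \mid \mydim{\phi_1}$; taking $s = \lceil \log_2 \witti{1}{\phi} \rceil$, the conjecture reduces to the assertion that $2^s$ divides $\mydim{\phi_1}$. The natural way to produce such a divisor is to exhibit $\phi_1$ as divisible by a quasi-Pfister form $\tau$ of dimension at least $\witti{1}{\phi}$, meaning $\phi_1 \simeq \tau \otimes \alpha$ for some totally singular $k_1$-form $\alpha$: since quasi-Pfister forms have dimensions equal to powers of $2$, this immediately yields the required divisibility.

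The heart of the plan is therefore to construct $\tau$ and verify this divisibility. Working at the generic point of $X_\phi$, one isolates a subspace of $V_{k_1}$ of dimension $\witti{1}{\phi}$ on which $\phi$ vanishes; the resulting $k_1^2$-linear relations among the coefficients $a_1,\ldots,a_n$ should generate a multiplicative subgroup of $k_1/k_1^2$ of order at least $\witti{1}{\phi}$, whose preimage realises $\tau$ as a quasi-Pfister companion of $\phi$. One then verifies that $D(\phi_1)$ is a module over the corresponding $D(\tau)$, using the multiplicativity of quasi-Pfister forms to decompose $D(\phi_1)$ into $\mydim{\phi_1}/\mydim{\tau}$ copies of $D(\tau)$.

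The principal obstacle will be the structural step — extracting the Pfister skeleton from the bare fact of a dimension drop of size $\witti{1}{\phi}$. In the nonsingular setting, Karpenko's analogous result relies on motivic decompositions of $X_\phi$ and Steenrod operations on Chow groups modulo $2$; for totally singular forms, whose varieties are nowhere smooth, no comparable apparatus is directly available. Instead, one must exploit the $\mathbb{F}_2$-linear algebra of $k^2$-subspaces under Frobenius and propagate information through the entire Knebusch tower of $\phi$, as the abstract's reference to a ``new structural result on the higher anisotropic kernels'' suggests. Pinning down the precise quasi-Pfister companion of the correct size is where I expect the substantive effort to lie.
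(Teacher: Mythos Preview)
Your high-level reduction matches the paper exactly: the totally singular case of the conjecture follows from the structural assertion that $\phi_1$ is divisible by an $s$-fold quasi-Pfister form with $s = \lceil \log_2 \witti{1}{\phi} \rceil$ (Theorem~\ref{Maintheorem}), and your arithmetic deduction from that divisibility is the paper's two-line proof of Theorem~\ref{ThmHoffmannsconjecture}.

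The substantive gap is in your sketch of the structural step, and in particular in your expectation that it can be carried out by ``$\mathbb{F}_2$-linear algebra of $k^2$-subspaces'' without geometric input. The paper's argument rests essentially on Totaro's incompressibility theorem (Theorem~\ref{Thmincompressibility}, the canonical-dimension computation for $X_\phi$): this is what identifies $\phi_1$ with $\psi_{F(\phi)}$ for any codimension-$\witti{1}{\phi}$ subform $\psi$ (Corollary~\ref{Corfirstkernel}) and, via the Ruledness Theorem~\ref{Thmruledness}, lets one identify $F(\phi)$ with the affine function field $F[\phi']$ of a codimension-one subform $\phi'$. From there the paper constructs an explicit decomposition $\phi_{F(T)} \simeq \psi_{F(T)} \oplus \phi'(T)\form{1,f_1,\ldots,f_{\witti{1}{\phi}-1}}$ with coefficients $g_{i,j} \in D(\psi_{F[T]})$, sets $\sigma = \form{1,\overline{g_{1,1}},\ldots,\overline{g_{\witti{1}{\phi}-1,1}}}$ over $F(\phi)$, proves $D(\sigma) \subseteq G(\phi_1)$ via Cassels--Pfister and a multiplicity argument (Lemmas~\ref{LemMTL2} and~\ref{LemMTL3}), and finally proves $\sigma$ anisotropic by a degree-minimisation trick (Lemma~\ref{LemMTL4}); then $\normform{\sigma}$ is the required quasi-Pfister divisor. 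Your heuristic of ``relations generating a multiplicative subgroup of order at least $\witti{1}{\phi}$'' does not by itself yield anisotropy of $\sigma$ or the inclusion $D(\sigma) \subseteq G(\phi_1)$, and the geometric ingredient you proposed to bypass is precisely what makes the construction go through.
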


Over fields of characteristic $\neq 2$, the first major result in the direction of Conjecture \ref{ConjHoffmanns} was established by Hoffmann himself, who showed that if $\mydim{\phi} = 2^n + m$ for nonnegative integers $n$ and $1 \leq m \leq 2^n$, then $\witti{1}{\phi} \leq m$ (\cite[Cor. 1]{Hoffmann1}). A few years later, Izhboldin (\cite[Cor. 5.12]{Izhboldin1}) proved that one cannot have $\witti{1}{\phi} = m-1$ here unless $m=2$. Izhboldin's paper combined an elaboration of the algebraic methods conceived in \cite{Hoffmann1} with emerging work of Vishik (\cite{Vishik1},\cite{Vishik2}), who developed a systematic approach to the study of the splitting pattern using the (integral) motive of the given quadric (see \cite{Vishik4}, where motivic methods were used to verify Hoffmann's conjecture in all dimensions  $\leq 22$ in this setting). Going further, Vishik later formulated a very general conjecture concerning the complete motivic decomposition a smooth anisotropic quadric (the \emph{Excellent Connections} conjecture) which subsumed the nonsingular case of Conjecture \ref{ConjHoffmanns} in a conceptual way. Not long after this, Karpenko (\cite{Karpenko1}) used a similar approach to prove the characteristic $\neq 2$ case of the conjecture in its entirety, the key new ingredient being the application of (reduced power) Steenrod operations on modulo-2 Chow groups. More recently, Vishik (\cite[Thm. 1.3]{Vishik5}) proved the Excellent Connections conjecture in characteristic $\neq 2$, thus yielding another proof of Hoffmann's conjecture in this setting. This work also makes essential use of Steenrod squares in Chow theory.

In characteristic 2, the general picture is more complicated. In the nonsingular case, Karpenko's (as well as Vishik's) approach to Conjecture \ref{ConjHoffmanns} is still valid, and progress is only hindered here by the fact that the total mod-2 Steenrod operation is not yet available when $2$ is not invertible. To this end, weak forms of the first three Steenrod squares have been constructed by Haution, and these are sufficient to prove nontrivial partial results towards Conjecture \ref{ConjHoffmanns} (see \cite[Thm. 6.2]{Haution1},\cite[Thm. 5.8]{Haution2}). Haution's results are further supplemented by earlier work of Hoffmann and Laghribi, who extended Hoffmann's upper bound on $\mathfrak{i}_1(\phi)$ to the characteristic-2 setting, irrespective of whether $\phi$ is nonsingular or not (\cite[Lem. 4.1]{HoffmannLaghribi2}). For singular forms, the situation is different, and almost nothing is known in the direction of Conjecture \ref{ConjHoffmanns} beyond Hoffmann and Laghribi's bound. In fact, the one exception lies in the extreme case where $\phi$ is \emph{totally singular}. Here, it was shown in \cite[Thm. 9.4]{Scully2} that if $\mydim{\phi} = 2^n +m$ for nonnegative integers $n$ and $1 \leq m \leq 2^n$, and if $\witti{1}{\phi} \neq m$ (that is, if Hoffmann and Laghribi's bound is not met), then $\witti{1}{\phi} \leq \frac{m}{2}$. In the present article, we will settle this case completely by proving:

\begin{theorem} \label{ThmHoffmannsconjecture} Conjecture \ref{ConjHoffmanns} is true in the case where $\phi$ is totally singular. \end{theorem}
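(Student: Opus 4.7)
The plan is to exploit the fact that, in characteristic $2$, a totally singular quadratic form is a quasilinear $2$-form of shape $\phi = \form{a_1,\hdots,a_n}$ whose isotropy behaviour over any extension $L/k$ is governed by the $L^2$-vector space $D(\phi)\cdot L^2 \subseteq L$ generated by the coefficients. Writing $\mydim{\phi} = 2^n + m$ with $1 \leq m \leq 2^n$, Hoffmann--Laghribi's bound gives $\witti{1}{\phi} \leq m$; the extremal value $\witti{1}{\phi} = m$ is already one of the permissible values in Conjecture \ref{ConjHoffmanns} (namely the residue of $\mydim{\phi} -1$ modulo $2^n$). Thanks to \cite[Thm.~9.4]{Scully2}, we may therefore assume $\witti{1}{\phi} \leq m/2$, and the task is to exclude all values of $\witti{1}{\phi}$ between $1$ and $m/2$ which are \emph{not} dyadic residues of $\mydim{\phi}-1$.

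The heart of the argument, as advertised, should be a structural description of the higher anisotropic kernels. The most natural target would be a statement of the form: there exists an anisotropic $k$-form $\psi$ with $\mydim{\psi} = \mydim{\phi_1}$ such that $\phi_1 \simeq \psi_{k_1}$, or, slightly weaker, that the quasi-Pfister closure (norm form) $\normform{\phi_1}$ is defined over $k$. Assuming such a result, one obtains a $k$-subform or $k$-complement $\chi$ of dimension $\witti{1}{\phi}$ ``controlling'' $\phi$, to which one can apply an induction on $\mydim{\phi}$: since $\mydim{\chi} = \witti{1}{\phi} \leq m/2 < 2^n$, the Hoffmann--Laghribi bound applied to $\chi$ and to its own Knebusch tower forces $\witti{1}{\phi}$ to be constrained by the $2$-adic valuation of a specific integer derived from $\mydim{\phi}$. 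The desired congruence $\witti{1}{\phi} - 1 \equiv \mydim{\phi} - 1 \pmod{2^s}$ should then fall out from a direct dyadic comparison between $\mydim{\chi}$ and the dimensions of the quasi-Pfister forms that dominate $\phi$.

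The main obstacle is the structural result itself. A priori, $\phi_1$ is defined only over the transcendental extension $k_1 = k(X_\phi)$, and one must rule out the possibility that new scalars from $k_1$ contribute to $D(\phi_1)$ beyond what is forced by the single generic relation $\phi(x_0,\hdots,x_{n-1}) = 0$. To handle this, I would pick a generic zero $\xi$ of $\phi$ and run a specialization argument: every element of $D(\phi_1)$ arises as a value of $\phi$ with coordinates in $k_1 = k(\xi)$, and by clearing denominators and exploiting the Frobenius-linearity of quasilinear forms one should be able to realise each such value, modulo $k_1^2\cdot D(\phi)$, as the value of $\phi$ at a $k$-point. The crux is producing a sufficiently rich family of such specialisations, which amounts to understanding the $k$-rational function theory of $X_\phi$ with enough sharpness to recover not just the dimension (which is what the previously known bounds give) but the entire $k$-structure of the anisotropic kernel. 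This is where I would expect the bulk of the technical work to lie.
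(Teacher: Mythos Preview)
Your proposal has a genuine gap: the structural result you single out as the ``main obstacle'' is in fact already known and essentially trivial, while the structural result that is actually needed --- and which carries all the content --- does not appear in your plan at all.

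Concretely, your target statement ``there exists an anisotropic $k$-form $\psi$ with $\mydim{\psi} = \mydim{\phi_1}$ such that $\phi_1 \simeq \psi_{k_1}$'' is an immediate consequence of the incompressibility theorem: \emph{any} codimension-$\witti{1}{\phi}$ subform $\psi \subset \phi$ satisfies $\psi_{k_1} \simeq \phi_1$ (this is Corollary~\ref{Corfirstkernel}). There is no ``new scalars from $k_1$'' issue to resolve; $D(\phi_1) = D(\phi_{k_1})$ is just the $k_1^2$-span of $D(\phi)$, and the descent of $\phi_1$ to $k$ is automatic. Likewise, the norm form $\normform{\phi_1}$ is simply $\anispart{((\normform{\psi})_{k_1})}$, so it too descends. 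Neither of these facts gives you any control over $\witti{1}{\phi}$ beyond what you already know from Hoffmann--Laghribi. Your proposed induction on a ``complement $\chi$ of dimension $\witti{1}{\phi}$'' is then left hanging: there is no mechanism linking the splitting behaviour of such a $\chi$ back to the dyadic arithmetic of $\mydim{\phi}$.

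What the paper actually proves is a much sharper structural statement: $\phi_1$ is \emph{divisible} by an $s$-fold quasi-Pfister form, where $s$ is minimal with $2^s \geq \witti{1}{\phi}$ (Theorem~\ref{Maintheorem}). Divisibility --- not descent --- is the point. It forces $2^s \mid \mydim{\phi_1} = \mydim{\phi} - \witti{1}{\phi}$, whence $\mydim{\phi} - 1 \equiv \witti{1}{\phi} - 1 \pmod{2^s}$, which is exactly Hoffmann's conjecture. The work lies entirely in establishing this divisibility, and the argument (carried out in \S\ref{ProofofMainTheorem}) is of a rather different flavour than what you sketch: one writes $\phi_{F(T)} \simeq \psi_{F(T)} \oplus \phi'(T)\form{1,f_1,\hdots,f_{\mathfrak{i}_1-1}}$ with explicit polynomial coefficients $f_i$, and then shows via a delicate multiplicity analysis (using the Cassels--Pfister theorem and the incompressibility theorem) that the images $\overline{g_{i,1}}$ of certain pieces of the $f_i$ generate similarity factors of $\phi_1$. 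The quasi-Pfister divisor is then $\normform{\form{1,\overline{g_{1,1}},\hdots,\overline{g_{\mathfrak{i}_1-1,1}}}}$, and the crux is proving that this form is anisotropic of dimension $\mathfrak{i}_1$.
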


Contrary to the existing approaches to the nonsingular case of Conjecture \ref{ConjHoffmanns}, our proof of Theorem \ref{ThmHoffmannsconjecture} does not involve the study of Chow correspondences on the quadric $X_\phi$. Indeed, although we also make essential use of the computation of the canonical dimension\footnote{Here, the \emph{canonical dimension} of an algebraic variety $X$ over a field $k$ should be understood as the minimal dimension of the image of a rational self-map $X \dashrightarrow X$.} of $X_\phi$ (see \cite{KarpenkoMerkurjev,Totaro1}), it is exploited here in a rather more direct and algebraic way. This point of view begins with the following observation: Let $\phi$ be an anisotropic totally singular quadratic form of dimension $\geq 2$ over a field $k$ of characteristic 2 and let $\psi \subset \phi$ be a subform of codimension $\witti{1}{\phi}$.

\begin{proposition}[{see Proposition \ref{Propwarmup} below}] \label{PropMotivation} Suppose, in the above situation, that $h(\psi) < h(\phi)$\footnote{A weaker condition will suffice -- see the statement of Proposition \ref{Propwarmup}.}. Then there exist a quasi-Pfister form $\pi$\footnote{That  is, $\pi$ is the diagonal part of a bilinear Pfister form over $k$ -- see \S \ref{QPforms} below.}, a subform $\sigma \subset \pi$, an element $\lambda \in k^*$ and a form $\tau$ over $k$ such that $\psi \simeq \pi \otimes \tau$ and $\phi \simeq \psi \perp \lambda\sigma$. \end{proposition}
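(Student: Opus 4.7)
The plan is to reformulate the claim as a linear-algebraic statement about $k^2$-subspaces of $k$, and then use the height hypothesis to produce the desired intermediate field. I would begin by picking a decomposition $\phi \simeq \psi \perp \rho$ with $\mydim{\rho} = \witti{1}{\phi}$, diagonalising $\psi \simeq \form{a_1,\hdots,a_n}$ and $\rho \simeq \form{\lambda_1,\hdots,\lambda_r}$, and recalling that for totally singular forms in characteristic $2$, divisibility of $\psi$ by a quasi-Pfister form $\pi = \pfister{b_1,\hdots,b_s}$ is equivalent to the $k^2$-subspace $V_\psi := \sum_i k^2 a_i \subseteq k$ being stable under multiplication by the subfield $K := k^2(b_1,\hdots,b_s)$. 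The supplementary condition $\phi \simeq \psi \perp \lambda\sigma$ with $\sigma \subset \pi$ then reduces to requiring that $\lambda_j \in \lambda_1 K$ for every $j$, so the entire proof amounts to producing such a subfield $K$.

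Next, I would analyse the situation over $L = k(X_\phi)$ using the crucial dimension equality $\mydim{\psi} = \mydim{\phi} - \witti{1}{\phi} = \mydim{\phi_1}$. This equality forces that either $\psi_L$ is anisotropic -- in which case $\psi_L \simeq \phi_1$, since a totally singular form admits (up to isomorphism) a unique anisotropic subform of codimension equal to its isotropy index -- or else $\psi_L$ is isotropic with $\anispart{\psi_L}$ a proper subform of $\phi_1$. In both cases, the generic point of $X_\phi$ exhibits each $\lambda_j$ as an $L^2$-linear combination of the $a_i$, giving explicit relations $\lambda_j = \sum_i c_{ij}^2 a_i$ with $c_{ij} \in L$.

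The heart of the argument is to exploit the hypothesis $h(\psi) < h(\phi)$ to produce further $L^2$-relations among the $a_i$ \emph{alone}. Informally, the fact that $\psi$ splits away strictly earlier than $\phi$ means that at the Knebusch-tower stage $k_{h(\psi)}$ the $a_i$ have already become linearly dependent over $k_{h(\psi)}^2$ in a manner that does not involve the $\lambda_j$; for otherwise $\phi_{k_{h(\psi)}}$ would be almost split as well, contradicting $h(\phi) > h(\psi)$. Combining these pure $a_i$-relations with the mixed relations from the previous step, and then executing a characteristic-$2$ descent (in the spirit of the Cassels--Pfister theorem, suitably adapted to the quasilinear setting as in \cite{Scully2}), I would bring the relations down from the tower to $k$ itself. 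The subfield $K$ generated by the resulting $k^2$-relations should then satisfy both $K V_\psi \subseteq V_\psi$ and $\lambda_j \in \lambda_1 K$, yielding the claimed $\pi$, $\tau$, $\sigma$ and $\lambda = \lambda_1$.

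The principal obstacle is the descent step and the verification that the extracted $K$ serves as a \emph{common} Pfister-divisor for $\psi$ and for the coset containment of the $\lambda_j$, rather than producing a weaker structure in which distinct entries are controlled by distinct subfields. This is precisely where the height hypothesis must do genuine work: it needs to supply enough coherence among the new relations that the resulting subfield governs $\psi$ and $\rho$ simultaneously. The incompressibility input coming from the canonical dimension of $X_\phi$ computed in \cite{KarpenkoMerkurjev,Totaro1}, flagged in the introduction, is precisely the tool that should enforce this coherence. Executing the bookkeeping cleanly, and checking that the $\lambda_j$ land in a single coset rather than spreading across several, is the main technical hurdle.
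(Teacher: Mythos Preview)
Your proposal has a genuine gap at the descent step, and the paper's actual argument takes a quite different and more elementary route that avoids this difficulty entirely.

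The paper does \emph{not} work over the function field $k(X_\phi)$ or over higher stages of the Knebusch tower, and it does not invoke Cassels--Pfister or incompressibility for this proposition. Instead, the height hypothesis $h(\psi)<h(\phi)$ (equivalently $\mathrm{ndeg}(\psi)<\mathrm{ndeg}(\phi)$) is used only to extract a single element $a\in D(\phi)$ lying outside $cN(\psi)$ for every $c\in k$; after scaling so that $1\in D(\psi)$, this becomes the concrete condition $(\star')$: $a\neq c(d+ef)$ for any $c,d,e,f\in D(\psi)$. The entire argument then takes place over the degree-$2$ purely inseparable extensions $k_a$ and $k_{ay}$ for $y\in D(\psi)$. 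Since $\phi_{k_a}$ is isotropic and the minimality of $\witti{1}{\phi}$ (Lemma~\ref{Lemminimalityi1}) forces $\witti{0}{\phi_{k_a}}\geq\witti{1}{\phi}$, while $(\star')$ keeps $\psi_{k_a}$ anisotropic, one gets $\anispart{(\phi_{k_a})}\simeq\psi_{k_a}$ and hence $D(\phi)\subset D(\psi)+aD(\psi)$. This already yields $\phi\simeq\psi\perp a\sigma$ with $\sigma\subset\psi$. Repeating the trick over $k_{ay}$ for each $y\in D(\psi)$ shows $D(\sigma)\subset G(\psi)$, so $\psi$ is divisible by $\pi=\normform{\sigma}$, and one is done.

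By contrast, your plan produces relations over transcendental extensions (either $k(X_\phi)$ or $k_{h(\psi)}$) and then needs to descend them to $k$. You flag this as ``the principal obstacle'', and rightly so: Cassels--Pfister controls representability of \emph{polynomials} over $k(T)$ in terms of $k[T^2]$-coefficients, but it gives no mechanism for descending $k_{h(\psi)}^2$-linear relations among fixed elements of $k$ back down through the tower, nor for ensuring that the resulting subfield stabilises $V_\psi$ and simultaneously contains all the ratios $\lambda_j/\lambda_1$. The paper sidesteps this entirely by never leaving $k$ except for degree-$2$ extensions, where the passage back and forth is completely explicit via $D(\phi_{k_a})=D(\psi)+aD(\psi)$.
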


In the situation of Proposition \ref{PropMotivation}, Hoffmann's conjecture is immediately verified. Indeed, (since $\sigma \subset \pi$) the integer $\mydim{\pi}$ is a power of 2 strictly greater than $\witti{1}{\phi} - 1 = \mydim{\sigma} -1$, and (since $\psi$ is divisible by $\pi$) we have $\mydim{\phi} - 1 = \mydim{\psi} + \witti{1}{\phi} -1 \equiv \witti{1}{\phi} - 1 \pmod{\mydim{\pi}}$. It is not always possible, however, to decompose the form $\phi$ in the manner intimated by the proposition. Indeed, Vishik (see \cite[Lem. 7.1]{Totaro2}) has given examples of 16-dimensional anisotropic quadratic forms in characteristic different from 2 which have first higher isotropy index equal to 2, but which do not decompose in this way, and the same examples carry over into the totally singular setting (see Lemma \ref{LemVishiksexample} below). Thus, the picture is, in general, more complicated than that suggested by Proposition \ref{PropMotivation}. Perhaps surprisingly, however, the main result of this paper shows that the next best thing happens here:

\begin{theorem} \label{Maintheorem} Let $\phi$ be an anisotropic totally singular quadratic form of dimension $\geq 2$ over a field of characteristic 2 and let $s$ be the smallest nonnegative integer such that $2^s \geq \witti{1}{\phi}$. Then $\phi_1$ is divisible by an $s$-fold quasi-Pfister form. \end{theorem}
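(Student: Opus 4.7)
The plan is to proceed by induction on $s$, or equivalently on $i := \witti{1}{\phi}$. The base case $s = 0$ forces $i = 1$, and the conclusion is immediate, since every totally singular form is divisible by the $0$-fold quasi-Pfister form $\form{1}$.

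The cleanest situation in the inductive step is the one anticipated by Proposition \ref{PropMotivation}: if $\phi$ contains a codimension-$i$ subform $\psi$ with $h(\psi) < h(\phi)$, the proposition yields a decomposition $\phi \simeq \pi \otimes \tau \perp \lambda \sigma$ with $\sigma \subset \pi$ and $\mydim{\pi} = 2^s$. Under scalar extension to $k_1 = k(X_\phi)$, the $\lambda \sigma$ summand is absorbed into $\pi \otimes \tau$, and a direct analysis of the resulting Witt decomposition should show that $\phi_1$ is explicitly $\pi$-divisible. Part of the work in the proof should consist of singling out this favourable situation and verifying the $\pi$-divisibility of $\phi_1$ on the nose.

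The real obstacle, as emphasised by the Vishik-type examples alluded to in the text, is that no such decomposition of $\phi$ need exist in general. To treat the general case, the plan is to exploit the transparent structure of the function field $k_1 = k(X_\phi)$ of a totally singular quadric: it is of the form $K(\sqrt{a})$, i.e., a purely inseparable quadratic extension of a purely transcendental extension $K$ of $k$. The behaviour of anisotropic totally singular forms under such extensions is well understood, with isotropy controlled by the multiplicative structure of the set of values represented by the form over $K$. Combining this with the canonical $2$-dimension bounds of Karpenko--Merkurjev and Totaro (which constrain $\mydim{\phi_1}$ from below) and the author's prior bound $\witti{1}{\phi} \leq \mydim{\phi}/2$ outside the Hoffmann--Laghribi extremal case, one seeks to produce an auxiliary anisotropic totally singular form $\phi'$ with $\witti{1}{\phi'} < i$ whose first higher anisotropic kernel governs the quasi-Pfister divisibility of $\phi_1$. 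Applying the inductive hypothesis to $\phi'$ then provides a quasi-Pfister divisor which is to be augmented, using the extra $\sqrt{a}$-structure of $k_1/K$, to yield the desired $s$-fold quasi-Pfister divisor of $\phi_1$.

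The hardest step is expected to be the construction of the auxiliary form $\phi'$ and the verification that the inductively supplied divisor genuinely extends to an $s$-fold quasi-Pfister divisor of all of $\phi_1$, rather than only of a proper subform of it. This is the point at which the interplay between the canonical dimension estimates and the internal structure of quasi-Pfister divisibility for totally singular forms (in terms of the action of the entries of $\pi$ on the $k^2$-span of the coefficients of $\phi$) is most likely to be decisive.
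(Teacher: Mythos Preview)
Your proposal has a genuine gap: the inductive scheme you outline never specifies the auxiliary form $\phi'$ or the ``augmentation'' step, and these are not mere details to be filled in---they are the entire content of the theorem. The naive candidate for $\phi'$ (a codimension-one neighbour $\psi \subset \phi$, which has $\witti{1}{\psi} = \witti{1}{\phi} - 1$ and $\psi_1 \simeq \phi_1$) only yields an $(s-1)$-fold divisor when $\witti{1}{\phi} = 2^{s-1}+1$, and there is no evident mechanism to promote this to an $s$-fold divisor using only the $\sqrt{a}$-structure of $k_1/K$. The Vishik-type example you cite is precisely the obstruction: it shows that the extra quasi-Pfister slot cannot in general be found over $k$, so it must be manufactured over $k_1$ by some explicit construction---and your plan does not contain one.

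The paper's proof is \emph{not} inductive on $\witti{1}{\phi}$. Instead, it works directly over $F(T)$ and $F(\phi)$: after identifying $F(\phi)$ with the affine function field $F[\phi']$ for a codimension-one subform $\phi'$, one uses the Cassels--Pfister representation theorem to write
\[
\phi_{F(T)} \simeq \psi_{F(T)} \perp \phi'(T)\form{1,f_1,\ldots,f_{\mathfrak{i}_1-1}}
\]
with each $f_i$ a polynomial in $\phi'(T)$ with coefficients $g_{i,j} \in D(\psi_{F[T]})$. A multiplicity argument (Proposition~\ref{PropCPapp}) then shows that the images $\overline{g_{i,1}} \in F(\phi)$ are similarity factors of $\phi_1$, and a minimality-of-degree choice forces the form $\sigma = \form{1,\overline{g_{1,1}},\ldots,\overline{g_{\mathfrak{i}_1-1,1}}}$ to be anisotropic. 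The norm form $\normform{\sigma}$ is then the required $s$-fold quasi-Pfister divisor. This construction produces all $\mathfrak{i}_1 - 1$ nontrivial similarity factors simultaneously; it does not build them one slot at a time, and there is no obvious way to reorganise it into the induction you propose.
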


As remarked above, the key ingredient needed for the proof of this theorem is Totaro's computation of the canonical dimension of a totally singular quadric (see \cite[Thm. 5.1]{Totaro1}). In \cite{Scully1}, this computation was extended to the wider class of Fermat-type hypersurfaces of degree $p$ over fields of characteristic $p>0$, and this extension enables us to also prove a direct analogue of Theorem \ref{Maintheorem} for totally singular forms of any prime degree $p>2$ (known here as \emph{quasilinear $p$-forms}) - see Theorem \ref{ThmMainthm} below. Subsequently, we also get an analogue of Theorem \ref{ThmHoffmannsconjecture} in higher degrees. This may be interpreted as a complete solution to the problem of determining the possible values of the canonical dimension of a degree-$p$ Fermat-type hypersurface in characteristic $p>0$ (it is worth noting here that no such result is known for Fermat-type hypersurfaces of prime degree $p>2$ over fields of characteristic \emph{different from $p$}). Returning to the case where $p=2$, let us explain more precisely how Theorem \ref{Maintheorem} implies totally singular case of Hoffmann's conjecture:

\begin{proof}[Proof of Theorem \ref{ThmHoffmannsconjecture}] Since $\phi$ is totally singular, we have $\mydim{\phi_1} = \mydim{\phi} - \witti{1}{\phi}$ (see Remark \ref{RemsKnebusch} (2) below). Thus, if $s$ is as in Theorem \ref{Maintheorem}, then
\begin{equation*} \mydim{\phi} - 1 = \mydim{\phi_1} + \witti{1}{\phi} - 1 \equiv \witti{1}{\phi} - 1 \pmod{2^s}. \end{equation*}
Since $\witti{1}{\phi} - 1 < 2^s$, the result follows. \end{proof}

Of course, our main result goes somewhat deeper than this. In fact, Theorem \ref{Maintheorem} (resp. Theorem \ref{ThmMainthm} below) yields a complete answer to Question \ref{Quevalues} in the totally singular case (resp. its analogue for arbitrary quasilinear $p$-forms). In other words, all restrictions on the possible values of the Knebusch splitting pattern of $\phi$ are explained here by the presence of certain divisibilities among its higher anisotropic kernels -- more precise statements will be given in \S \ref{Possiblevalues} below (see Theorem \ref{Thmallvalues}, Proposition \ref{Propexistenceofvalues}). With this result in hand, it is then natural to ask for a precise description of the general discrepancy which exists between the Knebusch and \emph{full} splitting patterns in the totally singular case. An immediate challenge here concerns the determination of all non-trivial restrictions which the former invariant imposes on the latter. In \S \ref{Excellentconnections} below, we initiate this process by conjecturing that the gaps in the full splitting pattern established in characteristic $\neq 2$ by Vishik as a consequence of his Excellent Connections theorem (see \cite[Prop. 2.6]{Vishik5} or Theorem \ref{ThmVishikstheorem} below) are also present in the totally singular theory. A particular case of this conjecture was already proved in \cite[Thm. 9.2]{Scully2}, and we provide some further evidence for its general veracity here\footnote{To the author's knowledge, there is no conjectural description of the possible values of the full splitting pattern, even in the nonsingular case (in any characteristic).}.

The rest of this paper is organised as follows: In sections \ref{Quasilinearpformsandhypersurfaces} and \ref{Incompressibilityresults}, we recall the basic theory of quasilinear $p$-forms and introduce the key notions and results which will be needed in the main part of the text. As a warm-up for the proof of our main result, we prove in \S \ref{Warmup} (a stronger version of) Proposition \ref{PropMotivation} above and consider some situations in which it may be applied. The proof of Theorem \ref{Maintheorem} (and its generalisation to higher degrees) is then given in \S \ref{ProofofMainTheorem}, and, in \S \ref{Applications}, we apply this result to determine all possible Knebusch splitting patterns of quasilinear $p$-forms and settle another conjecture of Hoffmann concerning quasilinear $p$-forms with ``maximal splitting''. In \S \ref{Bilinearforms}, we discuss some consequences of these results for the Knebusch splitting theory of symmetric bilinear forms in characteristic 2, as initiated by Laghribi in \cite{Laghribi6,Laghribi7}. Finally, in \S \ref{Excellentconnections}, we consider the aforementioned problem of establishing totally singular analogues of the results obtained by Vishik in \cite{Vishik5}.\\

\noindent {\bf Notation and Terminology.} Unless stated otherwise, $p$ will denote an arbitrary prime integer and $F$ will denote an arbitrary field of characteristic $p$. If $L$ is a field of characteristic $p$ and $a_1,\hdots,a_n$ are elements of $L$, then $L_{a_1,\hdots,a_n}$ will denote the field $L(\sqrt[p]{a_1},\hdots,\sqrt[p]{a_n})$. Finally, if $k$ is a field and $T = (T_1,\hdots,T_m)$ is a tuple of algebraically independent variables over $k$, then we will write $k[T]$ for the polynomial ring $k[T_1,\hdots,T_n]$ and $k(T)$ for its fraction field.

\section{Quasilinear $p$-forms and quasilinear $p$-hypersurfaces} \label{Quasilinearpformsandhypersurfaces}

The basic material presented in this section was originally developed in a series of papers by Hoffmann and Laghribi (see \cite{HoffmannLaghribi1,Laghribi2,Laghribi3,Laghribi5} and especially \cite{Hoffmann2}). Additional elementary results which will be needed in the sequel are also included here. For any details which are omitted from our exposition of the basic theory, we refer the reader to \cite{Hoffmann2}.

\subsection{Basic notions} \label{Basicnotions} Let $\phi \colon V \rightarrow F$ be a nonzero form on a (nonzero) finite-dimensional $F$-vector space $V$. We say that $\phi$ is a \emph{quasilinear $p$-form} (on $V$) if $\phi$ is homogeneous of degree $p$ and the equation $\phi(v+w) = \phi(v) + \phi(w)$ holds for all $(v,w) \in V \times V$. By a \emph{quasilinear $p$-form over $F$} (or sometimes simply a \emph{form over $F$} or \emph{$F$-form}), we will mean a quasilinear $p$-form on some (nonzero) finite-dimensional $F$-vector space. By a \emph{quasilinear $p$-hypersurface over $F$} we will mean a projective hypersurface defined by the vanishing of a quasilinear $p$-form on some $F$-vector space of dimension $\geq 2$. In the special case where $p=2$, we will speak of \emph{quasilinear quadratic forms} rather than quasilinear 2-forms.

\begin{remark} \label{Remnowheresmooth} Let $T = (T_1,\hdots,T_n)$ be a tuple of $n \geq 2$ algebraically independent variables over $F$, and let $f \in F[T]\setminus \lbrace 0 \rbrace$. Then the projective hypersurface $X_f = \lbrace f = 0 \rbrace \subset \mathbb{P}^{n-1}$ is nowhere smooth if and only if $f \in F[T_1^p,\hdots,T_n^p]$. In particular, if $p=2$, then a nonzero quadratic form of dimension $\geq 2$ over $F$ is totally singular (in the sense of \S \ref{Introduction}) if and only if it is quasilinear. \end{remark}

Let $\phi$ be a quasilinear $p$-form over $F$. The underlying $F$-vector space of $\phi$ will be denoted by $V_\phi$. Its dimension will be called the \emph{dimension} of $\phi$ and will be denoted by $\mydim{\phi}$. If $\mydim{\phi} \geq 2$, then the quasilinear $p$-hypersurface $\lbrace \phi =0 \rbrace \subset \mathbb{P}(V_\phi)$ (which is nowhere smooth by Remark \ref{Remnowheresmooth}) will be denoted by $X_{\phi}$. The set $\lbrace \phi(v)\;|\;v \in V_{\phi} \rbrace$ of elements of $F$ represented by $\phi$ will be denoted by $D(\phi)$. Given a field extension $L$ of $F$, we will write $\phi_L$ for the unique quasilinear $p$-form on the $L$-vector space $V_{\phi} \otimes_F L$ such that $\phi_L(v \otimes 1) = \phi(v)$ for all $v \in V_\phi$. If $R$ is a subring of $L$ containing $F$, then $D(\phi_R)$ will denote the subset $\lbrace \phi(w) \;|\; w \in V_\phi \otimes_F R \rbrace$ of $D(\phi_L)$ (which lies in $R$). Given $a \in F^*$, we will write $a\phi$ for the form $v \mapsto a\phi(v)$ on the vector space $V_\phi$. 

Let $\psi$ be another quasilinear $p$-form over $F$. If there exists an injective (resp. bijective) $F$-linear map $f \colon V_\psi \rightarrow V_\phi$ such that $\phi(f(v)) = \psi(v)$ for all $v \in V_\psi$, then we will say that $\psi$ is a \emph{subform} of (resp. is \emph{isomorphic} to) $\phi$ and write $\psi \subset \phi$ (resp. $\psi \simeq \phi$). If $\psi \simeq a\phi$ for some $a \in F^*$, then we will say that $\psi$ and $\phi$ are \emph{similar}. The \emph{sum} $\psi \oplus \phi$ (resp. \emph{product} $\psi \otimes \phi$) is defined as the unique quasilinear $p$-form on $V_\psi \oplus V_\phi$ (resp. $V_\psi \otimes_F V_\phi$) such that $(\psi \oplus \phi)\big((v,w)\big) = \psi(v) + \phi(w)$ (resp. $(\psi \otimes \phi)(v\otimes w) = \psi(v)\phi(w)$) for all $(v,w) \in V_\psi \times V_\phi$. Given a positive integer $n$, $n \cdot \phi$ will denote the sum of $n$ copies of $\phi$ (note that $n \cdot \phi \neq n\phi$). If there exists a form $\tau$ over $F$ such that $\phi \simeq \psi \otimes \tau$, then we will say that $\phi$ is \emph{divisible} by $\psi$.

Given elements $a_1,\hdots,a_n \in F$, at least one of which is nonzero, we will write $\form{a_1,\hdots,a_n}$ for the quasilinear $p$-form $(\lambda_1,\hdots,\lambda_n) \mapsto \sum_{i=1}^n a_i \lambda_i^p$ on the $F$-vector space $F^{\oplus n}$. By definition, every quasilinear $p$-form of dimension $n$ over $F$ is isomorphic to $\form{a_1,\hdots,a_n}$ for some $a_i \in F$.

A vector $v \in V_{\phi}$ is said to be \emph{isotropic} if $\phi(v) = 0$. We will say that $\phi$ is \emph{isotropic} if $V_{\phi}$ contains a nonzero isotropic vector, and \emph{anisotropic} otherwise. By the additivity of $\phi$, the subset $V_\phi^0$ of all isotropic vectors in $V_\phi$ is, in fact, an $F$-linear subspace of $V_\phi$. Its dimension will be called the \emph{isotropy index} of $\phi$, and will be denoted by $\witti{0}{\phi}$ (note that, in the special case where $p=2$, this agrees with the definition given in \S \ref{Introduction}). The additivity of $\phi$ also implies that $D(\phi)$ is a (nonzero finite-dimensional) $F^p$-linear subspace of $F$ (where $F$ is equipped with its natural $F^p$-vector space structure). Conversely, if $U$ is a nonzero finite-dimensional $F^p$-linear subspace of $F$, and if $a_1,\hdots,a_n$ is a basis of $U$, then $\sigma = \form{a_1,\hdots,a_n}$ is a quasilinear $p$-form over $F$ satisfying $D(\sigma) = U$. In fact, it is easy to see that $\sigma$ is, up to isomorphism, the unique \emph{anisotropic} quasilinear $p$-form with this property:

\begin{lemma}[{cf. \cite[Prop. 2.12]{Hoffmann2}}] \label{Lemexistenceofforms} Let $U$ be a nonzero finite-dimensional $F^p$-linear subspace of $F$. Then, up to isomorphism, there exists a unique anisotropic quasilinear $p$-form $\phi$ over $F$ such that $D(\phi) = U$. \end{lemma}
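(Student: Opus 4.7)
The plan is to treat existence and uniqueness separately, and to handle both by exploiting the fact that a quasilinear $p$-form $\phi$ behaves, in a precise sense, as an $F^p$-linear map on its underlying vector space.

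For existence, I would simply pick an $F^p$-basis $a_1,\ldots,a_n$ of $U$ and set $\phi = \form{a_1,\hdots,a_n}$. By the definition recalled just before the lemma, $D(\phi) = F^p a_1 + \cdots + F^p a_n = U$. Anisotropy is immediate: if $\sum a_i \lambda_i^p = 0$ for some $\lambda_i \in F$, then the $F^p$-linear independence of the $a_i$ forces each $\lambda_i^p$ (and hence each $\lambda_i$, since Frobenius is injective) to vanish.

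The main content is uniqueness, and the key observation is the following. If $\phi$ is any quasilinear $p$-form over $F$, endow the underlying set $V_\phi$ with a new $F^p$-module structure by defining $\lambda^p \star v \coloneqq \lambda v$ for $\lambda \in F$ (well-defined since Frobenius is injective); call this $F^p$-module $V'_\phi$. An $F$-basis of $V_\phi$ is still an $F^p$-basis of $V'_\phi$, so $\dim_{F^p} V'_\phi = \mydim{\phi}$. By construction, the map $\phi \colon V'_\phi \to F$ becomes $F^p$-linear, because $\phi(\lambda^p \star v) = \phi(\lambda v) = \lambda^p \phi(v)$. When $\phi$ is anisotropic, this $F^p$-linear map is injective with image $D(\phi)$, so it is an $F^p$-linear isomorphism $V'_\phi \xrightarrow{\sim} D(\phi)$.

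Now suppose $\phi$ and $\psi$ are two anisotropic quasilinear $p$-forms over $F$ with $D(\phi) = D(\psi) = U$. Composing the two $F^p$-isomorphisms just produced gives an $F^p$-linear bijection $f \colon V'_\phi \to V'_\psi$ satisfying $\psi \circ f = \phi$. The final step, and the only mildly delicate point, is to observe that $f$ is automatically $F$-linear with respect to the original $F$-vector space structures: for any $\alpha \in F$ and $v \in V_\phi$, one has $\alpha v = \alpha^p \star v$ in $V'_\phi$, hence
\begin{equation*}
f(\alpha v) = f(\alpha^p \star v) = \alpha^p \star f(v) = \alpha f(v).
\end{equation*}
Thus $f \colon V_\phi \to V_\psi$ is an $F$-linear isomorphism intertwining $\phi$ and $\psi$, which gives $\phi \simeq \psi$. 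I expect no genuine obstacle here; the only thing to be careful with is the bookkeeping around the twisted $F^p$-structure, which is what makes the $F$-linearity of $f$ pop out for free rather than requiring an additional argument.
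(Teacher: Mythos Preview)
Your proposal is correct and follows essentially the same idea the paper relies on: the paper does not give a full proof but, in the paragraph following Proposition~\ref{Propanisclassification}, observes that if one views $D(\phi)$ as an $F$-vector space via the Frobenius $F \to F^p$, then $\phi \colon V_\phi \to D(\phi)$ is a surjective $F$-linear map with kernel $V_\phi^0$, which is exactly your twisted-structure argument seen from the other side. Your version twists $V_\phi$ to an $F^p$-module rather than twisting $D(\phi)$ to an $F$-module, but the content is identical and your write-up is a clean unpacking of what the paper leaves implicit.
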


In particular, Pfister's quadratic subform theorem (\cite[Thm. 17.12]{EKM}) takes the following simplified form in this setting:

\begin{proposition}[{cf. \cite[Prop. 2.6]{Hoffmann2}}] \label{Propanisclassification} Let $\psi$ and $\phi$ be anisotropic quasilinear $p$-forms over $F$. Then $\psi \subset \phi$ if and only if $D(\psi) \subseteq D(\phi)$. In particular, $\psi \simeq \phi$ if and only if $D(\psi) = D(\phi)$. \end{proposition}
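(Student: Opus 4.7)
The forward direction is immediate: if $f\colon V_\psi \hookrightarrow V_\phi$ realises $\psi$ as a subform of $\phi$, then every value $\psi(v) = \phi(f(v))$ lies in $D(\phi)$, so $D(\psi) \subseteq D(\phi)$.

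For the converse, the plan is to exploit the additivity and $p$-homogeneity of quasilinear $p$-forms, which together make $D(\psi)$ a much more rigid invariant than in the classical quadratic-form setting. First I would observe that if $\psi \simeq \form{b_1,\hdots,b_n}$ is anisotropic, then $b_1,\hdots,b_n$ are necessarily $F^p$-linearly independent: a nontrivial relation $\sum b_i \lambda_i^p = 0$ (with $\lambda_i \in F$, not all zero) would exhibit a nonzero isotropic vector for $\psi$, using that the Frobenius $F \to F^p$ is bijective as a map of additive groups (so every element of $F^p$ has a unique $p$-th root in $F$). Consequently $\mydim{\psi} = \dim_{F^p} D(\psi)$, and the same holds for $\phi$.

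Now assume $D(\psi) \subseteq D(\phi)$, set $m = \mydim{\psi}$, and choose an $F^p$-basis $a_1,\hdots,a_m$ of $D(\psi)$. Each $a_i$ lies in $D(\phi)$, so we may write $a_i = \phi(v_i)$ for some $v_i \in V_\phi$. Let $f\colon F^{\oplus m} \to V_\phi$ be the $F$-linear map sending the standard basis vector $e_i$ to $v_i$. Using additivity and $p$-homogeneity, for any $(\lambda_1,\hdots,\lambda_m) \in F^m$ we have
\begin{equation*}
\phi\bigl(f(\lambda_1,\hdots,\lambda_m)\bigr) = \phi\Bigl(\sum_{i=1}^m \lambda_i v_i\Bigr) = \sum_{i=1}^m \lambda_i^p \phi(v_i) = \sum_{i=1}^m a_i \lambda_i^p = \form{a_1,\hdots,a_m}(\lambda_1,\hdots,\lambda_m).
\end{equation*}
If $f(\lambda_1,\hdots,\lambda_m) = 0$, then $\sum a_i \lambda_i^p = 0$ in $F$, and the $F^p$-linear independence of the $a_i$ forces $\lambda_i = 0$ for all $i$. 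Thus $f$ is injective, showing $\form{a_1,\hdots,a_m} \subset \phi$. Since $\form{a_1,\hdots,a_m}$ is anisotropic (its defining coefficients are $F^p$-linearly independent) and satisfies $D(\form{a_1,\hdots,a_m}) = \sum_i a_i F^p = D(\psi)$, Lemma \ref{Lemexistenceofforms} yields $\form{a_1,\hdots,a_m} \simeq \psi$, whence $\psi \subset \phi$.

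The final assertion is then immediate: $D(\psi) = D(\phi)$ gives subform inclusions in both directions, and as both forms have the same dimension $\dim_{F^p} D(\psi)$, any such inclusion must be an isomorphism (alternatively, apply the uniqueness part of Lemma \ref{Lemexistenceofforms} directly). There is no real obstacle here; the proof essentially amounts to unpacking the definitions, with the one delicate point being the verification that anisotropicity is equivalent to $F^p$-linear independence of the diagonal entries -- a step which crucially relies on the bijectivity of the Frobenius on the scalar field.
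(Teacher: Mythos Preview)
Your proof is correct and follows essentially the approach the paper intends: the paper states this proposition without proof, presenting it as a direct consequence of Lemma~\ref{Lemexistenceofforms} (the existence and uniqueness of an anisotropic form with prescribed value set $D(\phi)$), and your argument is precisely an unpacking of that consequence, with the key observation that anisotropy of $\form{a_1,\hdots,a_m}$ is equivalent to $F^p$-linear independence of the $a_i$.
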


In view of these observations, we can define (up to isomorphism) the \emph{anisotropic kernel} of $\phi$ as the unique quasilinear $p$-form $\anispart{\phi}$ over $F$ such that $D(\anispart{\phi}) = D(\phi)$. If we view $D(\phi)$ as an $F$-vector space via the Frobenius $F \mapsto F^p$, then $\phi \colon V_\phi \rightarrow D(\phi)$ is a surjective $F$-linear map with kernel $V_\phi^0$. We thus obtain:

\begin{proposition}[{cf. \cite[Lem 2.10]{Hoffmann2}}] \label{PropWittdecomposition} Let $\phi$ be a quasilinear $p$-form over $F$. Then $\phi$ is anisotropic if and only if $\phi \simeq \anispart{\phi}$. If $\phi$ is isotropic, then $\phi \simeq \anispart{\phi} \oplus \witti{0}{\phi} \cdot \form{0}$. In particular, $\mydim{\anispart{\phi}} = \mydim{\phi} - \witti{0}{\phi}$. \end{proposition}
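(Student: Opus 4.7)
The plan is to leverage the observation immediately preceding the statement: after twisting the target by the Frobenius, the quasilinear $p$-form $\phi$ becomes an honest $F$-linear surjection $\phi \colon V_\phi \to D(\phi)$ whose kernel is precisely the isotropic subspace $V_\phi^0$. Once this linear-algebraic viewpoint is in place, everything reduces to the choice of a complement together with the uniqueness assertion in Proposition \ref{Propanisclassification}.

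For the first biconditional I would treat the two directions separately. If $\phi \simeq \anispart{\phi}$, then $\phi$ is anisotropic because $\anispart{\phi}$ is (the latter being produced, via Lemma \ref{Lemexistenceofforms}, from an $F^p$-basis of $D(\phi)$). Conversely, if $\phi$ itself is anisotropic, then $D(\phi) = D(\anispart{\phi})$ by the very definition of the anisotropic kernel, and Proposition \ref{Propanisclassification} yields at once $\phi \simeq \anispart{\phi}$.

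For the isotropic case, the main step is to choose an $F$-linear complement $W$ of $V_\phi^0$ in $V_\phi$, so that $V_\phi = W \oplus V_\phi^0$. Additivity of $\phi$ then forces $\phi(w+v) = \phi(w)$ for every $(w,v) \in W \times V_\phi^0$, which gives a direct-sum decomposition $\phi \simeq \phi|_W \oplus \phi|_{V_\phi^0}$. The second summand is manifestly $\witti{0}{\phi}\cdot\form{0}$. For the first, $\phi|_W$ is anisotropic because $W \cap V_\phi^0 = 0$, and one checks that $D(\phi|_W) = D(\phi)$ (any $\phi(w+v)$ with $v \in V_\phi^0$ equals $\phi(w)$, so $W$ already realises every value of $\phi$); applying Proposition \ref{Propanisclassification} to the common value set $D(\phi)$ then identifies $\phi|_W$ with $\anispart{\phi}$. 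The dimension formula $\mydim{\anispart{\phi}} = \mydim{\phi} - \witti{0}{\phi}$ is read off from the resulting isomorphism.

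I do not anticipate any genuine obstacle here: the conceptual content is entirely concentrated in the Frobenius-linearity observation (already furnished by the authors), and what remains is the routine bookkeeping with the complement $W$ and an appeal to the uniqueness of anisotropic quasilinear $p$-forms with a prescribed represented set.
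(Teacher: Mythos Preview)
Your proposal is correct and follows exactly the approach the paper intends: the paper presents this proposition as an immediate consequence of the preceding Frobenius-linearity observation (``We thus obtain:''), and you have simply unpacked that deduction by choosing a complement $W$ to $V_\phi^0$ and invoking the uniqueness of the anisotropic form with a given represented set. There is nothing to add.
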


In summary, we see that $\phi$ is determined up to isomorphism by the set $D(\phi)$ and the integer $\witti{0}{\phi}$. Since we only consider nonzero forms here, we always have $\mydim{\anispart{\phi}} \geq 1$. In the case where $\mydim{\anispart{\phi}} = 1$, we will say that $\phi$ is \emph{split}. Given another form $\psi$ over $F$, we will write $\psi \sim \phi$ whenever $\anispart{\psi} \simeq \anispart{\phi}$. If $F$ is perfect (that is, if $F = F^p$), then every form over $F$ is split and the theory is vacuous. As such, we are essentially only interested in the case where $F$ is imperfect.

\begin{remark} \label{Remanispart} Let $\phi$ be a quasilinear $p$-form over $F$. Choose a basis $v_1,\hdots,v_n$ of $V_\phi$ and let $a_i = \phi(v_i)$ for all $1 \leq i \leq n$, so that $\phi \simeq \form{a_1,\hdots,a_n}$. Suppose that $v \in V_\phi$ is an isotropic vector, and write $v = \sum_{i=1}^n\lambda_i v_i$. If $\lambda_j \neq 0$ for $1 \leq j \leq n$, then the $F^p$-vector space $D(\phi)$ is spanned by the elements $a_1,
\hdots,a_{j-1},a_{j+1},\hdots,a_n$. In particular, we have $\phi \sim \form{a_1,\hdots,a_{j-1},a_{j+1},\hdots,a_n}$. \end{remark}

\subsection{Function fields of quasilinear $p$-hypersurfaces and their products} \label{Functionfields} Let $\phi$ be a quasilinear $p$-form of dimension $\geq 2$ over $F$. If $\phi$ is not split, then the quasilinear $p$-hypersurface $X_\phi$ is an integral scheme (cf. \cite[Lem. 7.1]{Hoffmann2}), as is its affine cone $\lbrace \phi = 0 \rbrace \subset \mathbb{A}(V_\phi)$. In this case, we will write $F(\phi)$ for the function field of the former and $F[\phi]$ for that of the latter. If $L$ is a field extension of $F$, then we will simply write $L(\phi)$ instead of $L(\phi_L)$ whenever it is defined. In general, given a finite collection $\phi_1,\hdots,\phi_n$ of quasilinear $p$-forms dimension $\geq 2$ over $F$, we will write $F(\phi_1 \times \hdots \times \phi_n)$ for the function field of the scheme $X_{\phi_1} \times \hdots \times X_{\phi_n}$, provided that it is integral. This notation will be further simplified where possible; for example, if $\phi_1 = \hdots = \phi_n = \phi$, then we will simply write $F(\phi^{\times n})$ instead of $F(\phi_1 \times \hdots \times \phi_n)$.

\begin{remarks} \label{Remsff} Let $\phi$ be a quasilinear $p$-form over $F$. Assume that $\phi$ is not split. We make the following basic observations:
\begin{enumerate}[leftmargin=*] \item Let $a_0,\hdots,a_n \in F$ be such that $\phi \simeq \form{a_0,\hdots,a_n}$ and $a_0,a_1 \neq 0$. Then we have $F$-isomorphisms
\begin{equation*} F(\phi) \simeq F(S)\Big(\sqrt[p]{a_1^{-1}\big(a_0 + a_2S_2^p + \hdots + a_nS_n^p\big)}\Big) \end{equation*}
and
\begin{equation*} F[\phi] \simeq \mathrm{Frac}\big(F[T]/(a_0T_0^p + \hdots + a_nT_n^p)\big) \simeq F(U)\Big(\sqrt[p]{a_0^{-1}\big(a_1U_1^p + \hdots + a_nU_n^p\big)}\Big), \end{equation*}
where $S = (S_2,\hdots,S_n)$, $T = (T_0,\hdots,T_n)$ and $U = (U_1,\hdots,U_n)$ are tuples of algebraically independent variables over $F$.
\item $F[\phi]$ is $F$-isomorphic to a degree-1 purely transcendental extension of $F(\phi)$.
\item $F(\phi)$ is $F$-isomorphic to a degree-$\witti{0}{\phi}$ purely transcendental extension of $F(\anispart{\phi})$ -- see Proposition \ref{PropWittdecomposition}.
\item The form $\phi_{F(\phi)}$ is evidently isotropic. Furthermore, if $a_1,\hdots,a_n\in F$ are such that $\phi \simeq \form{a_1,\hdots,a_n}$, then consideration of the generic point in $X_\phi\big(F(\phi)\big)$ shows that $\phi_{F(\phi)} \sim \form{a_1,\hdots,a_{i-1},a_{i+1},\hdots,a_n}$ for every $1\leq i\leq n$ -- see Remark \ref{Remanispart}. \end{enumerate}
 \end{remarks}

\subsection{Quasi-Pfister $p$-forms} \label{QPforms} Let $\phi$ be a quasilinear $p$-form over $F$ and let $n$ be a positive integer. We say that $\phi$ is an \emph{n-fold quasi-Pfister $p$-form} if there exist $a_1,\hdots,a_n \in F$ such that $\phi \simeq \pfister{a_1,\hdots,a_n} \coloneqq \otimes_{i=1}^n\form{1,a_i,a_i^2,\hdots,a_i^{p-1}}$. For convenience, we also say that $\phi$ is a \emph{$0$-fold quasi-Pfister $p$-form} if $\phi \simeq \form{1}$. Note, in particular, that if $\phi$ is an $n$-fold quasi-Pfister $p$-form for some $n\geq 0$, then we have $\mydim{\phi} = p^n$. The basic observation concerning quasi-Pfister $p$-forms is found in the following proposition (which follows easily from Lemma \ref{Lemexistenceofforms}):

\begin{proposition}[{cf. \cite[Prop. 4.6]{Hoffmann2}}] \label{PropQP=field} Let $\phi$ be a quasilinear $p$-form over $F$. Then $\anispart{\phi}$ is a quasi-Pfister $p$-form if 
and only if $D(\phi)$ is a subfield of $F$. \end{proposition}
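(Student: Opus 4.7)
The plan is to compare the two conditions through the fact that $D(\phi) = D(\anispart{\phi})$ and that an anisotropic quasilinear $p$-form is determined up to isomorphism by the $F^p$-linear subspace of $F$ it represents (Lemma \ref{Lemexistenceofforms} / Proposition \ref{Propanisclassification}). Replacing $\phi$ by $\anispart{\phi}$, it thus suffices to show that an anisotropic quasilinear $p$-form $\phi$ is quasi-Pfister if and only if $D(\phi)$ is a subfield of $F$.

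For the forward direction, suppose $\phi \simeq \pfister{a_1,\hdots,a_n}$. By the definition of the tensor product, $D(\phi)$ is precisely the $F^p$-linear span of the $p^n$ monomials $a_1^{i_1}\cdots a_n^{i_n}$ with $0 \leq i_j \leq p-1$. In particular, $D(\phi) = F^p[a_1,\hdots,a_n]$ is a finite $F^p$-subalgebra of $F$; being a finite integral domain, it is a field.

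For the converse, assume $D(\phi)$ is a subfield of $F$. Being a nonzero $F^p$-subspace containing the multiplicative identity, $D(\phi)$ contains $F^p \cdot 1 = F^p$, so $D(\phi)/F^p$ is a finite field extension. Since every element of $F$ has its $p$-th power lying in $F^p$, this extension is purely inseparable of exponent $\leq 1$. By the elementary theory of such extensions, we can find elements $a_1,\hdots,a_n \in D(\phi)$ with $D(\phi) = F^p(a_1,\hdots,a_n)$ and $[D(\phi) : F^p] = p^n$; equivalently, the $p^n$ monomials $a_1^{i_1}\cdots a_n^{i_n}$ ($0 \leq i_j \leq p-1$) form an $F^p$-basis of $D(\phi)$. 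Set $\pi = \pfister{a_1,\hdots,a_n}$. By the forward direction's computation, $D(\pi)$ is precisely the $F^p$-span of the same monomials, so $D(\pi) = D(\phi)$. Moreover, since $\mydim{\pi} = p^n$ matches $\dim_{F^p}D(\pi)$, Proposition \ref{PropWittdecomposition} forces $\pi$ to be anisotropic. Applying Proposition \ref{Propanisclassification} (or the uniqueness in Lemma \ref{Lemexistenceofforms}) to the two anisotropic forms $\phi$ and $\pi$ with the same set of represented values yields $\phi \simeq \pi$, as desired.

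The only real content is locating generators $a_1,\hdots,a_n$ inside $D(\phi)$ whose monomials form a basis — this is the step where one must invoke the structure of purely inseparable extensions of exponent one (a $p$-basis argument), and it is also the step that makes the dimension count $\mydim{\pi} = p^n = \dim_{F^p}D(\phi)$ work out so that $\pi$ is automatically anisotropic. Everything else is a direct application of the results recalled in \S\ref{Basicnotions}.
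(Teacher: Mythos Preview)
Your proof is correct and follows essentially the approach the paper intends: the proposition is stated as an easy consequence of Lemma~\ref{Lemexistenceofforms}, and your argument is precisely the natural fleshing-out of this, matching the explicit description given in Remark~\ref{RemExplicitQP}. The only quibble is the phrase ``finite integral domain'' --- what you mean (and use) is that a finite-dimensional $F^p$-algebra which is an integral domain is a field.
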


Since the set of elements represented by an arbitrary quasi-Pfister $p$-form is, by definition, a subfield of the base field, we obtain:

\begin{corollary}[{cf. \cite[Prop. 4.6]{Hoffmann2}}] \label{CoranispartQP} Let $\phi$ be a quasi-Pfister $p$-form over $F$. Then $\anispart{\phi}$ is a quasi-Pfister $p$-form. In particular, if $\phi$ is isotropic, then $\mydim{\anispart{\phi}} = \frac{1}{p^k}\mydim{\phi}$ for some $k \geq 1$. \end{corollary}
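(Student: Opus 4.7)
The plan is to reduce the corollary to Proposition \ref{PropQP=field}: once we know that $D(\phi)$ is a subfield of $F$, the fact that $D(\anispart{\phi}) = D(\phi)$ will immediately imply that $\anispart{\phi}$ is a quasi-Pfister $p$-form. So the content of the first assertion is really to verify that the value set of a quasi-Pfister $p$-form is closed under the field operations.

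To see this, I would write $\phi \simeq \pfister{a_1,\hdots,a_n}$ and observe that, by unpacking the tensor product, $D(\phi)$ is the $F^p$-linear span of the monomials $M_e \coloneqq a_1^{e_1}\cdots a_n^{e_n}$ with $0 \leq e_i \leq p-1$. The element $1 = M_0$ lies in this span, so $D(\phi)$ contains $F^p$. For multiplicative closure, the product of two spanning monomials is a monomial $a_1^{e_1+e_1'}\cdots a_n^{e_n+e_n'}$ in which each exponent $e_i + e_i'$ lies in $\lbrace 0,\hdots,2p-2\rbrace$; whenever $e_i + e_i' \geq p$ we extract a factor of $a_i^p \in F^p$ to reduce the exponent by $p$, and the result is an $F^p$-multiple of some $M_{e''}$. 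Thus $D(\phi)$ is a finite-dimensional $F^p$-subalgebra of $F$, and since it is a subring of a field, it is itself a field. Applying Proposition \ref{PropQP=field} now gives the first statement.

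For the second statement, suppose $\phi$ is isotropic. By Proposition \ref{PropWittdecomposition}, $\mydim{\anispart{\phi}} = \mydim{\phi} - \witti{0}{\phi} < \mydim{\phi}$. By what has just been shown, $\anispart{\phi}$ is a quasi-Pfister $p$-form; being defined as a tensor product of $p$-dimensional forms, every quasi-Pfister $p$-form has dimension a power of $p$. Writing $\mydim{\phi} = p^n$ and $\mydim{\anispart{\phi}} = p^m$ with $m < n$ yields $\mydim{\anispart{\phi}} = \tfrac{1}{p^{n-m}}\mydim{\phi}$ with $k \coloneqq n-m \geq 1$.

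There is no substantial obstacle here: the only thing that needs any argument at all is the multiplicative closure of $D(\phi)$, and this reduces to the trivial observation that each $a_i^p \in F^p$ so that exponents exceeding $p-1$ may be reduced modulo $p$ at the cost of a scalar in $F^p$. Everything else is bookkeeping via Propositions \ref{PropQP=field} and \ref{PropWittdecomposition}.
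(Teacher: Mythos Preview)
Your proof is correct and follows essentially the same route as the paper. The paper simply asserts that the set of elements represented by a quasi-Pfister $p$-form is ``by definition'' a subfield of $F$ and then invokes Proposition~\ref{PropQP=field}; you spell out the monomial computation that justifies this closure, but the logical structure is identical.
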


\begin{remark} \label{RemExplicitQP} More explicitly, let $\phi = \pfister{a_1,\hdots,a_n}$ for some $n \geq 1$ and $a_i \in F$. Then $D(\phi) = F^p(a_1,\hdots,a_n)$. Let $m$ be such that $[F^p(a_1,\hdots,a_n):F^p] = p^m$. If $m = 0$ (i.e., $D(\phi) = F^p$), then $\anispart{\phi} \simeq \form{1}$. If $m \geq 1$, then $\anispart{\phi} \simeq \pfister{b_1,\hdots,b_m}$ for any $m$ elements $b_1,\hdots,b_m \in F$ such that $F^p(b_1,\hdots,b_m) = F^p(a_1,\hdots,a_n)$. \end{remark}

Quasi-Pfister $p$-forms have a central role to play in the general theory of quasilinear $p$-forms. As shown by Hoffmann (\cite{Hoffmann2}), these forms are distinguished here by the very same properties which distinguish the classical Pfister forms among nonsingular quadratic forms. For this reason, it will be useful to define the \emph{divisibility index} of a given form $\phi$, denoted $\mathfrak{d}_0(\phi)$, as the largest nonnegative integer $s$ such that $\anispart{\phi}$ is divisible by an $s$-fold quasi-Pfister $p$-form. Clearly we have $\mathfrak{d}_0(\phi) \leq \mathrm{log}_p(\mydim{\anispart{\phi}})$, with equality holding if and only if $\anispart{\phi}$ is similar to a quasi-Pfister $p$-form. An alternative description of this invariant will be given in Corollary \ref{Cordivisibilityindexsimilarity} below.

\subsection{The norm form} \label{Normform} Let $\phi$ be a quasilinear $p$-form over $F$. The \emph{norm field} of $\phi$, denoted $N(\phi)$, is defined (cf. \cite[Def. 4.1]{Hoffmann2}) as the smallest subfield of $F$ which contains all ratios of nonzero elements of $D(\phi)$. Note, in particular, that we have $N(a\phi) = N(\phi) = N(\anispart{\phi})$ for all $a \in F^*$. In spite of its simple nature, this invariant has an important role to play in the whole theory. A more explicit description of the norm field may be given as follows:

\begin{remark} \label{Remexplicitnormfield} If $a_1,\hdots,a_n \in F$ are such that $\phi \simeq \form{a_1,\hdots,a_n}$ and $a_1 \neq 0$, then we have $N(\phi) = F^p(\frac{a_2}{a_1},\hdots,\frac{a_n}{a_1})$. \end{remark}

In particular, we see that $N(\phi)$ is a nonzero finite-dimensional $F^p$-linear subspace of $F$. By Lemma \ref{Lemexistenceofforms}, it follows that, up to isomorphism, there exists a unique anisotropic quasilinear $p$-form $\normform{\phi}$ over $F$ such that $D(\normform{\phi}) = N(\phi)$. The form $\normform{\phi}$ is called the \emph{norm form} of $\phi$ (cf. \cite[Def. 4.9]{Hoffmann2}). By Proposition \ref{PropQP=field}, $\normform{\phi}$ is a quasi-Pfister $p$-form. Its dimension (which is necessarily equal to a power of $p$) is called the \emph{norm degree} of $\phi$, and is denoted by $\mathrm{ndeg}(\phi)$ (cf. \cite[Def. 4.1]{Hoffmann2}). The following lemma characterises the norm form as the smallest anisotropic quasi-Pfister $p$-form which contains $\anispart{\phi}$ as a subform up to multiplication by a scalar (again, this is a simple consequence of Proposition \ref{Propanisclassification}):

\begin{lemma}[{cf. \cite[Lem. 2.10]{Scully2}}] \label{Lempropertyofnormform} Let $\phi$ be a quasilinear $p$-form (resp. a quasilinear $p$-form such that $1 \in D(\phi)$) and $\pi$ an anisotropic quasi-Pfister $p$-form over $F$. Then $\anispart{\phi}$ is similar to a subform of $\pi$ (resp. $\anispart{\phi} \subset \pi$) if and only if $\normform{\phi} \subset \pi$. In particular, $\anispart{\phi}$ is similar to a subform of $\normform{\phi}$ (resp. $\anispart{\phi} \subset \normform{\phi}$).\end{lemma}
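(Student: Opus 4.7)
The plan is to reduce everything to the subform classification of anisotropic quasilinear $p$-forms by their sets of represented elements (Proposition \ref{Propanisclassification}), combined with the fact that $D(\pi)$ is a subfield of $F$ whenever $\pi$ is an anisotropic quasi-Pfister $p$-form (Proposition \ref{PropQP=field}). The entire proof is really just a translation of subform relations on forms into $F^p$-subspace inclusions on their value sets.

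For the forward direction of the main biconditional, I would assume $\anispart{\phi} \simeq a\rho$ for some $a \in F^*$ and some subform $\rho \subset \pi$. Since $\rho$ is anisotropic (being a subform of the anisotropic $\pi$), Proposition \ref{Propanisclassification} yields $D(\rho) \subseteq D(\pi)$; hence $D(\anispart{\phi}) = a D(\rho) \subseteq a D(\pi)$. Now any ratio of two nonzero elements of $D(\phi) = D(\anispart{\phi})$ equals a ratio of two elements of $D(\rho)$, and this lies in $D(\pi)$ because $D(\pi)$ is a field. Thus $N(\phi) \subseteq D(\pi)$, i.e., $D(\normform{\phi}) \subseteq D(\pi)$, so $\normform{\phi} \subset \pi$ by Proposition \ref{Propanisclassification} (both forms are anisotropic by construction).

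For the reverse direction, I would assume $\normform{\phi} \subset \pi$, pick any nonzero $a \in D(\phi)$, and consider $a^{-1}\anispart{\phi}$. For any $d \in D(\phi)$, the element $a^{-1} d$ is either $0$ or a ratio of nonzero elements of $D(\phi)$, hence lies in $N(\phi) \subseteq D(\pi)$. Thus $D(a^{-1}\anispart{\phi}) \subseteq D(\pi)$, and another application of Proposition \ref{Propanisclassification} gives $a^{-1}\anispart{\phi} \subset \pi$, which says exactly that $\anispart{\phi}$ is similar to a subform of $\pi$.

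The parenthetical variant (when $1 \in D(\phi)$) follows by running the same two arguments with $a = 1$: in the forward direction, $D(\phi) \subseteq D(\pi)$ already implies $N(\phi) \subseteq D(\pi)$ since $D(\pi)$ is a field containing $D(\phi)$; and in the reverse direction, choosing $a = 1$ in the argument above gives $\anispart{\phi} \subset \pi$ directly. Finally, the ``In particular'' clause is immediate by applying the main statement to $\pi = \normform{\phi}$, which is a legitimate anisotropic quasi-Pfister $p$-form (Proposition \ref{PropQP=field} again, since $D(\normform{\phi}) = N(\phi)$ is by definition a field). There is no real obstacle here; the only point requiring any care is bookkeeping the scalar factor $a$ in the ``similar to a subform'' statement and remembering that $D(\pi)$ being a field is what allows ratios of elements of $D(\phi)$ to be absorbed.
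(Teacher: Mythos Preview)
Your proof is correct and is precisely the argument the paper has in mind: the text introduces this lemma with the remark that it ``is a simple consequence of Proposition \ref{Propanisclassification}'' and gives no further proof, so your translation of the subform relations into inclusions among the $F^p$-subspaces $D(\phi)$, $N(\phi)$, and the field $D(\pi)$ is exactly what is intended.
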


\begin{example} \label{Exnormform} Let $\phi$ be a quasilinear $p$-form over $F$. The following are equivalent:
\begin{enumerate}\item $\anispart{\phi}$ is similar (resp. isomorphic) to a quasi-Pfister $p$-form.
\item $\normform{\phi} \simeq a\anispart{\phi}$ for some $a \in F^*$ (resp. $\normform{\phi} \simeq \anispart{\phi}$). 
\item $N(\phi) = aD(\phi)$ for some $a \in F^*$ (resp. $N(\phi) = D(\phi)$).\end{enumerate} \end{example}

\subsection{Similarity factors} \label{Similarityfactors} Let $\phi$ be a quasilinear $p$-form over $F$. By a \emph{similarity factor} of $\phi$, we mean an element $a \in F^*$ such that $a\phi \simeq \phi$. The set of all similarity factors of $\phi$ will be denoted by $G(\phi)^*$, and we will write $G(\phi)$ for the set $G(\phi)^* \cup \lbrace 0 \rbrace$. Note that $G(a\phi) = G(\phi) = G(\anispart{\phi})$ for all $a \in F^*$, the second equality being an obvious consequence of Proposition \ref{PropWittdecomposition}. Thus, in view of Proposition \ref{Propanisclassification}, we have:

\begin{lemma}[{cf. \cite[Lem. 6.3]{Hoffmann2}}] \label{Lemsimilarityfactor} Let $\phi$ be a quasilinear $p$-form over $F$ and let $a \in F^*$. Then $a \in G(\phi)^*$ if and only if $aD(\phi) \subseteq D(\phi)$. \end{lemma}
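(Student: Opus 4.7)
The plan is to reduce immediately to the anisotropic case using the equality $G(\phi)=G(\anispart{\phi})$ stated in the passage (this follows from Proposition \ref{PropWittdecomposition}), and then apply the classification of anisotropic forms by their represented sets (Proposition \ref{Propanisclassification}).

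For the forward direction, if $a\phi \simeq \phi$ then the two forms represent the same set of elements of $F$, and since $D(a\phi) = aD(\phi)$ by the definition of the scaled form, we conclude $aD(\phi) = D(\phi)$, giving in particular $aD(\phi) \subseteq D(\phi)$.

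For the reverse direction, assume $aD(\phi) \subseteq D(\phi)$. Since $D(\phi) = D(\anispart{\phi})$ and $D(a\phi) = aD(\phi) = aD(\anispart{\phi}) = D(a\anispart{\phi})$, we may replace $\phi$ by $\anispart{\phi}$ and assume $\phi$ is anisotropic; note also that $a\phi$ is then anisotropic (multiplication by $a \in F^*$ cannot send a nonzero vector to an isotropic one for $a\phi$ unless it was already isotropic for $\phi$). The key point is a dimension count: multiplication by $a$ is an $F^p$-linear automorphism of $F$, hence $aD(\phi)$ has the same $F^p$-dimension as $D(\phi)$. Combined with the hypothesis $aD(\phi) \subseteq D(\phi)$, this forces equality $aD(\phi) = D(\phi)$, i.e., $D(a\phi) = D(\phi)$. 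Both $a\phi$ and $\phi$ being anisotropic, Proposition \ref{Propanisclassification} yields $a\phi \simeq \phi$, so $a \in G(\phi)^*$.

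There is no real obstacle here; the only subtle point worth stating carefully is the reduction to the anisotropic case, together with the observation that the $F^p$-dimension of $D(\phi)$ agrees with the rank of the inclusion $aD(\phi) \hookrightarrow D(\phi)$, so that the inclusion hypothesis upgrades to equality. Everything else is a direct invocation of the already-established Propositions \ref{PropWittdecomposition} and \ref{Propanisclassification}.
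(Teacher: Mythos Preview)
Your proof is correct. The paper does not spell out a proof of this lemma but merely records it as a consequence of Proposition~\ref{Propanisclassification} (together with the reduction $G(\phi)=G(\anispart{\phi})$ already noted in the text), which is precisely the route you take; your dimension-count observation that the inclusion $aD(\phi)\subseteq D(\phi)$ of equidimensional $F^p$-subspaces forces equality is exactly what is needed to invoke that proposition.
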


\begin{example} \label{ExroundPfister} Let $\phi$ be a quasi-Pfister $p$-form over $F$. Then, since $D(\phi)$ is a subfield of $F$, we have $G(\phi) = D(\phi)$. \end{example}

More generally, Lemma \ref{Lemsimilarityfactor} immediately implies the following:

\begin{corollary}[{cf. \cite[Prop. 6.4]{Hoffmann2}}] \label{Corsimilarityfield} Let $\phi$ be a quasilinear $p$-form over $F$. Then $G(\phi)$ is a subfield of $N(\phi)$ containing $F^p$. \end{corollary}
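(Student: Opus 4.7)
The statement asserts two things: $G(\phi)$ is closed under field operations, and it sits between $F^p$ and $N(\phi)$. The criterion furnished by Lemma~\ref{Lemsimilarityfactor}, together with the fact (noted in \S\ref{Basicnotions}) that $D(\phi)$ is a finite-dimensional $F^p$-subspace of $F$, provides essentially everything we need; the argument is a routine verification, and I do not anticipate a genuine obstacle.

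First I would check that $G(\phi)$ is a subfield of $F$. The only content here is to verify closure under addition and inversion. For closure under addition, given $a,b \in G(\phi)^*$ with $a+b \neq 0$, and any $d \in D(\phi)$, one writes $(a+b)d = ad + bd$; both summands lie in $D(\phi)$ by Lemma~\ref{Lemsimilarityfactor}, and $D(\phi)$ is closed under addition because it is an $F^p$-vector space, so $(a+b)D(\phi) \subseteq D(\phi)$, giving $a+b \in G(\phi)^*$ by Lemma~\ref{Lemsimilarityfactor} again. For inversion, if $a \in G(\phi)^*$, then multiplication by $a$ is an $F^p$-linear injection $D(\phi) \hookrightarrow D(\phi)$; since $D(\phi)$ is finite-dimensional over $F^p$, this map is a bijection, so $aD(\phi) = D(\phi)$ and hence $a^{-1}D(\phi) = D(\phi) \subseteq D(\phi)$, whence $a^{-1} \in G(\phi)^*$. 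Closure under multiplication and containment of $1$ are immediate.

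Next, $F^p \subseteq G(\phi)$ is immediate from Lemma~\ref{Lemsimilarityfactor}: for $c \in F^p$ one has $cD(\phi) \subseteq D(\phi)$ since $D(\phi)$ is an $F^p$-subspace of $F$. Finally, to see $G(\phi) \subseteq N(\phi)$, fix any nonzero element $a_1 \in D(\phi)$. For $a \in G(\phi)^*$, Lemma~\ref{Lemsimilarityfactor} gives $aa_1 \in aD(\phi) \subseteq D(\phi)$, and therefore
\begin{equation*}
a = \frac{aa_1}{a_1}
\end{equation*}
is a ratio of nonzero elements of $D(\phi)$, which by definition of the norm field lies in $N(\phi)$. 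This completes the three inclusions.
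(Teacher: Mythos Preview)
Your proof is correct and follows exactly the approach the paper intends: the paper states this corollary as an immediate consequence of Lemma~\ref{Lemsimilarityfactor} without writing out a proof, and your argument simply unpacks that claim by verifying closure under addition and inversion (using finite-dimensionality of $D(\phi)$ over $F^p$), the containment $F^p \subseteq G(\phi)$, and the inclusion $G(\phi) \subseteq N(\phi)$ via ratios in $D(\phi)$.
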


In particular, $G(\phi)$ is a nonzero finite-dimensional $F^p$-linear subspace of $F$. By Lemma \ref{Lemexistenceofforms}, it follows that, up to isomorphism, there exists a unique anisotropic quasilinear $p$-form $\simform{\phi}$ over $F$ such that $D(\simform{\phi}) = G(\phi)$. The form $\simform{\phi}$ is called the \emph{similarity form} of $\phi$ (cf. \cite[Def. 6.5]{Hoffmann2}). By Proposition \ref{PropQP=field}, $\simform{\phi}$ is a quasi-Pfister $p$-form. Taken together, Examples \ref{Exnormform} and \ref{ExroundPfister} yield:

\begin{example} \label{Exsimform} Let $\phi$ be a quasilinear $p$-form over $F$. The following are equivalent:
\begin{enumerate} \item $\anispart{\phi}$ is similar (resp. isomorphic) to a quasi-Pfister $p$-form.
\item $\simform{\phi} \simeq \normform{\phi} \simeq a\anispart{\phi}$ for some $a \in F^*$ (resp. $\simform{\phi} \simeq \normform{\phi} \simeq \anispart{\phi}$).
\item $G(\phi) = N(\phi) = aD(\phi)$ for some $a \in F^*$ (resp. $G(\phi) = N(\phi) = D(\phi)$). \end{enumerate} \end{example}

The basic observation concerning similarity factors is the following:

\begin{proposition} \label{Propdivisibilitybysimform} Let $\phi$ and $\psi$ be quasilinear $p$-forms over $F$. Then $G(\psi) \subseteq G(\phi)$ if and only if $\anispart{\phi}$ is divisible by $\simform{\psi}$. 
\begin{proof} We may assume that $1 \in D(\phi)$. Suppose that $G(\psi) \subseteq G(\phi)$. By Corollary \ref{Corsimilarityfield}, $G(\psi)$ and $G(\phi)$ are subfields of $F$. By Lemma \ref{Lemsimilarityfactor}, $D(\phi)$ is naturally a (finite-dimensional) vector space over $G(\phi)$, and hence over $G(\psi)$. If $a_1,\hdots,a_m$ is a basis of $D(\phi)$ over $G(\psi)$, then (since $D(\simform{\psi}) = G(\psi)$), Lemma \ref{Lemexistenceofforms} implies that $\anispart{\phi} \simeq \simform{\psi} \otimes \form{a_1,\hdots,a_m}$. Conversely, if $\anispart{\phi}$ is divisible by $\simform{\psi}$, then it is clear that $G(\psi) \subseteq G(\phi)$, since $G(\psi) = D(\simform{\psi}) = G(\simform{\psi})$ by Example \ref{Exsimform}. \end{proof} \end{proposition}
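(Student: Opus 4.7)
The plan is to normalize and then treat the two directions separately, with Lemma \ref{Lemexistenceofforms} (anisotropic forms are determined by the set of values they represent) providing the bridge between the multiplicative/field-theoretic data on $F$ and the divisibility structure of the form.

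Since scaling $\phi$ by a nonzero element of $F$ changes neither $G(\phi)$ nor the property ``$\anispart{\phi}$ is divisible by a given form'', I would first reduce to the case $1 \in D(\phi)$; then $G(\phi) \subseteq D(\phi)$ by Lemma \ref{Lemsimilarityfactor}. For the forward direction, assume $G(\psi) \subseteq G(\phi)$. By Corollary \ref{Corsimilarityfield} both are subfields of $F$, and Lemma \ref{Lemsimilarityfactor} gives $G(\phi) \cdot D(\phi) \subseteq D(\phi)$, so $D(\phi)$ carries the structure of a finite-dimensional vector space over $G(\phi)$, hence over $G(\psi)$. Pick a $G(\psi)$-basis $a_1,\hdots,a_m$ of $D(\phi)$ and consider the form $\eta \coloneqq \simform{\psi} \otimes \form{a_1,\hdots,a_m}$. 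A direct computation (using additivity of quasilinear $p$-forms and $D(\simform{\psi}) = G(\psi)$) gives
\begin{equation*}
D(\eta) \;=\; \sum_{i=1}^{m} a_i \cdot G(\psi) \;=\; D(\phi).
\end{equation*}
The dimension count $\mydim{\eta} = m \cdot [G(\psi):F^p] = [D(\phi):G(\psi)]\cdot[G(\psi):F^p] = [D(\phi):F^p] = \mydim{\anispart{\phi}}$ shows $\eta$ is anisotropic, so by the uniqueness part of Lemma \ref{Lemexistenceofforms} (equivalently Proposition \ref{Propanisclassification}) we conclude $\anispart{\phi} \simeq \eta$, which is divisible by $\simform{\psi}$.

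The converse is the easier half. If $\anispart{\phi} \simeq \simform{\psi} \otimes \tau$, then for any $a \in G(\simform{\psi})^*$ we have $a \cdot \anispart{\phi} \simeq (a\simform{\psi}) \otimes \tau \simeq \simform{\psi} \otimes \tau \simeq \anispart{\phi}$, so $G(\simform{\psi}) \subseteq G(\anispart{\phi}) = G(\phi)$. But $\simform{\psi}$ is itself a quasi-Pfister $p$-form, so Example \ref{ExroundPfister} yields $G(\simform{\psi}) = D(\simform{\psi}) = G(\psi)$, giving $G(\psi) \subseteq G(\phi)$ as required.

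The main subtlety to watch out for is the forward direction: one must check that the tensor product $\simform{\psi} \otimes \form{a_1,\hdots,a_m}$ is actually anisotropic before invoking uniqueness, since Lemma \ref{Lemexistenceofforms} characterises anisotropic representatives only. The dimension count via the tower $F^p \subseteq G(\psi) \subseteq D(\phi)$ of $F^p$-vector spaces takes care of this cleanly, and this is really the only place where the hypothesis that $G(\psi)$ is a subfield (rather than merely an $F^p$-subspace) gets used in a serious way.
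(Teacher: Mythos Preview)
Your proof is correct and follows essentially the same approach as the paper's: normalize so that $1 \in D(\phi)$, use that $D(\phi)$ is a $G(\psi)$-vector space, and identify $\anispart{\phi}$ with $\simform{\psi} \otimes \form{a_1,\hdots,a_m}$ via Lemma \ref{Lemexistenceofforms}. Your added dimension count to verify anisotropy of the tensor product makes explicit a step the paper leaves implicit, and your converse argument via Example \ref{ExroundPfister} is the same in content as the paper's appeal to Example \ref{Exsimform}.
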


We thus obtain the following characterisation of the similarity form:

\begin{corollary}[{cf. \cite[Prop. 6.4]{Hoffmann2}}] \label{Corpropertyofsimform} Let $\phi$ be a quasilinear $p$-form and $\pi$ an anisotropic quasi-Pfister $p$-form over $F$. Then $\anispart{\phi}$ is divisible by $\pi$ if and only if $\simform{\phi}$ is divisible by $\pi$. In particular, $\anispart{\phi}$ is divisible by $\simform{\phi}$. \end{corollary}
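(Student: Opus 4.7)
The plan is to deduce the corollary directly from Proposition \ref{Propdivisibilitybysimform}, using the observation that an anisotropic quasi-Pfister $p$-form coincides with its own similarity form up to isomorphism.

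I would start with the ``in particular'' statement, which is the easiest case: applying Proposition \ref{Propdivisibilitybysimform} with $\psi = \phi$, the trivial inclusion $G(\phi) \subseteq G(\phi)$ immediately yields that $\anispart{\phi}$ is divisible by $\simform{\phi}$. This is essentially a tautological consequence of Proposition \ref{Propdivisibilitybysimform} and requires no further work.

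For the main biconditional, the strategy is to translate both divisibility conditions into inclusions of similarity fields. Since $\pi$ is an anisotropic quasi-Pfister $p$-form, Example \ref{Exsimform} (or the combination of Example \ref{ExroundPfister} and the definition of $\simform{\pi}$) gives $\simform{\pi} \simeq \pi$. Applying Proposition \ref{Propdivisibilitybysimform} with $\psi = \pi$ therefore shows that $\anispart{\phi}$ is divisible by $\pi$ if and only if $G(\pi) \subseteq G(\phi)$. Next, since $\simform{\phi}$ is itself anisotropic (it is, by construction, the unique anisotropic form with $D(\simform{\phi}) = G(\phi)$), a second application of Proposition \ref{Propdivisibilitybysimform}, with $\phi$ replaced by $\simform{\phi}$ and still $\psi = \pi$, shows that $\simform{\phi}$ is divisible by $\pi$ if and only if $G(\pi) \subseteq G(\simform{\phi})$. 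The proof is then completed by observing that $G(\simform{\phi}) = G(\phi)$: indeed, $\simform{\phi}$ is a quasi-Pfister $p$-form, so by Example \ref{ExroundPfister} we have $G(\simform{\phi}) = D(\simform{\phi}) = G(\phi)$, the last equality being the definition of $\simform{\phi}$.

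There is no real obstacle here; the entire substance of the corollary is already packaged into Proposition \ref{Propdivisibilitybysimform}, and the only mildly nontrivial input is the self-duality $\simform{\pi} \simeq \pi$ for anisotropic quasi-Pfister forms, which itself is immediate from the fact that the set of values $D(\pi)$ is a field and therefore closed under multiplication by its nonzero elements.
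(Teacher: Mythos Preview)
Your proof is correct and follows exactly the approach implicit in the paper: the corollary is stated without proof immediately after Proposition~\ref{Propdivisibilitybysimform}, and your derivation via the identities $\simform{\pi}\simeq\pi$ and $G(\simform{\phi})=G(\phi)$ is precisely the intended argument.
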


This enables us to reinterpret the divisibility index $\mathfrak{d}_0(\phi)$ (see \S \ref{QPforms}) as follows:

\begin{corollary} \label{Cordivisibilityindexsimilarity} Let $\phi$ be a quasilinear $p$-form over $F$. Then $\mathfrak{d}_0(\phi) = \mathrm{log}_p(\mydim{\simform{\phi}}) = [G(\phi):F^p]$. \end{corollary}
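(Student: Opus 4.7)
The plan is to read off both equalities from Corollary \ref{Corpropertyofsimform} combined with a dimension count on $\simform{\phi}$. First I would record that $\simform{\phi}$ is an anisotropic quasi-Pfister $p$-form: it is anisotropic by construction, and quasi-Pfister by Proposition \ref{PropQP=field} applied to the fact (Corollary \ref{Corsimilarityfield}) that $G(\phi) = D(\simform{\phi})$ is a subfield of $F$. Hence $\mydim{\simform{\phi}} = p^s$ for some $s \geq 0$, and since $\simform{\phi}$ is anisotropic, $\mydim{\simform{\phi}}$ also records the $F^p$-dimension of $D(\simform{\phi}) = G(\phi)$; indeed, a basis of the underlying vector space of any anisotropic quasilinear $p$-form yields an $F^p$-basis of its value set (cf.\ Lemma \ref{Lemexistenceofforms} and Proposition \ref{Propanisclassification}). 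This already delivers $\log_p \mydim{\simform{\phi}} = \log_p [G(\phi):F^p] = s$.

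Next, I would prove the inequality $\mathfrak{d}_0(\phi) \geq s$. This is immediate from the final assertion of Corollary \ref{Corpropertyofsimform}: the form $\anispart{\phi}$ is divisible by $\simform{\phi}$ itself, and $\simform{\phi}$ is an $s$-fold quasi-Pfister $p$-form, so the divisibility index is at least $s$ by its very definition.

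For the reverse inequality, I would set $t = \mathfrak{d}_0(\phi)$ and pick a $t$-fold quasi-Pfister $p$-form $\pi$ with $\anispart{\phi} \simeq \pi \otimes \tau$ for some form $\tau$ over $F$. Since $\anispart{\phi}$ is anisotropic, $\pi$ must be anisotropic as well, so Corollary \ref{Corpropertyofsimform} applies and guarantees that $\pi$ divides $\simform{\phi}$. Writing $\simform{\phi} \simeq \pi \otimes \tau'$ for some $\tau'$ then gives $p^t = \mydim{\pi} \mid \mydim{\simform{\phi}} = p^s$, and hence $t \leq s$.

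Given the preceding machinery, no serious obstacle is anticipated; the proof is essentially a bookkeeping exercise on top of Corollary \ref{Corpropertyofsimform}. The only two facts that need a moment's verification are the dimensional inputs used above: that an anisotropic $n$-fold quasi-Pfister $p$-form has dimension exactly $p^n$ (immediate from the definition), and that for an anisotropic quasilinear $p$-form $\sigma$ one has $\mydim{\sigma} = [D(\sigma):F^p]$ (immediate from Lemma \ref{Lemexistenceofforms}). Both are elementary consequences of the material already developed, so the main work is really contained in Corollary \ref{Corpropertyofsimform}.
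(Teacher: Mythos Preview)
Your argument is correct and follows exactly the route the paper intends: the corollary is stated without proof precisely because both inequalities $\mathfrak{d}_0(\phi)\ge s$ and $\mathfrak{d}_0(\phi)\le s$ drop out of Corollary~\ref{Corpropertyofsimform}, and you have unpacked this cleanly. One small remark: the second equality in the statement as printed, $\log_p(\mydim{\simform{\phi}}) = [G(\phi):F^p]$, is evidently a typo (it should read $\log_p[G(\phi):F^p]$, since $[G(\phi):F^p]=\mydim{\simform{\phi}}$); you have implicitly corrected this in your write-up, which is the right thing to do.
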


We also get the following:

\begin{corollary} \label{CordivbyQP} Let $\phi$ and $\psi$ be quasilinear $p$-forms over $F$. Then $\anispart{\phi}$ is divisible by $\normform{\psi}$ if and only if $N(\psi) \subseteq G(\phi)$. If, additionally, $1 \in D(\psi)$, then the latter condition may be replaced by $D(\psi) \subseteq G(\phi)$.
\begin{proof} For the second statement, we simply recall that $N(\phi)$ is the smallest subfield of $F$ containing all ratios of nonzero elements of $D(\phi)$ and that $G(\phi)$ is a subfield of $F$ (Corollary \ref{Corsimilarityfield}). For the first, we can replace $\psi$ by its norm form to arrive at the case where $N(\psi) = G(\psi)$ and $\normform{\psi} \simeq \simform{\psi}$ (see Example \ref{Exsimform}). The result is therefore a particular case of Proposition \ref{Propdivisibilitybysimform}. \end{proof} \end{corollary}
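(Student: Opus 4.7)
The plan is to leverage Proposition \ref{Propdivisibilitybysimform}, which already characterises divisibility of $\anispart{\phi}$ by the similarity form of another form in terms of a containment of similarity factor fields. Since both sides of the stated biconditional depend on $\psi$ only through the invariants $N(\psi)$ and $\normform{\psi}$, the natural strategy is to replace $\psi$ by $\normform{\psi}$ and thereby reduce to the case of a quasi-Pfister form, for which norm form and similarity form coincide.

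More concretely, I would first treat the second assertion. If $1 \in D(\psi)$, then for any nonzero $a \in D(\psi)$ one has $a = a/1 \in N(\psi)$, so $D(\psi) \subseteq N(\psi)$ and the implication ``$N(\psi) \subseteq G(\phi) \Rightarrow D(\psi) \subseteq G(\phi)$'' is trivial. For the converse, Corollary \ref{Corsimilarityfield} tells us that $G(\phi)$ is a subfield of $F$; hence $G(\phi)$ is closed under the formation of ratios of nonzero elements, and the minimality of $N(\psi)$ as the smallest subfield of $F$ containing all such ratios of elements of $D(\psi)$ forces $N(\psi) \subseteq G(\phi)$.

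For the first assertion, I would note that by Proposition \ref{PropQP=field} the form $\normform{\psi}$ is itself an anisotropic quasi-Pfister $p$-form, so by Example \ref{Exsimform} we have $\simform{\normform{\psi}} \simeq \normform{\normform{\psi}} \simeq \normform{\psi}$ together with $G(\normform{\psi}) = N(\normform{\psi}) = D(\normform{\psi}) = N(\psi)$. In particular, replacing $\psi$ by $\normform{\psi}$ changes neither side of the biconditional I want to prove. After this replacement, the claim reduces to: $\anispart{\phi}$ is divisible by $\simform{\normform{\psi}}$ if and only if $G(\normform{\psi}) \subseteq G(\phi)$, which is exactly the content of Proposition \ref{Propdivisibilitybysimform} applied to the pair $(\phi, \normform{\psi})$.

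There is no real obstacle here beyond bookkeeping; the subsections on the norm form and the similarity form have been arranged precisely so that this reduction is available. The only point that requires a moment's care is verifying that $G(\phi)$ is genuinely a subfield (so that it absorbs ratios), which is Corollary \ref{Corsimilarityfield}, and that $\normform{\psi}$ really is a quasi-Pfister form so that its similarity form collapses to itself, which follows from Proposition \ref{PropQP=field} together with Example \ref{Exsimform}.
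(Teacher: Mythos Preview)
Your proof is correct and follows essentially the same approach as the paper: you replace $\psi$ by $\normform{\psi}$ (so that norm form and similarity form coincide via Example \ref{Exsimform}) and then invoke Proposition \ref{Propdivisibilitybysimform}, and for the second statement you use that $G(\phi)$ is a subfield (Corollary \ref{Corsimilarityfield}) together with the minimality of $N(\psi)$. The paper's own proof is more terse but identical in substance.
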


\subsection{A criterion for a quasilinear $p$-form to be quasi-Pfister} \label{Criterion} Let $\phi$ be a quasilinear $p$-form over $F$ such that $1 \in D(\phi)$, let $L$ be a field extension of $F$ and let $\alpha \in D(\phi_L)\setminus \lbrace 0 \rbrace$. Consider the set $S_{\alpha} = \lbrace a \in F\;|\;\alpha a \in D(\phi_L) \rbrace$. Since $D(\phi_L)$ is an $L^p$-linear subspace of $L$, we have the following observation:
\begin{equation} \label{eq2.1} \sum \lambda_i^pa_i\in S_{\alpha} \text{ for all } \lambda_i \in F \text{ and all } a_i \in S_{\alpha}. \end{equation}

\begin{lemma} \label{Lemmultipliers} In the above situation, let $P \in F^p[T]$ be a polynomial of degree $<p$ in a single variable $T$ such that $P(b) \in S_{\alpha}$ for all $b \in D(\phi)$. Then $b^n \in S_{\alpha}$ for all $b \in D(\phi)$ and all $n \leq \mathrm{deg}(P)$.
\begin{proof} We proceed by induction on $d = \mathrm{deg}(P)$. Since $\alpha \in D(\phi_L)$, the case where $d=0$ is trivial. Suppose now that $d>0$, and let $\lambda \in F$ be such that $P(T + \lambda^p) = P(T) + Q(T)$ for some $Q \in F^p[T]$ of degree $d-1$. Since $F^p \subseteq D(\phi)$ by hypothesis, our assumption and \eqref{eq2.1} imply that $Q(b) = P(b + \lambda^p) - P(b) \in S_{\alpha}$ for all $b \in D(\phi)$. By the induction hypothesis, it follows that $b^n \in S_{\alpha}$ for all $b \in D(\phi)$ and all $n<d$. Finally, since $P(b) = \sum_{i=0}^d\lambda_i^pb^i$ for some $\lambda_i \in F$ with $\lambda_d \neq 0$, \eqref{eq2.1} implies that, for any $b \in D(\phi)$, we also have $b^d \in S_{\alpha}$. This proves the lemma. \end{proof} \end{lemma}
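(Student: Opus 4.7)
My plan is to induct on $d = \deg(P)$. The base case $d = 0$ is immediate: $P$ is then a constant in $F^p$, and the only statement to check is $1 \in S_\alpha$, which is the hypothesis $\alpha \in D(\phi_L)$.

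For the inductive step, the key device I would use is a Frobenius-compatible finite difference operator. Given $P \in F^p[T]$ of degree $1 \leq d < p$, pick any nonzero $\lambda \in F$ and set
\begin{equation*}
Q(T) \coloneqq P(T + \lambda^p) - P(T).
\end{equation*}
Because $\lambda^p \in F^p$, the substitution preserves $F^p[T]$; expanding with the binomial theorem, every coefficient of $Q$ lies in $F^p$ (one uses here that the binomial coefficients $\binom{i}{j}$ for $i \leq d$ lie in the prime field $\mathbb{F}_p \subseteq F^p$, which is automatic as long as $d < p$). The leading term of $Q$ is $c_d \cdot d \cdot \lambda^p \cdot T^{d-1}$, where $c_d \neq 0$ is the leading coefficient of $P$, and this is nonzero precisely because $0 < d < p$ forces $d$ to be a unit in $F$. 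Hence $\deg(Q) = d - 1$. This degree-drop step is the only genuine obstacle in the argument, and it hinges entirely on $\deg(P) < p$ — without this the discrete derivative could vanish or drop by more than one.

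Next I would verify that $Q$ satisfies the same hypothesis as $P$, so that the inductive hypothesis applies to it. Since $1 \in D(\phi)$ and $D(\phi)$ is an $F^p$-subspace of $F$, we have $\lambda^p \in D(\phi)$, hence $b + \lambda^p \in D(\phi)$ for every $b \in D(\phi)$. The assumption on $P$ then gives $P(b + \lambda^p), P(b) \in S_\alpha$, and property \eqref{eq2.1} (applied with scalars $\pm 1 \in F$) produces $Q(b) = P(b + \lambda^p) - P(b) \in S_\alpha$. Induction applied to $Q$ now yields $b^n \in S_\alpha$ for every $b \in D(\phi)$ and every $n \leq d - 1$.

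Finally, I would extract the top power $b^d$ directly from the hypothesis on $P$. Writing $P(T) = \sum_{i=0}^d \mu_i^p T^i$ with $\mu_d \neq 0$, the previous paragraph together with \eqref{eq2.1} puts $\sum_{i=0}^{d-1} \mu_i^p b^i$ in $S_\alpha$; subtracting this from $P(b) \in S_\alpha$ (again via \eqref{eq2.1}) gives $\mu_d^p b^d \in S_\alpha$, and a final scaling by $(\mu_d^{-1})^p \in F^p$ through \eqref{eq2.1} delivers $b^d \in S_\alpha$, completing the induction.
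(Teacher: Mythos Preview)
Your proposal is correct and follows essentially the same approach as the paper: induction on $\deg(P)$, using the finite difference $Q(T) = P(T+\lambda^p) - P(T)$ to drop the degree, and then extracting the top power from $P$ itself via \eqref{eq2.1}. You supply more detail than the paper does (in particular, you justify explicitly why one can choose $\lambda$ so that $\deg(Q) = d-1$, using $0 < d < p$), but the argument is the same.
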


Suppose now that there exists a polynomial $P \in F^p[T]$ as in the statement of Lemma \ref{Lemmultipliers} with $\mathrm{deg}(P) \geq 2$ (in particular, we necessarily have $p>2$). A first application of the lemma shows that we have $D(\phi) \subseteq S_{\alpha}$. Since $D(\phi_L)$ is spanned by $D(\phi)$ as an $L^p$-vector space, this implies that $\alpha D(\phi_L) \subseteq D(\phi_L)$, and hence (for dimension reasons) that $\alpha D(\phi_L) = D(\phi_L)$. Another application of Lemma \ref{Lemmultipliers} then shows that $b^n \in D(\phi_L)$ for all $b \in D(\phi)$ and all $n \leq \mathrm{deg}(P)$. In particular, since $\mathrm{deg}(P) \geq 2$, we have $2bc = (b + c)^2 - b^2 - c^2 \in D(\phi_L)$ for all $b,c \in D(\phi)$. Since $p>2$, and since $D(\phi_L)$ is spanned by $D(\phi)$ as an $L^p$-vector space, this implies that $D(\phi_L)$ is closed under multiplication, i.e., that $D(\phi_L) = N(\phi_L)$ (see Remark \ref{Remexplicitnormfield}). By Example \ref{Exnormform}, this means that $\anispart{(\phi_L)} \simeq \normform{(\phi_L)}$. We have thus proved:

\begin{lemma} \label{LemQPcriterion} Assume that $p>2$. Let $\phi$ be a quasilinear $p$-form over $F$ such that $1 \in D(\phi)$ and let $L$ be a field extension of $F$. Suppose that there exists a polynomial $P \in F^p[T]$ in a single variable $T$, and an element $\alpha \in D(\phi_L)\setminus \lbrace 0 \rbrace$ such that $2 \leq \mathrm{deg}(P) < p$ and $\alpha  P(b) \in D(\phi_L)$ for all $b \in D(\phi)$. Then $\anispart{(\phi_L)}$ is a quasi-Pfister $p$-form. \end{lemma}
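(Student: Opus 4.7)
My plan is to reduce the statement to showing that $D(\phi_L)$ is a subring of $L$, since $D(\phi_L)$ is already a finite-dimensional $L^p$-subspace containing $1$, and will then automatically be a subfield. Once that is achieved, Proposition \ref{PropQP=field}, applied to $\phi_L$, immediately gives that $\anispart{(\phi_L)}$ is a quasi-Pfister $p$-form. The natural vehicle is the set $S_\alpha = \{a \in F : \alpha a \in D(\phi_L)\}$ together with Lemma \ref{Lemmultipliers}, which converts the polynomial hypothesis into the monomial statement that $b^n \in S_\alpha$ for every $b \in D(\phi)$ and every $n \leq \deg(P)$.

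First I would extract the case $n = 1$: this gives $\alpha D(\phi) \subseteq D(\phi_L)$. Since $D(\phi_L)$ is the $L^p$-linear span of $D(\phi)$ inside $L$, this upgrades to $\alpha D(\phi_L) \subseteq D(\phi_L)$, and equality then follows from an $L^p$-dimension count. In particular, $\alpha^{-1} D(\phi_L) = D(\phi_L)$; so multiplication by $\alpha$ is an automorphism of $D(\phi_L)$.

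Next I would exploit the case $n = 2$, which is available precisely because $\deg(P) \geq 2$: we get $\alpha b^2 \in D(\phi_L)$ for all $b \in D(\phi)$, and hence, cancelling the $\alpha$ using the previous step, $b^2 \in D(\phi_L)$. Because $D(\phi)$ is $F^p$-linear, the element $b+c$ also lies in $D(\phi)$ for any $b,c \in D(\phi)$, so $(b+c)^2 \in D(\phi_L)$ as well. The polarisation identity
\begin{equation*}
bc = \tfrac{1}{2}\bigl((b+c)^2 - b^2 - c^2\bigr),
\end{equation*}
which requires $p > 2$ so that $2$ is invertible in $F$, then yields $bc \in D(\phi_L)$ for all $b,c \in D(\phi)$. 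Writing arbitrary elements of $D(\phi_L)$ as $L^p$-linear combinations of elements of $D(\phi)$ and distributing products gives closure of $D(\phi_L)$ under multiplication, completing the reduction.

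I do not anticipate a serious obstacle here, since Lemma \ref{Lemmultipliers} already performs the genuinely combinatorial step of bootstrapping the polynomial hypothesis on $P$ into monomial information, and the remaining argument is the classical Pfister-style polarisation trick adapted to the quasilinear setting. The hypotheses $p > 2$ and $\deg(P) \geq 2$ enter only at the polarisation step, where they are exactly what is needed to make it available.
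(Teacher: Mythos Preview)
Your proposal is correct and follows essentially the same argument as the paper: apply Lemma~\ref{Lemmultipliers} to obtain $b^n \in S_\alpha$ for $n \leq \deg(P)$, use the $n=1$ case and an $L^p$-dimension count to get $\alpha D(\phi_L) = D(\phi_L)$, then use the $n=2$ case and the polarisation identity (valid since $p>2$) to show $D(\phi_L)$ is multiplicatively closed, and conclude via Proposition~\ref{PropQP=field}. The only cosmetic difference is that the paper phrases the conclusion as $D(\phi_L) = N(\phi_L)$ and invokes Example~\ref{Exnormform}, which amounts to the same thing.
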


\subsection{The Cassels-Pfister representation theorem} \label{CasselsPfister} Let $\phi$ be a quadratic form over a field $k$ and let $f \in k[T]$ be a polynomial in a single variable $T$ which is represented by the form $\phi_{k(T)}$. One of the foundational results of the classical algebraic theory of quadratic forms is the Cassels-Pfister representation theorem, which asserts that, in this case, $\phi$ already represents $f$ over the polynomial ring $k[T]$ (see \cite[Thm. 17.3]{EKM}). In the present setting, the original argument of Cassels may be readily adapted to prove the analogous statement for quasilinear $p$-forms. However, as pointed out by Hoffmann (\cite{Hoffmann2}), the additivity property of these forms enables one to prove a stronger multi-variable statement taking the following form: 

\begin{theorem}[{cf. \cite[Cor. 3.4]{Hoffmann2}}] \label{ThmCasselsPfister} Let $\phi$ be a quasilinear $p$-form over $F$, let $T = (T_1,\hdots,T_m)$ be a tuple of algebraically independent variables over $F$ and let $f \in F[T]$. Then $f \in D(\phi_{F(T)})$ if and only if $f \in D(\phi_{F[T]})$, if and only if $f \in D(\phi)[T_1^p,\hdots,T_m^p]$. \end{theorem}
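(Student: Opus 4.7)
The plan is to first verify the equivalence of (ii) and (iii) directly, using only the additivity and $p$-homogeneity of $\phi$, and then reduce the theorem to the nontrivial implication (i) $\Rightarrow$ (ii), which I will prove by induction on $m$. For the easy equivalence: any element of $V_\phi \otimes_F F[T]$ can be written uniquely as $v = \sum_\alpha v_\alpha T^\alpha$ with $v_\alpha \in V_\phi$ (multi-index $\alpha$), and additivity together with homogeneity of $\phi$ immediately gives $\phi(v) = \sum_\alpha \phi(v_\alpha) T^{p\alpha} \in D(\phi)[T_1^p,\ldots,T_m^p]$. Conversely, given $f = \sum_\alpha d_\alpha T^{p\alpha}$ with $d_\alpha \in D(\phi)$, I would pick preimages $v_\alpha \in V_\phi$ with $\phi(v_\alpha) = d_\alpha$ and note that $f = \phi(\sum_\alpha v_\alpha T^\alpha)$. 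Since (ii) $\Rightarrow$ (i) is trivial, the only content is the implication (i) $\Rightarrow$ (ii).

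For the base case $m = 1$, I would adapt the classical Cassels argument to this setting, using additivity as a simplifying ingredient. Writing $\phi \simeq \form{a_1,\ldots,a_n}$, suppose $f = \sum_i a_i (g_i/q)^p$ with $g_i, q \in F[T_1]$ and $q$ of minimal possible degree, and assume for contradiction that $\deg q \geq 1$. Euclidean division produces $g_i = q s_i + r_i$ with $\deg r_i < \deg q$, and additivity of $\phi$ in characteristic $p$ yields $(g_i/q)^p = s_i^p + (r_i/q)^p$; summing, $f - \sum_i a_i s_i^p = (\sum_i a_i r_i^p)/q^p$. The left-hand side lies in $F[T_1]$, while the numerator on the right has degree strictly less than $p\deg q = \deg(q^p)$, so the identity forces both sides to vanish. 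Then $f = \sum_i a_i s_i^p \in D(\phi_{F[T_1]})$ — a representation with denominator $1$ — contradicting the minimality assumption. Hence $q \in F^*$ and $f \in D(\phi_{F[T_1]})$.

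For the inductive step with $m \geq 2$, let $K = F(T_1,\ldots,T_{m-1})$ and view $\phi_{F(T)}$ as $(\phi_K)_{K(T_m)}$. Applying the base case to the form $\phi_K$ over the field $K$ in the single variable $T_m$, I obtain $f = \sum_i a_i h_i^p$ with $h_i \in K[T_m]$. Expanding each $h_i = \sum_k h_{i,k} T_m^k$ with $h_{i,k} \in K$ and applying the additivity-and-homogeneity identity of the first paragraph in the single variable $T_m$ over $K$ gives $f = \sum_k c_k T_m^{pk}$, where $c_k := \sum_i a_i h_{i,k}^p \in D(\phi_K)$. Uniqueness of the $K[T_m]$-expansion of $f \in F[T_1,\ldots,T_m]$ forces each $c_k$ to lie in $F[T_1,\ldots,T_{m-1}]$ (and all $T_m^l$-coefficients with $l$ not divisible by $p$ to vanish). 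The inductive hypothesis then places each $c_k$ in $D(\phi)[T_1^p,\ldots,T_{m-1}^p]$, and reassembling yields $f \in D(\phi)[T_1^p,\ldots,T_m^p]$. The delicate step is the base case, where the strict degree bound on $\sum_i a_i r_i^p$ is precisely what collapses the denominator; the inductive step is essentially formal, because raising a polynomial in a single variable to the $p$-th power over a field of characteristic $p$ automatically lands in the subring generated by the $p$-th power of that variable.
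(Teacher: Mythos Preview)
The paper does not supply its own proof of this theorem; it is stated with a citation to Hoffmann \cite[Cor.~3.4]{Hoffmann2}, and the surrounding text only remarks that the additivity of quasilinear $p$-forms allows one to strengthen the classical one-variable Cassels argument to a multi-variable statement. Your proof is correct and does exactly this: the Freshman's dream identity $(s_i + r_i/q)^p = s_i^p + (r_i/q)^p$ in characteristic $p$ is precisely the simplification that collapses the denominator in one step (rather than via the more delicate quadratic-form Cassels reduction), and the same additivity is what makes the induction on $m$ go through cleanly by peeling off one variable at a time. This is the approach of Hoffmann's paper, so there is nothing to compare beyond noting that you have reconstructed the intended argument.
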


Now, in the situation of Theorem \ref{ThmCasselsPfister}, the $F(T)^p$-vector space $D(\phi_{F(T)})$ is (evidently) spanned by elements of $D(\phi)$. Thus, in view of Lemma \ref{Lemsimilarityfactor}, we immediately obtain the following result concerning rational similarity factors:

\begin{corollary}[{cf.\cite[Prop. 6.7]{Hoffmann2}}] \label{CorCPsimilarity} Let $\phi$ be a quasilinear $p$-form over $F$, let $T = (T_1,\hdots,T_m)$ be a tuple of algebraically independent variables over $F$ and let $f \in F[T]$. Then $f \in G(\phi_{F(T)})$ if and only if $f \in G(\phi)[T_1^p,\hdots,T_m^p]$. \end{corollary}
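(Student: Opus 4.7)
The plan is to derive each direction from Theorem \ref{ThmCasselsPfister} together with Lemma \ref{Lemsimilarityfactor}. The case $f=0$ is trivial (zero lies in both sets), so I will assume $f \neq 0$.

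For the ``only if'' direction, suppose $f \in G(\phi_{F(T)})$, so that $f \cdot D(\phi_{F(T)}) \subseteq D(\phi_{F(T)})$. First I would choose a nonzero element $a \in D(\phi) \subseteq D(\phi_{F(T)})$, yielding $fa \in D(\phi_{F(T)})$. Theorem \ref{ThmCasselsPfister} then forces $fa \in D(\phi)[T_1^p,\ldots,T_m^p]$, and dividing by the nonzero scalar $a$ already gives $f \in F[T_1^p,\ldots,T_m^p]$. Expanding $f = \sum_\alpha b_\alpha \mathbf{T}^{p\alpha}$ with $b_\alpha \in F$ and $\mathbf{T}^{p\alpha}$ a monomial in $T_1^p,\ldots,T_m^p$, the same Cassels--Pfister argument applied to an arbitrary $a \in D(\phi)$ shows $\sum_\alpha (b_\alpha a)\mathbf{T}^{p\alpha} \in D(\phi)[T_1^p,\ldots,T_m^p]$, so $b_\alpha a \in D(\phi)$ for every $\alpha$. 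Hence each coefficient $b_\alpha$ satisfies $b_\alpha D(\phi) \subseteq D(\phi)$, and Lemma \ref{Lemsimilarityfactor} identifies $b_\alpha \in G(\phi)$, giving $f \in G(\phi)[T_1^p,\ldots,T_m^p]$.

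For the converse, write $f = \sum_\alpha b_\alpha \mathbf{T}^{p\alpha}$ with $b_\alpha \in G(\phi)$. The inclusion $G(\phi) \subseteq G(\phi_{F(T)})$ is immediate from the definition of similarity factor, since an $F$-isomorphism $b_\alpha \phi \simeq \phi$ base-changes to an $F(T)$-isomorphism $b_\alpha \phi_{F(T)} \simeq \phi_{F(T)}$. Each monomial $\mathbf{T}^{p\alpha}$ lies in $F(T)^p$, which is contained in $G(\phi_{F(T)})$ because $D(\phi_{F(T)})$ is an $F(T)^p$-linear subspace of $F(T)$. Since $G(\phi_{F(T)})$ is a field by Corollary \ref{Corsimilarityfield}, it is closed under addition and multiplication, hence contains the sum $f$, as desired.

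I do not anticipate any real obstacle: the statement is essentially a bookkeeping consequence of the polynomial-level conclusion of Cassels--Pfister, with the only minor care-point being the observation that a single test element $a \in D(\phi)$ already pins $f$ down to $F[T_1^p,\ldots,T_m^p]$, after which varying $a$ pins each coefficient down to $G(\phi)$.
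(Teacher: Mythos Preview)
Your proof is correct and follows essentially the same approach as the paper: the paper's argument is just the one-line observation that $D(\phi_{F(T)})$ is spanned over $F(T)^p$ by $D(\phi)$, so Lemma~\ref{Lemsimilarityfactor} combined with Theorem~\ref{ThmCasselsPfister} gives the result; you have simply written out the coefficient-comparison details that make this precise.
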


\subsection{Isotropy of quasilinear $p$-forms under scalar extension} \label{Isotropy} We now collect some basic facts regarding the isotropy of quasilinear $p$-forms under scalar extension. 

Let $K$ and $L$ be extensions of a field $k$. Recall that a \emph{$k$-place} $K \dashrightarrow L$ is a pair $(R,f)$ consisting of a valuation subring $k \subseteq R \subseteq K$ and a local $k$-algebra homomorphism $f \colon R \rightarrow L$. For example, given an inclusion $i \colon K \hookrightarrow L$, the pair $(K,i)$ defines a $k$-place $K \dashrightarrow L$. If there exist $k$-places $K \dashrightarrow L$ and $L \dashrightarrow K$, then we say that $K$ and $L$ are \emph{equivalent over $k$}, and write $K \sim_k L$. For instance, this easily seen to be the case whenever $L$ (resp. $K$) is a purely transcendental extension of $K$ (resp. $L$) (see \cite[\S 103]{EKM} for further details). We have here the following basic lemma, which is a consequence of the completeness of $X_{\phi}$ (see \cite[(7.3.8)]{EGAII}):

\begin{lemma}[{cf. \cite[Lem. 3.4]{Scully2}}] \label{LemPlaceisotropy} Let $\phi$ be a quasilinear $p$-form over $F$ and let $K$ and $L$ be field extensions of $F$ such that there exists an $F$-place $K \dashrightarrow L$. Then $\witti{0}{\phi_L} \geq \witti{0}{\phi_K}$. In particular, if $K \sim_F L$, then $\witti{0}{\phi_K} = \witti{0}{\phi_L}$. \end{lemma}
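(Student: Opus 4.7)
The plan is to reformulate the statement in geometric terms and then appeal to the valuative criterion of properness. Let $d = \witti{0}{\phi_K}$; I want to show that $\witti{0}{\phi_L} \geq d$. The key observation is that having isotropy index at least $d$ is the same as having a $K$-point on a certain \emph{proper} $F$-scheme, namely the scheme $Y$ parametrising $d$-dimensional totally isotropic subspaces of $V_\phi$ with respect to $\phi$. Concretely, $Y$ is cut out in the Grassmannian $\mathrm{Gr}(d,V_\phi)$ over $F$ by the vanishing of $\phi$ on the universal rank-$d$ subbundle, so it is a closed subscheme of $\mathrm{Gr}(d,V_\phi)$ and hence proper over $F$. (Equivalently, one can work with the Fano scheme of $(d-1)$-planes in $X_\phi$, which is the projectivisation of $Y$ and is likewise proper — this is the manifestation of the completeness of $X_\phi$ alluded to in the hint.) By the very definition of the isotropy index, $Y(K) \neq \emptyset$.

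Next, I would unpack the given $F$-place as a pair $(R,f)$ with $F\subseteq R \subseteq K$ a valuation subring and $f\colon R \to L$ a local $F$-algebra homomorphism, so that the residue field $\kappa = R/\mathfrak{m}_R$ embeds into $L$ via $f$. The valuative criterion of properness (see \cite[(7.3.8)]{EGAII}), applied to the proper $F$-morphism $Y \to \mathrm{Spec}(F)$, then guarantees that any $K$-point of $Y$ extends uniquely to an $R$-point $\mathrm{Spec}(R) \to Y$. Composing with the closed immersion $\mathrm{Spec}(\kappa) \hookrightarrow \mathrm{Spec}(R)$ and the map $\mathrm{Spec}(L) \to \mathrm{Spec}(\kappa)$ induced by $f$, I get an $L$-point of $Y$. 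Translating back through the modular description of $Y$, this produces a $d$-dimensional totally isotropic subspace of $V_\phi \otimes_F L$, proving $\witti{0}{\phi_L} \geq d$ as required.

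The final sentence is then immediate: if $K \sim_F L$, there are $F$-places in both directions, so applying the first assertion twice yields both inequalities $\witti{0}{\phi_K} \leq \witti{0}{\phi_L}$ and $\witti{0}{\phi_L} \leq \witti{0}{\phi_K}$. I do not foresee any real obstacle here; the only thing to be careful about is writing down the scheme $Y$ cleanly enough that its properness over $F$ is manifest, and checking that a $K$-point of $Y$ really does encode what the isotropy index is supposed to measure — both of which are formal once one has the Grassmannian/Fano-scheme description in hand.
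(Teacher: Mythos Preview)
Your proposal is correct and matches the paper's approach: the paper does not give a detailed argument but simply remarks that the lemma is a consequence of the completeness of $X_\phi$ together with the valuative criterion \cite[(7.3.8)]{EGAII}, and your use of the Fano scheme of $(d-1)$-planes in $X_\phi$ (equivalently, the closed subscheme of $\mathrm{Gr}(d,V_\phi)$ you describe) is precisely the standard way to make that hint rigorous for arbitrary $d$.
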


Note, in particular, that passage to rational extensions of the base field does not affect the isotropy index of a quasilinear $p$-form. By MacLane's theorem (\cite[Prop. VIII.4]{Lang}), the same is, in fact, true of arbitrary \emph{separable} extensions\footnote{Recall that an extension of fields $k \subseteq L$ is called \emph{separable} if, for any algebraic closure $\overline{k}$ of $k$, the ring $L \otimes_k \overline{k}$ has no nontrivial nilpotent elements.}:

\begin{lemma}[{cf. \cite[Prop. 5.3]{Hoffmann2}}] \label{Lemseparableextensions} Let $\phi$ be an anisotropic quasilinear $p$-form over $F$ and let $L$ be a field extension of $F$. If $L$ is separable over $F$, then $\phi_L$ is anisotropic and $\mathrm{ndeg}(\phi_L) = \mathrm{ndeg}(\phi)$. \end{lemma}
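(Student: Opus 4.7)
The plan is to deduce both assertions from MacLane's characterization of separable extensions (already cited elsewhere in the paper as \cite[Prop. VIII.4]{Lang}), in the following explicit form: if $L/F$ is separable, then any finite subset of $F$ which is linearly independent over $F^p$ remains linearly independent over $L^p$ in $L$. Equivalently, $F$ and $L^p$ are linearly disjoint over $F^p$ inside $L$. Every other ingredient needed is routine bookkeeping using $D(-)$, $N(-)$ and $\mydim{-}$ as developed above.

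For anisotropy of $\phi_L$, I would write $\phi \simeq \form{a_1,\hdots,a_n}$ with $n = \mydim{\phi}$. Since $\phi$ is anisotropic, Proposition \ref{PropWittdecomposition} gives $n = \mydim{\anispart{\phi}} = [D(\phi):F^p]$, so $a_1,\hdots,a_n$ are $F^p$-linearly independent in $F$. Applying MacLane, they remain $L^p$-linearly independent, and since $D(\phi_L) = \sum_{i=1}^n L^p a_i$, we get $[D(\phi_L):L^p] = n = \mydim{\phi_L}$; again by Proposition \ref{PropWittdecomposition}, this forces $\phi_L$ to be anisotropic.

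For the equality $\mathrm{ndeg}(\phi_L) = \mathrm{ndeg}(\phi)$, I would first replace $\phi$ by a scalar multiple so that $1 \in D(\phi)$, which is harmless since neither side changes under scaling. By Remark \ref{Remexplicitnormfield} we then have $N(\phi) = F^p(a_2,\hdots,a_n)$ and $N(\phi_L) = L^p(a_2,\hdots,a_n)$. Pick an $F^p$-basis $b_1,\hdots,b_m$ of $N(\phi)$, so $m = \mathrm{ndeg}(\phi)$. These $b_i$ span $N(\phi_L) = L^p \cdot N(\phi)$ as an $L^p$-vector space, giving $\mathrm{ndeg}(\phi_L) \le m$; and since the $b_i$ lie in $F$ and are $F^p$-linearly independent, MacLane yields $L^p$-linear independence in $L$, giving $\mathrm{ndeg}(\phi_L) \ge m$. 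The only real obstacle in the whole argument is properly invoking MacLane's theorem in the $p$-independence form stated above; once that is in hand, both claims reduce to two-line verifications using the basic dictionary between anisotropic quasilinear $p$-forms and $F^p$-subspaces of $F$ provided by Proposition \ref{Propanisclassification} and Lemma \ref{Lemexistenceofforms}.
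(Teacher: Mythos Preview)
Your proposal is correct and follows exactly the route the paper indicates: the lemma is stated without proof, but the sentence immediately preceding it explicitly attributes the result to MacLane's theorem, and your argument is precisely the standard unwinding of that theorem in terms of $p$-independence of the coefficients $a_i$ (for anisotropy) and of an $F^p$-basis of $N(\phi)$ (for the norm degree). There is nothing to add.
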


Thus, in order to study the isotropy behaviour of quasilinear $p$-forms under scalar extension, we are effectively reduced to considering the case of purely inseparable algebraic extensions. In degree $p$, we have the following basic observations, all of which can be easily verified using the results which have been discussed thus far (recall here that, given $a_1,\hdots,a_n \in F$, $F_{a_1,\hdots,a_n}$ denotes the field $F(\sqrt[p]{a_1},\hdots,\sqrt[p]{a_n})$):

\begin{lemma}[{see \cite[\S 5]{Hoffmann2},\cite[Lem. 3.8]{Scully2}}] \label{LemIsotropypurelyinseparable} Let $\phi$ be a quasilinear $p$-form over $F$ and let $a \in F \setminus F^p$. Then:
\begin{enumerate} \item $D(\phi_{F_a}) = D(\pfister{a} \otimes \phi) = \sum_{i=0}^{p-1}a^iD(\phi)$.
\item $\witti{0}{\phi_{F_a}} = \frac{1}{p}\witti{0}{\pfister{a} \otimes \phi}$.
\item $\mathrm{ndeg}(\phi_{F_a}) = \begin{cases} \frac{1}{p}\mathrm{ndeg}(\phi) & \text{if } a \in N(\phi), \\ \mathrm{ndeg}(\phi) & \text{if } a \notin N(\phi). \end{cases}$.
\item If $\phi$ is anisotropic and $a \notin N(\phi)$, then $\phi_{F_a}$ is anisotropic.
\item $\mydim{\anispart{(\phi_{F_a})}} \geq \frac{1}{p}\mydim{\anispart{\phi}}$.
\item Equality holds in $(5)$ if and only if $\anispart{\phi}$ is divisible by $\pfister{a}$, if and only if $a \in G(\phi)$. \end{enumerate} \end{lemma}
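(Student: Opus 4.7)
The backbone of all six statements is the identity $F_a^p=F^p(a)$, a degree-$p$ extension of $F^p$ whose basis $1,a,\ldots,a^{p-1}$ enters each part in some guise. Combined with Proposition~\ref{PropWittdecomposition}, this identity essentially handles everything. Part~(1) is a direct unravelling: $D(\pfister{a}\otimes\phi)=\sum_{i=0}^{p-1}a^i D(\phi)$ by the additivity of $D(\phi)$, while $D(\phi_{F_a})=F_a^p\cdot D(\phi)=F^p(a)\cdot D(\phi)$ coincides with this sum. Part~(2) then follows at once: the common $F^p$-space in~(1) has $F^p$-dimension $\mydim{\anispart{(\pfister{a}\otimes\phi)}}$ by Proposition~\ref{PropWittdecomposition}, but also equals $p\cdot\dim_{F_a^p}\!D(\phi_{F_a})=p\cdot\mydim{\anispart{(\phi_{F_a})}}$ by the same proposition together with $[F_a^p:F^p]=p$. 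Subtracting each from the ambient dimensions yields the stated identity of isotropy indices.

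For part~(3), Remark~\ref{Remexplicitnormfield} gives $N(\phi_{F_a})=N(\phi)(a)$. The pivotal observation is a degree dichotomy: since $a^p\in F^p\subseteq N(\phi)$, the element $a$ satisfies $T^p-a^p\in N(\phi)[T]$, and the only divisors of $(T-a)^p$ with coefficients in the reduced ring $N(\phi)$ are $T-a$ and $T^p-a^p$; hence $[N(\phi)(a):N(\phi)]\in\{1,p\}$, equal to $1$ iff $a\in N(\phi)$. A short tower computation through $F^p\subseteq F^p(a)\subseteq N(\phi)(a)$, using $[F^p(a):F^p]=p$, then produces both branches of the formula for $\mathrm{ndeg}(\phi_{F_a})$.

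Part~(4) is in spirit the main obstacle, but reduces to the same dichotomy. I argue contrapositively: if $\phi_{F_a}$ is isotropic, then by part~(2) the generators $a^i e_j$ of $\sum_i a^i D(\phi)$ (running over an $F^p$-basis $\{e_j\}$ of the $\mydim{\phi}$-dimensional space $D(\phi)$) are $F^p$-linearly dependent. Collecting by powers of $a$ rewrites this as a non-trivial relation $\sum_{i=0}^{p-1} a^i b_i = 0$ with $b_i \in D(\phi)$; since $b_0=0$ alone would contradict $b_0\neq 0$, the maximal $k$ with $b_k\neq 0$ satisfies $k\geq 1$. Dividing by $b_k$ exhibits $a^k$ as an $N(\phi)$-linear combination of $1,a,\ldots,a^{k-1}$, so $a$ is algebraic over $N(\phi)$ of degree $<p$, and the dichotomy from part~(3) forces $a\in N(\phi)$.

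Finally, part~(5) follows from part~(2) and the inclusion $D(\phi)\subseteq\sum_i a^i D(\phi)$, with equality precisely when $aD(\phi)\subseteq D(\phi)$, i.e., when $a\in G(\phi)$, by Lemma~\ref{Lemsimilarityfactor}. Since $a\notin F^p$ makes $\pfister{a}$ anisotropic, Example~\ref{ExroundPfister} gives $\simform{\pfister{a}}\simeq\pfister{a}$; Proposition~\ref{Propdivisibilitybysimform} then identifies $a\in G(\phi)$ (equivalently $G(\pfister{a})\subseteq G(\phi)$) with divisibility of $\anispart{\phi}$ by $\pfister{a}$, closing the chain in~(6). The recurring bookkeeping concern throughout is keeping the $F^p$-, $F_a^p$- and $N(\phi)$-module structures on the single set $\sum_i a^i D(\phi)$ straight, which is why I would anchor the whole argument on the identity $F_a^p=F^p(a)$ from the outset.
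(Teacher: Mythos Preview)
Your proof is correct and follows precisely the approach the paper intends: it states that all six parts ``can be easily verified using the results which have been discussed thus far'' and gives no further argument, and you carry this out cleanly via the identity $F_a^p=F^p(a)$ together with Proposition~\ref{PropWittdecomposition}, Remark~\ref{Remexplicitnormfield}, Lemma~\ref{Lemsimilarityfactor}, and Proposition~\ref{Propdivisibilitybysimform}. The only cosmetic point is that the paper's remark after the lemma singles out Corollary~\ref{CordivbyQP} for the second equivalence in~(6), whereas you route through Proposition~\ref{Propdivisibilitybysimform} directly---but these are the same argument, and your phrasing in part~(4) (``since $b_0=0$ alone would contradict $b_0\neq 0$'') is a bit garbled even though the underlying logic is sound.
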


\begin{remark} The second equivalence in (6) holds by Corollary \ref{CordivbyQP}. \end{remark}

As an application of the first part of the lemma, we have:

\begin{corollary} \label{Corsimilaritydegreep} Let $\phi$ be a quasilinear $p$-form over $F$ and let $a \in F \setminus F^p$. Then $G(\phi_{F_a}) = G(\pfister{a} \otimes \phi)$. In particular, $\mathfrak{d}_0(\pfister{a}) = \mathfrak{d}_0(\phi_{F_a}) + 1$.
\begin{proof} More specifically, the first statement is an immediate consequence of Lemma \ref{LemIsotropypurelyinseparable} (1) and Lemma \ref{Lemsimilarityfactor}. The second then follows from Corollary \ref{Cordivisibilityindexsimilarity}. \end{proof} \end{corollary}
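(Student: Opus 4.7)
The plan is to deduce both statements of the corollary directly from the tools already assembled, with the first equality being the real content and the second a short degree count.

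For the equality $G(\phi_{F_a})=G(\pfister{a}\otimes\phi)$, I would begin by recording the key consequence of Lemma \ref{LemIsotropypurelyinseparable}(1): the $F_a^p$-space $D(\phi_{F_a})$ is literally equal to the $F^p$-space $D(\pfister{a}\otimes\phi)$, and in particular is a nonzero subset of $F$. By Lemma \ref{Lemsimilarityfactor}, $G(\phi_{F_a})^*$ consists of those $b\in F_a^*$ satisfying $bD(\phi_{F_a})\subseteq D(\phi_{F_a})$, while $G(\pfister{a}\otimes\phi)^*$ consists of the $b\in F^*$ satisfying the same condition. The proof thus reduces to showing $G(\phi_{F_a})\subseteq F$. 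Given $b\in G(\phi_{F_a})^*$ and any nonzero $d\in D(\phi_{F_a})$, the element $bd$ lies in $D(\phi_{F_a})\subseteq F$, and since $d\in F^*$ we conclude $b=(bd)/d\in F$, as required.

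For the relation $\mathfrak{d}_0(\pfister{a}\otimes\phi)=\mathfrak{d}_0(\phi_{F_a})+1$ (the ``$\pfister{a}$'' in the displayed statement being a typo for $\pfister{a}\otimes\phi$), I would invoke Corollary \ref{Cordivisibilityindexsimilarity} to reinterpret the two divisibility indices as $F^p$- and $F_a^p$-degrees of the respective similarity fields. The extension $F_a^p=F^p(a)$ has degree $p$ over $F^p$ because $a\notin F^p$, and $F_a^p\subseteq G(\phi_{F_a})$ by Corollary \ref{Corsimilarityfield}. Applying the tower law
\begin{equation*}
[G(\pfister{a}\otimes\phi):F^p] \;=\; [G(\phi_{F_a}):F^p] \;=\; [G(\phi_{F_a}):F_a^p]\cdot[F_a^p:F^p]
\end{equation*}
and taking $\log_p$ yields the desired shift by $1$.

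There is no real obstacle here; the mild subtlety is simply recognising that although $D(\phi_{F_a})$ is defined as a set of values over $F_a$, Lemma \ref{LemIsotropypurelyinseparable}(1) forces it to sit inside $F$, which is what lets any rational similarity factor over $F_a$ be pulled back to $F$. Everything else is a bookkeeping exercise with the previously developed machinery.
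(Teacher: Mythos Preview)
Your proof is correct and follows essentially the same approach as the paper, invoking Lemma~\ref{LemIsotropypurelyinseparable}(1) and Lemma~\ref{Lemsimilarityfactor} for the first statement and Corollary~\ref{Cordivisibilityindexsimilarity} for the second. You have also supplied the one detail the paper leaves implicit, namely the argument that $G(\phi_{F_a})\subseteq F$, and correctly flagged the typo in the second displayed formula.
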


Suppose now that $\pi = \pfister{a_1,\hdots,a_n}$ is an \emph{anisotropic} quasi-Pfister $p$-form over $F$. By Remark \ref{RemExplicitQP}, we have $[F^p(a_1,\hdots,a_n):F^p] = p^n$, which means that $a_i \notin F_{a_1,\hdots,a_{i-1}}$ for every $1 \leq i \leq n$. Repeated applications of Lemma \ref{LemIsotropypurelyinseparable} (2) and \ref{Corsimilaritydegreep} therefore yield the following proposition:

\begin{proposition} \label{PropindexofmultiplesofQP} Let $\phi$ be a quasilinear $p$-form over $F$, let $\pi$ be as above and let $\psi = \pi \otimes \phi$. Then $\witti{0}{\psi} = p^n\witti{0}{\phi_{F_{a_1,\hdots,a_n}}}$ and $\mathfrak{d}_0(\psi) = \mathfrak{d}_0(\phi_{F_{a_1,\hdots,a_n}}) + n$. \end{proposition}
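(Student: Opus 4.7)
The proof will go by induction on $n$, with $n=0$ being trivial (then $\pi \simeq \form{1}$, $\psi \simeq \phi$, and $F_{a_1,\ldots,a_n} = F$). For the inductive step, I would factor the Pfister form as $\pi \simeq \pfister{a_1} \otimes \pi'$ with $\pi' = \pfister{a_2,\ldots,a_n}$, and set $\psi' = \pi' \otimes \phi$ so that $\psi \simeq \pfister{a_1} \otimes \psi'$. The key preparatory observation is that, because $\pi$ is anisotropic, Remark \ref{RemExplicitQP} gives $[F^p(a_1,\ldots,a_n):F^p] = p^n$; by multiplicativity of degrees this forces both $a_1 \notin F^p$ (so that Lemma \ref{LemIsotropypurelyinseparable}(2) and Corollary \ref{Corsimilaritydegreep} apply to $\psi'$ in the role of $\phi$ over $F$) and $[F^p(a_1)(a_2,\ldots,a_n):F^p(a_1)] = p^{n-1}$. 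Since $F_{a_1}^p = F^p(a_1)$, this second identity says that $\pi'_{F_{a_1}}$ is again an anisotropic $(n-1)$-fold quasi-Pfister $p$-form over $F_{a_1}$, which is exactly what is needed in order to apply the inductive hypothesis to $\phi_{F_{a_1}}$ over $F_{a_1}$ with the form $\pi'_{F_{a_1}}$.

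With these ingredients in hand, Lemma \ref{LemIsotropypurelyinseparable}(2) will yield
\begin{equation*}
\witti{0}{\psi} \;=\; p\cdot \witti{0}{\psi'_{F_{a_1}}} \;=\; p\cdot \witti{0}{\pi'_{F_{a_1}} \otimes \phi_{F_{a_1}}},
\end{equation*}
and by induction the rightmost index equals $p^{n-1}\witti{0}{\phi_{F_{a_1,\ldots,a_n}}}$, which combines to give the first claim. For the second claim, Corollary \ref{Corsimilaritydegreep} (read as $\mathfrak{d}_0(\pfister{a_1}\otimes\psi') = \mathfrak{d}_0(\psi'_{F_{a_1}}) + 1$) together with the inductive formula $\mathfrak{d}_0(\psi'_{F_{a_1}}) = \mathfrak{d}_0(\phi_{F_{a_1,\ldots,a_n}}) + (n-1)$ adds up to $\mathfrak{d}_0(\psi) = \mathfrak{d}_0(\phi_{F_{a_1,\ldots,a_n}}) + n$, as required.

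Since the scheme is essentially an unwinding of the two cited results, I do not anticipate a substantive obstacle. The single point that requires genuine attention is the verification, at each stage of the induction, that peeling off the factor $\pfister{a_1}$ leaves behind a Pfister form which remains anisotropic over the enlarged base field $F_{a_1}$; this is precisely the content of the degree computation above, and it is what justifies the \emph{repeated applications} phrasing of the paragraph preceding the proposition.
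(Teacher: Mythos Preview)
Your proposal is correct and is precisely the explicit unfolding of the paper's one-line justification (``repeated applications of Lemma \ref{LemIsotropypurelyinseparable} (2) and Corollary \ref{Corsimilaritydegreep}''); the paper merely records in the preceding paragraph the degree condition ensuring each $a_i$ stays outside the relevant $p$-th-power subfield, which is exactly the point you isolate and verify in your inductive step.
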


Now, in view of Remark \ref{Remsff} (1), one may combine the above results in order to study the isotropy behaviour of quasilinear $p$-forms under scalar extension to function fields of quasilinear $p$-hypersurfaces. More specifically, let $\psi$ be a quasilinear $p$-form over $F$ which is not split, and let $a_0,\hdots,a_n \in F$ be such that $\psi \simeq \form{a_0,\hdots,a_n}$, with $a_0,a_1 \neq 0$. Then, by Remark \ref{Remsff} (1), we have an $F$-isomorphism of fields 
\begin{equation*} F(\psi) \simeq F(T)\Big(\sqrt[p]{a_1^{-1}\big(a_0 + a_2T_2^p + \hdots + a_nT_n^p\big)}\Big), \end{equation*}
where $T = (T_2,\hdots,T_n)$ is an $(n-1)$-tuple of algebraically independent variables over $F$. Thus, putting Lemmas \ref{Lemseparableextensions} and \ref{LemIsotropypurelyinseparable} and together, we obtain:

\begin{lemma}[{see \cite[\S\S 7.3,7.4]{Hoffmann2}}] \label{Lemisotropyff} Let $\phi$ be an anisotropic quasilinear $p$-form over $F$, and let $\psi$ be as above. Then:
\begin{enumerate} \item $\mydim{\anispart{(\phi_{F(\psi)})}} \geq \frac{1}{p}\mydim{\phi}$.
\item Equality holds in $(1)$ if and only if $a_1^{-1}(a_0 + a_2T_2^p + \hdots + a_nT_n^p) \in G(\phi_{F(T)})$.
\item $\mathrm{ndeg}(\phi_{F(\psi)}) \geq \frac{1}{p}\mathrm{ndeg}(\phi)$.
\item Equality holds in $(3)$ if and only if $a_1^{-1}(a_0 + a_2T_2^p + \hdots + a_nT_n^p) \in N(\phi_{F(T)})$.
\item The equivalent conditions of $(4)$ are satisfied if $\phi_{F(\psi)}$ is isotropic. \end{enumerate} \end{lemma}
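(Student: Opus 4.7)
The plan is to identify $F(\psi)$ with a two-step tower---first a purely transcendental extension, then a single degree-$p$ purely inseparable extension---and then invoke the machinery already developed for such extensions. Concretely, by Remark~\ref{Remsff}(1) there is an $F$-isomorphism $F(\psi) \simeq F(T)_\alpha$, where $T = (T_2, \dots, T_n)$ and
\begin{equation*}
\alpha = a_1^{-1}\bigl(a_0 + a_2 T_2^p + \cdots + a_n T_n^p\bigr) \in F[T].
\end{equation*}
Since $F(T)/F$ is purely transcendental and hence separable, Lemma~\ref{Lemseparableextensions} tells us that $\phi_{F(T)}$ remains anisotropic with $\mathrm{ndeg}(\phi_{F(T)}) = \mathrm{ndeg}(\phi)$, and of course $\mydim{\phi_{F(T)}} = \mydim{\phi}$. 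The strategy is then to apply Lemma~\ref{LemIsotropypurelyinseparable} to the anisotropic form $\phi_{F(T)}$ and the element $\alpha \in F(T)$; the five assertions of the present lemma are obtained by reading off the corresponding parts of that result.

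Before doing so, one must verify the standing hypothesis of Lemma~\ref{LemIsotropypurelyinseparable}, namely $\alpha \notin F(T)^p$. This is the one point that requires a genuine argument. Any $p$-th root of $\alpha$ in $F(T)$ must already lie in $F[T]$ by unique factorisation; in characteristic $p$ such a polynomial $h$ with $h^p = \alpha$ is necessarily of the form $h = c_0 + c_2T_2 + \cdots + c_nT_n$, which forces $a_1^{-1}a_i \in F^p$ for each $i \in \{0, 2, \dots, n\}$. But then $D(\psi) \subseteq a_1 F^p$, so $\mydim{\anispart{\psi}} = 1$, contradicting the fact that $\psi$ is not split.

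Granted this, parts (1) and (2) are the contents of Lemma~\ref{LemIsotropypurelyinseparable}(5)-(6) applied to $\phi_{F(T)}$, the condition in (2) being $\alpha \in G(\phi_{F(T)})$. Parts (3) and (4) are the two cases of Lemma~\ref{LemIsotropypurelyinseparable}(3), the condition in (4) being $\alpha \in N(\phi_{F(T)})$. Finally, for (5), if $\phi_{F(\psi)} = (\phi_{F(T)})_\alpha$ is isotropic, then because $\phi_{F(T)}$ is anisotropic the contrapositive of Lemma~\ref{LemIsotropypurelyinseparable}(4) forces $\alpha \in N(\phi_{F(T)})$, which is exactly the condition in (4).

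The main (and essentially only) obstacle is the $p$-th power exclusion $\alpha \notin F(T)^p$; after that, the proof is a matter of bookkeeping along the tower $F \subset F(T) \subset F(T)_\alpha \simeq F(\psi)$.
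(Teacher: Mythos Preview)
Your proof is correct and follows exactly the approach the paper indicates: combine the separable-extension invariance of Lemma~\ref{Lemseparableextensions} with the degree-$p$ inseparable case of Lemma~\ref{LemIsotropypurelyinseparable} along the tower $F \subset F(T) \subset F(T)_\alpha \simeq F(\psi)$. Your explicit verification that $\alpha \notin F(T)^p$ (equivalent to $\psi$ not being split) is a detail the paper leaves implicit but is indeed needed for Lemma~\ref{LemIsotropypurelyinseparable} to apply.
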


As a basic application, we have:

\begin{corollary}[{see \cite[\S\S 7.3,7.4]{Hoffmann2}}] \label{Corisotropyff} Let $\phi$ and $\psi$ be quasilinear $p$-forms over $F$ such that $\phi$ is anisotropic and $\psi$ is not split. Then: 
\begin{enumerate} \item $\mydim{\anispart{(\phi_{F(\psi)})}} \geq \frac{1}{p}\mydim{\phi}$, with equality holding if and only if $N(\psi) \subseteq G(\phi)$.
\item If $\phi_{F(\psi)}$ is isotropic, then $N(\psi) \subseteq N(\phi)$. In particular, $\mathrm{ndeg}(\psi) \leq \mathrm{ndeg}(\phi)$. \end{enumerate}
\begin{proof} We may assume that $\psi$ is as in Lemma \ref{Lemisotropyff}. In this case, we have $N(\psi) = F^p(\frac{a_0}{a_1},\hdots,\frac{a_n}{a_1})$ (see Remark \ref{RemExplicitQP}), and so (1) follows from the first two parts of the former lemma and Corollary \ref{CorCPsimilarity}. Similarly, since $N(\phi_L) = D((\normform{\phi})_{L})$ for any field extension $L$ of $F$, (2) follows from Lemma \ref{Lemisotropyff} (4,5) and Theorem \ref{ThmCasselsPfister}. \end{proof} \end{corollary}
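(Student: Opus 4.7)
The plan is to reduce to an explicit presentation of $F(\psi)$ and then apply the Cassels--Pfister machinery. Writing $\psi \simeq \form{a_0,a_1,\hdots,a_n}$ with $a_0, a_1 \neq 0$, Remark \ref{Remsff} (1) gives the $F$-isomorphism
\begin{equation*}
F(\psi) \simeq F(T)\Big(\sqrt[p]{f}\Big), \quad \text{where } f \coloneqq a_1^{-1}\bigl(a_0 + a_2T_2^p + \hdots + a_nT_n^p\bigr) \in F[T],
\end{equation*}
with $T = (T_2,\hdots,T_n)$. This puts us precisely in the situation addressed by Lemma \ref{Lemisotropyff}, which becomes the engine of both statements.

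For (1), the inequality is Lemma \ref{Lemisotropyff} (1). For the equality criterion, Lemma \ref{Lemisotropyff} (2) says equality holds if and only if $f \in G(\phi_{F(T)})$. By Corollary \ref{CorCPsimilarity} (the similarity-factor form of Cassels--Pfister), this is equivalent to $f \in G(\phi)[T_2^p,\hdots,T_n^p]$, i.e.\ to $a_i/a_1 \in G(\phi)$ for each $i \in \lbrace 0, 2,\hdots,n \rbrace$ (the coefficient of $T_1^p$, namely $1$, is automatic). Now Remark \ref{Remexplicitnormfield} gives $N(\psi) = F^p(a_0/a_1, a_2/a_1,\hdots,a_n/a_1)$, and since $G(\phi)$ is a subfield of $F$ containing $F^p$ by Corollary \ref{Corsimilarityfield}, the coefficient condition is equivalent to $N(\psi) \subseteq G(\phi)$, as required.

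For (2), if $\phi_{F(\psi)}$ is isotropic then Lemma \ref{Lemisotropyff} (5) and (4) together give $f \in N(\phi_{F(T)})$. But $N(\phi_{F(T)}) = D\bigl((\normform{\phi})_{F(T)}\bigr)$ by the definition of the norm form, so applying the Cassels--Pfister representation theorem (Theorem \ref{ThmCasselsPfister}) to $\normform{\phi}$ forces $f \in N(\phi)[T_2^p,\hdots,T_n^p]$. Comparing coefficients again yields $a_i/a_1 \in N(\phi)$ for every $i$, and since $N(\phi)$ is a subfield containing $F^p$, this is exactly $N(\psi) \subseteq N(\phi)$. The ``in particular'' follows from $\mathrm{ndeg}(\psi) = [N(\psi):F^p] \leq [N(\phi):F^p] = \mathrm{ndeg}(\phi)$.

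The only non-routine move is recognising that both statements collapse to coefficient-wise membership of the single polynomial $f \in F[T]$ in either $G(\phi)$ or $N(\phi)$; once this is seen, the Cassels--Pfister theorem and its similarity-factor corollary do all of the work, and there is no serious obstacle beyond bookkeeping.
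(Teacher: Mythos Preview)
Your proof is correct and follows exactly the same route as the paper's: reduce to the explicit presentation of $F(\psi)$, invoke Lemma~\ref{Lemisotropyff}, and then use the Cassels--Pfister results (Corollary~\ref{CorCPsimilarity} for (1), Theorem~\ref{ThmCasselsPfister} applied to $\normform{\phi}$ for (2)) to pass from the polynomial condition on $f$ to the coefficient-wise containment $N(\psi)\subseteq G(\phi)$ or $N(\psi)\subseteq N(\phi)$. The only slip is the parenthetical ``the coefficient of $T_1^p$'' --- there is no variable $T_1$ in your presentation; you presumably mean that the case $i=1$ is automatic since $a_1/a_1=1$.
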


Finally, it will be useful to record in this section another basic application of the Cassels-Pfister theorem. To state it, let $T = (T_1,\hdots,T_m)$ be a tuple of algebraically independent variables over $F$, let $g \in F[T]$ be an irreducible polynomial and let $F[g]$ denoted the field $\mathrm{Frac}\big(F[T]/(g)\big)$ (i.e. the function field of the integral hypersurface $\lbrace g = 0 \rbrace \subset \mathbb{A}_F^m$). Given $f \in F[T]$, we write $\mathrm{mult}_g(f)$ for the multiplicity of $g$ in $f$, i.e., the largest nonnegative integer $s$ such that $f = g^sh$ for some $h \in F[T]$\footnote{With the added convention that $\mathrm{mult}_g(0) = +\infty$.}.

\begin{proposition} \label{PropCPapp} In the above situation, let $\phi$ be a quasilinear $p$-form over $F$ and let $f \in F[T]$. Suppose that $f \in D(\phi_{F(T)})$ and that $\phi_{F[g]}$ is anisotropic. Then $\mathrm{mult}_g(f) \equiv 0 \pmod{p}$. 
\begin{proof} Let $s = \mathrm{mult}_g(f)$. After replacing $f$ by $f/g^{kp} \in D(\phi_{F(T)})$ for a suitable integer $k \geq 0$, we may assume that $s<p$. Our goal is then to prove that $s = 0$. To see this, note first that there exists $v \in V_{\phi} \otimes_F F[T]$ such that $\phi_{F(T)}(v) = f$ by Theorem \ref{ThmCasselsPfister}. If $s \neq 0$, then the image $\overline{v}$ of $v$ in $V_{\phi} \otimes_F F[g]$ is an isotropic vector for $\phi_{F[g]}$. By hypothesis, it follows that $\overline{v} = 0$, which means that $v = gw$ for some $w \in V_\phi \otimes_F F[T]$. But this implies that $f = g^p\phi_{F(T)}(w)$, which contradicts the fact that $s < p$. We conclude that $s = 0$, and so the proposition is proved. \end{proof} \end{proposition}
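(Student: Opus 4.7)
The plan is to use Theorem \ref{ThmCasselsPfister} (the multivariable Cassels--Pfister theorem) to upgrade the hypothesis $f \in D(\phi_{F(T)})$ to a polynomial-level representation $f = \phi_{F[T]}(v)$ with $v \in V_\phi \otimes_F F[T]$, and then use the anisotropy of $\phi_{F[g]}$ to force $g$-divisibility of $v$ itself, from which the congruence on $\mathrm{mult}_g(f)$ will follow.

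First I would reduce to the case $s \coloneqq \mathrm{mult}_g(f) < p$. Write $s = kp + r$ with $0 \leq r < p$. The element $g^{kp} = (g^k)^p$ lies in $F(T)^p$, and since $D(\phi_{F(T)})$ is an $F(T)^p$-linear subspace of $F(T)$ (by the defining additivity and $p$-homogeneity of $\phi$), the quotient $f/g^{kp}$ still lies in $D(\phi_{F(T)})$. Replacing $f$ by $f/g^{kp}$, which has $\mathrm{mult}_g$ equal to $r$, the problem becomes to show that $r = 0$.

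With this reduction in place, I would apply Theorem \ref{ThmCasselsPfister} to produce $v \in V_\phi \otimes_F F[T]$ with $\phi_{F[T]}(v) = f$. Suppose for contradiction that $r > 0$; then $g \mid f$, so reducing $v$ modulo $g$ yields a vector $\overline{v} \in V_\phi \otimes_F F[g]$ with $\phi_{F[g]}(\overline{v}) = 0$. The anisotropy hypothesis forces $\overline{v} = 0$, i.e.\ $v = g w$ for some $w \in V_\phi \otimes_F F[T]$. But then by $p$-homogeneity $f = \phi_{F[T]}(gw) = g^p \phi_{F[T]}(w)$, which contradicts $r < p$.

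I do not anticipate any genuine obstacle here: the content of the argument is contained in the Cassels--Pfister theorem and the trivial observation that $p$-th powers can be factored out of the argument of $\phi$; once those are in hand the proof is essentially mechanical. The only point requiring a little care is the reduction modulo $p$ at the start, where one uses that $D(\phi_{F(T)})$ is closed under division by $p$-th powers in $F(T)^*$.
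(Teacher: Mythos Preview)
Your proof is correct and follows essentially the same approach as the paper's own proof: reduce modulo $p$-th powers of $g$ to assume $s<p$, invoke the Cassels--Pfister theorem to get a polynomial representation, and then use anisotropy over $F[g]$ to force $g\mid v$, yielding the contradiction $g^p\mid f$. Your write-up even makes the justification for the initial reduction (closure of $D(\phi_{F(T)})$ under division by $p$-th powers) slightly more explicit than the paper does.
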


\subsection{The divisibility index and scalar extension} \label{Divisibilityscalarextension} Let $\phi$ be a quasilinear $p$-form over $F$. We make some brief remarks concerning the behaviour of the divisibility index $\mathfrak{d}_0(\phi)$ (see \S \ref{QPforms}) under scalar extension.

\begin{lemma} \label{Lemdivisibilityindexextension} Let $\phi$ be a quasilinear $p$-form over $F$ and let $L$ be a field extension of $F$. If $(\simform{\phi})_L$ is anisotropic, then $\mathfrak{d}_0(\phi_L) \geq \mathfrak{d}_0(\phi)$.
\begin{proof} As an $L^p$-vector space, $D\big((\simform{\phi})_L\big)$ is spanned by $D(\simform{\phi}) = G(\phi)$. Since we evidently have $G(\phi) \subseteq G(\phi_L) = D\big(\simform{(\phi_L)}\big)$, and since $(\simform{\phi})_L$ is anisotropic by hypothesis, Proposition \ref{Propanisclassification} implies that $(\simform{\phi})_L \subset \simform{(\phi_L)}$. The desired assertion now follows from Corollary \ref{Cordivisibilityindexsimilarity}.
\end{proof} \end{lemma}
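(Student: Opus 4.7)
The plan is to convert the statement into an assertion about similarity forms and then reduce to a subform comparison that can be handled by Proposition \ref{Propanisclassification}. By Corollary \ref{Cordivisibilityindexsimilarity}, $\mathfrak{d}_0(\phi) = \log_p(\mydim{\simform{\phi}})$ and $\mathfrak{d}_0(\phi_L) = \log_p(\mydim{\simform{(\phi_L)}})$, so proving the inequality is equivalent to showing $\mydim{\simform{\phi}} \leq \mydim{\simform{(\phi_L)}}$, or, what amounts to the same thing (since $\simform{\phi}$ has unchanged dimension after base change), that $(\simform{\phi})_L$ is no bigger in dimension than $\simform{(\phi_L)}$. Given the hypothesis that $(\simform{\phi})_L$ is anisotropic, a cleaner target is to exhibit it as a subform of $\simform{(\phi_L)}$.

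First I would observe the containment of similarity groups $G(\phi) \subseteq G(\phi_L)$, which is immediate from the definition: any $a \in F^*$ with $a\phi \simeq \phi$ trivially satisfies $a\phi_L \simeq \phi_L$. Translated via the defining equalities $D(\simform{\phi}) = G(\phi)$ and $D(\simform{(\phi_L)}) = G(\phi_L)$, this gives $D(\simform{\phi}) \subseteq D(\simform{(\phi_L)})$. Since $D\bigl((\simform{\phi})_L\bigr)$ is, by its additive definition, the $L^p$-linear span of $D(\simform{\phi})$ in $L$, and since $D(\simform{(\phi_L)})$ is itself an $L^p$-subspace of $L$ (indeed a subfield, by Proposition \ref{PropQP=field}), this $L^p$-span is contained in $D(\simform{(\phi_L)})$, yielding $D\bigl((\simform{\phi})_L\bigr) \subseteq D(\simform{(\phi_L)})$.

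Next I would invoke Proposition \ref{Propanisclassification}: both forms are anisotropic (the hypothesis guarantees this for $(\simform{\phi})_L$, while $\simform{(\phi_L)}$ is anisotropic by its construction), so the containment of represented sets promotes to an actual subform inclusion $(\simform{\phi})_L \subset \simform{(\phi_L)}$. Comparing dimensions and taking $\log_p$ then gives $\mathfrak{d}_0(\phi) \leq \mathfrak{d}_0(\phi_L)$ via Corollary \ref{Cordivisibilityindexsimilarity}.

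The one point requiring care — and the reason the hypothesis on $(\simform{\phi})_L$ is essential — is the application of Proposition \ref{Propanisclassification}, which demands anisotropy of both forms. Without it, $(\simform{\phi})_L$ could collapse to a smaller anisotropic kernel upon extension to $L$, and one would instead obtain only $\anispart{(\simform{\phi})_L} \subset \simform{(\phi_L)}$, which is insufficient to control $\mydim{\simform{\phi}}$. Given the hypothesis, this subtlety disappears and the argument is a direct chain of translations between $G(\cdot)$, similarity forms, and their represented sets.
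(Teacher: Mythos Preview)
Your proof is correct and follows essentially the same route as the paper: establish $G(\phi)\subseteq G(\phi_L)$, observe that $D\bigl((\simform{\phi})_L\bigr)$ is the $L^p$-span of $G(\phi)$ and hence lies in $D(\simform{(\phi_L)})$, apply Proposition~\ref{Propanisclassification} using the anisotropy hypothesis to obtain $(\simform{\phi})_L\subset\simform{(\phi_L)}$, and conclude via Corollary~\ref{Cordivisibilityindexsimilarity}. Your write-up is more expansive and includes a useful remark on why the hypothesis cannot be dropped, but the underlying argument is identical.
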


In particular, this applies in the case where $L$ is a separable extension of $F$ (see Lemma \ref{Lemseparableextensions}). In the case where $L$ is purely transcendental over $F$, we can say more:

\begin{lemma} \label{Lemdivisibilityindexstability} Let $\phi$ be a quasilinear $p$-form over $F$ and let $L$ be a purely transcendental extension of $F$. Then $\simform{(\phi_L)} \simeq (\simform{\phi})_L$ and $\mathfrak{d}_0(\phi_L) = \mathfrak{d}_0(\phi)$. 
\begin{proof} Continuing with the proof of Lemma \ref{Lemdivisibilityindexextension}, it is sufficient to show that in this case $G(\phi_L)$ is generated by $G(\phi)$ over $L^p$. If $L$ is finitely generated over $F$, then this follows from Corollary \ref{CorCPsimilarity}. On the other hand, the general case reduces easily to the finitely generated case in view of Lemma \ref{Lemsimilarityfactor}, so the lemma is proved. \end{proof} \end{lemma}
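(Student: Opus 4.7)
The plan is to reduce the whole statement to proving $\simform{(\phi_L)} \simeq (\simform{\phi})_L$, since the equality $\mathfrak{d}_0(\phi_L) = \mathfrak{d}_0(\phi)$ will then follow at once from Corollary \ref{Cordivisibilityindexsimilarity}. Because a purely transcendental extension is separable, Lemma \ref{Lemseparableextensions} guarantees that $(\simform{\phi})_L$ remains anisotropic, and it is clearly a quasi-Pfister $p$-form. Both $\simform{(\phi_L)}$ and $(\simform{\phi})_L$ are thus anisotropic forms over $L$, so by Proposition \ref{Propanisclassification} it suffices to show that they represent the same set of elements of $L$. Now $D\bigl((\simform{\phi})_L\bigr)$ is the $L^p$-linear subspace of $L$ spanned by $D(\simform{\phi}) = G(\phi)$, whereas $D\bigl(\simform{(\phi_L)}\bigr) = G(\phi_L)$. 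The inclusion $D\bigl((\simform{\phi})_L\bigr) \subseteq G(\phi_L)$ is immediate, since $G(\phi) \subseteq G(\phi_L)$ trivially, $L^p \subseteq G(\phi_L)$ by Corollary \ref{Corsimilarityfield}, and $G(\phi_L)$ is a field. The task is therefore the reverse inclusion.

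I would first establish this in the finitely generated case $L = F(T_1,\ldots,T_m)$. Given $f \in G(\phi_L)^*$, write $f = g/h$ with $g,h \in F[T]$ and $h \neq 0$. Since $h^p \in L^p \subseteq G(\phi_L)$ and $G(\phi_L)$ is a field, the product $gh^{p-1} = fh^p$ lies in $G(\phi_L) \cap F[T]$. Corollary \ref{CorCPsimilarity} then forces $gh^{p-1} \in G(\phi)[T_1^p,\ldots,T_m^p]$, say $gh^{p-1} = \sum_\alpha c_\alpha T^{p\alpha}$ with $c_\alpha \in G(\phi)$ (a finite sum). Dividing by $h^p$ yields
\begin{equation*}
f \;=\; \sum_\alpha c_\alpha \bigl(T^\alpha/h\bigr)^p,
\end{equation*}
which exhibits $f$ in the $L^p$-span of $G(\phi)$, as required.

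To handle an arbitrary purely transcendental $L$, I would take any $f \in G(\phi_L)^*$ and invoke Lemma \ref{Lemsimilarityfactor} to rephrase this as $fD(\phi_L) \subseteq D(\phi_L)$. Choosing any $F^p$-basis $a_1,\ldots,a_n$ of $D(\phi)$, the space $D(\phi_L)$ is $L^p$-spanned by $a_1,\ldots,a_n$, so for each $i$ one can write $fa_i = \sum_j \ell_{ij}^p a_j$ with $\ell_{ij} \in L$. The element $f$ together with the finitely many $\ell_{ij}$ lies in some finitely generated subextension $L_0 \subseteq L$, which is itself purely transcendental over $F$. A second application of Lemma \ref{Lemsimilarityfactor} then gives $f \in G(\phi_{L_0})^*$, and the case already treated places $f$ in the $L_0^p$-span of $G(\phi)$, hence \emph{a fortiori} in its $L^p$-span.

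The only genuinely nontrivial input is the application of Corollary \ref{CorCPsimilarity} in the finitely generated case; this is where the Cassels–Pfister machinery does the real work, by converting a rational similarity into a polynomial one whose coefficients can be read off over $F$. Everything else, including the reduction from an arbitrary purely transcendental extension to a finitely generated one, should be purely formal.
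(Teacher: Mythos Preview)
Your proof is correct and follows essentially the same approach as the paper: reduce to showing that $G(\phi_L)$ is the $L^p$-span of $G(\phi)$, handle the finitely generated case via Corollary~\ref{CorCPsimilarity}, and reduce the general case to the finitely generated one using Lemma~\ref{Lemsimilarityfactor}. The paper's own proof is much terser, but you have simply unpacked the same steps in detail.
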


\subsection{The Knebusch splitting pattern} \label{Splittingpattern} Let $\phi$ be a quasilinear $p$-form over $F$. Following the construction outlined in \S \ref{Introduction} (see also \cite[\S 7.5]{Hoffmann2}), set $F_0  = F$, $\phi_0 = \anispart{\phi}$, and recursively define
\begin{itemize} \item $F_r = F_{r-1}(\phi_{r-1})$ (provided $\phi_{r-1}$ is not split), and
\item $\phi_r = \anispart{(\phi_{F_r})}$ (provided $F_r$ is defined). \end{itemize}
Note here that if $\phi_r$ is defined, then we have $\mydim{\phi_r} < \mydim{\phi_{r-1}}$ by Remark \ref{Remsff} (4). As such, the whole process is finite, terminating at the first nonnegative integer $h(\phi)$ for which $\mydim{\phi_{h(\phi)}} \leq 1$. The integer $h(\phi)$ will be called the \emph{height} of $\phi$, and the tower of fields $F_0 \subset F_1 \subset \hdots \subset F_{h(\phi)}$ will be called the \emph{Knebusch splitting tower} of $\phi$. For each $0 \leq r \leq h(\phi)$, we set $\wittj{r}{\phi} = \witti{0}{\phi_{F_r}}$. If $\phi$ is not split and $r\geq 1$, then the difference $\wittj{r}{\phi} - \wittj{r-1}{\phi}$ will be called the \emph{r-th higher isotropy index} of $\phi$, and will be denoted by $\witti{r}{\phi}$. In this case, the form $\phi_r$ will be called the \emph{r-th higher anisotropic kernel} of $\phi$. Finally, the sequence $\mathfrak{i}(\phi) = \big(\witti{1}{\phi},\hdots,\witti{h(\phi)}{\phi}\big)$ (understood to be empty if $\phi$ is split) will be called the \emph{Knebusch splitting pattern} of $\phi$\footnote{\label{Footnote3}See Footnote \ref{Footnote2}. We omit the term $\witti{0}{\phi}$ from the sequence because we are ultimately interested in the case where $\phi$ is anisotropic (i.e., where $\witti{0}{\phi} = 0$).}.

\begin{remarks} \label{RemsKnebusch} Let $\phi$ be a quasilinear $p$-form over $F$.
\begin{enumerate}[leftmargin=*] \item By the recursive nature of the above construction, we have $\witti{r}{\phi} = \witti{1}{\phi_{r-1}}$ for every $1 \leq r \leq h(\phi)$.
\item By Proposition \ref{PropWittdecomposition}, we have $\witti{r}{\phi} = \mydim{\phi_{r-1}} - \mydim{\phi_r}$ for all $1 \leq r \leq h(\phi)$.
\item Let $L$ be a field extension of $F$. As already remarked in \S \ref{Introduction}, it is not true in general that $\witti{0}{\phi_L} = \wittj{r}{\phi}$ for some $0 \leq r \leq h(\phi)$ - see Example \ref{Exnongeneric} below. \end{enumerate}
\end{remarks}

Note that by Remark \ref{Remsff} (3), we have the following:

\begin{lemma} \label{Lemsstequivalence} Let $\phi$ be a quasilinear $p$-form form of dimension $\geq 2$ and let $(F_r)$ denote its Knebusch splitting tower. Then $F_r \sim_F F(\phi^{\times r})$ for every $0 \leq r \leq h(\phi)$.\end{lemma}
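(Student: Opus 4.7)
The statement follows by induction on $r$, with the case $r=0$ trivial (both fields equal $F$). For the inductive step, suppose $1 \leq r \leq h(\phi)$ and $F_{r-1} \sim_F F(\phi^{\times (r-1)})$; my strategy is to show that both $F_r$ and $F(\phi^{\times r})$ are $F$-equivalent to the common intermediate field $F_{r-1}(\phi) := F_{r-1}(\phi_{F_{r-1}})$.

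The equivalence $F_{r-1}(\phi) \sim_F F_r$ is immediate from Remark \ref{Remsff}(3) applied to $\phi_{F_{r-1}}$ over $F_{r-1}$: the field $F_{r-1}(\phi_{F_{r-1}})$ is a purely transcendental extension of $F_{r-1}(\anispart{(\phi_{F_{r-1}})}) = F_{r-1}(\phi_{r-1}) = F_r$. For the equivalence $F_{r-1}(\phi) \sim_F F(\phi^{\times r})$, one identifies $F(\phi^{\times r})$ with the function field of the generic fibre of the projection $X_\phi^{\times r} \to X_\phi^{\times (r-1)}$, namely $F(\phi^{\times (r-1)})(\phi)$. The requisite integrality is ensured by the inductive hypothesis together with Lemma \ref{LemPlaceisotropy}: these force $\witti{0}{\phi_{F(\phi^{\times (r-1)})}} = \witti{0}{\phi_{F_{r-1}}}$, so $\phi$ remains non-split over $F(\phi^{\times (r-1)})$ (since $r \leq h(\phi)$ gives $\mydim{\phi_{r-1}} \geq 2$). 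The inductive step thus reduces to the following:

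\textbf{Key claim.} If $K \sim_F L$ and $\phi_K, \phi_L$ are both non-split, then $K(\phi) \sim_F L(\phi)$.

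To prove this, I use the explicit description of Remark \ref{Remsff}(1). Writing $\phi \simeq \form{a_0,\ldots,a_n}$ with $a_0, a_1 \neq 0$ and setting $S = (S_2,\ldots,S_n)$, $\alpha = a_1^{-1}(a_0 + a_2 S_2^p + \cdots + a_n S_n^p) \in F(S)$, we have $K(\phi) \simeq K(S)(\sqrt[p]{\alpha})$ and $L(\phi) \simeq L(S)(\sqrt[p]{\alpha})$. Given a valuation ring $R \subseteq K$ realizing an $F$-place $K \dashrightarrow L$ with residue field $\kappa \hookrightarrow L$, the Gauss extension $R[S]_{(m_R R[S])}$ furnishes an $F$-place $K(S) \dashrightarrow L(S)$ with residue field $\kappa(S) \hookrightarrow L(S)$. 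Since the coefficients of $\alpha$ lie in $F \subseteq R$ and $a_1 \in F^* \subseteq R^*$, the element $\alpha$ lies in this valuation ring and its residue is literally $\alpha$ again, viewed in $L(S)$. The $p$-th root $\beta = \sqrt[p]{\alpha}$ is integral over this ring, and by standard valuation-theoretic extension we may dominate $R[S]_{(m_R R[S])}[\beta]$ by a valuation ring of $K(\phi)$ whose residue field is $\kappa(S)(\sqrt[p]{\alpha})$, embedding into $L(S)(\sqrt[p]{\alpha}) = L(\phi)$ via $\kappa \hookrightarrow L$. A symmetric construction gives a place in the reverse direction.

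The main technical point is this place-extension step through the purely inseparable degree-$p$ extension $K(S)(\sqrt[p]{\alpha})/K(S)$, which goes through cleanly precisely because $\alpha$ is defined over $F$: this forces its residue to be canonical and makes the adjoined $p$-th root land exactly in $L(\phi)$, rather than in some larger algebraic extension of $L(S)$. Chaining the two equivalences yields $F_r \sim_F F_{r-1}(\phi) \sim_F F(\phi^{\times r})$, closing the induction.
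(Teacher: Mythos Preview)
Your proof is correct, but it works harder than necessary. The paper's one-line reference to Remark~\ref{Remsff}(3) is pointing at a stronger and more direct inductive statement: $F(\phi^{\times r})$ is not merely $F$-equivalent to $F_r$, but is $F$-isomorphic to a \emph{purely transcendental} extension of $F_r$. With this as the inductive hypothesis, the step is immediate: if $F(\phi^{\times(r-1)}) \cong F_{r-1}(U)$ for a tuple of variables $U$, then
\[
F(\phi^{\times r}) \;\cong\; F_{r-1}(U)(\phi) \;\cong\; F_{r-1}(\phi)(U),
\]
since taking the function field of $X_\phi$ commutes with adjoining transcendentals; and $F_{r-1}(\phi)$ is purely transcendental over $F_{r-1}(\phi_{r-1}) = F_r$ by Remark~\ref{Remsff}(3). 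No place-extension machinery is needed.

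Your Key Claim (that $K \sim_F L$ implies $K(\phi) \sim_F L(\phi)$) is true and potentially useful in its own right, but the valuation-theoretic argument you give for it has some details swept under the rug: to know that the residue field of the extended valuation on $K(S)(\sqrt[p]{\alpha})$ is exactly $\kappa(S)(\sqrt[p]{\alpha})$ (rather than possibly larger, or with $\bar\beta$ already in $\kappa(S)$), you need $\alpha \notin \kappa(S)^p$, which amounts to $\phi_\kappa$ being non-split. This does follow from your hypothesis that $\phi_L$ is non-split together with $\kappa \hookrightarrow L$, but you should say so. In any case, strengthening the induction to ``purely transcendental'' sidesteps all of this.
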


In light of Lemma \ref{Lemsstequivalence}, we therefore have:

\begin{corollary} \label{Corcomputationssp} Let $\phi$ be a quasilinear $p$-form over $F$. Then, for every $0 \leq r \leq h(\phi)$, we have $\wittj{r}{\phi} = \witti{0}{\phi_{F(\phi^{\times r})}}$. \end{corollary}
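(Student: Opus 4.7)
The plan is to deduce this corollary as an essentially immediate consequence of Lemma \ref{Lemsstequivalence} combined with Lemma \ref{LemPlaceisotropy}. The idea is that the definition $\wittj{r}{\phi} = \witti{0}{\phi_{F_r}}$ ties $\wittj{r}{\phi}$ to the $r$-th field in the Knebusch tower, and the tower field $F_r$ is known to be $F$-place-equivalent to the function field of the $r$-fold self-product $X_\phi^{\times r}$, at which point the isotropy index is a place-equivalence invariant by the completeness-type argument recorded in Lemma \ref{LemPlaceisotropy}.

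Concretely, I would treat the case $r=0$ trivially (with the convention that $F(\phi^{\times 0}) = F$, so both sides equal $\witti{0}{\phi}$), and for $1 \leq r \leq h(\phi)$ proceed as follows. First, by the definition of the Knebusch splitting tower one has $\wittj{r}{\phi} = \witti{0}{\phi_{F_r}}$. Next, Lemma \ref{Lemsstequivalence} yields $F_r \sim_F F(\phi^{\times r})$, that is, there exist $F$-places $F_r \dashrightarrow F(\phi^{\times r})$ and $F(\phi^{\times r}) \dashrightarrow F_r$. Applying Lemma \ref{LemPlaceisotropy} to these two places gives the two inequalities
\begin{equation*}
\witti{0}{\phi_{F_r}} \leq \witti{0}{\phi_{F(\phi^{\times r})}} \quad \text{and} \quad \witti{0}{\phi_{F(\phi^{\times r})}} \leq \witti{0}{\phi_{F_r}},
\end{equation*}
hence the equality $\witti{0}{\phi_{F_r}} = \witti{0}{\phi_{F(\phi^{\times r})}}$, which combined with the first step yields the claim.

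Since both inputs are already established earlier in the section, there is no real obstacle here; the only point requiring a small remark is the degenerate case $r = 0$, which is handled by the stated convention on the empty product. The proof is thus a one-line corollary once Lemmas \ref{LemPlaceisotropy} and \ref{Lemsstequivalence} are in hand.
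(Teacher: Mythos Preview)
Your proof is correct and matches the paper's approach exactly: the paper presents this corollary with the words ``In light of Lemma \ref{Lemsstequivalence}, we therefore have'' and no further argument, leaving implicit precisely the appeal to Lemma \ref{LemPlaceisotropy} that you have spelled out.
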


Given the results of \S \ref{Isotropy}, we are now in a position to prove the following characterisation of anisotropic quasi-Pfister $p$-forms:

\begin{proposition}[{cf. \cite[Thm. 8.11]{HoffmannLaghribi1}}] \label{Propi1bound} Let $\phi$ be an anisotropic quasilinear $p$-form of dimension $\geq 2$ over $F$. Then $\mydim{\phi_1} \geq \frac{1}{p}\mydim{\phi}$, and the following conditions are equivalent:
\begin{enumerate} \item $\mydim{\phi_1} = \frac{1}{p}\mydim{\phi}$.
\item $\mathfrak{i}(\phi) = (p^{h(\phi)}-p^{h(\phi) -1},p^{h(\phi)-1}-p^{h(\phi) -2},\hdots,p^2 - p,p-1)$.
\item $\phi$ is similar to a quasi-Pfister $p$-form. \end{enumerate}
\begin{proof} The inequality $\mydim{\phi_1} \geq \frac{1}{p}\mydim{\phi}$ holds by Corollary \ref{Corisotropyff}. The same result shows that equality holds here if and only if $N(\phi) \subseteq G(\phi)$. By Corollary \ref{CordivbyQP}, the latter condition holds if and only if $\phi$ is divisible by $\normform{\phi}$. In view of Lemma \ref{Lempropertyofnormform}, this proves the equivalence of (1) and (3), as well as the implication $(2) \Rightarrow (3)$. On the other hand, if $\phi$ is similar to a quasi-Pfister $p$-form of dimension $p^n$, then $\phi_1$ is similar to a quasi-Pfister $p$-form of dimension $p^{n-1}$ by Corollary \ref{CoranispartQP} and (1). Since $\witti{1}{\phi} = \mydim{\phi} - \mydim{\phi_1}$ (see Remark \ref{RemsKnebusch} (2)), an easy induction on $h(\phi)$ then shows that (3) implies (2). \end{proof} \end{proposition}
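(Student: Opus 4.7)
The plan is to exploit the results on isotropy of quasilinear $p$-forms under scalar extension to function fields (\S \ref{Isotropy}) together with the norm/similarity form machinery (\S \ref{QPforms}, \S \ref{Normform}, \S \ref{Similarityfactors}). First, I would take $\psi = \phi$ in Corollary \ref{Corisotropyff}(1). Since $\phi$ is anisotropic and of dimension $\geq 2$ (hence non-split), the corollary gives $\mydim{\phi_1} = \mydim{\anispart{(\phi_{F(\phi)})}} \geq \frac{1}{p}\mydim{\phi}$, which is the required lower bound. Moreover, the same corollary tells me that equality holds if and only if $N(\phi) \subseteq G(\phi)$.

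Next, I would translate this condition into a divisibility statement. By Corollary \ref{CordivbyQP} (applied again with $\psi = \phi$, noting that after rescaling we may assume $1 \in D(\phi)$), the inclusion $N(\phi) \subseteq G(\phi)$ is equivalent to $\phi$ being divisible by $\normform{\phi}$. Now $\normform{\phi}$ is a quasi-Pfister $p$-form, so this already shows $(1) \Rightarrow (3)$ up to rescaling; but to get similarity (not just divisibility by a quasi-Pfister), I invoke Lemma \ref{Lempropertyofnormform}: $\phi$ is similar to a subform of $\normform{\phi}$, so $\mydim{\phi} \leq \mydim{\normform{\phi}}$. Combined with $\normform{\phi} \mid \phi$, this forces $\mydim{\phi} = \mydim{\normform{\phi}}$, whence $\phi$ is similar to $\normform{\phi}$ and (3) holds. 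The converse direction $(3) \Rightarrow (1)$ is immediate from Corollary \ref{CoranispartQP}: if $\phi$ is similar to a quasi-Pfister $p$-form, then $\phi_{F(\phi)}$ is isotropic, and its anisotropic kernel (again similar to a quasi-Pfister $p$-form) has dimension exactly $\frac{1}{p}\mydim{\phi}$.

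For the implication $(2) \Rightarrow (3)$, I would observe that the hypothesis on $\mathfrak{i}(\phi)$ explicitly forces $\witti{1}{\phi} = p^{h(\phi)} - p^{h(\phi)-1}$ and, by summing the splitting pattern together with $\mydim{\phi_{h(\phi)}} = 1$, also forces $\mydim{\phi} = p^{h(\phi)}$; hence $\mydim{\phi_1} = \mydim{\phi} - \witti{1}{\phi} = \frac{1}{p}\mydim{\phi}$ via Remark \ref{RemsKnebusch}(2), giving (1), and then (3) by the equivalence already established. Finally, $(3) \Rightarrow (2)$ would be handled by induction on $h(\phi)$: if $\phi$ is similar to a quasi-Pfister $p$-form of dimension $p^n$, then the already-proved equivalence $(1) \Leftrightarrow (3)$ combined with Corollary \ref{CoranispartQP} shows that $\phi_1$ is similar to a quasi-Pfister $p$-form of dimension $p^{n-1}$, so the inductive hypothesis gives $\mathfrak{i}(\phi_1) = (p^{n-1} - p^{n-2}, \ldots, p-1)$; using $\witti{r}{\phi} = \witti{1}{\phi_{r-1}}$ (Remark \ref{RemsKnebusch}(1)) together with $\witti{1}{\phi} = p^n - p^{n-1}$ then assembles $\mathfrak{i}(\phi)$ into the required form.

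I do not anticipate a serious obstacle here: once Corollary \ref{Corisotropyff}(1) is applied with $\psi = \phi$, the whole argument is a bookkeeping of norm fields and similarity groups, and the inductive step in $(3) \Rightarrow (2)$ is immediate from the base $h = 0$ case (where $\phi \simeq \form{1}$). The only point requiring slight care is ensuring that $\phi$ is non-split so that $F(\phi)$ makes sense and the Knebusch tower is well defined, but this is given by the anisotropy and dimension hypotheses.
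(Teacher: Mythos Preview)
Your proposal is correct and follows essentially the same route as the paper: both arguments apply Corollary \ref{Corisotropyff} with $\psi = \phi$ to obtain the inequality together with the criterion $N(\phi)\subseteq G(\phi)$ for equality, translate this via Corollary \ref{CordivbyQP} into divisibility by $\normform{\phi}$, and then use Lemma \ref{Lempropertyofnormform} to pass from divisibility to similarity; the remaining implications $(2)\Rightarrow(3)$ and $(3)\Rightarrow(2)$ are handled in both proofs by reducing to (1) and by induction on $h(\phi)$, respectively. Your write-up is slightly more explicit in spelling out the dimension-count that forces $\phi$ to be similar (rather than merely divisible by) $\normform{\phi}$, but this is exactly what the paper's appeal to Lemma \ref{Lempropertyofnormform} is encoding.
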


Finally, a repeated application of Lemma \ref{Lemisotropyff} (3-5) (with $\psi = \phi$) yields the following computation of the height $h(\phi)$: 

\begin{corollary}[{cf. \cite[Thm. 7.25 (ii)]{Hoffmann2}}] \label{Corheight=ndegree} Let $\phi$ be a quasilinear $p$-form over $F$. Then $h(\phi) = \mathrm{log}_p\big(\mathrm{ndeg}(\phi)\big)$. \end{corollary}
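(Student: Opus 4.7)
The plan is to induct on the height $h(\phi)$, using the fact (evident from the definitions in \S\ref{Normform}) that the norm degree depends only on the anisotropic kernel, together with the key observation that passing from $\phi$ to $\phi_1 = \anispart{(\phi_{F(\phi)})}$ divides the norm degree by exactly $p$ whenever $\phi$ is not split.

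First, I would handle the trivial base case: if $h(\phi) = 0$, then $\mydim{\anispart{\phi}} \leq 1$, so $D(\anispart{\phi}) \subseteq F^p \cdot 1$, which gives $N(\phi) = F^p$ and hence $\mathrm{ndeg}(\phi) = 1 = p^0$. Conversely, $\mathrm{ndeg}(\phi) = 1$ forces $\normform{\phi} \simeq \form{1}$, and then Lemma \ref{Lempropertyofnormform} forces $\mydim{\anispart{\phi}} \leq 1$, i.e., $\phi$ is split.

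For the inductive step, assume $\phi$ is not split (so $h(\phi) \geq 1$ and $\witti{0}{\phi_{F(\phi)}} \geq 1$ by Remark \ref{Remsff}~(4)). Apply Lemma \ref{Lemisotropyff} with $\psi = \phi$: since $\phi_{F(\phi)}$ is isotropic, part (5) tells us that the equivalent conditions of part (4) hold, and therefore equality is attained in part (3), yielding $\mathrm{ndeg}(\phi_{F(\phi)}) = \tfrac{1}{p}\mathrm{ndeg}(\phi)$. Because $\mathrm{ndeg}$ is invariant under passage to the anisotropic kernel, this gives $\mathrm{ndeg}(\phi_1) = \tfrac{1}{p}\mathrm{ndeg}(\phi)$. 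Now $\phi_1$ has strictly smaller dimension than $\phi$ and its own Knebusch tower is exactly the tail of that of $\phi$, so $h(\phi_1) = h(\phi) - 1$. The inductive hypothesis then gives $\mathrm{ndeg}(\phi_1) = p^{h(\phi)-1}$, whence $\mathrm{ndeg}(\phi) = p^{h(\phi)}$, as required.

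There is no serious obstacle here: the only point that needs a moment of care is the verification that the hypotheses of Lemma \ref{Lemisotropyff} are met when $\psi = \phi$, which is immediate since $\phi$ being non-split ensures $\phi_{F(\phi)}$ is isotropic, activating part (5) of the lemma. Everything else is bookkeeping about the Knebusch tower and the fact that the norm field (hence norm degree) is an invariant of the anisotropic kernel.
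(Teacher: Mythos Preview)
Your proof is correct and follows essentially the same approach as the paper, which simply states that the result follows from ``a repeated application of Lemma \ref{Lemisotropyff} (3--5) (with $\psi = \phi$)''. Your induction on $h(\phi)$ is just the formal packaging of that repeated application; the key step in both is that isotropy of $\phi_{F(\phi)}$ forces $\mathrm{ndeg}(\phi_{F(\phi)}) = \tfrac{1}{p}\mathrm{ndeg}(\phi)$, and the base case is immediate.
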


Together with Corollary \ref{Corisotropyff} (2), this implies the following useful result:

\begin{corollary}[{cf. \cite[Prop. 4.12]{Scully2}}] \label{Corhfunctoriality} Let $\phi$ and $\psi$ be quasilinear $p$-forms over $F$ such that $\phi$ is anisotropic and $\psi$ is not split. If $\phi_{F(\psi)}$ is isotropic, then $h(\psi) \leq h(\phi)$. \end{corollary}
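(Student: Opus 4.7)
The result is essentially a one-line consequence of two earlier facts already collected in the excerpt. The plan is to reduce the inequality on heights to an inequality on norm degrees, where the input hypothesis (isotropy of $\phi_{F(\psi)}$) has already been digested.

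First, I would invoke Corollary \ref{Corheight=ndegree}, which tells us that $h(\phi) = \log_p\bigl(\mathrm{ndeg}(\phi)\bigr)$ and $h(\psi) = \log_p\bigl(\mathrm{ndeg}(\psi)\bigr)$. Thus proving $h(\psi) \leq h(\phi)$ is the same as proving $\mathrm{ndeg}(\psi) \leq \mathrm{ndeg}(\phi)$, i.e., that the norm field of $\psi$ is no larger (in $[\,\cdot\,:F^p]$) than that of $\phi$.

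Next, I would apply Corollary \ref{Corisotropyff}(2) directly: since $\phi$ is anisotropic and $\psi$ is not split (so that $F(\psi)$ is defined as a genuine function field), the assumption that $\phi_{F(\psi)}$ is isotropic gives $N(\psi) \subseteq N(\phi)$, and hence $\mathrm{ndeg}(\psi) \leq \mathrm{ndeg}(\phi)$. Combining with the previous step yields the claim.

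There is essentially no obstacle here: the two key ingredients (the formula $h = \log_p\mathrm{ndeg}$ and the norm-degree comparison under isotropy over a function field) have already been established in \S\ref{Splittingpattern} and \S\ref{Isotropy} respectively, so the proof is a straightforward concatenation. The only minor care needed is to note that Corollary \ref{Corheight=ndegree} applies to any quasilinear $p$-form $\psi$ regardless of anisotropy (the height depends only on $\anispart{\psi}$, and $N(\psi) = N(\anispart{\psi})$), so that the conclusion makes sense even though the hypothesis is only that $\psi$ is not split.
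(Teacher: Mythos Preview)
Your proposal is correct and follows exactly the approach the paper uses: it explicitly states that Corollary~\ref{Corhfunctoriality} follows from Corollary~\ref{Corheight=ndegree} together with Corollary~\ref{Corisotropyff}(2), which is precisely the two-step reduction you outline.
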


\subsection{The quasi-Pfister height and higher divisibility indices} \label{Higherdivindices} Let $\phi$ be a quasilinear $p$-form over $F$. As in \cite[\S 4.2]{Scully2}, we define the \emph{quasi-Pfister height} of $\phi$, denoted $\qp{h}(\phi)$, to be the smallest nonnegative integer $l$ such that $\phi_l$ is similar to a quasi-Pfister $p$-form (this is well defined, since $\phi_{h(\phi)}$, being of dimension 1, is similar to a 0-fold quasi-Pfister $p$-form). We have:

\begin{lemma} \label{Lemtruncationsplitting} Let $\phi$ be a quasilinear $p$-form over $F$ and let $d = h(\phi) - \qp{h}(\phi)$. Then $\mathfrak{i}(\phi) = (\witti{1}{\phi},\hdots,\witti{\qp{h}(\phi)}{\phi},p^d - p^{d-1},p^{d-1} - p^{d-2},\hdots,p^2 - p,p-1)$ and $\witti{\qp{h}(\phi)}{\phi} = \mathrm{dim}(\phi) - \wittj{\qp{h}(\phi) - 1}{\phi} - p^d < p^{d+1} - p^d$.
\begin{proof} The first statement is an immediate consequence of Proposition \ref{Propi1bound}. The point here is that $\phi_{\qp{h}(\phi)}$ is similar to a quasi-Pfister $p$-form of dimension $p^d$. By Remark \ref{RemsKnebusch} (2), we therefore have $\witti{\qp{h}(\phi)}{\phi} = \mydim{\phi_{\qp{h}(\phi)-1}} - \mydim{\phi_{\qp{h}(\phi)}} =  \mathrm{dim}(\phi) - \wittj{\qp{h}(\phi) - 1}{\phi} - p^d$. Finally, since $\phi_{\qp{h}(\phi)-1}$ is (by the definition of $\qp{h}(\phi)$) not similar to a quasi-Pfister $p$-form, it must have dimension $<p^{d+1}$, again by Proposition \ref{Propi1bound}. This proves the inequality in the second statement, and hence the lemma. \end{proof} \end{lemma}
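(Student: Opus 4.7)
The plan is to reduce the whole statement to an application of Proposition \ref{Propi1bound} to the higher anisotropic kernel $\phi_{\qp{h}(\phi)}$, which is, by the very definition of $\qp{h}(\phi)$, similar to a quasi-Pfister $p$-form. First I would note that $\phi_{\qp{h}(\phi)}$ is anisotropic of height $h(\phi) - \qp{h}(\phi) = d$; being similar to a quasi-Pfister $p$-form, the equivalence $(3) \Leftrightarrow (2)$ of Proposition \ref{Propi1bound} forces $\mydim{\phi_{\qp{h}(\phi)}} = p^d$ and
$\mathfrak{i}(\phi_{\qp{h}(\phi)}) = (p^d - p^{d-1}, p^{d-1} - p^{d-2}, \hdots, p-1)$.
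Since the Knebusch tower of $\phi$ from stage $\qp{h}(\phi)$ onwards coincides with the Knebusch tower of $\phi_{\qp{h}(\phi)}$, Remark \ref{RemsKnebusch} (1) yields $\witti{\qp{h}(\phi) + r}{\phi} = \witti{r}{\phi_{\qp{h}(\phi)}}$ for all $r \geq 1$, which immediately gives the first assertion.

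For the equality in the second assertion, I would apply Remark \ref{RemsKnebusch} (2) to rewrite $\witti{\qp{h}(\phi)}{\phi} = \mydim{\phi_{\qp{h}(\phi)-1}} - \mydim{\phi_{\qp{h}(\phi)}}$; the first dimension equals $\mydim{\phi} - \wittj{\qp{h}(\phi)-1}{\phi}$ by Proposition \ref{PropWittdecomposition}, and we have just computed the second to be $p^d$, so the claimed formula falls out.

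The only genuinely nontrivial point, and thus the main (albeit modest) obstacle, is the strict inequality $\witti{\qp{h}(\phi)}{\phi} < p^{d+1} - p^d$. Here I would invoke the minimality built into the definition of $\qp{h}(\phi)$: the form $\phi_{\qp{h}(\phi)-1}$ is \emph{not} similar to a quasi-Pfister $p$-form. Proposition \ref{Propi1bound} applied to $\phi_{\qp{h}(\phi)-1}$ gives $\mydim{(\phi_{\qp{h}(\phi)-1})_1} \geq \frac{1}{p}\mydim{\phi_{\qp{h}(\phi)-1}}$, and since $(\phi_{\qp{h}(\phi)-1})_1 = \phi_{\qp{h}(\phi)}$ has dimension $p^d$, we conclude $\mydim{\phi_{\qp{h}(\phi)-1}} \leq p^{d+1}$. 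Equality would, by the equivalence $(1) \Leftrightarrow (3)$ of the same proposition, force $\phi_{\qp{h}(\phi)-1}$ to be similar to a quasi-Pfister $p$-form, contradicting the minimality of $\qp{h}(\phi)$. Hence the inequality is strict, and combined with the identity just established we obtain $\witti{\qp{h}(\phi)}{\phi} = \mydim{\phi_{\qp{h}(\phi)-1}} - p^d < p^{d+1} - p^d$, finishing the argument.
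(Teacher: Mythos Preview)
Your proposal is correct and follows essentially the same route as the paper's proof: both arguments hinge on applying Proposition~\ref{Propi1bound} to $\phi_{\qp{h}(\phi)}$ (to pin down its dimension as $p^d$ and the tail of the splitting pattern) and then to $\phi_{\qp{h}(\phi)-1}$ (using that it is \emph{not} similar to a quasi-Pfister form to get the strict bound on its dimension), with Remark~\ref{RemsKnebusch}~(2) supplying the dimension identity. Your write-up is simply a more explicit version of the paper's terse argument.
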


The Knebusch splitting pattern of a quasilinear $p$-form $\phi$ is therefore determined by $h(\phi)$, $\qp{h}(\phi)$ and the truncated sequence $\big(\witti{1}{\phi},\hdots,\witti{\qp{h}(\phi)}{\phi}\big)$. With a view to studying the latter invariant, we now introduce new invariants of $\phi$ which will be of central interest in the sequel. More specifically, for each $1 \leq r \leq h(\phi)$, we define the \emph{r-th higher divisibility index} of $\phi$, denoted $\mathfrak{d}_r(\phi)$, as the integer $\mathfrak{d}_0(\phi_r)$ (see \S \ref{QPforms}). In other words, $\mathfrak{d}_r(\phi)$ is the largest integer $s$ such that $\phi_r$ is divisible by an $s$-fold quasi-Pfister $p$-form (over the corresponding field of the Knebusch splitting tower of $\phi$). The sequence of integers $\big(\mathfrak{d}_0(\phi),\hdots,\mathfrak{d}_{h(\phi)}(\phi)\big)$ will be denoted by $\mathfrak{d}(\phi)$. As per Lemma \ref{Lemtruncationsplitting} (and the proof of Proposition \ref{Propi1bound}), we have:

\begin{lemma} \label{Lemtruncationdivisibility} Let $\phi$ be a quasilinear $p$-form over $F$ and let $d = h(\phi) - \qp{h}(\phi)$. Then $\mathfrak{d}(\phi) = (\mathfrak{d}_0(\phi),\hdots,\mathfrak{d}_{\qp{h}(\phi)-1}(\phi),d,d-1,\hdots,1,0)$.
\end{lemma}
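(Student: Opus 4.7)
The first $\qp{h}(\phi)$ entries of the sequence $\mathfrak{d}(\phi)$ are literally the definitions $\mathfrak{d}_0(\phi),\hdots,\mathfrak{d}_{\qp{h}(\phi)-1}(\phi)$, so there is nothing to prove for them. The whole content of the lemma is therefore the computation of $\mathfrak{d}_r(\phi)$ for $\qp{h}(\phi) \leq r \leq h(\phi)$: we must show that $\mathfrak{d}_{\qp{h}(\phi)+i}(\phi) = d - i$ for all $0 \leq i \leq d$.

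My plan is to prove the stronger statement that $\phi_{\qp{h}(\phi)+i}$ is similar to a $(d-i)$-fold quasi-Pfister $p$-form for every $0 \leq i \leq d$, and then invoke the obvious fact (from Corollary \ref{Cordivisibilityindexsimilarity} together with Example \ref{Exsimform}) that a form similar to an $e$-fold quasi-Pfister $p$-form has divisibility index exactly $e$. The base case $i=0$ is immediate from the definition of $\qp{h}(\phi)$ combined with the dimension computation in the proof of Lemma \ref{Lemtruncationsplitting}: the form $\phi_{\qp{h}(\phi)}$ is similar to a quasi-Pfister $p$-form, and its dimension is forced to be $p^d$.

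For the inductive step, suppose that $\phi_r \simeq a\pi$ for some $a \in F_r^*$ and some $e$-fold quasi-Pfister $p$-form $\pi$ over $F_r$, with $e \geq 1$. Then $(\phi_r)_{F_{r+1}} \simeq a\pi_{F_{r+1}}$, so $\phi_{r+1} = \anispart{((\phi_r)_{F_{r+1}})} \simeq a\cdot\anispart{(\pi_{F_{r+1}})}$. By Corollary \ref{CoranispartQP}, $\anispart{(\pi_{F_{r+1}})}$ is again a quasi-Pfister $p$-form, and Proposition \ref{Propi1bound} (applied to $\phi_r$, which is similar to a quasi-Pfister $p$-form and hence satisfies condition (3) there) forces $\mydim{\phi_{r+1}} = \tfrac{1}{p}\mydim{\phi_r} = p^{e-1}$. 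Thus $\phi_{r+1}$ is similar to an $(e-1)$-fold quasi-Pfister $p$-form, closing the induction.

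There is no real obstacle here: once one observes that similarity (as opposed to isomorphism) with a quasi-Pfister $p$-form is preserved under both scalar extension and passing to the anisotropic kernel, the whole tail of the Knebusch tower is controlled by iterating Proposition \ref{Propi1bound} and Corollary \ref{CoranispartQP}. The only slightly delicate point worth spelling out is the compatibility between the scalar factor $a$ and the operation of taking the anisotropic kernel, but this is immediate since scaling the form commutes with splitting off hyperbolic directions in the quasilinear setting (i.e., $\anispart{(a\psi)} \simeq a\anispart{\psi}$ for any form $\psi$).
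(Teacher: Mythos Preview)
Your proof is correct and follows essentially the same approach as the paper. The paper does not give an explicit proof of this lemma but simply prefaces it with ``As per Lemma \ref{Lemtruncationsplitting} (and the proof of Proposition \ref{Propi1bound})''; you have unpacked exactly those references, showing by induction (via Corollary \ref{CoranispartQP} and Proposition \ref{Propi1bound}) that each $\phi_{\qp{h}(\phi)+i}$ is similar to a $(d-i)$-fold quasi-Pfister $p$-form and then reading off the divisibility index from Example \ref{Exsimform} and Corollary \ref{Cordivisibilityindexsimilarity}.
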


We also, however, have the following information concerning the ``nontrivial part'' of the sequence $\mathfrak{d}(\phi)$:

\begin{lemma} \label{Lemincreasingdivisibility} Let $\phi$ be a quasilinear $p$-form over $F$. Then $\mathfrak{d}_0(\phi) \leq \hdots \leq \mathfrak{d}_{\qp{h}(\phi)}$.
\begin{proof} We may assume that $\phi$ is anisotropic and not split. We need to show that if $\phi$ is not similar to a quasi-Pfister $p$-form, then $\mathfrak{d}_1(\phi) \geq \mathfrak{d}_0(\phi)$. By Lemma \ref{Lemdivisibilityindexextension} it will be sufficient to check that $\simform{\phi}$ remains anisotropic over $F(\phi)$. Suppose otherwise. Then, by Corollary \ref{Corisotropyff} (1), we have $N(\phi) \subseteq G(\phi)$. By Corollary \ref{Corsimilarityfield} it follows that $N(\phi) = G(\phi)$, or, equivalently, that $\normform{\phi} \simeq \simform{\phi}$ (see Lemma \ref{Lemexistenceofforms}). But, in view of Lemma \ref{Lempropertyofnormform} and Corollary \ref{Corpropertyofsimform}, this implies that $\phi$ is similar to a quasi-Pfister $p$-form, thus contradicting our assumption. The lemma follows. \end{proof} \end{lemma}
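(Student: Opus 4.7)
My plan is to reduce the full chain of inequalities to the single base case $\mathfrak{d}_0(\phi) \leq \mathfrak{d}_1(\phi)$ by exploiting the recursive structure of the Knebusch tower. Since $\phi_{r+1}$ is by construction the first higher anisotropic kernel of $\phi_r$ over $F_r$, and since $\phi_r$ fails to be similar to a quasi-Pfister $p$-form precisely when $r < \qp{h}(\phi)$, applying the base case to each $\phi_r$ with $0 \leq r < \qp{h}(\phi)$ will chain together to yield all the desired inequalities.

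For the base case I may assume that $\phi$ is anisotropic, not split, and not similar to a quasi-Pfister $p$-form. The first step is to invoke Lemma \ref{Lemdivisibilityindexextension} with $L = F(\phi)$, which reduces the problem to showing that $\simform{\phi}$ remains anisotropic over $F(\phi)$: if so, then $\mathfrak{d}_1(\phi) = \mathfrak{d}_0(\phi_{F(\phi)}) \geq \mathfrak{d}_0(\phi)$, where the equality rests on the fact that $\mathfrak{d}_0$ depends only on the anisotropic kernel.

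The key technical step is therefore to establish this anisotropy, which I will do by contradiction. Suppose $(\simform{\phi})_{F(\phi)}$ is isotropic. Since $\simform{\phi}$ is a quasi-Pfister $p$-form, Corollary \ref{CoranispartQP} forces $\mydim{\anispart{((\simform{\phi})_{F(\phi)})}} \leq \frac{1}{p}\mydim{\simform{\phi}}$, while Corollary \ref{Corisotropyff} (1) applied to $\simform{\phi}$ (with $\psi = \phi$) furnishes the reverse inequality together with the equality criterion $N(\phi) \subseteq G(\simform{\phi})$. Invoking Example \ref{Exsimform} to identify $G(\simform{\phi})$ with $G(\phi)$, and combining with the inclusion $G(\phi) \subseteq N(\phi)$ from Corollary \ref{Corsimilarityfield}, I obtain $G(\phi) = N(\phi)$, i.e., $\simform{\phi} \simeq \normform{\phi}$.

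To close the base case, Lemma \ref{Lempropertyofnormform} shows $\phi$ is similar to a subform of $\normform{\phi}$, while Corollary \ref{Corpropertyofsimform} shows $\phi$ is divisible by $\simform{\phi}$; the coincidence $\simform{\phi} \simeq \normform{\phi}$ then pinches these together to force $\phi$ itself to be similar to $\normform{\phi}$, contradicting the assumption that $\phi$ is not similar to a quasi-Pfister $p$-form. The step I expect to be most delicate is this anisotropy argument: the crux is that the two opposing dimension bounds on $\anispart{((\simform{\phi})_{F(\phi)})}$ must pinch tightly enough to yield an equality, and that this equality then propagates through the norm/similarity machinery to identify $\phi$ as quasi-Pfister.
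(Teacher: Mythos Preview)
Your proposal is correct and follows essentially the same approach as the paper. You spell out a couple of steps the paper leaves implicit---namely, the pinching of the two dimension bounds on $\anispart{((\simform{\phi})_{F(\phi)})}$ via Corollary~\ref{CoranispartQP}, and the identification $G(\simform{\phi}) = G(\phi)$ via Example~\ref{Exsimform}---but the logical skeleton (reduce to anisotropy of $\simform{\phi}$ over $F(\phi)$ via Lemma~\ref{Lemdivisibilityindexextension}, then derive $N(\phi) = G(\phi)$ and contradict the non-quasi-Pfister hypothesis) is identical.
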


In particular, since $\witti{r}{\phi} = \mydim{\phi_r} - \mydim{\phi_{r-1}}$ for all $1 \leq r \leq h(\phi)$ (see Remark \ref{RemsKnebusch} (2)), we obtain the following result concerning the integers $\witti{r}{\phi}$:

\begin{corollary} \label{Cordivisorsofindices} Let $\phi$ be a quasilinear $p$-form over $F$. Then $\witti{r}{\phi} \equiv 0 \pmod{\mathfrak{d}_{r-1}(\phi)}$ for all $1 \leq r \leq \qp{h}(\phi)$. \end{corollary}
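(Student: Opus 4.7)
The plan is to reduce the divisibility claim to an assertion about dimensions, and then exploit the monotonicity of the sequence $\big(\mathfrak{d}_r(\phi)\big)_r$ on the range $0 \leq r \leq \qp{h}(\phi)$ established in Lemma \ref{Lemincreasingdivisibility}. Interpreting the statement of the corollary as $\witti{r}{\phi} \equiv 0 \pmod{p^{\mathfrak{d}_{r-1}(\phi)}}$ (so that the content is genuinely about divisibility by a power of $p$ coming from a quasi-Pfister factor), the key observation is that whenever an anisotropic quasilinear $p$-form is divisible by an $s$-fold quasi-Pfister $p$-form, its dimension is automatically a multiple of $p^s$.

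First I would unwind the definitions: by the definition of $\mathfrak{d}_{r-1}(\phi) = \mathfrak{d}_0(\phi_{r-1})$, the form $\phi_{r-1}$ over $F_{r-1}$ is divisible by some $\mathfrak{d}_{r-1}(\phi)$-fold quasi-Pfister $p$-form $\pi$. Since every $s$-fold quasi-Pfister $p$-form has dimension $p^s$, this gives $p^{\mathfrak{d}_{r-1}(\phi)} \mid \mydim{\phi_{r-1}}$.

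Next I would apply Lemma \ref{Lemincreasingdivisibility}. The hypothesis $1 \leq r \leq \qp{h}(\phi)$ ensures that $r-1 < \qp{h}(\phi)$, so that lemma gives $\mathfrak{d}_r(\phi) \geq \mathfrak{d}_{r-1}(\phi)$. Hence $\phi_r$ is itself divisible by a quasi-Pfister $p$-form of dimension at least $p^{\mathfrak{d}_{r-1}(\phi)}$, and the same dimension-counting argument yields $p^{\mathfrak{d}_{r-1}(\phi)} \mid \mydim{\phi_r}$. Combining these two divisibilities with the identity $\witti{r}{\phi} = \mydim{\phi_{r-1}} - \mydim{\phi_r}$ from Remark \ref{RemsKnebusch}(2) delivers the conclusion.

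There is essentially no obstacle here; all of the analytic work was packaged into Lemma \ref{Lemincreasingdivisibility}, whose proof required Corollary \ref{Corisotropyff}(1), Corollary \ref{Corsimilarityfield}, Lemma \ref{Lempropertyofnormform} and Corollary \ref{Corpropertyofsimform} to rule out the unwanted coincidence $\normform{\phi} \simeq \simform{\phi}$ at each step of the splitting tower below $\qp{h}(\phi)$. The only subtlety worth flagging is the role of the cutoff $r \leq \qp{h}(\phi)$: above this threshold the form $\phi_{r-1}$ is similar to a quasi-Pfister $p$-form, and then $\mathfrak{d}_{r-1}(\phi)$ drops in the manner described by Lemma \ref{Lemtruncationdivisibility}, so monotonicity need not hold and the present argument would fail; inside the allowed range, however, the proof is immediate.
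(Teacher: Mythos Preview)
Your proposal is correct and follows essentially the same route as the paper: the paper's proof is the single sentence preceding the corollary, which invokes Remark \ref{RemsKnebusch}(2) to write $\witti{r}{\phi} = \mydim{\phi_{r-1}} - \mydim{\phi_r}$ and then appeals to Lemma \ref{Lemincreasingdivisibility} for the required divisibilities of the two dimensions. You have simply spelled this out in more detail, and you are right to read the modulus as $p^{\mathfrak{d}_{r-1}(\phi)}$ (as confirmed by Theorem \ref{Thmallvalues}(5)).
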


\subsection{Two basic examples} \label{Twoexamples} We now conclude this section with two basic computations which will be needed in the sequel, beginning with:

\begin{lemma} \label{Lemaddavariable} Let $\phi$ be an anisotropic quasilinear $p$-form over $F$ and let $\psi = \phi_{F(T)} \perp \form{T}$, where $T$ is an algebraically independent variable over $F$. Then $\mathfrak{i}(\psi) = \big(1,\witti{1}{\phi},\witti{2}{\phi},\hdots,\witti{h(\phi)}{\phi}\big)$ and $\mathfrak{d}(\psi) = \big(0,\mathfrak{d}_0(\phi),\mathfrak{d}_1(\phi),\hdots,\mathfrak{d}_{h(\phi)}(\phi)\big)$.
\begin{proof} The form $\psi$ is clearly anisotropic. Now, the field $F(T)(\psi)$ is $F$-isomorphic to a purely transcendental extension of $F$ (see the presentation of Remark \ref{Remsff} (1), for example). In particular, $\phi_{F(T)(\psi)}$ is anisotropic (Lemma \ref{Lemseparableextensions}), and so $\witti{1}{\psi} = 1$ and $\psi_1 \simeq \phi_{F(T)(\psi)}$ (see Remark \ref{Remanispart}). Since $F(T)(\psi)$ is purely transcendental over $F$, the first statement now follows immediately from Lemma \ref{Lemseparableextensions}. In a similar way, Lemma \ref{Lemdivisibilityindexstability} implies that $\mathfrak{d}_r(\psi) = \mathfrak{d}_{r-1}(\phi)$ for all $1 \leq r \leq h(\psi)$. Thus, to prove the second statement, it only remains to check that $\mathfrak{d}_0(\psi) = 0$. But, since $\witti{1}{\psi} = 1$, this is an immediate consequence of Corollary \ref{Cordivisorsofindices}. \end{proof} \end{lemma}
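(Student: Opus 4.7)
My plan proceeds in three stages: (i) verify that $\psi$ is anisotropic; (ii) compute the first higher anisotropic kernel $\psi_1$ by showing that $F(T)(\psi)$ is $F$-isomorphic to a purely transcendental extension of $F$; (iii) propagate this identification through the remainder of the Knebusch splitting tower using the stability results of \S\ref{Isotropy} and \S\ref{Divisibilityscalarextension}.

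For stage (i), since $F \subset F(T)$ is separable, Lemma~\ref{Lemseparableextensions} ensures $\phi_{F(T)}$ is anisotropic. If $\psi$ were isotropic, then $T \in D(\phi_{F(T)})$, and the Cassels-Pfister theorem (Theorem~\ref{ThmCasselsPfister}) would force $T \in D(\phi)[T^p]$, which is absurd.

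For stage (ii), write $\phi \simeq \form{a_1,\ldots,a_n}$ and apply the presentation of Remark~\ref{Remsff}(1) with $T$ taking the role of $a_0$, giving
\[ F(T)(\psi) \simeq F(T,S_2,\ldots,S_n)\bigl(\sqrt[p]{a_1^{-1}(T + a_2 S_2^p + \cdots + a_n S_n^p)}\bigr). \]
Writing $U$ for the adjoined $p$-th root, the relation $a_1 U^p = T + a_2 S_2^p + \cdots + a_n S_n^p$ lets me recover $T$ as an element of $F(U,S_2,\ldots,S_n)$, so the whole field collapses to $F(U,S_2,\ldots,S_n)$. This field has transcendence degree $n$ over $F$ and is generated by $n$ elements, so $U,S_2,\ldots,S_n$ are algebraically independent and the extension $F \subset F(T)(\psi)$ is purely transcendental. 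Lemma~\ref{Lemseparableextensions} then shows $\phi_{F(T)(\psi)}$ is anisotropic, and Remark~\ref{Remanispart} yields $\witti{1}{\psi} = 1$ and $\psi_1 \simeq \phi_{F(T)(\psi)}$.

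For stage (iii) everything now follows by quotation. Applying Lemma~\ref{Lemseparableextensions} stepwise to the Knebusch tower of $\phi$, which lifts to the tower of $\psi_1$ over the purely transcendental extension $F \subset F(T)(\psi)$, yields the claimed formula for $\mathfrak{i}(\psi)$. Lemma~\ref{Lemdivisibilityindexstability} gives $\mathfrak{d}_r(\psi) = \mathfrak{d}_{r-1}(\phi)$ for all $r \geq 1$, and $\mathfrak{d}_0(\psi) = 0$ is forced by Corollary~\ref{Cordivisorsofindices}, since $\mathfrak{d}_0(\psi)$ must divide $\witti{1}{\psi} = 1$. The only nontrivial calculation in the whole argument is the function-field identification in stage (ii); all remaining steps are direct invocations of the stability lemmas.
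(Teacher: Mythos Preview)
Your proof is correct and follows essentially the same route as the paper's: identify $F(T)(\psi)$ as purely transcendental over $F$ via the presentation of Remark~\ref{Remsff}(1), deduce $\witti{1}{\psi}=1$ and $\psi_1\simeq\phi_{F(T)(\psi)}$, then invoke Lemmas~\ref{Lemseparableextensions} and~\ref{Lemdivisibilityindexstability} and Corollary~\ref{Cordivisorsofindices}. You simply make explicit what the paper leaves to the reader---the Cassels--Pfister justification of anisotropy and the change of variables $T\mapsto a_1U^p - a_2S_2^p - \cdots - a_nS_n^p$ exhibiting the pure transcendence---but the logical skeleton is identical.
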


Given this result, we can give an example of a quasilinear $p$-form whose (full) splitting pattern is \emph{not} determined its Knebusch splitting pattern:

\begin{example}[{cf. \cite[Ex. 8.15]{HoffmannLaghribi1}}] \label{Exnongeneric} Let $T = (T_1,\hdots,T_{n+1})$ be a tuple of algebraically independent variables over a field $F_0$ of characteristic $p$, and let $F = F_0(T)$. Consider the form $\phi = \pfister{T_1,\hdots,T_n} \perp \form{T_{n+1}}$ over $F$. By Lemma \ref{Lemaddavariable} and Proposition \ref{Propi1bound}, we have $\mathfrak{i}(\phi) = (1,p^n - p^{n-1},p^{n-1} - p^{n-2},\hdots,p^2-p,p-1)$, so that $\wittj{r}{\phi} = p^n - p^{n-r+1} + 1$ for all $1 \leq r \leq n+1$. On the other hand, the full splitting pattern of $\phi$ also contains all the integers $p^n - p^{n-r+1}$ ($1 \leq r \leq n+1$). Indeed, if $(L_s)$ denotes the Knebusch splitting tower of $\pfister{a_1,\hdots,a_n}_L$, then we clearly have $\witti{0}{\phi_{L_{r-1}}} = p^n - p^{n-r+1}$ for all $1 \leq r \leq n+1$ (again, we are using Lemma \ref{Lemseparableextensions} and Proposition \ref{Propi1bound} here). \end{example}

Our second computation is the following:

\begin{lemma} \label{LemgenericQPmultiple} Let $\phi$ be a quasilinear $p$-form over $F$ and let $\psi = \pfister{T_1,\hdots,T_n} \otimes \phi_{F(T)}$, where $T=(T_1,\hdots,T_n)$ is a tuple of algebraically independent variables over $F$. Then $\mathfrak{i}(\psi) = (p^n\witti{1}{\phi},\hdots,p^n\witti{h(\phi)}{\phi},p^n - p^{n-1},p^{n-1} - p^{n-2},\hdots,p^2 - p,p-1)$ and $\mathfrak{d}(\psi) = (\mathfrak{d}_0(\phi) + n, \mathfrak{d}_1(\phi) + n,\hdots, \mathfrak{d}_{h(\phi)-1}(\phi) + n, n, n-1,\hdots,1,0)$.
\begin{proof} It is enough to treat the case where $n=1$. To simplify the notation, let us write $T$ for the variable $T_1$ and $L$ for the rational function field $F(T)$. Now, by construction, we have $\mathrm{ndeg}(\psi) = p\big(\mathrm{ndeg}(\phi)\big)$ (see Remark \ref{Remexplicitnormfield}). In view of Corollary \ref{Corheight=ndegree}, it follows that $h(\psi) = h(\phi) +1$. Let $(L_r)$ and $(F_r)$ denote the Knebusch splitting towers of $\psi$ and $\phi$ respectively. We claim that, for every $0 \leq r \leq h(\phi)$, $(L_r)_T$ is $F$-isomorphic to a purely transcendental extension of $F_r$. The case where $r = 0$ is evident. In general, we have $(L_r)_T = (L_T)_r$, where $((L_T)_r)$ denotes the Knebusch splitting tower of $\psi_{L_T}$. But, since $\pfister{T}_{L_T} \sim \form{1}$, and since $L_T$ is purely transcendental over $F$ (the $r=0$ case), we have $\anispart{(\psi_{L_T})} \simeq \phi_{L_T}$. By Remark \ref{Remanispart}, it follows that $(L_r)_T$ is $L$-isomorphic to a purely transcendental extension of the free composite $F_r \cdot L_T$. Again, since $L_T$ is purely transcendental over $F$, the claim follows. Given this, Proposition \ref{PropindexofmultiplesofQP} and Lemma \ref{Lemseparableextensions} together imply that
\begin{equation*} \wittj{r}{\psi} = \witti{0}{\psi_{L_r}} = p\witti{0}{\phi_{(L_r)_T}} = p\witti{0}{\phi_{F_r}} = p\wittj{r}{\phi} \end{equation*}
for all $0 \leq r \leq h(\phi)$, which proves the first statement of the lemma. Similarly, our claim, Proposition \ref{PropindexofmultiplesofQP} and Lemma \ref{Lemdivisibilityindexstability} together imply that
\begin{equation*} \mathfrak{d}_r(\psi) = \mathfrak{d}_0(\psi_{L_r}) = \mathfrak{d}_0(\phi_{(L_r)_T}) + 1 = \mathfrak{d}_0(\phi_{F_r}) + 1 = \mathfrak{d}_r(\phi) + 1 \end{equation*}
for all $0 \leq r \leq h(\phi)$, and so the second statement also holds.\end{proof} \end{lemma}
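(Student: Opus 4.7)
The plan is to reduce to the case $n=1$ by induction (the general case follows by applying the $n=1$ case $n$ times, the essential point being that adjoining successive independent variables keeps each intermediate $\pfister{T_1,\hdots,T_i}$ anisotropic). With $n=1$, write $T$ for $T_1$, $L=F(T)$, and $\psi = \pfister{T}\otimes\phi_L$. From Remark \ref{Remexplicitnormfield} we immediately get $\mathrm{ndeg}(\psi) = p\cdot\mathrm{ndeg}(\phi)$, so Corollary \ref{Corheight=ndegree} forces $h(\psi) = h(\phi) + 1$. Thus the Knebusch tower of $\psi$ is exactly one step longer than that of $\phi$, which matches the shape of the claimed invariants.

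The crux of the proof is a claim proved by induction on $r$: for every $0 \leq r \leq h(\phi)$, the field $(L_r)_T = L_r(\sqrt[p]{T})$ is $F$-isomorphic to a purely transcendental extension of $F_r$. The case $r=0$ is immediate since $F(T)(\sqrt[p]{T}) = F(\sqrt[p]{T})$. For the inductive step, one observes that once we adjoin $\sqrt[p]{T}$, the element $T$ becomes a $p$-th power, so $\pfister{T}$ has anisotropic kernel $\form{1}$ over $(L_{r-1})_T$; hence $\psi_{(L_{r-1})_T} \sim \phi_{(L_{r-1})_T}$. Combined with the inductive hypothesis that $(L_{r-1})_T$ is purely transcendental over $F_{r-1}$ (so that $\phi$ remains anisotropic there by Lemma \ref{Lemseparableextensions}), this identifies the function field obtained by passing from $L_{r-1}$ to $L_r$ and then adjoining $\sqrt[p]{T}$ with a purely transcendental extension of $F_{r-1}(\phi_{r-1}) = F_r$.

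Given the claim, the two formulas follow mechanically. For each $r$, $\pfister{T}$ is anisotropic over $L_r$ (otherwise $(L_r)_T = L_r$, contradicting the fact that $(L_r)_T$ is rational over $F_r$ while $L_r$ is not). Proposition \ref{PropindexofmultiplesofQP} applied over $L_r$ with $\pi = \pfister{T}$ then yields
\begin{equation*}
\wittj{r}{\psi} = \witti{0}{\psi_{L_r}} = p\cdot\witti{0}{\phi_{(L_r)_T}} \quad\text{and}\quad \mathfrak{d}_r(\psi) = \mathfrak{d}_0(\psi_{L_r}) = \mathfrak{d}_0(\phi_{(L_r)_T}) + 1.
\end{equation*}
Since $(L_r)_T$ is purely transcendental over $F_r$, Lemma \ref{LemPlaceisotropy} and Lemma \ref{Lemdivisibilityindexstability} give $\witti{0}{\phi_{(L_r)_T}} = \wittj{r}{\phi}$ and $\mathfrak{d}_0(\phi_{(L_r)_T}) = \mathfrak{d}_r(\phi)$. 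Taking differences $\witti{r}{\psi} = \wittj{r}{\psi} - \wittj{r-1}{\psi} = p\witti{r}{\phi}$ covers the first $h(\phi)$ entries; the final entry $\witti{h(\psi)}{\psi} = p-1$ and $\mathfrak{d}_{h(\psi)}(\psi) = 0$ come from the fact that $\psi_{h(\phi)}$ has dimension $p\cdot\mydim{\phi_{h(\phi)}} = p$, forcing it to be similar to a $1$-fold quasi-Pfister form (this is where Proposition \ref{Propi1bound} applies in the terminal step).

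The main obstacle is the inductive claim that $(L_r)_T$ is purely transcendental over $F_r$; this is where one must carefully argue that taking the function field of the $(r-1)$-th anisotropic kernel of $\psi$ and then adjoining $\sqrt[p]{T}$ is interchangeable, up to $F$-equivalence, with first adjoining $\sqrt[p]{T}$ and then taking the function field of the corresponding anisotropic kernel of $\phi$. Once this is in place, everything else is a direct invocation of Proposition \ref{PropindexofmultiplesofQP} together with the separability/transcendence stability lemmas.
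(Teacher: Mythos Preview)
Your proof is correct and follows essentially the same route as the paper's: reduce to $n=1$, compute $h(\psi)=h(\phi)+1$ via norm degree, establish that $(L_r)_T$ is purely transcendental over $F_r$, and then invoke Proposition \ref{PropindexofmultiplesofQP} together with the stability lemmas. The paper phrases the key claim via the identity $(L_r)_T=(L_T)_r$ rather than by explicit induction on $r$, but the content is the same.

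One small point: your justification that $\pfister{T}$ stays anisotropic over $L_r$ (``otherwise $(L_r)_T=L_r$, contradicting the fact that $(L_r)_T$ is rational over $F_r$ while $L_r$ is not'') is not quite right as stated, since you have not shown that $L_r$ fails to be purely transcendental over $F_r$. A clean fix is to note that if $T\in L_r^p$ then $\psi_r\simeq(\phi_r)_{L_r}$, whence $h(\psi_r)=h(\phi)-r$, contradicting $h(\psi_r)=h(\psi)-r=h(\phi)+1-r$. (The paper's proof also leaves this step implicit.)
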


\section{An incompressibility theorem and related results} \label{Incompressibilityresults}

In this section, we collect some of the farther-reaching results on the isotropy behaviour of quasilinear $p$-forms over function fields of quasilinear $p$-hypersurfaces which have been obtained in recent years. These results will have an essential role to play in the sequel. We do not provide full details here, but the interested reader is referred to the original articles (\cite{HoffmannLaghribi1,Totaro1,Scully2}) for further information.

\subsection{The incompressibility theorem} \label{Incompressibilitythm} Let $\phi$ be an anisotropic quasilinear $p$-form of dimension $\geq 2$ over $F$ with associated quasilinear $p$-hypersurface $X_\phi$. As in \cite[\S 5]{Scully1}, we define the \emph{Izhboldin dimension} of $X_\phi$, denoted $\mathrm{dim}_{\mathrm{Izh}}(X_\phi)$, to be the integer $\mydim{X_\phi} - \witti{1}{\phi} + 1$. The following result was proved in \emph{loc. cit.}:

\begin{theorem}[{\cite[Thm. 5.12]{Scully1}}] \label{Thmincompressibility} Let $X$ be an anisotropic quasilinear $p$-hypersurface over $F$ and let $Y$ be an algebraic variety over $F$ such that $Y(F_{\mathrm{sep}}) = \emptyset$. If $\mydim{Y} < \mathrm{dim}_{\mathrm{Izh}}(X)$, then there cannot exist a rational map $X \dashrightarrow Y$. \end{theorem}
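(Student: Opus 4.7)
The plan is to derive a contradiction from the existence of such a rational map $f \colon X \dashrightarrow Y$. An initial reduction replaces $Y$ by the closure $\overline{f(X)}$ of its image: this closure is a closed subvariety of $Y$, hence still has no $F_{\mathrm{sep}}$-points and still has dimension strictly less than $\mathrm{dim}_{\mathrm{Izh}}(X)$. We may therefore assume from the outset that $f$ is dominant, so that we obtain an $F$-embedding $F(Y) \hookrightarrow F(X)$ with
\begin{equation*}
\mathrm{trdeg}_{F(Y)} F(X) = \mydim{X} - \mydim{Y} \geq \witti{1}{\phi},
\end{equation*}
by the defining equation $\mathrm{dim}_{\mathrm{Izh}}(X) = \mydim{X} - \witti{1}{\phi} + 1$.

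I would next convert this into a statement about isotropy of $\phi$ over $F(Y)$. Forming the graph closure $\Gamma \subseteq X \times Y$ of $f$ and projecting onto $Y$, the generic fibre is a closed integral subvariety $W \subseteq X_{F(Y)}$ of dimension $\mydim{X} - \mydim{Y} \geq \witti{1}{\phi}$. Using the explicit description of function fields of quasilinear $p$-hypersurfaces (Remark \ref{Remsff} (1)), together with the Cassels--Pfister theorem (Theorem \ref{ThmCasselsPfister}) and the norm-field analysis provided by Lemma \ref{Lemisotropyff} and Corollary \ref{Corisotropyff}, such a large subvariety in $X_{F(Y)}$ should force $\witti{0}{\phi_{F(Y)}} \geq \witti{1}{\phi}$. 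In other words, the field $F(Y)$ behaves no worse than the generic splitting field $F(\phi)$ from the viewpoint of first-round isotropy of $\phi$.

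The hard part, and really the heart of the argument, is translating this excess isotropy over $F(Y)$ into a genuine $F_{\mathrm{sep}}$-rational point of $Y$, contradicting the hypothesis. A natural approach is specialisation: pick a discrete valuation of $F(Y)$ centred at a closed point of a proper model of $Y$. Lemma \ref{LemPlaceisotropy} guarantees that isotropy indices do not decrease under specialisation along $F$-places, while Lemma \ref{Lemseparableextensions} shows that passage to separable extensions preserves anisotropy. Combining these, one hopes to conclude that at least one such specialisation must have residue field embedding into $F_{\mathrm{sep}}$, producing the desired $F_{\mathrm{sep}}$-point of $Y$. The delicate issue is controlling the purely inseparable part of the residue fields encountered (since $Y$ may only acquire points over purely inseparable extensions of $F$); making this rigorous would most likely require an induction on $\mydim{Y}$ or on the height $h(\phi)$, interwoven with the analysis of higher anisotropic kernels $\phi_r$ developed in Section \ref{Splittingpattern}, and this inductive specialisation-and-descent argument is where I anticipate the main technical obstacle.
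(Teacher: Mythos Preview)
The paper does not prove this theorem; it is imported as a black box from \cite[Thm.~5.12]{Scully1} (the $p=2$ case being due to Totaro \cite{Totaro1}). So there is no proof in the present paper to compare against. That said, your proposal has two genuine gaps that would need to be closed before it could stand as an independent argument.

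First, your Step 3 does not follow. The generic fibre $W \subseteq X_{F(Y)}$ has dimension $\mydim{X} - \mydim{Y} \geq \witti{1}{\phi}$, but $X_{F(Y)}$ itself already has dimension $\mydim{\phi} - 2$, which may be far larger than $\witti{1}{\phi}$. The mere existence of a closed subvariety of dimension $\geq \witti{1}{\phi}$ inside $X_{F(Y)}$ is a very weak condition and certainly does not force $\witti{0}{\phi_{F(Y)}} \geq \witti{1}{\phi}$; indeed, $\phi_{F(Y)}$ could still be anisotropic (for instance, when $F(Y)$ is separable over $F$, which is ruled out only by the hypothesis on $Y(F_{\mathrm{sep}})$, not by anything you have used so far). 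The lemmas you cite (Lemma \ref{Lemisotropyff}, Corollary \ref{Corisotropyff}) concern isotropy over function fields of quasilinear $p$-hypersurfaces, not over $F(Y)$ for an arbitrary $Y$, and do not bridge this gap.

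Second, your Step 4 is, as you yourself note, the heart of the matter, and the sketch you give does not work. Specialisation along an $F$-place $F(Y) \dashrightarrow \kappa$ lands in a residue field $\kappa$ which is a finite extension of $F$; nothing prevents $\kappa$ from being purely inseparable over $F$, and in that case no $F_{\mathrm{sep}}$-point of $Y$ is produced. The hypothesis $Y(F_{\mathrm{sep}}) = \emptyset$ is precisely compatible with $Y$ having closed points of $p$-power degree, so this route is blocked. The actual argument in \cite{Scully1} (following \cite{KarpenkoMerkurjev,Totaro1}) is of a rather different nature: it proceeds via a canonical dimension computation, using a cycle-theoretic or degree-formula style argument to show that any rational self-map of $X$ must have image of dimension at least $\mathrm{dim}_{\mathrm{Izh}}(X)$, and then deduces the general statement from this. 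Your specialisation-and-descent outline does not capture that mechanism.
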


\begin{remark} In the case where $p=2$, Theorem \ref{Thmincompressibility} is due to Totaro (\cite[Thm. 5.1]{Totaro1}). In fact, if $X_\phi = \lbrace \phi = 0 \rbrace$ is an anisotropic projective quadric over a field $k$ of any characteristic, and if $Y$ is any complete $k$-variety possessing no closed points of odd degree, then it is known that the existence of a rational map $X \dashrightarrow Y$ necessarily implies the inequality $\mydim{Y} \geq \mathrm{dim}_{\mathrm{Izh}}(X_{\phi})$, where $\mathrm{dim}_{\mathrm{Izh}}(X_{\phi}) = \mydim{X_\phi} - \witti{1}{\phi} + 1$. This result was first proved by Karpenko and Merkurjev in the case where $X_\phi$ is smooth (\cite[Thm 4.1]{KarpenkoMerkurjev}, \cite[Thm. 76.5]{EKM}), and was later extended by Totaro (\emph{loc. cit.}) to the singular case. \end{remark}

In the remainder of this section, we will recall some of the main applications of Theorem \ref{Thmincompressibility} (and its proof). Here, we mention the following:

\begin{corollary}[{cf. \cite[Cor. 5.4]{Scully2}}] \label{Corsubformanisotropy} Let $\phi$ and $\psi$ be anisotropic quasilinear $p$-forms of dimension $\geq 2$ over $F$, and let $\sigma \subset \phi$ be a subform of dimension $\leq \mydim{\psi} - \witti{1}{\psi}$. Then $\sigma_{F(\psi)} \subset \anispart{(\phi_{F(\psi)})}$. In particular, $\sigma_{F(\psi)}$ is anisotropic.
\begin{proof} We trivially have $D(\sigma_{F(\psi)}) \subseteq D(\phi_{F(\psi)})$. In light of Proposition \ref{Propanisclassification}, it therefore suffices to check that $\sigma_{F(\psi)}$ is anisotropic, or, equivalently, that there does not exist a rational map $X_{\psi} \dashrightarrow X_\sigma$. But, since $X_{\sigma}(F_{\mathrm{sep}}) = \emptyset$ (see Lemma \ref{Lemseparableextensions}), this is an immediate consequence of Theorem \ref{Thmincompressibility}. \end{proof} \end{corollary}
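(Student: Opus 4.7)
The plan is to reduce the assertion to a direct application of Theorem \ref{Thmincompressibility}. Since $\sigma \subset \phi$, we trivially have $D(\sigma_{F(\psi)}) \subseteq D(\phi_{F(\psi)}) = D(\anispart{(\phi_{F(\psi)})})$. By Proposition \ref{Propanisclassification}, which identifies an anisotropic quasilinear $p$-form with its set of represented values, the desired embedding $\sigma_{F(\psi)} \subset \anispart{(\phi_{F(\psi)})}$ will follow at once from the anisotropy of $\sigma_{F(\psi)}$. So the whole problem collapses to establishing that anisotropy.

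If $\mydim{\sigma} = 1$ the anisotropy is automatic and nothing further is needed, so I assume $\mydim{\sigma} \geq 2$. In that regime, isotropy of $\sigma_{F(\psi)}$ is the same as the existence of an $F(\psi)$-rational point on $X_\sigma$, which in turn is the same as the existence of a rational $F$-map $X_\psi \dashrightarrow X_\sigma$. Theorem \ref{Thmincompressibility} forbids such a rational map provided that (a) $X_\sigma(F_{\mathrm{sep}}) = \emptyset$, and (b) $\mydim{X_\sigma} < \mathrm{dim}_{\mathrm{Izh}}(X_\psi)$. For (a), the form $\sigma$ is anisotropic as a subform of the anisotropic $\phi$, and Lemma \ref{Lemseparableextensions} preserves anisotropy under the separable extension $F \subset F_{\mathrm{sep}}$. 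For (b), one has $\mydim{X_\sigma} = \mydim{\sigma} - 2$ and $\mathrm{dim}_{\mathrm{Izh}}(X_\psi) = \mydim{\psi} - \witti{1}{\psi} - 1$, so the required inequality reduces to $\mydim{\sigma} \leq \mydim{\psi} - \witti{1}{\psi}$, which is precisely the hypothesis.

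There is no genuine obstacle to this argument: Theorem \ref{Thmincompressibility} does all the heavy lifting, and the corollary is essentially a translation of the geometric incompressibility statement into the algebraic language of represented values via Proposition \ref{Propanisclassification}. The only points requiring attention are the dimension bookkeeping in (b) and the separate handling of the degenerate case $\mydim{\sigma} = 1$.
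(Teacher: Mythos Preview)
Your proposal is correct and follows essentially the same route as the paper's proof: reduce to anisotropy of $\sigma_{F(\psi)}$ via Proposition~\ref{Propanisclassification}, then rule out a rational map $X_\psi \dashrightarrow X_\sigma$ by invoking Theorem~\ref{Thmincompressibility} together with Lemma~\ref{Lemseparableextensions}. You have simply made explicit what the paper leaves implicit, namely the dimension count in (b) and the trivial case $\mydim{\sigma}=1$.
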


In particular, we have the following fundamental observation:

\begin{corollary} \label{Corfirstkernel} Let $\phi$ be an anisotropic quasilinear $p$-form of dimension $\geq 2$ over $F$ and let $\psi \subset \phi$ be a subform of codimension $\witti{1}{\phi}$. Then $\phi_1 \simeq \psi_{F(\phi)}$. 
\begin{proof} By Corollary \ref{Corsubformanisotropy}, we have $\psi_{F(\phi)} \subset \phi_1$. Since both forms have the same dimension by hypothesis, the result follows. \end{proof} \end{corollary}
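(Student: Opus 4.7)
The plan is to deduce this as a direct consequence of the preceding Corollary \ref{Corsubformanisotropy}, together with a simple dimension count. The setup already places us exactly in the hypotheses of that corollary, so the only real work is to verify that there is no slack in the codimension bound and to use the anisotropy of $\phi_1$ to promote a subform inclusion of equal dimensions to an isomorphism.

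First I would apply Corollary \ref{Corsubformanisotropy} with the roles of both ``$\phi$'' and ``$\psi$'' in that statement taken to be our $\phi$, and with $\sigma = \psi$. The required inequality $\mydim{\sigma} \leq \mydim{\psi} - \witti{1}{\psi}$ becomes $\mydim{\psi} \leq \mydim{\phi} - \witti{1}{\phi}$, which holds by hypothesis (in fact with equality, since $\psi$ is a subform of $\phi$ of codimension exactly $\witti{1}{\phi}$). The corollary then gives $\psi_{F(\phi)} \subset \anispart{(\phi_{F(\phi)})} = \phi_1$; in particular, $\psi_{F(\phi)}$ is anisotropic.

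Next I would compare dimensions. By Remark \ref{RemsKnebusch}(2), $\mydim{\phi_1} = \mydim{\phi} - \witti{1}{\phi}$, while $\mydim{\psi_{F(\phi)}} = \mydim{\psi} = \mydim{\phi} - \witti{1}{\phi}$ by the codimension hypothesis. Hence the $F(\phi)$-linear injection $V_{\psi_{F(\phi)}} \hookrightarrow V_{\phi_1}$ realising $\psi_{F(\phi)}$ as a subform of $\phi_1$ must actually be a bijection, and $\psi_{F(\phi)} \simeq \phi_1$.

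There is essentially no obstacle at this stage: the substantive input has already been absorbed into Corollary \ref{Corsubformanisotropy}, which in turn depends on the incompressibility theorem (Theorem \ref{Thmincompressibility}) applied to a rational map $X_\phi \dashrightarrow X_\sigma$. The only point requiring a moment's care is the verification that $\mydim{\psi} \leq \mydim{\phi} - \witti{1}{\phi}$ genuinely satisfies the hypothesis of Corollary \ref{Corsubformanisotropy} (which is stated with a non-strict inequality, so the equality case is permitted) -- once this is noted, the conclusion is immediate.
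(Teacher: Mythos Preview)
Your proposal is correct and follows exactly the same approach as the paper's proof: apply Corollary \ref{Corsubformanisotropy} (with $\sigma = \psi$ and both forms equal to $\phi$) to get $\psi_{F(\phi)} \subset \phi_1$, then observe that the two forms have the same dimension. You have simply spelled out the dimension count and the verification of the hypothesis in more detail than the paper does.
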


\subsection{Neighbours and near neighbours, I} \label{Neighbours1} Let $\psi$ and $\phi$ be anisotropic quasilinear $p$-forms of dimension $\geq 2$ over $F$. We will say that $\psi$ is a \emph{neighbour} (resp. \emph{near neighbour}) of $\phi$ if $\psi$ is similar to a subform of codimension $< \witti{1}{\phi}$ (resp. codimension $\witti{1}{\phi}$) of $\phi$. Our motivation here is the following extension of Corollary \ref{Corfirstkernel}:

\begin{lemma} \label{Lemnearneighbours} Let $\phi$ and $\psi$ be anisotropic quasilinear $p$-forms over $F$ such that $\phi$ is anisotropic of dimension $\geq 2$ and $\psi$ is similar to a subform of $\phi$. Then $\anispart{(\psi_{F(\phi)})}$ is similar to $\phi_1$ if and only if $\psi$ is a neighbour or near neighbour of $\phi$. 
\begin{proof} We may assume that $\psi \subset \phi$. Again, we trivially have $D(\psi_{F(\phi)}) \subseteq D(\phi_{F(\phi)}) = D(\phi_1)$. By Lemma \ref{Propanisclassification}, it therefore suffices to check that $\mydim{\anispart{(\psi_{F(\phi)})}} = \mydim{\phi} - \witti{1}{\phi}$ if and only if $\psi$ has codimension $\leq \witti{1}{\phi}$ in $\phi$. The left-to-right implication here is trivial. Conversely, if $\psi$ has codimension $\leq \witti{1}{\phi}$ in $\phi$, then we have $\mydim{\anispart{(\psi_{F(\phi)})}} \geq \mydim{\phi} - \witti{1}{\phi}$ by Theorem \ref{Thmincompressibility}. Since the reverse inequality holds here by obvious dimension reasons, the lemma is proved. \end{proof} \end{lemma}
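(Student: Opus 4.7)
The plan is to reduce the question to an equality of dimensions inside $\phi_1$. First I would use similarity-invariance to assume outright that $\psi \subset \phi$, since replacing $\psi$ by a scalar multiple has no effect on the conclusion. Under this reduction, the tautology $D(\psi_{F(\phi)}) \subseteq D(\phi_{F(\phi)}) = D(\phi_1)$, together with Proposition \ref{Propanisclassification}, identifies $\anispart{(\psi_{F(\phi)})}$ as a subform of $\phi_1$. Two anisotropic forms in this nested configuration are similar precisely when they are isomorphic, which happens if and only if their dimensions coincide. So the lemma reduces to verifying that $\mydim{\anispart{(\psi_{F(\phi)})}}$ equals $\mydim{\phi} - \witti{1}{\phi}$ if and only if $\mathrm{codim}(\psi,\phi) \leq \witti{1}{\phi}$.

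One implication comes for free: if $\mathrm{codim}(\psi,\phi)$ exceeds $\witti{1}{\phi}$, then already $\mydim{\psi} < \mydim{\phi_1}$, so certainly $\mydim{\anispart{(\psi_{F(\phi)})}} \leq \mydim{\psi} < \mydim{\phi_1}$. For the substantive converse, suppose $\mydim{\psi} \geq \mydim{\phi_1}$. I would pick a subform $\sigma \subseteq \psi$ of dimension exactly $\mydim{\phi} - \witti{1}{\phi}$. Then $\sigma$ is in particular a subform of $\phi$ of this dimension, and Corollary \ref{Corsubformanisotropy} (applied to the pair $(\phi,\phi)$, whose hypothesis $\mydim{\sigma} \leq \mydim{\phi} - \witti{1}{\phi}$ is met with equality) guarantees that $\sigma_{F(\phi)}$ remains anisotropic. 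Since $D(\sigma_{F(\phi)}) \subseteq D(\psi_{F(\phi)}) = D\big(\anispart{(\psi_{F(\phi)})}\big)$, a second application of Proposition \ref{Propanisclassification} yields $\sigma_{F(\phi)} \subseteq \anispart{(\psi_{F(\phi)})}$, so the latter has dimension at least $\mydim{\sigma} = \mydim{\phi_1}$, giving the equality.

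The only real content here is the appeal to the incompressibility theorem, packaged in Corollary \ref{Corsubformanisotropy}; the rest is formal manipulation of represented sets and subform inclusions, mirroring closely the argument of Corollary \ref{Corfirstkernel} (which is itself the special case $\psi = \phi$ of the equality, stripped of the similarity language). I do not anticipate any genuine obstacle beyond correctly invoking the incompressibility input with the right choice of auxiliary subform $\sigma$.
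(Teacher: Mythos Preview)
Your proposal is correct and follows essentially the same route as the paper. The only cosmetic difference is that the paper cites Theorem \ref{Thmincompressibility} directly for the lower bound $\mydim{\anispart{(\psi_{F(\phi)})}} \geq \mydim{\phi} - \witti{1}{\phi}$, while you unpack this step by explicitly choosing an auxiliary subform $\sigma \subset \psi$ of the right dimension and invoking Corollary \ref{Corsubformanisotropy}; since the latter is precisely the packaged form of the incompressibility theorem, the two arguments are the same in substance.
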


Note here that while neighbours of $\phi$ become anisotropic over $F(\phi)$, its \emph{near} neighbours do not (Corollary \ref{Corsubformanisotropy}). This enables us to compute:

\begin{proposition}[{cf. \cite[Prop. 6.1]{Scully1}}] \label{Propneighbouri1} Let $\phi$ and $\psi$ be anisotropic quasilinear $p$-forms of dimension $\geq 2$ over $F$ such that $\psi$ is a codimension-$d$ neighbour of $\phi$. Then $\witti{1}{\psi} = \witti{1}{\phi} - d$. \end{proposition}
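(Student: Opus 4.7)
Replacing $\psi$ by an isomorphic subform (similar forms share the same splitting invariants), I may assume $\psi \subset \phi$. Since $d < \witti{1}{\phi} \leq \mydim{\phi} - 1$, I can pick a subform $\sigma \subset \psi$ of codimension $\witti{1}{\phi} - d$ in $\psi$; then $\sigma$ has codimension $\witti{1}{\phi}$ in $\phi$, and
\[
\mydim{\sigma} \;=\; \mydim{\phi} - \witti{1}{\phi} \;=\; \mydim{\phi_1}.
\]
By Corollary \ref{Corfirstkernel}, $\sigma_{F(\phi)} \simeq \phi_1$. My goal is to prove the two inequalities separately.

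\textbf{Lower bound $\witti{1}{\psi} \geq \witti{1}{\phi} - d$.} The closed embedding $X_\psi \hookrightarrow X_\phi$ determines a natural $F$-place $F(\phi) \dashrightarrow F(\psi)$: take the local ring of $X_\phi$ at the generic point of $X_\psi$, whose residue field is $F(\psi)$. Lemma \ref{LemPlaceisotropy} then gives $\witti{0}{\psi_{F(\psi)}} \geq \witti{0}{\psi_{F(\phi)}}$. Since $\psi$ is a neighbour of $\phi$, Lemma \ref{Lemnearneighbours} yields $\anispart{(\psi_{F(\phi)})} \sim \phi_1$, so this anisotropic kernel has dimension $\mydim{\phi_1}$; consequently
\[
\witti{0}{\psi_{F(\phi)}} \;=\; \mydim{\psi} - \mydim{\phi_1} \;=\; \witti{1}{\phi} - d,
\]
and the desired inequality follows.

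\textbf{Upper bound $\witti{1}{\psi} \leq \witti{1}{\phi} - d$.} The plan is to show that $\sigma_{F(\psi)}$ is anisotropic. Granting this, since $\sigma \subset \psi$ we get $\sigma_{F(\psi)} \subseteq \anispart{(\psi_{F(\psi)})} = \psi_1$, so $\mydim{\psi_1} \geq \mydim{\sigma} = \mydim{\phi_1}$, and therefore
\[
\witti{1}{\psi} \;=\; \mydim{\psi} - \mydim{\psi_1} \;\leq\; \mydim{\psi} - \mydim{\phi_1} \;=\; \witti{1}{\phi} - d.
\]

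\textbf{The main obstacle: anisotropy of $\sigma_{F(\psi)}$.} A direct appeal to Corollary \ref{Corsubformanisotropy} would require $\mydim{\sigma} \leq \mydim{\psi} - \witti{1}{\psi}$, which is equivalent to what we are trying to prove and therefore circular. Instead, I would invoke Theorem \ref{Thmincompressibility} directly: a rational map $X_\psi \dashrightarrow X_\sigma$ would have to be excluded by exploiting that $\mydim{X_\sigma} = \mydim{\phi_1} - 1 < \mathrm{dim}_{\mathrm{Izh}}(X_\phi) = \mydim{\phi_1}$, which already precludes any rational map $X_\phi \dashrightarrow X_\sigma$. The delicate step is to transfer this non-existence statement from $X_\phi$ to the closed subvariety $X_\psi \hookrightarrow X_\phi$: note that the Izhboldin dimension is attained by the specific subform $\sigma \subset \psi$ determined by the neighbour condition, and the combinatorics of this choice (together with the lower bound just established, which forces $\mathrm{dim}_{\mathrm{Izh}}(X_\psi) \leq \mydim{\phi_1}$) should allow us to conclude $\mathrm{dim}_{\mathrm{Izh}}(X_\psi) = \mathrm{dim}_{\mathrm{Izh}}(X_\phi)$ by a careful application of the incompressibility theorem. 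I expect this propagation step, which bypasses the circularity and is essentially the content of the cited Proposition~6.1 of \cite{Scully1}, to be the technical heart of the argument.
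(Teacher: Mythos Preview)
Your lower bound is correct and clean. The upper bound, however, is not a proof: you correctly identify that applying Corollary~\ref{Corsubformanisotropy} to $\sigma \subset \psi$ is circular, but then you defer the resolution to an unspecified ``propagation step'' that you merely \emph{expect} to work. That is a genuine gap.

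The fix is much simpler than your sketch suggests, and it is symmetric to your lower-bound argument. You have already computed $\witti{0}{\psi_{F(\phi)}} = \witti{1}{\phi} - d$, and since $d < \witti{1}{\phi}$ (this is exactly where being a \emph{neighbour} rather than a \emph{near} neighbour matters) this quantity is strictly positive: $\psi_{F(\phi)}$ is isotropic. Because $X_\psi$ is proper over $F$, the existence of an $F(\phi)$-point on $X_\psi$ yields an $F$-place $F(\psi) \dashrightarrow F(\phi)$ --- the reverse of the place you built from the closed embedding. Hence $F(\phi) \sim_F F(\psi)$, and the second clause of Lemma~\ref{LemPlaceisotropy} gives the equality outright:
\[
\witti{1}{\psi} \;=\; \witti{0}{\psi_{F(\psi)}} \;=\; \witti{0}{\psi_{F(\phi)}} \;=\; \witti{1}{\phi} - d.
\]
No separate analysis of $\sigma_{F(\psi)}$ or of Izhboldin dimensions of $X_\psi$ is needed. (Equivalently, once you have the place $F(\psi) \dashrightarrow F(\phi)$, Lemma~\ref{LemPlaceisotropy} applied to $\sigma$ gives $\witti{0}{\sigma_{F(\phi)}} \geq \witti{0}{\sigma_{F(\psi)}}$; since $\sigma_{F(\phi)} \simeq \phi_1$ is anisotropic, so is $\sigma_{F(\psi)}$, which was exactly the claim you could not establish.)

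The paper itself does not supply a proof here, citing \cite{Scully1}, but the tools present in the paper --- Lemma~\ref{LemPlaceisotropy} and Lemma~\ref{Lemnearneighbours} --- already suffice for the argument above.
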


\subsection{The Ruledness theorem} \label{Ruledness} Another key application of Theorem \ref{Thmincompressibility} is the following extension of Proposition \ref{Propneighbouri1}, which shows in a precise way that anisotropic quasilinear $p$-hypersurfaces having first higher isotropy index larger than 1 are ruled.

\begin{theorem}[{cf. \cite[Thm. 7.6]{Scully1}}] \label{Thmruledness} Let $\phi$ and $\psi$ be anisotropic quasilinear $p$-forms of dimension $\geq 2$ over $F$ such that $\psi$ is a codimension-$d$ neighbour of $\phi$. Then $X_\phi$ is birationally isomorphic to $X_{\psi} \times_F \mathbb{P}^d$.
\begin{proof} By Proposition \ref{Propneighbouri1}, we have $\witti{1}{\psi} = \witti{1}{\phi} - d$. It is therefore enough to treat the case where $d = \witti{1}{\psi} - 1$, and this is covered by \cite[Thm. 7.6]{Scully1}. \end{proof} \end{theorem}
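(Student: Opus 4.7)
The plan is to reduce, via a telescoping argument built on Proposition \ref{Propneighbouri1}, to the codimension-$1$ case, and then to establish that base case by constructing an explicit $F$-algebra isomorphism between function fields.

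For the reduction, after replacing $\psi$ by a similar form (which leaves $X_\psi$ unchanged) I may assume $\psi \subset \phi$, and I choose a flag $\psi = \psi_d \subsetneq \psi_{d-1} \subsetneq \hdots \subsetneq \psi_0 = \phi$ in which each $\psi_i$ has codimension $1$ in $\psi_{i-1}$. Iterating Proposition \ref{Propneighbouri1}, one finds that $\witti{1}{\psi_{i-1}} = \witti{1}{\phi} - (i-1) \geq 2$ for every $1 \leq i \leq d$ (this is where the hypothesis $d < \witti{1}{\phi}$ is used), which is precisely the condition that $\psi_i$ be a codimension-$1$ neighbour of $\psi_{i-1}$. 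Granting the codimension-$1$ case at each step produces birational isomorphisms $X_{\psi_{i-1}} \simeq X_{\psi_i} \times_F \mathbb{P}^1$, and composing these (together with the birational isomorphism $(\mathbb{P}^1)^d \simeq \mathbb{P}^d$) yields the desired $X_\phi \simeq X_\psi \times_F \mathbb{P}^d$.

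For the codimension-$1$ case, write $\phi \simeq \psi \perp \form{c}$ with $\psi \simeq \form{b_1,\hdots,b_k}$ and $\witti{1}{\phi} \geq 2$. Using the presentation of Remarks \ref{Remsff}(1), both $F(\phi)$ and $F(\psi)(W)$ can be realised as degree-$p$ extensions of the \emph{same} rational function field $E = F(V_1,\hdots,V_{k-2},W)$, namely $E\bigl(\sqrt[p]{\alpha + \beta}\bigr)$ and $E\bigl(\sqrt[p]{\alpha}\bigr)$ respectively, where $\alpha = b_2^{-1}(b_1 + b_3 V_1^p + \hdots + b_k V_{k-2}^p)$ and $\beta = b_2^{-1} c W^p$. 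By Lemma \ref{Lemnearneighbours}, the form $\psi$ becomes isotropic over $F(\phi)$ with $\witti{0}{\psi_{F(\phi)}} = \witti{1}{\phi} - 1 \geq 1$, yielding an $F$-algebra embedding $F(\psi) \hookrightarrow F(\phi)$. The plan is then to exhibit a transcendental element $T \in F(\phi)$ over the image of $F(\psi)$ for which the induced map $F(\psi)(T) \to F(\phi)$ is an $F$-algebra isomorphism.

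The hard part will be to verify surjectivity of this homomorphism --- equivalently, to rule out the possibility that $F(\phi)$ is a proper algebraic extension of the image of $F(\psi)(T)$. I expect this to require a delicate use of the incompressibility theorem (Theorem \ref{Thmincompressibility}), together with Corollary \ref{Corsubformanisotropy} (which, since $1 \leq \mydim{\phi} - \witti{1}{\phi}$ holds automatically here, guarantees that $\form{c}$ remains anisotropic over $F(\phi)$), in order to control the arithmetic of the competing $p$-th root extensions of $E$. The explicit construction of the requisite isomorphism and its inverse is the technical content of \cite[Thm.~7.6]{Scully1}.
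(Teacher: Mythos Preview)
Your telescoping reduction to the codimension-$1$ case is correct and is a legitimate alternative to the paper's route. The paper instead reduces in the opposite direction, to the \emph{maximal}-codimension case $d=\witti{1}{\phi}-1$ (equivalently $\witti{1}{\psi}=1$): one chooses a common subform $\tau\subset\psi\subset\phi$ with $\witti{1}{\tau}=1$, applies the extremal case to each of the pairs $(\phi,\tau)$ and $(\psi,\tau)$, and cancels the resulting $\mathbb{P}^{\witti{1}{\phi}-1}$ against $\mathbb{P}^{\witti{1}{\psi}-1}\times\mathbb{P}^d$. Either reduction works, and both terminate by invoking \cite[Thm.~7.6]{Scully1}; there is no substantive difference in strength.

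Your sketch of the codimension-$1$ base case is along the right lines (the two function fields are indeed degree-$p$ extensions of a common rational function field, and one does get an embedding $F(\psi)\hookrightarrow F(\phi)$ from the isotropy of $\psi_{F(\phi)}$), but as you yourself note it is not a proof. One small point: your appeal to Corollary~\ref{Corsubformanisotropy} for the one-dimensional subform $\form{c}$ is vacuous, since anisotropy of a one-dimensional form is nothing more than nonvanishing of its single entry, which is already given; that step therefore contributes nothing toward the surjectivity you are after. Since you ultimately defer to \cite[Thm.~7.6]{Scully1} as well, your argument is equivalent in content to the paper's, just with the reduction run in the other direction.
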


\begin{remark} Again, in the case where $p=2$, this result is due to Totaro (\cite[Thm. 6.4]{Totaro1}). Unlike Theorem \ref{Thmincompressibility}, however, the analogous assertion remains open for generically smooth quadrics (in any characteristic - see \cite{Totaro1,Totaro2}). \end{remark}

It is worth mentioning the following explicitly:

\begin{corollary} \label{Coraffinecodim1} Let $\phi$ and $\psi$ be an anisotropic quasilinear $p$-forms of dimension $\geq 2$ over $F$ such that $\witti{1}{\phi}>1$ and $\psi$ is similar to a codimension-1 subform of $\phi$. Then we have an $F$-isomorphism of fields $F(\phi) \simeq F[\psi]$. 
\begin{proof} By Theorem \ref{Thmruledness}, $F(\phi)$ is $F$-isomorphic to a degree-one purely transcendental extension of $F(\psi)$. In view of Remark \ref{Remsff} (3), the result follows. \end{proof} \end{corollary}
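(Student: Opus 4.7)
The plan is to reduce the statement directly to the Ruledness theorem. Since the projective hypersurface $X_{a\psi}$ coincides with $X_\psi$ for any $a \in F^*$, and likewise $F[a\psi] = F[\psi]$, we may replace $\psi$ by a similar form to assume that $\psi \subset \phi$ is an honest codimension-$1$ subform. The assumption $\witti{1}{\phi} > 1$ then guarantees that the codimension $d = 1$ satisfies $d < \witti{1}{\phi}$, so $\psi$ qualifies as a \emph{neighbour} of $\phi$ in the sense of \S \ref{Neighbours1} (and not merely a near neighbour).

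The main step is then to apply Theorem \ref{Thmruledness}: it produces a birational $F$-isomorphism $X_\phi \dashrightarrow X_\psi \times_F \mathbb{P}^1$. Taking function fields, this gives an $F$-isomorphism between $F(\phi)$ and the function field of $X_\psi \times_F \mathbb{P}^1$, which is plainly $F$-isomorphic to a degree-$1$ purely transcendental extension $F(\psi)(t)$ of $F(\psi)$.

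On the other hand, by Remark \ref{Remsff} (2), $F[\psi]$ is itself $F$-isomorphic to a degree-$1$ purely transcendental extension of $F(\psi)$. Any two such extensions are $F(\psi)$-isomorphic -- both being isomorphic to the field of rational functions in one variable over $F(\psi)$ -- and hence in particular $F$-isomorphic. Combining these two identifications yields $F(\phi) \simeq F[\psi]$ over $F$, completing the proof.

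There is no real obstacle here: the statement is essentially a direct corollary of the Ruledness theorem, packaged through the elementary relationship between the function fields of a projective quasilinear $p$-hypersurface and its affine cone. The only point requiring a moment's attention is the reduction from ``similar to'' to ``equal to a subform'', and the check that $d=1$ falls into the neighbour range (as opposed to the near neighbour case, where $\anispart{(\psi_{F(\phi)})}$ would instead be similar to $\phi_1$ by Lemma \ref{Lemnearneighbours} and $X_\phi$ would not be ruled by $X_\psi$ in the required sense).
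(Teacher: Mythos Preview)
Your proof is correct and follows essentially the same route as the paper's: apply Theorem \ref{Thmruledness} to identify $F(\phi)$ with a degree-one purely transcendental extension of $F(\psi)$, and then match this with $F[\psi]$ via the standard relationship between the function field of a quasilinear $p$-hypersurface and that of its affine cone. The only difference is cosmetic: the paper cites Remark \ref{Remsff} (3) for this last step, whereas you (more appropriately) invoke Remark \ref{Remsff} (2); your explicit reduction from ``similar to a subform'' to an actual subform is also harmless but unnecessary, since the definition of neighbour already absorbs similarity.
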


\subsection{Neighbours and near neighbours, II} \label{Neighbours2} Given Theorem \ref{Thmruledness}, one can extend Proposition \ref{Propneighbouri1} as follows:

\begin{proposition}[{cf. \cite[Prop. 6.2]{Scully2}}] \label{Propsspneighbours} Let $\phi$ and $\psi$ be anisotropic quasilinear $p$-forms of dimension $\geq 2$ over $F$ such that $\psi$ is a codimension-$d$ neighbour of $\phi$. Then $\mathfrak{i}(\psi) = \big(\witti{1}{\phi} - d, \witti{2}{\phi},\hdots,\witti{h(\phi)}{\phi}\big)$ and $\mathfrak{d}(\psi) = \big(\mathfrak{d}_0(\psi),\mathfrak{d}_1(\phi),\hdots,\mathfrak{d}_{h(\phi)}(\phi)\big)$.
\begin{proof} By Theorem \ref{Thmruledness}, $F(\phi)$ is $F$-isomorphic to a purely transcendental extension of $F(\psi)$. Thus, if $(F_r)$ and $\big(F(\phi)_r\big)$ denote the Knebusch splitting towers of $\psi$ and $\psi_{F(\phi)}$, respectively, then $F(\phi)_r$ is $F_r$-isomorphic to a purely transcendental extension of $F_{r+1}$ for every $0 \leq r \leq h(\psi_{F(\phi)})$. In particular, we have $\witti{r}{\psi_{F(\phi)}} = \witti{r+1}{\psi}$ and $\mathfrak{d}_r(\psi_{F(\phi)}) = \mathfrak{d}_{r+1}(\psi)$ for any such $r$ by Lemmas \ref{Lemseparableextensions} and \ref{Lemdivisibilityindexstability} , respectively. On the other hand, Lemma \ref{Lemnearneighbours} shows that $\anispart{(\psi_{F(\phi)})}$ is similar to $\phi_1$. Since $\witti{r}{\phi} = \witti{r-1}{\phi_1}$ and $\mathfrak{d}_r(\phi) = \mathfrak{d}_{r-1}(\phi_1)$ for all $1 \leq r \leq h(\phi)$, the proposition follows immediately. \end{proof} \end{proposition}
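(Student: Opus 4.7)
The plan is to exploit Theorem \ref{Thmruledness}, which implies that $F(\phi)$ is $F$-isomorphic to a purely transcendental extension of $F(\psi)$ of transcendence degree $d$, together with Lemma \ref{Lemnearneighbours}, which identifies $\anispart{(\psi_{F(\phi)})}$ with a scalar multiple of $\phi_1$.

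Writing $(F_r)$ for the Knebusch splitting tower of $\psi$ and $(E_r)$ for that of $\psi_{F(\phi)}$, the heart of the argument is the inductive claim that, for each $0 \leq r \leq h(\psi_{F(\phi)})$, the field $E_r$ is $F$-isomorphic to a purely transcendental extension of $F_{r+1}$. The base case $r=0$ is just Theorem \ref{Thmruledness}. For the inductive step, if $E_{r-1}$ is purely transcendental over $F_r$, then Lemma \ref{Lemseparableextensions} forces $\anispart{(\psi_{E_{r-1}})} \simeq (\psi_r)_{E_{r-1}}$; and forming the function field of this form over $E_{r-1}$ commutes with purely transcendental extension, so one obtains an isomorphism $E_r \simeq F_{r+1}(T_1,\hdots,T_s)$ for appropriate $s$.

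Granted the claim, Lemma \ref{Lemseparableextensions} yields $\witti{r}{\psi_{F(\phi)}} = \witti{r+1}{\psi}$ for all $r \geq 0$, while Lemma \ref{Lemdivisibilityindexstability} yields $\mathfrak{d}_r(\psi_{F(\phi)}) = \mathfrak{d}_{r+1}(\psi)$ in the same range. On the other hand, since $\anispart{(\psi_{F(\phi)})}$ is similar to $\phi_1$, these respective sequences also coincide with $\big(\witti{2}{\phi},\hdots,\witti{h(\phi)}{\phi}\big)$ and $\big(\mathfrak{d}_1(\phi),\hdots,\mathfrak{d}_{h(\phi)}(\phi)\big)$, using the general identities $\witti{s+1}{\phi} = \witti{s}{\phi_1}$ and $\mathfrak{d}_{s+1}(\phi) = \mathfrak{d}_s(\phi_1)$ that follow from the recursive construction of the Knebusch tower. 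Combining these with the equality $\witti{1}{\psi} = \witti{1}{\phi} - d$ from Proposition \ref{Propneighbouri1} then delivers both claims (and, in particular, $h(\psi) = h(\phi)$).

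The main obstacle I anticipate is the inductive commutation step: verifying precisely that, at each level, ``taking the function field of the anisotropic kernel'' commutes with ``extending scalars by a purely transcendental extension.'' Once this is rigorously checked, the remainder of the argument is essentially bookkeeping via the lemmas already established in \S \ref{Isotropy} and \S \ref{Divisibilityscalarextension}, coupled with the key input that $\anispart{(\psi_{F(\phi)})}$ is similar to $\phi_1$.
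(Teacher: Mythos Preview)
Your proposal is correct and follows essentially the same approach as the paper's proof: both use Theorem \ref{Thmruledness} to identify $F(\phi)$ with a purely transcendental extension of $F(\psi)$, then establish inductively that the $r$-th level of the Knebusch tower of $\psi_{F(\phi)}$ is purely transcendental over the $(r+1)$-st level of that of $\psi$, and conclude via Lemmas \ref{Lemseparableextensions}, \ref{Lemdivisibilityindexstability}, and \ref{Lemnearneighbours}. Your explicit treatment of the inductive commutation step and your invocation of Proposition \ref{Propneighbouri1} for $\witti{1}{\psi}$ simply spell out details the paper leaves implicit.
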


Let $\phi$ be a quasilinear $p$-form over $F$. We say that $\phi$ is a \emph{quasi-Pfister $p$-neighbour} (of $\pi$) if there exists a quasi-Pfister $p$-form $\pi$ over $F$ such that $\phi$ is similar to a subform of $\pi$ and $\mydim{\phi} > \frac{1}{p}\mydim{\pi}$. If $\phi$ is anisotropic, then it follows from Proposition \ref{Propi1bound} that $\phi$ is quasi-Pfister $p$-neighbour if and only if it is a neighbour of some quasi-Pfister $p$-form in the sense of \S \ref{Neighbours1}. Forms of this type are of special importance in the general theory of quasilinear $p$-forms. Putting Lemma \ref{Lempropertyofnormform}, Proposition \ref{Propi1bound}, Corollary \ref{Corheight=ndegree}, Lemma \ref{Lemnearneighbours} and Proposition \ref{Propsspneighbours} together, we obtain the following classification of anisotropic quasi-Pfister $p$-neighbours:

\begin{corollary}[{cf. \cite[Thm. 6.4]{Scully2}}] \label{CorClassificationQPN} Let $\phi$ be an anisotropic quasilinear $p$-form of dimension $\geq 2$ over $F$ and let $n$ be the smallest nonnegative integer such that $p^{n+1} \geq \mydim{\phi}$. Then the following are equivalent:
\begin{enumerate} \item $\phi$ is a quasi-Pfister $p$-neighbour.
\item $\phi$ is a neighbour of $\normform{\phi}$.
\item $\phi_{F(\normform{\phi})}$ is isotropic.
\item $\mathrm{ndeg}(\phi) = p^{n+1}$.
\item $h(\phi) = n+1$.
\item $\witti{1}{\phi} = \mydim{\phi} - p^n$ and $\witti{2}{\phi} = p^n - p^{n-1}$.
\item $\mathfrak{i}(\phi) = (\mydim{\phi} - p^n, p^n - p^{n-1},p^{n-1} - p^{n-2},\hdots,p^2-p,p-1)$.
\item $\phi_1$ is similar to a quasi-Pfister $p$-form (i.e., $\qp{h}(\phi) \leq 1$). \end{enumerate} \end{corollary}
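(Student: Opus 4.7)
The plan is to cycle through all eight conditions, splitting the argument into two loops: first $(1) \Rightarrow (2) \Rightarrow (4) \Rightarrow (5) \Rightarrow (3) \Rightarrow (1)$, which pins down the invariants $\mathrm{ndeg}(\phi)$ and $h(\phi)$, and then $(1) \Rightarrow (7) \Rightarrow (6) \Rightarrow (8) \Rightarrow (7)$, which extracts the full Knebusch splitting pattern and rejoins the first loop through the dimension of $\phi_1$. Throughout, the defining inequalities $p^n < \mydim{\phi} \leq p^{n+1}$ will serve as the main combinatorial input.

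For the first loop, $(1) \Rightarrow (2)$ is a dimension-pinching argument based on Lemma \ref{Lempropertyofnormform}: for any quasi-Pfister $p$-form $\pi$ of which $\phi$ is a neighbour, $\normform{\phi}$ embeds (after rescaling) into $\pi$, and the sandwich $\mydim{\pi}/p < \mydim{\phi} \leq \mydim{\normform{\phi}} \leq \mydim{\pi}$ between powers of $p$ forces $\normform{\phi} \simeq \pi$. Next, $(2) \Rightarrow (4)$ combines the neighbour inequality $\mathrm{ndeg}(\phi) = \mydim{\normform{\phi}} < p \, \mydim{\phi} \leq p^{n+2}$ with $\mathrm{ndeg}(\phi) \geq \mydim{\phi} > p^n$ to pin this power of $p$ to $p^{n+1}$, and then $(4) \Rightarrow (5)$ is immediate from Corollary \ref{Corheight=ndegree}. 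For $(5) \Rightarrow (3)$, reading that dimension chain backwards makes $\phi$ a neighbour of $\normform{\phi}$; since $\witti{1}{\normform{\phi}} = p^{n+1} - p^n$ by Proposition \ref{Propi1bound}, a dimension count forces any maximal isotropic subspace of $\normform{\phi}$ over $F(\normform{\phi})$ to meet $V_\phi$, so $\phi_{F(\normform{\phi})}$ is isotropic.

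The crux is $(3) \Rightarrow (1)$, which I would prove as the contrapositive of Corollary \ref{Corsubformanisotropy}: if $\phi$ is not a quasi-Pfister $p$-neighbour, then viewing it (up to similarity, via Lemma \ref{Lempropertyofnormform}) as a subform of $\normform{\phi}$, one has $\mydim{\phi} \leq \mydim{\normform{\phi}}/p = \mydim{\normform{\phi}} - \witti{1}{\normform{\phi}}$, so Corollary \ref{Corsubformanisotropy} says $\phi_{F(\normform{\phi})}$ stays anisotropic. This is the one step that invokes the deeper incompressibility material of \S \ref{Incompressibilityresults}, and I expect it to be the main obstacle: every other implication is dimension bookkeeping on top of the basic theory.

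For the second loop, from $(1)$ I would apply Proposition \ref{Propsspneighbours} to the neighbour $\phi$ of $\normform{\phi}$ and substitute the splitting pattern $\mathfrak{i}(\normform{\phi}) = (p^{n+1}-p^n,\ldots,p-1)$ supplied by Proposition \ref{Propi1bound} to read off $(7)$; $(7) \Rightarrow (6)$ is tautological. From $(6)$, the identities $\mydim{\phi_1} = \mydim{\phi} - \witti{1}{\phi} = p^n$ and $\witti{1}{\phi_1} = \witti{2}{\phi} = p^n - p^{n-1} = \mydim{\phi_1} - \mydim{\phi_1}/p$ identify $\phi_1$ as similar to a quasi-Pfister $p$-form via the $(1) \Leftrightarrow (3)$ equivalence of Proposition \ref{Propi1bound}, giving $(8)$. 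Finally, $(8) \Rightarrow (7)$ follows by sandwiching $\mydim{\phi_1}$ -- a power of $p$ -- between $\mydim{\phi}/p > p^{n-1}$ (lower bound from Corollary \ref{Corisotropyff}) and $\mydim{\phi} \leq p^{n+1}$, forcing $\mydim{\phi_1} = p^n$; Proposition \ref{Propi1bound} then supplies the full splitting pattern of the quasi-Pfister form $\phi_1$, which prepended with $\witti{1}{\phi} = \mydim{\phi} - p^n$ gives $(7)$, closing both loops.
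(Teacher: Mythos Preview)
Your argument is correct and uses essentially the same ingredients the paper lists (Lemma~\ref{Lempropertyofnormform}, Proposition~\ref{Propi1bound}, Corollary~\ref{Corheight=ndegree}, Proposition~\ref{Propsspneighbours}, and the incompressibility input --- you invoke Corollary~\ref{Corsubformanisotropy} where the paper cites the closely related Lemma~\ref{Lemnearneighbours}). Since the paper gives no detailed proof beyond naming these inputs, your expansion is consistent with what was intended.

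There is one small expositional gap: your two loops are not joined in the reverse direction. The first loop gives $(1)\Leftrightarrow(2)\Leftrightarrow(3)\Leftrightarrow(4)\Leftrightarrow(5)$; the second gives $(1)\Rightarrow(7)$ together with $(6)\Leftrightarrow(7)\Leftrightarrow(8)$, but nowhere do you state an implication from $\{(6),(7),(8)\}$ back into $\{(1),\ldots,(5)\}$. The phrases ``rejoins the first loop'' and ``closing both loops'' suggest you see the issue, but the bridge should be made explicit --- the cleanest fix is simply to observe that $(7)$ gives $h(\phi)=n+1$, i.e., $(5)$, by reading off the length of the sequence.
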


\begin{remark} In the case where $p=2$, this result was proved earlier by Hoffmann and Laghribi (cf. \cite[Thm. 8.1]{HoffmannLaghribi1}) using different methods. \end{remark}

For arbitrary subforms, the situation is naturally more complicated, but we can nevertheless appeal to the following general result which was proved in \cite{Scully2} (and whose proof again makes essential use of Theorems \ref{Thmincompressibility} and \ref{Thmruledness}):

\begin{proposition}[{cf. \cite[Prop. 8.6]{Scully2}}] \label{Propsspsubform} Let $\phi$ and $\psi$ be anisotropic quasilinear $p$-forms of dimension $\geq 2$ over $F$ such that $\phi_{F(\psi)}$ is isotropic. Then, either
\begin{enumerate} \item $(\psi_r)_{F_r(\phi)}$ is anisotropic for all $0 \leq r < h(\psi)$ and $\mathfrak{i}(\psi_{F(\phi)}) = \mathfrak{i}(\psi)$, or
\item $\mathfrak{i}(\psi_{F(\phi)}) = \big(\witti{1}{\psi},\hdots,\witti{s-1}{\psi},\witti{s}{\psi} + \witti{s+1}{\psi}, \witti{s+2}{\psi},\hdots,\witti{h(\psi)}{\psi}\big)$, where $s<h(\psi)$ is the smallest nonnegative integer such that $(\psi_s)_{F_s(\phi)}$ is isotropic. \end{enumerate} \end{proposition}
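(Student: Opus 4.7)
The plan is to track the Knebusch tower $(L_r)_{0 \le r \le h(\psi_{F(\phi)})}$ of $\psi_{F(\phi)}$ via comparison with the tower $(F_r)_{0 \le r \le h(\psi)}$ of $\psi$. Applying Lemma \ref{Lemsstequivalence} to both $\psi$ and $\psi_{F(\phi)}$ and juggling the resulting function-field composites will give the $F$-place equivalences
\begin{equation*}
L_r \sim_{F(\phi)} F(\phi)(\psi^{\times r}) \sim_F F(\phi) \cdot F_r = F_r(\phi).
\end{equation*}
By Lemma \ref{LemPlaceisotropy} this will translate into $\wittj{r}{\psi_{F(\phi)}} = \witti{0}{\psi_{F_r(\phi)}}$, and the decomposition $\psi_{F_r} \simeq \psi_r \oplus \wittj{r}{\psi} \cdot \form{0}$ (Proposition \ref{PropWittdecomposition}) will yield the master identity
\begin{equation*}
\wittj{r}{\psi_{F(\phi)}} = \wittj{r}{\psi} + a_r, \quad \text{where } a_r \coloneqq \witti{0}{(\psi_r)_{F_r(\phi)}}.
\end{equation*}

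The central claim, which will drive the whole argument, is the dichotomy $a_r \in \lbrace 0, \witti{r+1}{\psi} \rbrace$ for every $0 \le r \le h(\psi) - 1$. I would prove this via bidirectional $F_r$-place constructions between $F_r(\phi)$ and $F_{r+1} = F_r(\psi_r)$. If $a_r > 0$, a rational point of $X_{\psi_r}$ over $F_r(\phi)$ together with the completeness of $X_{\psi_r}$ produces an $F_r$-place $F_{r+1} \dashrightarrow F_r(\phi)$; in the other direction, the hypothesis that $\phi_{F_1} = \phi_{F(\psi)}$ is isotropic propagates up the tower of $\psi$ to give $\phi_{F_{r+1}}$ isotropic for every $r \ge 0$, hence an $F_r$-place $F_r(\phi) \dashrightarrow F_{r+1}$. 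Thus $F_r(\phi) \sim_{F_r} F_{r+1}$, and Lemma \ref{LemPlaceisotropy} combined with Remark \ref{RemsKnebusch}(1) forces $a_r = \witti{0}{(\psi_r)_{F_{r+1}}} = \witti{1}{\psi_r} = \witti{r+1}{\psi}$.

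With this dichotomy in hand, the case analysis is transparent. If $a_r = 0$ for every $r < h(\psi)$, the two Knebusch towers coincide numerically and the master identity directly yields $\mathfrak{i}(\psi_{F(\phi)}) = \mathfrak{i}(\psi)$, which is (1). Otherwise, let $s$ be the smallest index with $a_s > 0$. The identity $\witti{r}{\psi_{F(\phi)}} = \witti{r}{\psi} + a_r - a_{r-1}$ together with strict monotonicity of the Knebusch tower of $\psi_{F(\phi)}$ lets me argue inductively that $a_r = \witti{r+1}{\psi}$ for every $s \le r \le h(\psi)-1$: once $a_{r-1}>0$, a vanishing $a_r$ would force $\witti{r}{\psi_{F(\phi)}} \le 0$, contradicting the fact that the tower has not yet terminated (since the $r$-th anisotropic kernel still has dimension $\mydim{\psi_{r+1}} \ge 2$ for $r < h(\psi)-1$). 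The tower halts precisely at $r = h(\psi) - 1$, giving $h(\psi_{F(\phi)}) = h(\psi) - 1$, and substituting the computed values of $a_r$ into the identity yields the formula in (2).

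The main delicate point will be the bidirectional place argument, which relies on the completeness of quasilinear $p$-hypersurfaces (the content of Lemma \ref{LemPlaceisotropy} and its proof). Beyond that, the proposition reduces to a clean combinatorial bookkeeping exercise with higher isotropy indices.
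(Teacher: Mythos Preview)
The paper does not actually supply a proof of this proposition; it is simply quoted from \cite[Prop.~8.6]{Scully2}. So there is no in-paper argument to compare against directly.

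That said, your approach is correct and is essentially the argument one finds in the cited source. The two pillars --- the identification $L_r \sim_F F_r(\phi)$ via Lemma~\ref{Lemsstequivalence} (both $F_r(\phi)$ and $L_r$ sit under the common purely transcendental extension $F(\phi \times \psi^{\times r})$), and the dichotomy $a_r \in \{0, \witti{r+1}{\psi}\}$ obtained from bidirectional $F_r$-places between $F_r(\phi)$ and $F_{r+1}$ --- are exactly the right ingredients. The hypothesis that $\phi_{F(\psi)}$ is isotropic is precisely what guarantees the place $F_r(\phi) \dashrightarrow F_{r+1}$ for every $r \ge 0$, and the inductive propagation of $a_r > 0$ once $a_s > 0$ is handled correctly by the observation that $\wittj{r-1}{\psi_{F(\phi)}} = \wittj{r}{\psi} < \mydim{\psi} - 1$ forces the tower of $\psi_{F(\phi)}$ to continue at least one more step.

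One small point worth tightening in a full write-up: the field $F_r(\phi)$ is only defined when $\phi_{F_r}$ is not split, which is not automatic from the hypotheses (e.g.\ when $\mydim{\phi} = 2$). This is harmless in the end --- one can instead work uniformly with the function field $F(\phi \times \psi^{\times r})$, which is integral because $\phi$ is anisotropic over $F$ --- but your sketch implicitly assumes $F_r(\phi)$ exists throughout, so you should either note this or route the argument through the product variety directly.
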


Note here that in the special situation where $\psi$ is a neighbour of $\phi$, we are necessarily in case (2) with $s$ being equal to 0. Since $\anispart{(\psi_{F(\phi)})} \simeq \phi_1$ in this instance (Lemma \ref{Lemnearneighbours}), we thus recover the computation of Proposition \ref{Propsspneighbours}. By contrast, if $\psi$ is a \emph{near} neighbour of $\phi$, then we can be in either of cases (1) and (2) (see Remark \ref{Remnearneighbours} below). Nevertheless, we still have $\psi_{F(\phi)} \simeq \phi_1$ (Corollary \ref{Corfirstkernel}), and so we get:

\begin{corollary}[{cf. \cite[Cor. 6.10]{Scully2}}] \label{Corsspnearneighbours} Let $\phi$ be an anisotropic quasilinear $p$-form of dimension $\geq 2$ over $F$ and let $\psi$ be a near neighbour of $\phi$. Then, either
\begin{enumerate} \item $(\psi_r)_{F_r(\phi)}$ is anisotropic for all $0 \leq r < h(\psi)$ and $\mathfrak{i}(\psi) = \mathfrak{i}(\phi_1)$, or
\item $\mathfrak{i}(\psi) = \big(\witti{2}{\phi},\witti{3}{\phi},\hdots,\witti{s}{\phi}, \witti{s}{\psi}, \witti{s+1}{\phi} - \witti{s}{\psi}, \witti{s+2}{\phi}\hdots,\witti{h(\phi)}{\phi}\big)$, where $s<h(\psi)$ is the smallest positive integer such that $(\psi_s)_{F_s(\phi)}$ is isotropic. \end{enumerate} \end{corollary}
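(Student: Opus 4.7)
The plan is to deduce the corollary from Proposition \ref{Propsspsubform} by comparing the Knebusch splitting pattern of $\psi_{F(\phi)}$ computed in two different ways.

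First, I would verify that Proposition \ref{Propsspsubform} applies to the pair $(\phi,\psi)$, i.e., that $\phi_{F(\psi)}$ is isotropic. Since $\psi$ is a near neighbour of $\phi$, we may assume (after rescaling) that $\psi \subset \phi$, so any isotropic vector for $\psi_{F(\psi)}$ is also isotropic for $\phi_{F(\psi)}$, giving the required isotropy. By Corollary \ref{Corfirstkernel}, the codimension-$\witti{1}{\phi}$ inclusion $\psi \subset \phi$ yields $\psi_{F(\phi)} \simeq \phi_1$; in particular $\psi_{F(\phi)}$ is already anisotropic, so $\mathfrak{i}(\psi_{F(\phi)}) = \mathfrak{i}(\phi_1) = (\witti{2}{\phi},\hdots,\witti{h(\phi)}{\phi})$, a sequence of length $h(\phi)-1$.

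Next, I would feed these observations into Proposition \ref{Propsspsubform}. In case (1) of that proposition we immediately obtain $\mathfrak{i}(\psi) = \mathfrak{i}(\psi_{F(\phi)}) = \mathfrak{i}(\phi_1)$, which is the first alternative in the present corollary. In case (2), the index $s$ furnished by the proposition is a priori nonnegative, but the anisotropy of $\psi_{F(\phi)} \simeq \phi_1$ forces $s \geq 1$, reconciling with the ``smallest positive integer'' phrasing. Comparing the two expressions for $\mathfrak{i}(\psi_{F(\phi)})$ term by term,
\begin{equation*}
(\witti{1}{\psi},\hdots,\witti{s-1}{\psi},\witti{s}{\psi}+\witti{s+1}{\psi},\witti{s+2}{\psi},\hdots,\witti{h(\psi)}{\psi}) = (\witti{2}{\phi},\hdots,\witti{h(\phi)}{\phi}),
\end{equation*}
the equality of lengths gives $h(\psi) = h(\phi)$, and matching entry by entry yields $\witti{r}{\psi} = \witti{r+1}{\phi}$ for $1 \leq r \leq s-1$ and for $s+2 \leq r \leq h(\psi)$, together with $\witti{s}{\psi} + \witti{s+1}{\psi} = \witti{s+1}{\phi}$. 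Solving the latter for $\witti{s+1}{\psi}$ and inserting $\witti{s}{\psi}$ in the appropriate slot reproduces exactly the second alternative in the statement.

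The bulk of the work has already been done in the cited results; the only mildly delicate point is the bookkeeping check that the index $s$ from Proposition \ref{Propsspsubform} satisfies $s \geq 1$ here (via Corollary \ref{Corfirstkernel}) so that both formulations agree, and that the splicing of the combined index $\witti{s}{\psi}+\witti{s+1}{\psi}$ into the truncated splitting pattern of $\phi$ lines up correctly. I do not foresee any serious obstacle beyond this index arithmetic.
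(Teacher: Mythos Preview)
Your approach is correct and is exactly the route the paper takes: the sentence preceding the corollary (``we still have $\psi_{F(\phi)} \simeq \phi_1$ (Corollary \ref{Corfirstkernel}), and so we get:'') indicates that the result is obtained by combining $\psi_{F(\phi)}\simeq\phi_1$ with Proposition \ref{Propsspsubform}, precisely as you do.

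One small index slip: in case (2), matching the tails gives $\witti{r}{\psi}=\witti{r}{\phi}$ (no shift) for $s+2\leq r\leq h(\psi)$, not $\witti{r}{\psi}=\witti{r+1}{\phi}$ as you wrote; your final displayed formula for $\mathfrak{i}(\psi)$ is nonetheless consistent with the correct identification.
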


\begin{remark} \label{Remnearneighbours} As per the comments above, neither of cases (1) and (2) can be ruled out here. Indeed, case (1) describes the situation where $h(\psi) = h(\phi) - 1$, while case (2) describes that where $h(\psi) = h(\phi)$. As the reader will easily verify using Corollary \ref{Corheight=ndegree}, both these situations can arise in practice. \end{remark}

\subsection{A comparison result} \label{Comparisontheorem} We now conclude this section by recalling the following comparison result for isotropy indices of quasilinear quadratic forms which was obtained in \cite{Scully2} with the help of Theorem \ref{Thmincompressibility}:

\begin{proposition}[{cf. \cite[Thm. 7.13, Rem. 9.1]{Scully2}}] \label{Propcomparison} Assume that $p=2$. Let $\phi$ be an anisotropic quasilinear quadratic form of dimension $\geq 2$ over $F$ and let $L$ be a field extension of $F$ such that $\phi_L$ is not split. Then:
\begin{equation*} \witti{0}{\phi_{L(\phi)}} - \witti{1}{\phi} \geq \mathrm{min} \bigg\lbrace \witti{0}{\phi_L},\bigg[\frac{\mydim{\phi} - \witti{1}{\phi} + 1}{2}\bigg]\bigg \rbrace. \end{equation*} \end{proposition}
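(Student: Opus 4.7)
The plan is to reduce the inequality to an isotropy comparison for a codimension-$\witti{1}{\phi}$ subform $\psi$ of $\phi$, to set up the relevant geometry over $L$, and finally to apply Theorem~\ref{Thmincompressibility} to extract the desired minimum.

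First, the reduction. Fix a subform $\psi \subset \phi$ of codimension $\witti{1}{\phi}$, so $\mydim{\psi} = m := \mydim{\phi} - \witti{1}{\phi}$. By Corollary~\ref{Corfirstkernel}, $\phi_1 \simeq \psi_{F(\phi)}$; in particular, $D(\phi_{F(\phi)}) = D(\psi_{F(\phi)})$. Since $D(\phi_{L(\phi)})$ and $D(\psi_{L(\phi)})$ are the respective $L(\phi)^2$-spans of these sets, they coincide, and Proposition~\ref{Propanisclassification} yields $\anispart{(\phi_{L(\phi)})} \simeq \anispart{(\psi_{L(\phi)})}$. Writing $w := \witti{0}{\phi_L}$, it follows that $\witti{0}{\phi_{L(\phi)}} = \witti{1}{\phi} + \witti{0}{\psi_{L(\phi)}}$, so the proposition becomes the inequality
\[ \witti{0}{\psi_{L(\phi)}} \geq \min\bigl\{w,\; [(m+1)/2]\bigr\}. \]

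Next, I analyse the geometry over $L$. Let $\phi' := \anispart{(\phi_L)}$, of dimension $\mydim{\phi} - w$. By Remark~\ref{Remsff}(3), $L(\phi)$ is purely transcendental over $L(\phi')$, so Lemma~\ref{LemPlaceisotropy} gives $\witti{0}{\psi_{L(\phi)}} = \witti{0}{\psi_{L(\phi')}}$. Projecting $V_\psi \otimes L$ onto the anisotropic complement of the totally isotropic subspace of $V_\phi \otimes L$ realises $\sigma := \anispart{(\psi_L)}$ as a subform of $\phi'$ of dimension $m - \witti{0}{\psi_L}$, and then $\psi_{L(\phi')} \simeq \sigma_{L(\phi')} \oplus \witti{0}{\psi_L} \cdot \form{0}$ gives directly $\witti{0}{\psi_{L(\phi')}} = \witti{0}{\psi_L} + \witti{0}{\sigma_{L(\phi')}}$. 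The inclusion $D(\sigma_{L(\phi')}) \subseteq D(\phi'_{L(\phi')}) = D((\phi')_1)$, combined with Proposition~\ref{Propanisclassification}, forces $\anispart{(\sigma_{L(\phi')})}$ to be a subform of $(\phi')_1$, so $\witti{0}{\sigma_{L(\phi')}} \geq \mydim{\sigma} - \mydim{(\phi')_1}$. Consolidating,
\[ \witti{0}{\psi_{L(\phi)}} \geq \max\bigl\{\witti{0}{\psi_L},\; m - \mydim{(\phi')_1}\bigr\}. \]

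It remains to show that this maximum dominates $\min\{w,\; [(m+1)/2]\}$. Splitting into cases: if $w \leq [(m+1)/2]$, one needs $\witti{0}{\psi_L} \geq w$ or $\mydim{(\phi')_1} \leq m - w$ (i.e.\ $\witti{1}{\phi'} \geq \witti{1}{\phi}$); if $w > [(m+1)/2]$, one needs $\witti{0}{\psi_L} \geq [(m+1)/2]$ or $\mydim{(\phi')_1} \leq [m/2]$. Together with the trivial bound $\witti{0}{\psi_L} \geq w - \witti{1}{\phi}$ coming from intersecting a codimension-$\witti{1}{\phi}$ subspace with the $w$-dimensional isotropic subspace, this is where Theorem~\ref{Thmincompressibility} enters as the essential input: the $F$-level Izhboldin dimension $\mathrm{dim}_{\mathrm{Izh}}(X_\phi) = m$ has to be leveraged to control $\mydim{(\phi')_1}$, which is \emph{a priori} an $L$-level invariant. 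The main obstacle is precisely this bridging of $F$- and $L$-level data; one expects to succeed either by constructing an explicit rational $F$-map from $X_\phi$ into an $F$-variety whose base-change to $L$ records the first-kernel information of $\phi'$, or by invoking Theorem~\ref{Thmruledness} together with Proposition~\ref{Propsspsubform} to transport the splitting behaviour of $\sigma$ over $L(\phi')$ back to information about $\phi$ over $F(\phi)$, and to close the two cases against one another.
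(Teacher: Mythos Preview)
The paper does not prove this proposition; it is recalled from \cite[Thm.~7.13, Rem.~9.1]{Scully2} as an external input, so there is no in-paper argument to compare against. That said, your reduction in the first two paragraphs is correct and is the natural first move: the identification $\anispart{(\phi_{L(\phi)})} \simeq \anispart{(\psi_{L(\phi)})}$ via Corollary~\ref{Corfirstkernel} gives the reformulation $\witti{0}{\psi_{L(\phi)}} \geq \min\{w,[(m+1)/2]\}$, and the passage to $\phi' = \anispart{(\phi_L)}$ over $L$ via Remark~\ref{Remsff}(3), together with the subform inclusion $\sigma \subset \phi'$, yields the displayed lower bound $\max\{\witti{0}{\psi_L},\, m - \mydim{(\phi')_1}\}$.

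The genuine gap is your third paragraph. You correctly isolate the crux---bridging the $F$-level incompressibility of $X_\phi$ with the $L$-level quantity $\mydim{(\phi')_1}$---but you do not carry it out. The sentences beginning ``one expects to succeed either by\ldots'' describe the obstacle rather than resolve it: neither proposed mechanism is made precise, and both branches of your case split on $w$ versus $[(m+1)/2]$ are left open. The trivial bound $\witti{0}{\psi_L} \geq w - \witti{1}{\phi}$ is insufficient by itself, and there is no general inequality $\witti{1}{\phi'} \geq \witti{1}{\phi}$ available (indeed $\phi'$ can be much smaller than $\phi$, with correspondingly smaller first index). Completing the argument requires a concrete construction---e.g.\ producing an $F$-rational map from $X_\phi$ (or an iterated product) to a variety that detects the anisotropic part of $\psi_L$, and then invoking Theorem~\ref{Thmincompressibility} against that target---and this is exactly the nontrivial content supplied by \cite[Thm.~7.13]{Scully2}. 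As it stands, your proposal is a sound reduction followed by an unfinished sketch.
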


\begin{remark} A similar statement also holds for $p>2$ (see \cite[Thm. 7.13]{Scully2}), but this will not be needed below. \end{remark}

Finally, it will be convenient to record here the following application of this result which was also obtained in \cite{Scully2}:

\begin{theorem}[{\cite[Thm. 9.2]{Scully2}}] \label{Thmouterexcellentconnections} Let $\phi$ be an anisotropic quasilinear quadratic form of dimension $\geq 2$ over $F$ and write $\mydim{\phi} = 2^n + m$ for uniquely determined integers $n \geq 0$ and $1\leq m \leq 2^n$. Then, for any field extension $L$ of $F$, we either have $\witti{0}{\phi_L} \geq m$ or $\witti{0}{\phi_L} \leq m- \witti{1}{\phi}$. \end{theorem}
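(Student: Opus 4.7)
The plan is to argue by contradiction: suppose that for some extension $L/F$ we have $j \coloneqq \witti{0}{\phi_L}$ strictly between $m - \witti{1}{\phi}$ and $m$, and derive a contradiction by pairing the comparison inequality of Proposition \ref{Propcomparison} with Hoffmann and Laghribi's upper bound on the first higher isotropy index applied not to $\phi$ itself, but to the anisotropic kernel of $\phi_L$.

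First I would set $\psi \coloneqq \anispart{\phi_L}$. By Proposition \ref{PropWittdecomposition}, $\phi_L \simeq \psi \oplus j \cdot \form{0}$ and $\mydim{\psi} = 2^n + (m - j)$. Since $1 \leq m - j < \witti{1}{\phi} \leq m \leq 2^n$, the form $\psi$ is anisotropic of dimension $\geq 2$ and falls within the scope of Hoffmann and Laghribi's bound (\cite[Lem. 4.1]{HoffmannLaghribi2}), which yields $\witti{1}{\psi} \leq m - j$.

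Next I would compute $\witti{0}{\phi_{L(\phi)}}$ in two ways. On the one hand, Remark \ref{Remsff}(3) and Lemma \ref{LemPlaceisotropy} together give $\witti{0}{\phi_{L(\phi)}} = \witti{0}{\phi_{L(\psi)}}$, and the decomposition $\phi_{L(\psi)} \simeq \psi_{L(\psi)} \oplus j \cdot \form{0}$ then yields $\witti{0}{\phi_{L(\phi)}} = j + \witti{1}{\psi} \leq m$ by the previous step. On the other hand, since $\mydim{\psi} \geq 2$ the form $\phi_L$ is not split, so Proposition \ref{Propcomparison} applies and gives
\begin{equation*}
\witti{0}{\phi_{L(\phi)}} \geq \witti{1}{\phi} + \min \Big\lbrace j,\; \Big[\tfrac{\mydim{\phi} - \witti{1}{\phi} + 1}{2}\Big] \Big\rbrace.
\end{equation*}

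Finally, I would show that both possible values of the minimum force the strict inequality $\witti{0}{\phi_{L(\phi)}} > m$, contradicting the upper bound. If the minimum is $j$, this is immediate from the assumption $j > m - \witti{1}{\phi}$. If the minimum is the bracketed term, an elementary estimate using $\mydim{\phi} = 2^n + m$, $\witti{1}{\phi} \geq 1$ and $2^n \geq m$ gives $\big[(\mydim{\phi} - \witti{1}{\phi} + 1)/2\big] \geq m - \witti{1}{\phi} + 1$, so that $\witti{0}{\phi_{L(\phi)}} \geq m + 1$. I do not anticipate any serious obstacle here: the whole argument is short, and the key point is just the recognition that the correct incarnation of the Hoffmann--Laghribi bound in this context is the one for $\psi$, which has dimension $2^n + (m-j)$, rather than for $\phi$.
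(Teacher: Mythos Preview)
The paper does not supply its own proof of Theorem~\ref{Thmouterexcellentconnections}; the result is quoted from \cite[Thm.~9.2]{Scully2} and is introduced in \S\ref{Comparisontheorem} explicitly as an application of Proposition~\ref{Propcomparison}. Your argument is correct and proceeds along exactly the lines one would expect from that description: you feed the extension $L$ into Proposition~\ref{Propcomparison} and play the resulting lower bound on $\witti{0}{\phi_{L(\phi)}}$ off against the upper bound $\witti{0}{\phi_{L(\phi)}} = j + \witti{1}{\anispart{(\phi_L)}} \leq m$ coming from the Hoffmann--Laghribi bound applied to $\anispart{(\phi_L)}$, whose dimension is $2^n + (m-j)$. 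The parity check needed to handle the floor in the second branch of the minimum (that $2^n \geq k+1$ with $k = m - \witti{1}{\phi}$ forces $\big[(2^n + k + 1)/2\big] \geq k+1$) goes through because $2^n$ is even and $k+1 \leq 2^n$, so no obstruction arises there.
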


\begin{remark} Taking $L = F(\phi)$ here, we see that if $\witti{1}{\phi} < m$, then $\witti{1}{\phi} \leq \frac{m}{2}$. This result will now be subsumed in our Theorem \ref{ThmHoffmannsconjecture}. \end{remark}

\section{A motivational example} \label{Warmup}

As a warm-up for the proof of our main result, we will now prove Proposition \ref{PropMotivation}. Throughout this section, we assume that $p=2$. By a \emph{quasi-Pfister form}, we will mean a quasi-Pfister 2-form. Our assumption on the prime $p$ is imposed for simplicity, but also because we will make use of the following fact already mentioned in the introduction, the analogue of which is unknown when $p>3$ (see \cite[\S 4.1]{Scully2}):

\begin{lemma}[{cf. \cite[Cor. 7.22]{Hoffmann2}}] \label{Lemminimalityi1} Let $\phi$ be an anisotropic quasilinear quadratic form of dimension $\geq 2$ over $F$ and let $L$ be a field extension of $F$ such that $\phi_L$ is isotropic. Then $\witti{0}{\phi_L} \geq \witti{1}{\phi}$. \end{lemma}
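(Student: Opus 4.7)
The plan is to establish the existence of an $F$-place $F(\phi) \dashrightarrow L$, from which the desired inequality follows immediately by Lemma \ref{LemPlaceisotropy}. Indeed, given such a place, Lemma \ref{LemPlaceisotropy} yields
\[
\witti{0}{\phi_L} \geq \witti{0}{\phi_{F(\phi)}} = \wittj{1}{\phi} = \witti{1}{\phi},
\]
where the penultimate equality is the definition of $\wittj{1}{\phi}$ (since $F_1 = F(\phi)$) and the final equality uses that $\phi$ is anisotropic, so $\wittj{0}{\phi} = 0$.

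To construct the place, I would start from the hypothesis that $\phi_L$ is isotropic: since $\mydim{\phi} \geq 2$, a nonzero isotropic vector in $V_\phi \otimes_F L$ determines an $L$-rational point of the quasilinear $p$-hypersurface $X_\phi \subseteq \mathbb{P}(V_\phi)$. Since $\phi$ is anisotropic of dimension $\geq 2$, $\phi$ is not split, so $X_\phi$ is an integral projective (hence proper) $F$-scheme (cf. \S \ref{Functionfields}). Let $\mathfrak{p} \in X_\phi$ be the scheme-theoretic image of the chosen $L$-point, so that the residue field $\kappa(\mathfrak{p})$ embeds into $L$. Standard valuation theory then produces a valuation ring $R \subseteq F(\phi)$ dominating the local ring $\mathcal{O}_{X_\phi,\mathfrak{p}}$ whose residue field coincides with $\kappa(\mathfrak{p})$ (achievable by iterating the construction of divisorial valuations centered at $\mathfrak{p}$). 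The pair consisting of $R$ together with the composite $R \twoheadrightarrow \kappa(\mathfrak{p}) \hookrightarrow L$ is then the required $F$-place $F(\phi) \dashrightarrow L$.

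The only step requiring care is the valuation-theoretic construction above, but this is standard and is precisely the mechanism that underpins Lemma \ref{LemPlaceisotropy} itself (as Scully notes, that lemma is a consequence of the completeness of $X_\phi$). Conceptually, the content of Lemma \ref{Lemminimalityi1} is that $F(\phi)$ is a \emph{minimally isotropic} field for $\phi$: among all overfields $L$ of $F$ over which $\phi$ acquires a nonzero isotropic vector, $F(\phi)$ realises the smallest possible isotropy index. Finally, I note that nothing in the argument appears to use $p = 2$ explicitly; the restriction to that case in the statement seems to reflect the source (\cite[Cor.~7.22]{Hoffmann2}) rather than the proof strategy sketched here.
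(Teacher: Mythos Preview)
The paper does not prove this lemma itself; it merely cites \cite[Cor.~7.22]{Hoffmann2}. Your argument is correct and is the natural proof: an $L$-point of the proper integral scheme $X_\phi$ yields an $F$-place $F(\phi)\dashrightarrow L$, and Lemma~\ref{LemPlaceisotropy} then gives the inequality. The valuation-theoretic step you flag --- producing a valuation ring of $F(\phi)$ dominating $\mathcal{O}_{X_\phi,\mathfrak{p}}$ with residue field $\kappa(\mathfrak{p})$ --- is indeed standard (see, e.g., the material in \cite[\S 103]{EKM} to which the paper already refers for its treatment of places), and your parenthetical about iterating divisorial valuations along a chain of specialisations is the right mechanism. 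This is essentially the ``almost tautological'' observation the paper itself alludes to in the Introduction when explaining why $F(\phi)$ realises the first entry of the full splitting pattern.

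Your closing remark deserves emphasis: nothing in the argument uses $p=2$. All ingredients --- integrality of $X_\phi$ for non-split $\phi$ (\S\ref{Functionfields}), Lemma~\ref{LemPlaceisotropy}, and the definitions of \S\ref{Splittingpattern} --- are stated and proved for arbitrary $p$, so your proof establishes the lemma for every prime. This sits somewhat oddly with the paper's assertion, immediately preceding Lemma~\ref{Lemminimalityi1}, that ``the analogue \ldots\ is unknown when $p>3$''; presumably that remark is pointing to a subtler statement discussed in \cite[\S 4.1]{Scully2} rather than to the literal content of the lemma, which your argument shows holds in full generality.
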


Now, let $\phi$ be an anisotropic quasilinear quadratic form of dimension $\geq 2$ over $F$ and let $\psi \subset \phi$ be a subform of codimension $\witti{1}{\phi}$. By Corollary \ref{Corfirstkernel}, $\psi_{F(\phi)}$ is isomorphic to the first higher anisotropic kernel of $\phi$. In the next section, we will prove Theorem \ref{Maintheorem}, which asserts that the latter form is divisible by a quasi-Pfister form of dimension $\geq \witti{1}{\phi}$. Here, we will consider some special situations in which this divisibility property is already visible over the base field $F$. To this end, we will be interested in the following technical condition on the pair $(\phi, \psi)$:
\vspace{2mm}
\begin{enumerate} \item[$(\star)$] There exist elements $a \in D(\phi) \setminus \lbrace 0 \rbrace$ and $b \in D(\psi) \setminus \lbrace 0 \rbrace$ such that $a \neq c(bd + ef)$ for any $c,d,e,f \in D(\psi)$.\end{enumerate}

\begin{example} \label{Exstarcondition} If, in the above situation, we have $\mathrm{ndeg}(\psi) < \mathrm{ndeg}(\phi)$ (equivalently, if $h(\psi) < h(\phi)$ -- see Corollary \ref{Corheight=ndegree}), then $(\star)$ holds for the pair $(\phi,\psi)$. Indeed, in this case, $D(\phi)$ is (evidently) not contained in $cN(\psi)$ for any $c \in F$. Since $bd + ef \in N(\psi)$ for every $b,d,e,f \in D(\psi)$, the validity of $(\star)$ is immediately verified. \end{example}

In light of Example \ref{Exstarcondition}, Proposition \ref{PropMotivation} is subsumed in the following result:

\begin{proposition} \label{Propwarmup} Let $\phi$ be an anisotropic quasilinear quadratic form of dimension $\geq 2$ over $F$ and let $\psi \subset \phi$ be subform of codimension $\witti{1}{\phi}$ such that the pair $(\phi,\psi)$ satisfies $(\star)$. Then there exist a quasi-Pfister form $\pi$, a subform $\sigma \subset \pi$, an element $\lambda \in D(\phi)$ and a form $\tau$ over $F$ such that $\psi \simeq \pi\otimes \tau$ and $\phi \simeq \psi \perp \lambda\sigma$. 
\begin{proof} Let $b \in D(\psi) \setminus \lbrace 0 \rbrace$ be as in $(\star)$. Then $(\star)$ also holds for the pair $(b\phi,b\psi)$. Indeed, if this were not the case, then, for every $a \in D(\phi)$, we could find $c,d,e,f \in D(\psi)$ such that $ba = bc(b^3d + bebf)$, or, equivalently, such that $a = b^2c(bd + ef)$. But, since $D(\psi)$ is an $F^2$-linear subspace of $F$, we have $b^2c \in D(\psi)$, and so this would contradict the fact that $(\star)$ holds for the original pair $(\phi,\psi)$. Since the exchange $(\phi,\psi) \rightarrow (b\phi,b\psi)$ does not affect the statement of the proposition, we can therefore assume that $b = 1$. In other words. we can assume that $1 \in D(\psi)$ and that:

\begin{enumerate} \item[$(\star')$] There exists an element $a \in D(\phi) \setminus \lbrace 0 \rbrace$ such that $a \neq c(d + ef)$ for any $c,d,e,f \in D(\psi)$. \end{enumerate}

\noindent Let us fix $a \in D(\phi) \setminus \lbrace 0 \rbrace$ as in $(\star')$. We will now prove that the statement of the proposition holds with $\lambda = a$. First, we note that $(\star')$ implies:
\begin{enumerate}  \item $a \notin D(\psi)$.
\item $\psi_{F_{au}}$ is anisotropic for every $u \in D(\psi) \setminus \lbrace 0 \rbrace$. \end{enumerate}
Indeed, if $a$ were in $D(\psi)$, then we could contradict $(\star')$ by taking $c=1$, $d=a$ and $e=f=0$. Similarly, if $\psi_{F_{au}}$ were isotropic in (2), then (since $\psi$ is anisotropic) we could find nonzero elements $x,y \in D(\psi)$ such that $au = xy$ (see Lemma \ref{LemIsotropypurelyinseparable} (2)); taking $c = x^{-1}$, $d=0$, $e=y$ and $f = u$, this would again contradict $(\star')$.

Now, (1) implies that we have $\psi \perp \form{a} \subset \phi$. If $\witti{1}{\phi} = 1$, then the latter inclusion is an isomorphism, and the statement of the proposition holds with $\pi = \sigma = \form{1}$ and $\tau = \psi$. Assume now that $\witti{1}{\phi} >1$. Since $1,a \in D(\phi)$, $\phi_{F_a}$ is isotropic by Lemma \ref{LemIsotropypurelyinseparable} (2). By Lemma \ref{Lemminimalityi1}, it then follows that $\witti{0}{\phi_{F_a}} \geq \witti{1}{\phi}$. On the other hand, (2) (with $u=1 \in D(\psi)$) shows that $\psi_{F_a}$ is anisotropic, and, since $\mydim{\psi} = \mydim{\phi} - \witti{1}{\phi}$, we conclude that $\anispart{(\phi_{F_a})} \simeq \psi_{F_a}$. In other words, we have $D(\phi_{F_a}) = D(\psi_{F_a}) = D(\psi) + aD(\psi)$ (where the latter equality holds by Lemma \ref{LemIsotropypurelyinseparable} (1)). By Lemma \ref{Lemexistenceofforms}, it follows that we can write $\phi \simeq \psi \perp \form{a} \perp a\sigma'$ for some form $\sigma' \subset \psi$. Let $\sigma = \form{1} \perp \sigma$, so that $\phi \simeq \psi \perp a\sigma$. As $1 \in D(\psi)$, we have $\sigma \subset \psi$. Since $\sigma$ is anisotropic and represents 1, Lemma \ref{Lempropertyofnormform} shows that $\sigma \subset \normform{\sigma}$. Thus, in order to complete the proof, it will be enough to prove that $\psi$ is divisible by $\pi = \normform{\sigma}$. By Corollary \ref{CordivbyQP}, this amounts to showing that $D(\sigma) \subset G(\psi)$. Let $x \in D(\sigma) \setminus \lbrace 0 \rbrace$. In order to show that $x \in G(\psi)$, we must check that $xy \in D(\psi)$ for all $y \in D(\psi)$ (see Lemma \ref{Lemsimilarityfactor}). If $y = 0$, then there is nothing to prove. Suppose now that $y \neq 0$. Then, by Lemma \ref{LemIsotropypurelyinseparable} (2), $\phi_{F_{ay}}$ is isotropic. On the other hand, $\psi_{F_{ay}}$ is anisotropic by (2). Using Lemma \ref{Lemminimalityi1} in the same way as before, we see that $\anispart{(\phi_{F_{ay}})} \simeq \psi_{F_{ay}}$, or, equivalently, that $D(\phi_{F_{ay}}) = D(\psi_{F_{ay}}) = D(\psi) + ayD(\psi)$. But, letting $u = \sqrt{ay}$, we have $xy = (a^{-1}u)^2x \in D(\phi_{F_{ay}})$. In particular, we can find $u, v \in D(\psi)$ such that $xy = u + ayv$. To complete the proof, it now only remains to show that $v = 0$. But, if $v \neq 0$, then we have $a = v^{-1}(x + uy^{-1})$, and we obtain a contradiction to $(\star')$ by taking $c = v^{-1}$, $d = x$, $e = u$ and $f = y^{-1}$. The result follows. \end{proof} \end{proposition}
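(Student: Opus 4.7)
The plan is to first normalize so that $1 \in D(\psi)$ by exploiting the invariance of condition $(\star)$ under simultaneous scaling $(\phi,\psi) \mapsto (b\phi, b\psi)$; the resulting condition $(\star')$ features a single distinguished element $a \in D(\phi) \setminus \{0\}$ subject to $a \neq c(d+ef)$ for all $c,d,e,f \in D(\psi)$. From $(\star')$ I would immediately extract two preliminary facts that will drive everything else: (a) $a \notin D(\psi)$, for otherwise $a = 1\cdot(a + 0\cdot 0)$ would violate $(\star')$; and (b) $\psi_{F_{au}}$ is anisotropic for every $u \in D(\psi)\setminus\{0\}$. The content in (b) is that any isotropy witness, via Lemma \ref{LemIsotropypurelyinseparable}, would produce nonzero $x,y \in D(\psi)$ with $x = auy$; rearranging to $a = u^{-1} x y^{-1}$ contradicts $(\star')$ upon noting that $u^{-1} = u/u^2$ and $y^{-1} = y/y^2$ lie in $D(\psi)$ because $D(\psi)$ is an $F^2$-module.

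Next, I would decompose $\phi$. Using (a), $\psi \perp \form{a} \subseteq \phi$ strictly. Scalar-extending to $F_a$, the form $\phi_{F_a}$ is isotropic (an explicit isotropic vector comes from the fact that $1,a \in D(\phi)$), so Lemma \ref{Lemminimalityi1} yields $\witti{0}{\phi_{F_a}} \geq \witti{1}{\phi}$. Combined with (b) at $u = 1$ (which gives anisotropy of $\psi_{F_a}$) and the dimension count $\mydim{\psi} = \mydim{\phi} - \witti{1}{\phi}$, the dimensions force $\anispart{\phi_{F_a}} \simeq \psi_{F_a}$. Comparing represented sets gives $D(\phi) \subseteq D(\phi_{F_a}) = D(\psi) + aD(\psi)$, and Lemma \ref{Lemexistenceofforms} then produces a subform $\sigma \subseteq \psi$ with $1 \in D(\sigma)$ satisfying $\phi \simeq \psi \perp a\sigma$. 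Setting $\lambda = a$ and $\pi := \normform{\sigma}$ (so that $\sigma \subset \pi$ by Lemma \ref{Lempropertyofnormform}), all that remains is to prove $\psi$ is divisible by $\pi$.

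By Corollary \ref{CordivbyQP} this amounts to the pointwise statement $D(\sigma) \subseteq G(\psi)$, i.e., $xy \in D(\psi)$ for all $x \in D(\sigma)$ and $y \in D(\psi)\setminus\{0\}$. My plan is to rerun the previous argument with $F_{ay}$ in place of $F_a$: by (b), $\psi_{F_{ay}}$ is anisotropic, and $\phi_{F_{ay}}$ is isotropic because $a,y \in D(\phi)$ once again force an explicit isotropic vector (the underlying identity being that $y/a = (\sqrt{ay}/a)^2$ is a square in $F_{ay}$). The same dimension argument as before shows $\anispart{\phi_{F_{ay}}} \simeq \psi_{F_{ay}}$, and since $ax \in D(\phi)$ one verifies that $xy$ lies in $D(\phi_{F_{ay}})$, yielding a representation $xy = u + ayv$ with $u,v \in D(\psi)$. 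The main obstacle is then the final step: forcing $v = 0$. If $v \neq 0$, rearranging produces $a = v^{-1}(x + uy^{-1})$, and the delicate observation is that $v^{-1}$ and $y^{-1}$ both lie in $D(\psi)$ (again because $D(\psi)$ is an $F^2$-module and $y^{-1} = y/y^2$, $v^{-1} = v/v^2$), so that this identity is exactly of the forbidden form $c(d + ef)$ excluded by $(\star')$. The hard part is thus orchestrating the contradiction so that every element appearing in the final expression honestly lives in the $F^2$-submodule $D(\psi)$, which is precisely what makes the multiplicative shape $c(d+ef)$ ruling out $a$ so tight.
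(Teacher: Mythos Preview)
Your proposal is correct and follows essentially the same route as the paper: normalize to $1 \in D(\psi)$, extract from $(\star')$ both $a \notin D(\psi)$ and the anisotropy of $\psi_{F_{au}}$ for all nonzero $u \in D(\psi)$, use Lemma~\ref{Lemminimalityi1} over $F_a$ to get $\phi \simeq \psi \perp a\sigma$ with $\sigma \subset \psi$, set $\pi = \normform{\sigma}$, and then rerun the argument over $F_{ay}$ to force $D(\sigma) \subseteq G(\psi)$ via a contradiction to $(\star')$. The only cosmetic differences are that the paper separates out the trivial case $\witti{1}{\phi}=1$ and phrases the $(\star')$-contradictions with slightly different labelings, while you are a bit more explicit about why inverses of nonzero elements of $D(\psi)$ remain in $D(\psi)$.
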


In general, it is \emph{not} always possible to find a subform $\psi \subset \phi$ of codimension $\witti{1}{\phi}$ such that the pair $(\phi, \psi)$ satisfies condition $(\star)$. Indeed, note that if $\witti{1}{\phi}=2$ in the situation of Proposition \ref{Propwarmup}, then $\phi$ is divisible by the binary (i.e., 2-dimensional) form $\sigma$. The following example, which is directly analogous to an example of Vishik from the characteristic $\neq 2$ theory of quadratic forms, shows that there exist anisotropic quasilinear quadratic forms which have first higher isotropy index equal to 2, but which are not divisible by a binary form: Let $a,b,c,d,e$ be algebraically independent variables over a field $F_0$ of characteristic 2, let $F = F_0(a,b,c,d,e)$ and consider the anisotropic $F$-form $\phi = \pfister{ab,ac,ad} \perp bcd\form{1,ab,ac,ad} \perp e\form{a,b,c,d}$. 

\begin{lemma}[{Vishik, cf. \cite[Lem. 7.1]{Totaro2}}] \label{LemVishiksexample} In the above situation, we have $\witti{1}{\phi} = 2$, but $\phi$ is not divisible by a binary form. 
\begin{proof} We will make use of some basic facts from the theory of symmetric bilinear forms over fields of characteristic 2. For the relevant notation and terminology, the reader is referred to the appendix below. Now, let $\mathfrak{b}$ denote the bilinear form $\pfister{ab,ac,ad}_b \perp bcd\form{1,ab,ac,ad}_b \perp e\form{a,b,c,d}_b$ over $F$, so that $\phi = \phi_{\mathfrak{b}}$ (i.e., $\phi$ is the diagonal part of $\mathfrak{b}$). As the reader will immediately verify, we have
\begin{equation*} \mathfrak{b} \sim \mathfrak{c} \coloneqq \pfister{a,b,c,d}_b \perp \pfister{e}_b \otimes \form{a,b,c,d}_b. \end{equation*}
Let $\pi = \pfister{a,b,c,d}_b$, and let $\pi'$ denote the pure subform of $\pi$. Since $\mathrm{ndeg}(\phi_{\pi}) = 16 < 32 = \mathrm{ndeg}(\phi)$, Corollary \ref{Corisotropyff} (2) implies that $\pi_{F(\phi)}$ is anisotropic. At the same time, it follows from Remark \ref{Remsff} (4) and the definition of $\phi$ that $e \in D(\pi'_{F(\phi)})$. Thus, by \cite[Lem. 6.1]{EKM}, there exists a 3-fold bilinear Pfister form $\eta$ over $F(\phi)$ such that $\pi_{F(\phi)} \simeq \pfister{e}_b \otimes \eta$. In particular, $\mathfrak{c}_{F(\phi)}$ is divisible by $\pfister{e}_b$, and so $\witti{0}{\mathfrak{c}_{F(\phi)}}$ is even (see \cite[Prop. 6.22]{EKM}). Since $\mydim{\mathfrak{c}} - \mydim{\mathfrak{b}} = 8 \equiv 0 \pmod{4}$, it follows that $\witti{0}{\mathfrak{b}_{F(\phi)}}$ is also even. In particular, we have $\witti{1}{\mathfrak{b}} \geq 2$, and, in order to prove that equality holds here, we just need to find a field extension $L$ of $F$ such that $\phi_L$ is isotropic, but $\witti{0}{\phi_L} \leq 2$ (see Lemma \ref{Lemminimalityi1}). We claim that $L = F_{cde}$ is such an extension. First, note that since $cde = (bcd)(b^{-1}e)$ is a product of two nonzero elements of $D(\phi)$, $\phi_L$ is isotropic by Lemma \ref{LemIsotropypurelyinseparable} (2). On the other hand, it is easy to see that the codimension-2 subform $\psi = \pfister{ab,ac,ad} \perp bcd\form{1,ab,ac,ad}  \perp e\form{c,d} \subset \phi$ remains anisotropic over $L$. Indeed, since
\begin{equation*} \psi_L \simeq \pfister{ab,ac,ad} \perp bcd \form{1,ab,ac,ad} \perp \form{d,c} \subset  \pfister{a,b,c,d}_L, \end{equation*}
it suffices to check that $\pfister{a,b,c,d}$ remains anisotropic over $L$. But, since $cde \notin F^2(a,b,c,d) = N(\pfister{a,b,c,d})$, this follows from Lemma \ref{LemIsotropypurelyinseparable} (4). Since the anisotropy of $\phi_L$ readily implies that $\witti{0}{\psi_L} \leq 2$, we have shown that $\witti{1}{\phi} = 2$.

It now remains to check that $\phi$ is not divisible by a binary form. For the sake of contradiction, suppose instead that $\phi$ is divisible by $\pfister{u}$ for some $u \in F \setminus F^2$. We claim that $u \in F^2(ab,ac,ad)$. Again, let us assume that this is not the case. Then the quasi-Pfister form $\tau = \pfister{ab,ac,ad}$ remains anisotropic over $F_u$ by Lemma \ref{LemIsotropypurelyinseparable} (4). Let $\sigma = \tau \perp \form{bcd} \subset \phi$ and $\eta = \tau \perp \form{ae} \subset \phi$. Since $\witti{0}{\phi_{F_u}} = \frac{1}{2}\mydim{\phi} = 8$ (Lemma \ref{LemIsotropypurelyinseparable} (6)), and since $\mydim{\sigma} = \mydim{\eta} = 9$, both $\sigma_{F_u}$ and $\eta_{F_u}$ are necessarily isotropic. Since $\tau_{F_u}$ is anisotropic, this means that we have $bcd,ae \in D(\tau_{F_u}) = F^2(ab,ac,ad,u)$. But this implies that $F^2(a,b,c,d,e) \subseteq F^2(ab,ac,ad,u)$, thus contradicting the fact that the elements $a,b,c,d,e$ are algebraically independent over $F$. This proves our claim, and so we can write $u = v+w$ for some $v \in D(\form{1,ab,ac,ad})$ and $w \in D(\form{bc,bd,cd,abcd})$. Now, Lemma \ref{LemIsotropypurelyinseparable} (6) implies that $u \in G(\phi)$. In particular, applying Lemma \ref{Lemsimilarityfactor} to the elements $ae, acd, abd \in D(\phi)$, we see that 
\begin{enumerate} \item $aeu \in D(\phi)$,
\item $acdu \in D(\phi)$, and 
\item $abdu \in D(\phi)$. \end{enumerate}
We can now complete the proof: First, note that since $ae\form{1,ab,ac,ad} \simeq e\form{a,b,c,d} \subset \phi$, we have $aev \in D(\phi)$. By (1), we this implies that $aew \in D(\phi)$. Note however that $ae\form{bc,bd,cd,abcd} \simeq \form{abce,abde,acde,bcde}$, and the latter form does not represent any nonzero element of $D(\phi)$. It follows that $w = 0$, and so $u \in D(\form{1,ab,ac,ad})$. Next, consider the form $\rho = acd\form{1,ab,ac,ad} \simeq \form{acd,bcd,c,d}$. By (2), we have $acdu \in D(\rho) \cap D(\phi)$. Since the elements $a,b,c,d,e$ are algebraically independent over $F$, direct inspection shows that the former intersection is equal to $acdD(\form{1,ab})$, and so $u \in D(\form{1,ab})$. Finally, we can use (3) in a similar way to show that $u \in F^2$, thus providing us with the needed contradiction. The lemma is proved.\end{proof} \end{lemma}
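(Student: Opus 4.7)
The plan divides into two independent tasks: computing $\witti{1}{\phi}=2$, and ruling out divisibility of $\phi$ by any binary quasi-Pfister form. For the first, the natural move is to trade $\phi$ for the diagonal of a simpler symmetric bilinear form, since Witt-equivalent bilinear forms have diagonal parts with isomorphic anisotropic kernels. I would set $\mathfrak{b} = \pfister{ab,ac,ad}_b \perp bcd\form{1,ab,ac,ad}_b \perp e\form{a,b,c,d}_b$ and match monomials up to squares to verify the bilinear Witt-equivalence $\mathfrak{b} \sim \mathfrak{c} \coloneqq \pfister{a,b,c,d}_b \perp \pfister{e}_b \otimes \form{a,b,c,d}_b$. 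The lower bound $\witti{1}{\phi}\geq 2$ should then follow from the fact that $\mathfrak{c}_{F(\phi)}$ acquires the factor $\pfister{e}_b$: the $4$-fold Pfister $\pi = \pfister{a,b,c,d}_b$ remains anisotropic over $F(\phi)$ by a norm-degree comparison and Corollary \ref{Corisotropyff}(2), while reading off the generic point of $X_\phi$ (Remark \ref{Remsff}(4)) shows that $e$ lies in the value set of its pure subform; the standard bilinear Pfister extraction then makes $\pi_{F(\phi)}$, and hence $\mathfrak{c}_{F(\phi)}$, divisible by $\pfister{e}_b$, forcing its Witt index (and hence $\witti{1}{\phi}$) to be even.

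For the matching upper bound I would seek an extension $L/F$ that isotropises $\phi$ yet keeps a codimension-$2$ subform anisotropic. The candidate is $L = F_{cde}$: since $cde = (bcd)(b^{-1}e)$ is the product of two elements of $D(\phi)$, Lemma \ref{LemIsotropypurelyinseparable}(2) gives isotropy, while an explicit codimension-$2$ subform $\psi \subset \phi$ (remove the $ae,be$ slots) embeds inside $\pfister{a,b,c,d}_L$, which survives over $F_{cde}$ because $cde \notin F^2(a,b,c,d)$, i.e.\ $cde$ is not in the norm field, by Lemma \ref{LemIsotropypurelyinseparable}(4). Combined with Lemma \ref{Lemminimalityi1}, this yields $\witti{1}{\phi}=2$.

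For non-divisibility I assume $\phi$ is divisible by $\pfister{u}$ with $u \in F\setminus F^2$ and progressively pin down $u$. Step one: $u \in F^2(ab,ac,ad)$. If not, then $\pfister{ab,ac,ad}$ stays anisotropic over $F_u$ (Lemma \ref{LemIsotropypurelyinseparable}(4)) while $\witti{0}{\phi_{F_u}}=\tfrac{1}{2}\dim\phi=8$, forcing the $9$-dimensional subforms $\pfister{ab,ac,ad}\perp\form{bcd}$ and $\pfister{ab,ac,ad}\perp\form{ae}$ to become isotropic over $F_u$; this puts both $bcd$ and $ae$ in $F^2(ab,ac,ad,u)$, contradicting the algebraic independence of $a,b,c,d,e$. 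Step two: decompose $u = v+w$ with $v\in D(\form{1,ab,ac,ad})$ and $w\in D(\form{bc,bd,cd,abcd})$. Step three: exploit $u\in G(\phi)$ via Lemma \ref{Lemsimilarityfactor}. Applied to $ae\in D(\phi)$, and using that $aev \in D(\phi)$ automatically (because $ae\form{1,ab,ac,ad} \simeq e\form{a,b,c,d}\subset \phi$), we obtain $aew \in D(\phi)$; but $aew$ lies in the $F^2$-span of $\lbrace abce, abde, acde, bcde\rbrace$, whose intersection with $D(\phi)$ is trivial by inspection of monomials, so $w=0$. Applying the criterion next to $acd\in D(\phi)$ forces $u\in D(\form{1,ab})$ after computing $D(acd\form{1,ab,ac,ad})\cap D(\phi)$, and a third application with $abd$ forces $u \in F^2$, the desired contradiction. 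The main obstacle throughout is the combinatorial bookkeeping in these three similarity steps: each amounts to intersecting an explicit $F^2$-subspace with $D(\phi)$ and reading off which monomials in $a,b,c,d,e$ survive, so the algebraic independence of the five generators is doing the real work.
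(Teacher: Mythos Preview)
Your proposal is correct and follows the paper's proof essentially line for line: the same bilinear form $\mathfrak{c}$, the same Pfister-extraction argument over $F(\phi)$ for the lower bound, the same extension $L=F_{cde}$ with the same codimension-$2$ subform for the upper bound, and the same three-step similarity-factor elimination ($ae$, then $acd$, then $abd$) for non-divisibility. The only step you compress is the passage from the evenness of $\witti{0}{\mathfrak{c}_{F(\phi)}}$ to $\witti{1}{\phi}\geq 2$: the paper inserts the intermediate observation that $\mydim{\mathfrak{c}}-\mydim{\mathfrak{b}}\equiv 0\pmod 4$ to transfer parity to $\witti{0}{\mathfrak{b}_{F(\phi)}}$, and then uses $\witti{0}{\mathfrak{b}_{F(\phi)}}\leq \witti{0}{\phi_{F(\phi)}}$ (Corollary~\ref{CorBilindex}) rather than concluding that $\witti{1}{\phi}$ itself is even.
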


In fact, Vishik's example shows more: It is generally not possible to find a subform $\psi \subset \phi$ of codimension $\witti{1}{\phi}$ such that the pair $(\phi, \psi)$ satisfies condition $(\star)$, \emph{even after making arbitrary rational extensions of $F$} -- this stronger assertion follows from Lemma \ref{Lemdivisibilityindexstability}. This leads us to consider the possibility that decompositions of the kind suggested by Proposition \ref{Propwarmup} may be found by passing to suitable transcendental extensions of the base field, and, ultimately, to our main result. Nevertheless, it is still interesting to ask for conditions on $\phi$ (or, more specifically, on the Knebusch splitting pattern of $\phi$) which automatically ensure the existence of the needed subform $\psi$. To this end, it is natural to look to the extremities where $\phi$ is ``far from generic'' (e.g., where $\witti{1}{\phi}$ is ``large''). The basic example here is provided by Proposition \ref{Propi1bound}, which gives a characterisation of (scalar multiples of) anisotropic quasi-Pfister forms in terms of the first higher isotropy index. To give another example of this kind of equivalence, we state here the following conjecture, which is directly analogous to an open problem in the nonsingular theory of quadratic forms:

\begin{conjecture} \label{Conjh2good} Let $n$ be a positive integer, and let $\phi$ be an anisotropic quasilinear quadratic form of dimension $2^{n+1}$ over $F$ such that $\witti{1}{\phi} = 2^{n-1}$. Then $\phi$ is divisible by an $(n-1)$-fold quasi-Pfister form. \end{conjecture}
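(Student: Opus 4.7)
The plan is to leverage Theorem \ref{Maintheorem} together with Proposition \ref{Propwarmup} to reduce the conjecture to a concrete subform-existence problem. Applying Theorem \ref{Maintheorem} with $s = n - 1$ (the smallest integer with $2^s \geq 2^{n-1} = \witti{1}{\phi}$), we learn that $\phi_1$ is divisible by an $(n-1)$-fold quasi-Pfister form, so $\mathfrak{d}_1(\phi) \geq n-1$. Since $\mydim{\phi_1} = 3 \cdot 2^{n-1}$ is not divisible by $2^n$, Corollary \ref{Cordivisibilityindexsimilarity} forces $\mathfrak{d}_1(\phi) = n-1$ exactly. Combined with Lemma \ref{Lemincreasingdivisibility} (and the observation that $\phi$ itself cannot be similar to a quasi-Pfister form under the hypotheses, since otherwise $\witti{1}{\phi}$ would equal $2^n$), this gives $\mathfrak{d}_0(\phi) \leq n-1$, and the aim is to upgrade this inequality to equality.

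The production strategy is to apply Proposition \ref{Propwarmup} to a suitable codimension-$2^{n-1}$ subform $\psi \subset \phi$ (normalised so that $1 \in D(\psi)$). Granting condition $(\star)$, the proposition delivers a decomposition $\phi \simeq \psi \perp \lambda \sigma$ with $\sigma \subset \pi'$ and $\psi$ divisible by a quasi-Pfister form $\pi'$. A dimension count then closes the argument: $\mydim{\sigma} = \witti{1}{\phi} = 2^{n-1}$, while $\mydim{\pi'}$ is a power of $2$ dividing $\mydim{\psi} = 3 \cdot 2^{n-1}$, hence $\mydim{\pi'} \leq 2^{n-1}$; combined with $\sigma \subset \pi'$ this forces $\mydim{\pi'} = 2^{n-1}$ and $\sigma \simeq \pi'$. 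Writing $\psi \simeq \pi' \otimes \tau$, we obtain $\phi \simeq \pi' \otimes (\tau \perp \form{\lambda})$, so $\phi$ is divisible by the $(n-1)$-fold quasi-Pfister form $\pi'$, as desired. Moreover, condition $(\star)$ simplifies once $1 \in D(\psi)$: then $D(\psi) \subseteq N(\psi)$, so every expression $c(bd + ef)$ with $c, d, e, f \in D(\psi)$ lies in $N(\psi)$, and $(\star)$ holds as soon as $D(\phi) \not\subseteq N(\psi)$, equivalently $N(\psi) \subsetneq N(\phi)$, equivalently (Corollary \ref{Corheight=ndegree}) $h(\psi) < h(\phi)$. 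This is precisely Example \ref{Exstarcondition}; so the approach succeeds provided we can find a codimension-$2^{n-1}$ subform $\psi \subset \phi$ with $\mathrm{ndeg}(\psi) < \mathrm{ndeg}(\phi)$.

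The hard part, and the main obstacle, is producing such a subform. A recursive use of Corollary \ref{Cordivisorsofindices} combined with the divisibility-dimension computation of the first paragraph at each level of the Knebusch tower shows that under the hypotheses the splitting pattern of $\phi$ is entirely rigid: $\mathfrak{i}(\phi) = (2^{n-1}, 2^{n-1}, 2^{n-1}, 2^{n-2}, \ldots, 2, 1)$ and $\mathrm{ndeg}(\phi) = 2^{n+2}$. Thus $D(\phi)$ sits as an $F^2$-subspace of $F^2$-codimension $2^{n+1}$ inside $N(\phi)$, and the remaining problem is essentially combinatorial: identify a codimension-$2^{n-1}$ $F^2$-subspace of $D(\phi)$ whose $F^2$-span generates a proper subfield of $N(\phi)$. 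Such a subspace need not always exist, so when it does not, the approach above must be supplemented. A plausible alternative is to pass first to a generic transcendental extension, where (by Lemma \ref{Lemdivisibilityindexstability}) the divisibility index of $\phi$ is unchanged but more subforms with the required norm-degree drop are potentially available, and then descend. A second option is to exploit the strong structural constraints supplied by the rigid splitting pattern together with Propositions \ref{Propsspsubform} and \ref{Propcomparison} to force the divisibility by an $(n-1)$-fold quasi-Pfister form by more intrinsic means, bypassing Proposition \ref{Propwarmup} altogether.
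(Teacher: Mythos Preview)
The statement you are attempting to prove is labelled a \emph{Conjecture} in the paper, and the paper does not prove it: it is presented as an open problem, directly analogous to an open problem in the nonsingular theory. What the paper \emph{does} prove is the reformulation in Lemma~\ref{Lemh2good}, namely that Conjecture~\ref{Conjh2good} holds for a given $\phi$ if and only if $\phi$ admits a codimension-$2^{n-1}$ subform $\psi$ with $\mathrm{ndeg}(\psi) = 2^{n+1}$ (equivalently, $\mathrm{ndeg}(\psi) < \mathrm{ndeg}(\phi)$). The paper then remarks that this suffices to verify the conjecture for $n \leq 2$, but leaves the general case open.

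Your proposal rediscovers exactly this reduction. Your dimension count and use of Proposition~\ref{Propwarmup} under condition $(\star)$ are correct and match the proof of Lemma~\ref{Lemh2good}, and your computation of the Knebusch splitting pattern $(2^{n-1},2^{n-1},2^{n-1},2^{n-2},\ldots,1)$ and $\mathrm{ndeg}(\phi)=2^{n+2}$ is also correct. But you then concede the essential point: the required subform ``need not always exist'', and your two suggested workarounds are not arguments but speculation. Passing to a purely transcendental extension preserves $\mathfrak{d}_0$ by Lemma~\ref{Lemdivisibilityindexstability}, so a proof over such an extension would indeed descend; however you give no reason to believe that the needed subform becomes available there, and the norm-degree obstruction is equally stable under such extensions. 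The second alternative (``more intrinsic means'') is not an argument at all.

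In short: your proposal is not a proof of the conjecture, but a rederivation of the paper's Lemma~\ref{Lemh2good}, followed by an acknowledgement of the genuine gap. That gap is the content of the conjecture, and neither you nor the paper closes it.
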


\begin{remark} Examples of such forms $\phi$ exist -- see Proposition \ref{Propexistenceofvalues} below. \end{remark}

Proposition \ref{Propwarmup} allows us to reformulate this problem as follows:

\begin{lemma} \label{Lemh2good} Let $n$ be a positive integer, and let $\phi$ be an anisotropic quasilinear quadratic form of dimension $2^{n+1}$ over $F$ such that $\witti{1}{\phi} = 2^{n-1}$. Then the following are equivalent:
\begin{enumerate} \item Conjecture \ref{Conjh2good} holds for the pair $(\phi,n)$.
\item There exists a subform $\psi \subset \phi$ of codimension $2^{n-1}$ such that $\mathrm{ndeg}(\psi) = 2^{n+1}$. \end{enumerate}
\begin{proof} Suppose first that Conjecture \ref{Conjh2good} holds for the pair $(\phi,n)$. In other words, we have $\phi \simeq \pi \otimes \tau$ for some $(n-1)$-fold quasi-Pfister form $\pi$ and some 4-dimensional form $\tau$. Then, if $\sigma$ is any codimension-1 subform of $\tau$ and $\psi = \pi \otimes \sigma$, then $\mathrm{ndeg}(\psi) \geq \mathrm{ndeg}(\pi)\cdot \mathrm{ndeg}(\sigma) = 2^{n-1}\cdot 4 = 2^{n+1}$. Since $\mydim{\psi} > 2^n$, the reverse inequality also holds here (see Lemma \ref{Lempropertyofnormform}), and so (1) implies (2). Conversely, if (2) holds, then the pair $(\phi, \psi)$ satisfies condition $(\star)$. Indeed, since $\witti{1}{\phi} = 2^{n-1}$, $\phi$ is not similar to a quasi-Pfister form (Proposition \ref{Propi1bound}), and so $\mathrm{ndeg}(\phi) > \mydim{\phi} = 2^{n+1}$ by Corollary \ref{CorClassificationQPN}. Our claim therefore follows from Example \ref{Exstarcondition}. In particular, we can apply Proposition \ref{Warmup} to the given pair. Let $\pi$ and $\sigma$ be as in the statement of the former result. Since $\mydim{\psi} = 2^n + 2^{n-1}$, and since $\witti{1}{\phi} = 2^{n-1}$, we see that $\pi$ must have dimension $2^{n-1} = \witti{1}{\phi}$, and so $\sigma \simeq \pi$. The statement of the proposition now says that $\phi$ is divisible by $\pi$, which shows that (1) holds. This proves the lemma. \end{proof}\end{lemma}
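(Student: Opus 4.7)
The plan is to prove the two implications separately, with Proposition \ref{Propwarmup} doing the heavy lifting in the nontrivial direction.

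For $(1) \Rightarrow (2)$: starting from a factorisation $\phi \simeq \pi \otimes \tau$ with $\pi$ an $(n-1)$-fold quasi-Pfister form and $\mydim{\tau} = 4$, I would take any codimension-$1$ subform $\sigma \subset \tau$ and set $\psi = \pi \otimes \sigma$. The codimension in $\phi$ is immediately $2^{n-1}$. For the norm degree, the upper bound $\mathrm{ndeg}(\psi) \leq 2^{n+1}$ comes from Lemma \ref{Lempropertyofnormform} since $\mydim{\psi} = 3 \cdot 2^{n-1} > 2^n$; for the matching lower bound one uses that $\psi$ is divisible by $\pi$ (so $N(\psi) \supseteq D(\pi)$) combined with the fact that a $3$-dimensional anisotropic form is a quasi-Pfister neighbour of norm degree $4$ (Corollary \ref{CorClassificationQPN}).

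For $(2) \Rightarrow (1)$: the strategy is to feed $(\phi, \psi)$ into Proposition \ref{Propwarmup} and then read off the divisibility from a dimension count. To invoke Proposition \ref{Propwarmup} I must first check that $(\star)$ holds. By Example \ref{Exstarcondition} this reduces to the inequality $\mathrm{ndeg}(\psi) < \mathrm{ndeg}(\phi)$, and since $\witti{1}{\phi} = 2^{n-1} < 2^n = \mydim{\phi}/2$, Proposition \ref{Propi1bound} tells me $\phi$ is not similar to a quasi-Pfister form; because $\mydim{\phi}$ is itself a power of $2$, $\phi$ also fails to be a proper quasi-Pfister neighbour, so Corollary \ref{CorClassificationQPN} gives $\mathrm{ndeg}(\phi) > 2^{n+1} = \mathrm{ndeg}(\psi)$ as required.

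Proposition \ref{Propwarmup} then produces a quasi-Pfister form $\pi$, a subform $\sigma \subset \pi$, an element $\lambda \in D(\phi)$, and a form $\tau$ over $F$ with $\psi \simeq \pi \otimes \tau$ and $\phi \simeq \psi \perp \lambda \sigma$. The final step is the pivotal dimension count: $\mydim{\sigma} = \witti{1}{\phi} = 2^{n-1}$ forces $\mydim{\pi} \geq 2^{n-1}$, while $\mydim{\pi}$ is a power of $2$ dividing $\mydim{\psi} = 3 \cdot 2^{n-1}$ and therefore $\mydim{\pi} \leq 2^{n-1}$. Hence $\mydim{\pi} = 2^{n-1}$ and $\sigma \simeq \pi$, so both summands $\psi$ and $\lambda\sigma$ of $\phi$ are divisible by $\pi$, and so is $\phi$ itself. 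I expect the only nonroutine step to be pinning down the norm-degree equality $\mathrm{ndeg}(\psi) = 2^{n+1}$ in the forward direction; the remaining arguments are formal consequences of the machinery already assembled in the preceding sections.
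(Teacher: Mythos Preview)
Your overall strategy is the same as the paper's, and the $(2)\Rightarrow(1)$ direction is carried out correctly: you verify $(\star)$ via Example~\ref{Exstarcondition} and Corollary~\ref{CorClassificationQPN}, invoke Proposition~\ref{Propwarmup}, and then pin down $\mydim{\pi}=2^{n-1}$ by the dimension count exactly as the paper does.

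In the $(1)\Rightarrow(2)$ direction, however, you have the two bounds on $\mathrm{ndeg}(\psi)$ interchanged. Lemma~\ref{Lempropertyofnormform} says that $\anispart{\psi}$ is similar to a subform of $\normform{\psi}$, so $\mydim{\psi}\le \mathrm{ndeg}(\psi)$; from $\mydim{\psi}=3\cdot 2^{n-1}>2^n$ you therefore get the \emph{lower} bound $\mathrm{ndeg}(\psi)\ge 2^{n+1}$, not the upper bound. Conversely, the divisibility information gives the \emph{upper} bound: since $\psi\simeq\pi\otimes\sigma$, one has $N(\psi)=N(\pi)\cdot N(\sigma)$ (Remark~\ref{Remexplicitnormfield}), whence $\mathrm{ndeg}(\psi)\le \mathrm{ndeg}(\pi)\cdot\mathrm{ndeg}(\sigma)=2^{n-1}\cdot 4=2^{n+1}$. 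Your phrasing ``$N(\psi)\supseteq D(\pi)$'' points in the wrong direction for this step; what you need is the containment $N(\psi)\subseteq N(\pi)N(\sigma)$ (in fact equality). Once you swap the labels and make this small correction, the argument is complete and coincides with the paper's. (Amusingly, the paper's own write-up of this direction also has the inequality signs transposed, though the ingredients are the same.)
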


\begin{remark} Using Lemma \ref{Lemh2good}, it is not difficult to verify that Conjecture \ref{Conjh2good} holds for $n \leq 2$. \end{remark}

\section{Main theorem} \label{ProofofMainTheorem}

We are now ready to give the proof of Theorem \ref{Maintheorem}. In order to treat the case where $p>2$, the statement needs to be modified as follows:

\begin{theorem} \label{ThmMainthm} Let $\phi$ be an anisotropic quasilinear $p$-form of dimension $\geq 2$ over $F$ and let $s$ be the smallest nonnegative integer such that $p^s \geq \witti{1}{\phi}$. If $\phi$ is not a quasi-Pfister $p$-neighbour, then $\phi_1$ is divisible by an $s$-fold quasi-Pfister $p$-form. \end{theorem}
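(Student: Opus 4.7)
The approach begins with the observation underlying Proposition \ref{Propwarmup}: whenever we can find a codimension-$\witti{1}{\phi}$ subform $\psi\subset\phi$ satisfying condition $(\star)$, the theorem follows immediately. Indeed, in that case the decomposition $\phi\simeq\psi\perp\lambda\sigma$ with $\psi\simeq\pi\otimes\tau$ and $\sigma\subset\pi$ yields, via Corollary \ref{Corfirstkernel}, $\phi_1\simeq\psi_{F(\phi)}\simeq\pi_{F(\phi)}\otimes\tau_{F(\phi)}$. Since $\sigma\subset\pi$ and $\mydim\sigma=\witti{1}{\phi}>p^{s-1}$, the dimension $\mydim\pi$ (a power of $p$) is at least $p^s$; since $\psi_{F(\phi)}$ is anisotropic (Corollary \ref{Corsubformanisotropy}), so is $\pi_{F(\phi)}$, giving an $s$-fold quasi-Pfister divisor of $\phi_1$. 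By Example \ref{Exstarcondition}, $(\star)$ holds automatically whenever $h(\psi)<h(\phi)$, so the first step is to check whether such a ``lower-height'' subform exists.

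The plan is then to proceed by induction on $\mydim\phi$, reducing to this situation by means of codimension-one subforms. If $\witti{1}{\phi}=1$ there is nothing to prove, so assume $\witti{1}{\phi}\geq 2$. Take a codimension-one subform $\phi'\subset\phi$: by the Ruledness theorem (Theorem \ref{Thmruledness}), $F(\phi)$ is purely transcendental of degree one over $F(\phi')$, and by Proposition \ref{Propsspneighbours} we have $\witti{1}{\phi'}=\witti{1}{\phi}-1$ and $\mathfrak{d}_1(\phi')=\mathfrak{d}_1(\phi)$. When $\phi'$ is again not a quasi-Pfister $p$-neighbour, the induction hypothesis applied to $\phi'$ gives $\mathfrak{d}_1(\phi')\geq s'$, where $s'$ is the smallest nonnegative integer with $p^{s'}\geq\witti{1}{\phi}-1$; when $\witti{1}{\phi}>p^{s-1}+1$ we have $s'=s$ and we are done.

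The delicate case is the boundary $\witti{1}{\phi}=p^{s-1}+1$ (and the associated pathology where every codimension-one subform of $\phi$ is itself a quasi-Pfister $p$-neighbour, blocking the induction). Here I would exploit the generic point of $X_\phi$ directly: writing $\phi\simeq\form{a_0,\ldots,a_n}$, the relation $\sum a_iT_i^p=0$ holding in $F(\phi)$ expresses every $a_i$ as an $F(\phi)^p$-linear combination of the remaining ones, producing a large reservoir of candidate similarity factors of $\phi_1$ inside $F(\phi)$. One then uses the Cassels-Pfister theorem and its similarity version (Theorem \ref{ThmCasselsPfister}, Corollary \ref{CorCPsimilarity}) to control which rational elements of $F(\phi)$ correspond to genuine polynomial similarity factors, and to verify $p$-independence over $F(\phi)^p$. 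The deep input throughout is the incompressibility theorem (Theorem \ref{Thmincompressibility}), used in the form that $\dim_{\mathrm{Izh}}(X_\phi)=\mydim\phi_1-1$: assuming $\mathfrak{d}_1(\phi)<s$, one seeks to build from the maximal quasi-Pfister divisor of $\phi_1$ a rational map $X_\phi\dashrightarrow Y$ to some $F$-variety $Y$ with $Y(F_{\mathrm{sep}})=\emptyset$ and $\mydim Y<\mydim\phi_1-1$, contradicting Theorem \ref{Thmincompressibility}.

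The principal obstacle is precisely this last step: converting the divisibility data of $\phi_1$, which lives intrinsically over $F(\phi)$, into an $F$-geometric statement about rational maps from $X_\phi$ to some explicit low-dimensional target. This is where the non-quasi-Pfister-neighbour hypothesis on $\phi$ becomes essential for $p>2$, since without it Corollary \ref{CorClassificationQPN} forces $\phi_1$ to be similar to a quasi-Pfister form of a fixed dimension and the bound $\mathfrak{d}_1(\phi)\geq s$ can genuinely fail; in characteristic $2$ the corresponding numerical check is automatic. Unlike the nonsingular theory, no Steenrod-operation argument is available, and everything has to be extracted from the combination of Proposition \ref{Propwarmup}, Ruledness, and the canonical-dimension computation underlying Theorem \ref{Thmincompressibility}.
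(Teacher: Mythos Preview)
Your induction reduces the problem to the boundary case $\witti{1}{\phi}=p^{s-1}+1$ (together with the related pathology where codimension-one subforms are quasi-Pfister $p$-neighbours), and you explicitly identify this as ``the principal obstacle'' --- but you do not resolve it. The sketch you offer there (``exploit the generic point'', ``seeks to build a rational map $X_\phi\dashrightarrow Y$ to some low-dimensional $Y$'') names ingredients without saying what $Y$ is or how the divisibility data over $F(\phi)$ produces such a map. This is not a detail: it is the entire content of the theorem in the hard case, and nothing in your outline indicates how to carry it out. The induction buys you nothing if the base case is as hard as the general one.

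The paper's proof takes a completely different route: there is no induction on dimension and no contradiction via a rational map to an auxiliary variety. Instead, it works directly and constructively. Using the ruledness identification $F(\phi)\simeq F[\phi']=\mathrm{Frac}\big(F[T]/(\phi'(T))\big)$ for a codimension-one subform $\phi'$, and fixing any codimension-$\witti{1}{\phi}$ subform $\psi\subset\phi$ with $1\in D(\psi)$, one produces an explicit decomposition
\[
\phi_{F(T)}\;\simeq\;\psi_{F(T)}\;\oplus\;\phi'(T)\,\form{1,f_1,\ldots,f_{\mathfrak{i}_1-1}},
\qquad f_i=\sum_{j=1}^{p-1}g_{i,j}\,\phi'(T)^{\,j-1},\quad g_{i,j}\in D(\psi_{F[T]}).
\]
The key computation (carried out by a careful multiplicity analysis modulo $\phi'(T)$, using Cassels--Pfister and the anisotropy of $\psi_{F(\phi)}$) shows that the reductions $\overline{g_{i,1}}\in F(\phi)$ satisfy $\overline{g_{i,1}}\cdot D(\psi)\subseteq D(\phi_1)$, i.e.\ they are similarity factors of $\phi_1$. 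One then chooses the $g_{i,j}$ to minimise a degree invariant and proves that $\sigma=\form{1,\overline{g_{1,1}},\ldots,\overline{g_{\mathfrak{i}_1-1,1}}}$ is \emph{anisotropic}; hence $\normform{\sigma}$ is a quasi-Pfister form of dimension $\geq\mathfrak{i}_1\geq p^{s}$ dividing $\phi_1$. For $p=2$ anisotropy of $\sigma$ falls out of the degree argument directly; for $p>2$ an additional step (Lemma \ref{LemQPcriterion}) shows that failure of anisotropy would force $\phi_1$ to be quasi-Pfister, hence $\phi$ a quasi-Pfister $p$-neighbour, contrary to hypothesis. Incompressibility enters only in the mild form of Corollaries \ref{Corfirstkernel} and \ref{Coraffinecodim1}, not as a target for a contradiction.
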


\begin{remark} Nothing is lost here by assuming that $\phi$ is not a quasi-Pfister $p$-neighbour. Indeed, if $\phi$ is a quasi-Pfister $p$-neighbour, and $n$ denotes the smallest nonnegative integer such that $p^{n+1} \geq \mydim{\phi}$, then $\phi_1$ is similar to an $n$-fold quasi-Pfister $p$-form by Corollary \ref{CorClassificationQPN}. If $p=2$, then we have $\witti{1}{\phi} \leq \frac{1}{2}\mydim{\phi} \leq 2^n$ by Proposition \ref{Propi1bound}, so that $n \geq s$, where $s$ is the integer defined in the statement of the theorem. Note, however, that if $p>2$, then $n$ may be strictly smaller than $s$ (again, see Corollary \ref{CorClassificationQPN}). This explains why the additional hypothesis is needed here, but not in the statement of Theorem \ref{Maintheorem} (i.e., the case where $p=2$). \end{remark}

\begin{proof} To simplify the notation, we will write $\mathfrak{i}_1$ instead of $\witti{1}{\phi}$ in what follows. If $\mathfrak{i}_1 = 1$, then the statement of the theorem holds trivially. We therefore assume henceforth that $\mathfrak{i}_1>1$. After multiplying $\phi$ by a nonzero scalar if necessary, we may also assume that $1 \in D(\phi)$. In particular, we can find $a_1,\hdots,a_n \in F$ such that $\phi \simeq \form{1,a_1,\hdots,a_n}$. For the remainder of the proof, we let $\phi' = \form{a_1,\hdots,a_n}$, and we write $\phi'(T)$ for the ``generic value'' of $\phi'$, i.e., $\phi'(T) = \sum_{i=1}^na_iT_i^p \in F[T]$, where $T = (T_1,\hdots,T_n)$ is a tuple of algebraically independent variables over $F$. By Corollary \ref{Coraffinecodim1}, the function field $F(\phi)$ is $F$-isomorphic to $F[\phi']$, and may therefore be identified with $\mathrm{Frac}\big(F[T]/(\phi'(T))\big)$ (see Remark \ref{Remsff} (1)). Fixing this identification henceforth, we will write $\overline{f}$ for the image of a polynomial $f \in F[T]$ under the canonical $F$-algebra homomorphism $F[T] \rightarrow F(\phi)$. We will also write $m(f)$ for the multiplicity $\mathrm{mult}_{\phi'(T)}(f)$ of $\phi'(T)$ in $f$, i.e., the largest integer $k$ such that $f = \phi'(T)^kh$ for some $h \in F[T]$. Note here that we have $\overline{f} \neq 0$ if and only if $m(f) = 0$.

Now, let $\psi \subset \phi$ be any subform of codimension $\witti{1}{\phi}$ such that $1 \in D(\psi)$. We then have the following lemma:

\begin{lemma} \label{LemMTL1} In the above situation, we can find elements $g_{i,j} \in D(\psi_{F[T]})\;(1 \leq i < \mathfrak{i}_1,\;1 \leq j <p)$ such that 
\begin{equation*} \phi_{F(T)} \simeq \psi_{F(T)} \oplus \phi'(T)\form{1,f_1,\hdots,f_{\mathfrak{i}_1 -1}}, \end{equation*} 
where, for each $1 \leq i < \mathfrak{i}_1$, $f_i = \sum_{j=1}^{p-1}g_{i,j}\phi'(T)^{j-1}$.
\begin{proof} First, let us note that $\phi'(T) \notin D(\psi_{F(T)})$. Indeed, if $\psi_{F(T)}$ were to represent $\phi'(T)$, then it would follow from Theorem \ref{ThmCasselsPfister} that $a_1,\hdots,a_n \in D(\psi)$. Since $1 \in D(\psi)$ by hypothesis, this would imply that $D(\phi) \subseteq D(\psi)$, or, equivalently, that $\phi \subset \psi$ (see Proposition \ref{Propanisclassification}), which is impossible for dimension reasons (recall here that $\mathfrak{i}_1>1$ by assumption). It follows that $\psi_{F(T)} \oplus \form{\phi'(T)}$ is anisotropic, and so $\psi_{F(T)} \oplus \form{\phi'(T)} \subset \phi_{F(T)}$ by Proposition \ref{Propanisclassification}. Now, the affine function field $F[\phi]$ may be identified (over $F$) with the field $K = F(T)_{\phi'(T)}$ (see Remark \ref{Remsff} (1)). Since $F[\phi]$ is $F$-isomorphic to a purely transcendental extension of $F(\phi)$ (Remark \ref{Remsff} (2)), Corollary \ref{Corfirstkernel} and Lemma \ref{Lemseparableextensions} together imply that $\psi_K \simeq \anispart{(\phi_K)}$. In particular, we have $D(\phi_{F(T)}) \subset D(\phi_K) = D(\psi_K) = \sum_{j=0}^{p-1}D(\psi_{F(T)})\phi'(T)^j$ (where the last equality holds by Lemma \ref{LemIsotropypurelyinseparable} (1)). Thus, by Lemma \ref{Lemexistenceofforms}, we can complete the subform inclusion $\psi_{F(T)} \oplus \form{\phi'(T)} \subset \phi_{F(T)}$ to an isomorphism $\phi_{F(T)} \simeq \psi_{F(T)} \oplus \form{\phi'(T)} \oplus \form{f_1',\hdots,f_{\mathfrak{i}_1-1}'}$, where, for each $i$, we have $f_i' = \sum_{j=0}^{p-1}g_{i,j}\phi'(T)^j$ for some $g_{i,j} \in D(\psi_{F(T)})$. Note, however, that every element of $D(\psi_{F(T)})$ is (trivially) the ratio of an element of $D(\psi_{F[T]})$ and a $p$-th power in $F[T]$. Since multiplying the $f_i'$ by $p$-th powers in $F[T]$ does not change the $F(T)$-form $\form{f_1',\hdots,f_{\mathfrak{i}_1-1}'}$ up to isomorphism (see Lemma \ref{Lemexistenceofforms}), we can arrange it so that the $g_{i,j}$ belong to $D(\psi_{F[T]})$. Similarly, since substracting elements of $D(\psi_{F(T)})$ from the $f_i'$ does not change the isomorphism class of $\psi_{F(T)} \oplus \form{f_1',\hdots,f_{\mathfrak{i}_1-1}'}$ (again, see Lemma \ref{Lemexistenceofforms}), we can also arrange it so that $g_{i,0} = 0$ for all $i$. The remaining $g_{i,j}$ then satisfy the statement of the lemma.  \end{proof} \end{lemma}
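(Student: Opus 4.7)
The plan is to build the claimed isomorphism in two main stages: first, exhibit $\psi_{F(T)} \oplus \form{\phi'(T)}$ as an anisotropic subform of $\phi_{F(T)}$; then compute $D(\phi_{F(T)})$ explicitly by exploiting the identification of the affine function field $F[\phi]$ with a degree-$p$ purely inseparable extension of $F(T)$, and invoke Lemma \ref{Lemexistenceofforms} to read off the orthogonal complement in the required shape.

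First I would verify that $\phi'(T) \notin D(\psi_{F(T)})$. Suppose otherwise. Then Theorem \ref{ThmCasselsPfister} forces $\phi'(T) \in D(\psi)[T_1^p,\hdots,T_n^p]$, and matching monomial coefficients yields $a_1,\hdots,a_n \in D(\psi)$; combined with $1 \in D(\psi)$, this gives $D(\phi) \subseteq D(\psi)$, hence $\phi \subset \psi$ by Proposition \ref{Propanisclassification}, contradicting $\mathfrak{i}_1 > 1$. Consequently $\psi_{F(T)} \oplus \form{\phi'(T)}$ is anisotropic. Since its represented set lies inside $D(\phi_{F(T)})$, Proposition \ref{Propanisclassification} promotes this to a subform inclusion $\psi_{F(T)} \oplus \form{\phi'(T)} \subset \phi_{F(T)}$.

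Next I would pin down the missing orthogonal piece. By Remark \ref{Remsff} (1), $F[\phi]$ may be identified with $K \coloneqq F(T)\bigl(\sqrt[p]{\phi'(T)}\bigr) = F(T)_{\phi'(T)}$, and by Remark \ref{Remsff} (2), $K$ is purely transcendental over $F(\phi)$. Corollary \ref{Corfirstkernel} gives $\anispart{(\phi_{F(\phi)})} \simeq \psi_{F(\phi)}$, and Lemma \ref{Lemseparableextensions} transports this across the (separable) extension $F(\phi) \subset K$ to yield $\anispart{(\phi_K)} \simeq \psi_K$. Hence
\begin{equation*}
D(\phi_{F(T)}) \subseteq D(\phi_K) = D(\psi_K) = \sum_{j=0}^{p-1} D(\psi_{F(T)})\,\phi'(T)^j,
\end{equation*}
the last equality being Lemma \ref{LemIsotropypurelyinseparable} (1). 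Applying Lemma \ref{Lemexistenceofforms}, I can then complete the subform inclusion of the previous step to an isomorphism $\phi_{F(T)} \simeq \psi_{F(T)} \oplus \form{\phi'(T)} \oplus \form{f_1',\hdots,f_{\mathfrak{i}_1 - 1}'}$, with each $f_i' = \sum_{j=0}^{p-1} g_{i,j}'\,\phi'(T)^j$ for suitable $g_{i,j}' \in D(\psi_{F(T)})$.

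Finally, two cosmetic adjustments deliver the statement as written. Since every element of $D(\psi_{F(T)})$ is the ratio of an element of $D(\psi_{F[T]})$ by a $p$-th power in $F[T]$, rescaling each $f_i'$ by an appropriate $p$-th power (which, by Lemma \ref{Lemexistenceofforms}, does not alter the isomorphism class of the corresponding one-dimensional summand) allows me to assume all $g_{i,j}'$ lie in $D(\psi_{F[T]})$. The constant-in-$\phi'(T)$ piece $g_{i,0}' \in D(\psi_{F(T)})$ is already represented by $\psi_{F(T)}$, so subtracting it from $f_i'$ leaves the isomorphism class of $\psi_{F(T)} \oplus \form{f_1',\hdots,f_{\mathfrak{i}_1 - 1}'}$ unchanged by another application of Lemma \ref{Lemexistenceofforms}; after relabeling $g_{i,j} = g_{i,j+1}'$ for $1 \leq j < p$ and factoring out the common $\phi'(T)$, I obtain the claimed decomposition. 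The only mildly technical point is the last step, justifying these reductions cleanly via Lemma \ref{Lemexistenceofforms}; everything else is a direct assembly of Cassels--Pfister, Corollary \ref{Corfirstkernel}, and the description of $D$ under a degree-$p$ purely inseparable extension.
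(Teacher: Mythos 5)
Your proposal is correct and follows essentially the same route as the paper's proof: the Cassels--Pfister argument for $\phi'(T)\notin D(\psi_{F(T)})$, the identification of $F[\phi]$ with $F(T)_{\phi'(T)}$ to compute $D(\phi_{F(T)})$ via $\anispart{(\phi_K)}\simeq\psi_K$, and the final normalisations (clearing denominators and killing the $g_{i,0}$ terms) via Lemma \ref{Lemexistenceofforms}. No gaps.
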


Let us now fix elements $g_{i,j} \in D(\psi_{F[T]})$ (and the associated polynomials $f_i$) satisfying the statement of Lemma \ref{LemMTL1}. We are searching here for a sufficiently large quasi-Pfister divisor of $\phi_1$, and we would like to try to build this quasi-Pfister $p$-form from the elements $g_{i,1}$. The basic point here is the following:

\begin{lemma} \label{LemMTL2} In the above situation, we have $\overline{g_{i,1}}b \in D(\phi_1)$ for all $b \in D(\psi)$ and all $1 \leq i < \mathfrak{i}_1$.
\begin{proof} If $b = 0$, then the statement is trivial. Let us now fix $b \in D(\psi) \setminus \lbrace 0 \rbrace$. We will need another lemma:

\begin{lemma} \label{LemMTL3} In the above situation, there exist elements $s_{i,j} \in D(\psi_{F[T]})$ and $t_{i,j} \in F[T]\setminus \lbrace 0 \rbrace\;(1 \leq i < \mathfrak{i}_1,\;0 \leq j < p)$ such that, for every $1 \leq i < \mathfrak{i}_1$, we have:
\begin{enumerate} \item $bf_i = \sum_{j=0}^{p-1}\frac{s_{i,j}}{t_{i,j}^p}\frac{\phi'(T)^j}{b^j}$ in $F(T)$.
\item For each $0 \leq j < p$, at least one of $\overline{s_{i,j}}$ and $\overline{t_{i,j}}$ is nonzero. \end{enumerate}
\begin{proof} Let $1 \leq i < \mathfrak{i}_1$, and consider the field $L = F(T)_{u}$, where $u = \frac{\phi'(T)}{b}$. In view of Remark \ref{Remsff} (1), $L$ is $F$-isomorphic to the affine function field $F[\eta]$, where $\eta$ denotes the $F$-form $\form{b} \perp \phi'$. Now, since $b \in D(\psi)$, and since $\psi \subset \phi$, we have $D(\phi') \subseteq D(\eta) \subseteq D(\phi)$. In particular, if $\eta \not\simeq \phi$, then it follows from Lemma \ref{Lemexistenceofforms} that $\anispart{\eta} \simeq \phi'$. Either way, we see that $L$ is $F$-isomorphic to a degree-1 purely transcendental extension of $F(\phi)$ -- in the first case, see Remark \ref{Remsff} (2); in the second, see Remark \ref{Remsff} (3) and Corollary \ref{Coraffinecodim1}. By Corollary \ref{Corfirstkernel} and Lemma \ref{Lemseparableextensions}, it follows that $\psi_L \simeq \anispart{(\phi_L)}$. In other words, we have $D(\phi_L) = D(\psi_L) = \sum_{j=0}^{p-1} D(\psi_{F(T)})u^j$ (again, see Lemma \ref{LemIsotropypurelyinseparable} (1) for the final equality). Now, since $u$ is a $p$-th power in $L$, we have $bf_i = \frac{\phi'(T)f_i}{u} \in D(\phi_L)$. We can therefore write $bf_i = \sum_{i=0}^{p-1} q_ju^j$ for some $q_j \in D(\psi_{F(T)})$. Since every element of $D(\psi_{F(T)})$ is the quotient of an element of $D(\psi_{F[T]})$ and a $p$-th power in $F[T]$, and since $u = \frac{\phi'(T)}{b}$, this shows that we can find elements $s_{i,j} \in D(\psi_{F[T]})$ and $t_{i,j} \in F[T]\setminus \lbrace 0 \rbrace$ such that (1) holds. Finally, since $\psi_{F(\phi)}$ is anisotropic, Proposition \ref{PropCPapp} implies that $m(s_{i,j}) \equiv 0 \pmod{p}$ for all $0 \leq j < p$. For each such $j$, Let $m_j = \mathrm{min}\big(m(s_{i,j}),pm(t_{i,j})\big)$, and put $s_{i,j}' = \frac{s_{i,j}}{\phi'(T)^{m_j}}$ and $t_{i,j}' = \frac{t_{i,j}}{\phi'(T)^{m_j/p}}$. Then, by Theorem \ref{ThmCasselsPfister}, we again have $s_{i,j}' \in D(\psi_{F[T]})$. Thus, replacing $s_{i,j}$ by $s_{i,j}'$ and $t_{i,j}$ by $t_{i,j}'$ (for each $j$), we arrive at the situation where, for any $j$, either $m(s_{i,j}) = 0$ or $m(t_{i,j}) = 0$. In other, words at least one of $\overline{s_{i,j}}$ and $\overline{t_{i,j}}$ is nonzero, as we wanted.
\end{proof} \end{lemma}

Returning now to the proof of Lemma \ref{LemMTL2}, let $s_{i,j} \in D(\psi_{F[T]})$ and $t_{i,j} \in F[T]\setminus \lbrace 0 \rbrace$ be as in Lemma \ref{LemMTL3}. In particular, we have the equation
\begin{equation*} \sum_{l=1}^{p-1}bg_{i,l}\phi'(T)^{l-1} =  bf_i = \sum_{j=0}^{p-1}\frac{s_{i,j}}{t_{i,j}^p}\frac{\phi'(T)^j}{b^j} \end{equation*}
in $F(T)$. Clearing denominators, we obtain
\begin{equation} \label{eq5.1}\prod_{k}t_{i,k}^p \sum_{l=1}^{p-1}bg_{i,l}\phi'(T)^{l-1} = \sum_{j=0}^{p-1}\prod_{k \neq j}t_{i,k}^p s_{i,j}\frac{\phi'(T)^j}{b^j} \end{equation}
Now, we claim that, for all $0 \leq j < p$, we have $\overline{t_{i,j}} \neq 0$, or, equivalently, $m(t_{i,j}) = 0$. To see this, let $m = \mathrm{min}\lbrace \sum_{k \neq j} m(t_{i,k})\;|\;0 \leq j < p \rbrace$. Then our claim amounts to the assertion that $m = \sum_{k=0}^{p-1}m(t_{i,k})$. Suppose that this is not the case, and let $0 \leq j < p$ be minimal so that $\sum_{k \neq j} m(t_{i,k}) = m$. Then, reducing both sides of \eqref{eq5.1} modulo $\phi'(T)^{pm+j+1}$, we see that $s_{i,j} \equiv 0 \pmod{\phi'(T)}$. In other words, we have $\overline{s_{i,j}} = 0$. By the choice of the $s_{i,j}$ and $t_{i,j}$, this implies that $\overline{t_{i,j}} \neq 0$, or, equivalently, that $m(t_{i,j}) = 0$. But then $m = \sum_{k \neq j}m(t_k) = \sum_{k=0}^{p-1}m(t_k)$, which contradicts our assumption. The claim is therefore proved, and so, reducing \eqref{eq5.1} modulo $\phi'(T)$ and dividing through by $\prod_k \overline{t_i}^p$, we obtain the equality $\overline{g_{i,1}}b = \overline{s_{i,0}}/\overline{t_{i,0}}^p$ in $F(\phi)$. As $s_{i,0} \in D(\psi_{F[T]})$, this shows that $\overline{g_{i,1}}b \in D(\psi_{F(\phi)})$. But since $\psi_{F(\phi)} \simeq \phi_1$, we have $D(\psi_{F(\phi)}) = D(\phi_1)$, and the lemma is therefore proved. \end{proof} \end{lemma}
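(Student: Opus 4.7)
The plan is to pass to a purely inseparable degree-$p$ extension $L$ of $F(T)$ over which $bf_i$ visibly lies in $D(\phi_L)$ and which is at the same time rational over $F(\phi)$; together these two properties let a representation of $b f_i$ by $\phi_L$ descend to one by $\psi_{F(\phi)} \simeq \phi_1$.

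First I dispose of $b = 0$ as trivial and fix $b \in D(\psi) \setminus \lbrace 0 \rbrace$. Set $u = \phi'(T)/b \in F(T)$ and $L = F(T)_u$. The key preliminary observation is that $L$ is $F$-isomorphic to a degree-one purely transcendental extension of $F(\phi)$. By Remark \ref{Remsff} (1), $L$ is the affine function field $F[\eta]$ of $\eta = \form{b} \oplus \phi'$; since $b \in D(\psi) \subseteq D(\phi)$, the containments $D(\phi') \subseteq D(\eta) \subseteq D(\phi)$ hold, forcing either $\eta \simeq \phi$ (whence $L \simeq F[\phi]$ is rational over $F(\phi)$ by Remark \ref{Remsff} (2)), or $\anispart{\eta} \simeq \phi'$, in which case Remarks \ref{Remsff} (2,3) combined with Corollary \ref{Coraffinecodim1} (applicable because $\witti{1}{\phi} > 1$) again exhibit $L$ as a rational extension of $F(\phi)$.

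Given this, Corollary \ref{Corfirstkernel} and Lemma \ref{Lemseparableextensions} yield $\psi_L \simeq \anispart{(\phi_L)}$, so Lemma \ref{LemIsotropypurelyinseparable} (1) gives
\[ D(\phi_L) = D(\psi_L) = \sum_{j=0}^{p-1} u^j D(\psi_{F(T)}). \]
By Lemma \ref{LemMTL1}, $\phi'(T) f_i \in D(\phi_{F(T)}) \subseteq D(\phi_L)$; since $u = (\sqrt[p]{u})^p$ is a $p$-th power in $L$, dividing by $u$ keeps us inside $D(\phi_L)$, so $b f_i = \phi'(T) f_i / u \in D(\phi_L)$. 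I can therefore write $bf_i = \sum_{j=0}^{p-1}(s_{i,j}/t_{i,j}^p)u^j$ for some $s_{i,j} \in D(\psi_{F[T]})$ and $t_{i,j} \in F[T]\setminus\lbrace 0 \rbrace$. Applying Proposition \ref{PropCPapp} to the anisotropic form $\psi_{F(\phi)} \simeq \phi_1$ forces $p \mid \mathrm{mult}_{\phi'(T)}(s_{i,j})$, so common factors of $\phi'(T)$ can be cancelled from the numerators and denominators to arrange that, for every $j$, at least one of $\overline{s_{i,j}}$ and $\overline{t_{i,j}}$ is nonzero.

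With this normalization in hand, the conclusion is extracted by reducing modulo $\phi'(T)$. Clearing denominators in the above equality produces a polynomial identity in $F[T]$:
\[ \prod_{k} t_{i,k}^p \cdot \sum_{l=1}^{p-1} b g_{i,l} \phi'(T)^{l-1} = \sum_{j=0}^{p-1} \prod_{k \neq j} t_{i,k}^p \cdot s_{i,j} \cdot \frac{\phi'(T)^j}{b^j}. \]
I would first show $\overline{t_{i,j}} \neq 0$ for every $j$: setting $M = \min_j \sum_{k \neq j} m(t_{i,k})$ and picking $j_0$ minimal with $\sum_{k \neq j_0} m(t_{i,k}) = M$, the assumption that some $\overline{t_{i,j}}$ vanishes forces $m(t_{i,j_0}) \geq 1$, making the left-hand side vanish modulo $\phi'(T)^{pM + j_0 + 1}$; but the only right-hand term that could survive at this precision is the $j = j_0$ term, forcing $m(s_{i,j_0}) \geq 1$ and contradicting the normalization. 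With all $\overline{t_{i,j}} \neq 0$, reducing the identity modulo $\phi'(T)$ isolates the $l = 1$ term on the left and the $j = 0$ term on the right, giving $\overline{g_{i,1}} b = \overline{s_{i,0}}/\overline{t_{i,0}}^p \in D(\psi_{F(\phi)}) = D(\phi_1)$, as required. The principal obstacle I anticipate is the multiplicity bookkeeping in the argument for $\overline{t_{i,j}} \neq 0$, which requires simultaneously tracking orders of vanishing along $\phi'(T)$ on both sides of the cleared identity while exploiting the integrality statement of Proposition \ref{PropCPapp}.
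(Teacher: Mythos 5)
Your proposal is correct and follows essentially the same route as the paper: the same passage to $L = F(T)_u$ with $u = \phi'(T)/b$ identified as a rational extension of $F(\phi)$ via the form $\form{b} \oplus \phi'$, the same normalization of the $s_{i,j}, t_{i,j}$ using Proposition \ref{PropCPapp}, and the same multiplicity bookkeeping modulo $\phi'(T)^{pM+j_0+1}$ to force all $\overline{t_{i,j}} \neq 0$ before reducing modulo $\phi'(T)$. No gaps.
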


Continuing with the proof of Theorem \ref{ThmMainthm}, let us now choose elements $g_{i,j} \in D(\psi_{F[T]})$ as in the statement of Lemma \ref{LemMTL1} so that the integer $\sum_{i=1}^{\mathfrak{i}_1-1}\mathrm{deg}_{T_1}(g_{i,1})$ is minimal (where for any $g \in F[T]$, $\mathrm{deg}_{T_1}(g)$ denotes the degree of $g$ viewed as an element of the ring $F(T_2,\hdots,T_n)[T_1]$, i.e., as a polynomial in the single variable $T_1$\footnote{With the added convention that $\mathrm{deg}_{T_1}(0) = 0$.}). Consider the form $\sigma = \form{1,\overline{g_{1,1}},\hdots,\overline{g_{\mathfrak{i}_1-1,1}}}$ over $F(\phi)$. The final step in the proof of the theorem will be to prove the following statement:

\begin{lemma} \label{LemMTL4} In the above situation, $\sigma$ is anisotropic. \end{lemma}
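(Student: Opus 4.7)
The plan is to argue by contradiction: suppose $\sigma$ is isotropic over $F(\phi)$. I will construct a new family $(g'_{i,j})$ satisfying the conclusion of Lemma~\ref{LemMTL1} with strictly smaller $\sum_i \deg_{T_1}(g'_{i,1})$, contradicting the minimality of our choice.

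First, lift an isotropy relation. After clearing denominators, isotropy gives $v_0,v_1,\ldots,v_{\mathfrak{i}_1-1} \in F[T]$, not all mapping to zero in $F(\phi)$, such that (setting $g_{0,1}:=1$) $\sum_{i=0}^{\mathfrak{i}_1-1} v_i^p g_{i,1} = \phi'(T) h$ in $F[T]$ for some $h$. The left side lies in $D(\psi_{F[T]})$ by closure under $F[T]^p$-multiplication and the assumption $1\in D(\psi)$, so Proposition~\ref{PropCPapp} (applied to the anisotropic form $\psi_{F[\phi]}$, whose anisotropy was used already in Lemma~\ref{LemMTL1}) forces $\mathrm{mult}_{\phi'(T)}(\phi'(T) h)\equiv 0\pmod p$. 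Hence either $h=0$ or $h = \phi'(T)^{p-1} h_1$ with $h_1\neq 0$; in the latter case $\phi'(T)^p h_1 \in D(\psi_{F[T]})$ together with $\phi'(T)^p \in F(T)^p$ and Theorem~\ref{ThmCasselsPfister} force $h_1\in D(\psi_{F[T]})$. We may and do assume $\overline{v_1}\neq 0$, so $v_1\neq 0$ in $F[T]$.

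When $h_1 = 0$ the reduction is immediate. Define $\tilde f_1 := v_0^p + \sum_{i\geq 1} v_i^p f_i$ and $\tilde f_j := f_j$ for $j\geq 2$. Expanding each $f_i$ in powers of $\phi'(T)$ and using the relation $\sum_i v_i^p g_{i,1}+v_0^p = 0$, the constant term in $\phi'(T)$ vanishes and we obtain $\tilde f_1 = \sum_{l=2}^{p-1} G_l\,\phi'(T)^{l-1}$ with $G_l := \sum_i v_i^p g_{i,l} \in D(\psi_{F[T]})$. Thus $\tilde f_1$ is in the required form with $\tilde g_{1,1}=0$. The corresponding substitution on $\{\phi'(T),\phi'(T)f_1,\ldots,\phi'(T)f_{\mathfrak{i}_1-1}\}$ has invertible $F(T)^p$-matrix, since the coefficient of $\phi'(T)f_1$ in $\phi'(T)\tilde f_1$ equals $v_1^p\neq 0$; so it defines a valid basis change of the complement of $D(\psi_{F(T)})$ in $D(\phi_{F(T)})$, and $(\tilde g_{i,j})$ witnesses Lemma~\ref{LemMTL1}. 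Provided $\deg_{T_1}(g_{1,1})>0$, the equality $\deg_{T_1}(\tilde g_{1,1})=0$ then strictly decreases the total degree sum, contradicting minimality.

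Two issues remain: (i) reducing the case $h_1\neq 0$ to the case $h_1=0$, and (ii) the degenerate case where $\deg_{T_1}(g_{i,1})=0$ for every $i$. For (i), the lift $(v_i)$ is only determined modulo $\phi'(T)F[T]$; replacing $v_i$ with $v_i+\phi'(T)w_i$ preserves the relation in $F(\phi)$ while altering $h_1$ to $h_1 + w_0^p+\sum_i w_i^p g_{i,1}$. The plan is to leverage this freedom, together with the minimality of $(g_{i,j})$, to iteratively cancel the leading $T_1$-terms of $h_1$ until $h_1=0$, whereupon the previous paragraph applies. For (ii), observe that the identification $F(\phi)\simeq F[\phi']$ of Corollary~\ref{Coraffinecodim1} is symmetric in the variables $T_1,\ldots,T_n$, so the whole minimality argument may be rerun with each $T_k$ in place of $T_1$; iterating forces $g_{i,1}\in F$ for all $i$, reducing $\sigma$ to a form $\sigma_F$ defined over $F$, and the isotropy of $\sigma_F$ over the degree-$p$ extension $F(\phi)/F(T_2,\ldots,T_n)$ can then be analyzed via Lemma~\ref{LemIsotropypurelyinseparable} to contradict the structure of $(\phi,\psi)$. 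I expect step (i) to be the main technical obstacle, requiring a careful analysis of $D(\psi_{F[T]})=D(\psi)[T_1^p,\ldots,T_n^p]$ as an $F[T]^p$-module filtered by $T_1$-degree.
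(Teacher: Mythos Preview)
Your overall strategy---contradiction via minimality of $\sum_i\deg_{T_1}(g_{i,1})$---matches the paper's, but the execution diverges at exactly the point you flag as ``the main technical obstacle,'' and your proposed resolution of that point does not work.

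\textbf{The gap at step (i).} Your plan is to use the freedom $v_i\mapsto v_i+\phi'(T)w_i$ to modify $h_1$ by $w_0^p+\sum_{i\geq 1} w_i^p g_{i,1}$ and iteratively drive $h_1$ to zero. But the elements you can subtract from $h_1$ in this way form only the $F[T]^p$-span of $1,g_{1,1},\dots,g_{\mathfrak{i}_1-1,1}$, whereas $h_1$ lies in the (typically much larger) $F[T]^p$-module $D(\psi_{F[T]})=D(\psi)[T_1^p,\dots,T_n^p]$. There is no reason these $\mathfrak{i}_1$ elements should generate enough to cancel an arbitrary $h_1$; indeed if $\mathfrak{i}_1<\dim\psi$ (the generic situation) they cannot span $D(\psi)$ over $F^p$. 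So the reduction to $h_1=0$ is not available in general.

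\textbf{How the paper handles this.} The key idea you are missing is to choose the lift with a built-in degree bound: since $\phi'(T)$ has $T_1$-degree $p$, one may take representatives $h_i\in F[T]$ of the isotropy relation with $\deg_{T_1}(h_i)<p$. With this choice, one does \emph{not} try to produce a strictly smaller family. Instead one forms $f=h_0^p+\sum_k h_k^p f_k$, writes $f=\sum_{j=1}^{p-1}g_j'\phi'(T)^{j-1}$ with $g_1'=\phi'(T)^p h$, divides by $\phi'(T)^r$ where $r=\min_j m(g_j')\equiv 0\pmod p$, and replaces $g_{l,\cdot}$ by the resulting $g_\cdot$. Minimality now gives the \emph{inequality} $\deg_{T_1}(g_1)\geq\deg_{T_1}(g_{l,1})$; combining this with the bound $\deg_{T_1}(h_l)<p$ and the relation forces $r=0$, hence $g_1=\phi'(T)^p h$ and $m(g_1)>0$. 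For $p=2$ this is already a contradiction (the only $g_j$ is $g_1$, yet $m(g_1)=0$ by construction of $r$), so $\sigma$ is anisotropic and the argument is complete.

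\textbf{The case $p>2$.} Your sketch never invokes the hypothesis that $\phi$ is not a quasi-Pfister $p$-neighbour, and if it worked it would prove $\sigma$ anisotropic unconditionally---which is false (e.g.\ for $p=3$ and $\phi$ a $1$-fold quasi-Pfister $3$-form one has $\mathfrak{i}_1=2$ while $\phi_1$ is $1$-dimensional, so $\sigma$ must be isotropic). The paper does not obtain a contradiction from minimality alone when $p>2$; instead the output of the argument above is the data of Lemma~\ref{LemMTL5} (polynomials $g_j$ with $m(g_1)>0$ but $m(g_k)=0$ for some $2\leq k<p$), and a separate step---essentially rerunning the computation of Lemma~\ref{LemMTL2} one level higher and then applying Lemma~\ref{LemQPcriterion}---shows that $\phi_1$ would be quasi-Pfister, contradicting the standing hypothesis via Corollary~\ref{CorClassificationQPN}. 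Your step (ii) is not an adequate substitute for this.
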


Before proving the lemma, let us explain how this concludes the proof of Theorem \ref{ThmMainthm}. First, we claim that $\phi_1$ is divisible by $\normform{\sigma}$. By Corollary \ref{CordivbyQP}, this amounts to checking that $D(\sigma) \subseteq G(\phi_1)$. Since $G(\phi_1)$ is a subfield of $F$ containing $F^p$ (Corollary \ref{Corsimilarityfield}), it suffices to show here that $\overline{g_{i,1}} \in G(\phi_1)$ for all $1 \leq i < \mathfrak{i}_1$. But by Lemma \ref{Lemsimilarityfactor}, this is equivalent to showing that, for all such $i$, we have $\overline{g_{i,1}}D(\phi_1) \subseteq D(\phi_1)$. Since $D(\phi_1) = D(\psi_{F(\phi)})$ is spanned as an $F(\phi)^p$-vector space by $D(\psi)$, this follows immediately from Lemma \ref{LemMTL2}. The claim is therefore proved, and to finish the proof of the theorem, it only remains to check that $\mydim{\normform{\sigma}} \geq p^s$. But, $\sigma$ is anisotropic by Lemma \ref{LemMTL4}, and so $\sigma \subset \normform{\sigma}$ by Lemma \ref{Lempropertyofnormform}. In particular, we have $\mydim{\normform{\sigma}} \geq \mydim{\sigma} = \mathfrak{i}_1$, which is precisely the assertion that $\mydim{\normform{\sigma}} \geq p^s$. Now, in order to prove Lemma \ref{LemMTL4}, we need another auxiliary statement:

\begin{lemma} \label{LemMTL5} If $\sigma$ is isotropic, then $p>2$, and there exist polynomials $g_{j} \in D(\psi_{F[T]})\;(1 \leq j <p)$ and an integer $2 \leq k < p$ such that 
\begin{enumerate} \item $\sum_{j=1}^{p-1}g_j\phi'(T)^j \in D(\phi_{F[T]})$.
\item $m(g_l) > 0$ for all $1 \leq l < k$.
\item $m(g_k) = 0$. \end{enumerate}
\begin{proof} If $\sigma$ is isotropic, then since $\phi'(T)$ is a Fermat-type polynomial of degree $p$, we can find an integer $1 \leq m \leq p$ and polynomials $h_0,\dots,h_{\mathfrak{i}_1-1},h \in F[T]$ such that:
\begin{enumerate} \item[(i)] $h_0^p + g_{1,1}h_1^p + \hdots + g_{\mathfrak{i}_1-1,1}h_{\mathfrak{i}_1-1}^p = \phi'(T)^mh$ in $F[T]$.
\item[(ii)] $\mathrm{deg}_{T_1}(h_i) < p$ for all $0 \leq i < \mathfrak{i}_1$. 
\item[(iii)] $\overline{h_i} \neq 0$ for some $1 \leq i < \mathfrak{i}_1$. \end{enumerate}
First, let us note that we have $\phi'(T)^m h \in D(\psi_{F[T]})$ by (i) and the definition of the elements $g_{i,1}$. Since $\psi_{F(\phi)}$ is anisotropic, it follows from Proposition \ref{PropCPapp} that $h=0$ or $m=p$. Either way, we can assume henceforth that $m=p$. Now, by (iii), there exists $1 \leq l < \mathfrak{i}_1$ such that $h_l \neq 0$. Among all such integers $l$, let us fix one so that $\mathrm{deg}_{T_1}(g_{l,1}h_l^p)$ is maximal. Consider now the polynomial $f = h_0^p + \sum_{k=1}^{\mathfrak{i}_1} f_kh_k^p \in F[T]$, where the $f_k$ are as in the statement of Lemma \ref{LemMTL1}. Since $h_l \neq 0$, Lemma \ref{Lemexistenceofforms} implies that $\form{1,f_1,\hdots,f_{\mathfrak{i}_1-1}} \simeq \form{1,f_1,\hdots,f_{l-1},f,f_{l+1},\hdots,f_{\mathfrak{i}_1-1}}$ as $F(T)$-forms. In particular, we have 
\begin{equation} \label{eq5.2} \phi_{F(T)} \simeq \psi_{F(T)} \oplus \phi'(T)\form{1,f_1,\hdots,f_{l-1},f,f_{l+1},\hdots,f_{\mathfrak{i}_1 -1}}. \end{equation}
Now, by definition, $f = \sum_{j=1}^{p-1}g_j'\phi'(T)^{j-1}$, where $g_1' = \phi'(T)^mh \in D(\psi_{F[T]})$ and $g_j' = \sum_{k=1}^{\mathfrak{i}_1-1}g_{k,j}h_k^p \in D(\psi_{F[T]})$ for all $2 \leq i < p$. Let $r = \mathrm{min}\lbrace m(g_j')\;|\;1 \leq j < p \rbrace$. Since $g_j' \in D(\psi_{F[T]})$ for all $j$, and since $\psi_{F(\phi)}$ is anisotropic, another application of Proposition \ref{PropCPapp} shows that $r \equiv 0 \pmod{p}$. In particular, for each $j \geq 1$, we have $g_j \coloneqq \frac{g_j'}{\phi'(T)^r} \in D(\psi_{F[T]})$. In view of \eqref{eq5.2}, it follows that the exchange $g_{l,j} \rightarrow g_j$ does not alter the statement of Lemma \ref{LemMTL1}. By our choice of the $g_{i,j}$, we therefore have
\begin{equation} \label{eq5.3} \mathrm{deg}_{T_1}(\phi'(T)^{p-r}h) = \mathrm{deg}_{T_1}(g_1) \geq \mathrm{deg}_{T_1}(g_{l,1}). \end{equation}
Now, we claim that the elements $g_j$ (together with an appropriate integer $k$) satisfy the conditions of the lemma. We have already seen here the validity of (1). At the same time, we have $m(g_j) = 0$ for some $j \geq 1$ by construction. Thus, in order to prove the existence of an integer $k$ such that (2) and (3) are satisfied, we just need to check that $m(g_1) > 0$. Recall again that we have $g_1 = \phi'(T)^{p-r}h$. If $h=0$, then there is nothing to prove. Suppose now that $h \neq 0$. By (i) and the choice of the integer $l$, we have $\mathrm{deg}_{T_1}(g_{l,1}h_l^p) \geq \mathrm{deg}_{T_1}(\phi'(T)^ph) \geq p^2 + \mathrm{deg}_{T_1}(h)$. Since $\mathrm{deg}_{T_1}(h_l) < p$ (by (ii)), it follows that $\mathrm{deg}_{T_1}(g_{l,1}) > \mathrm{deg}_{T_1}(h)$. In view of \eqref{eq5.3}, we see that $r = 0$ in this case. In particular, we have $g_1 = \phi'(T)^ph$, and so $m(g_1) \geq p >0$, as we wanted.\end{proof} \end{lemma}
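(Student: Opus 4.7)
The plan is to lift the supposed isotropy relation for $\sigma$ from $F(\phi)$ back to a polynomial identity in $F[T]$ of the form $h_0^p + \sum_{i=1}^{\mathfrak{i}_1-1} g_{i,1} h_i^p = \phi'(T)^m h$, then exploit this identity to swap one of the $f_l$ in the decomposition of Lemma \ref{LemMTL1} for a new element $f = h_0^p + \sum_i h_i^p f_i$. Expanding $f$ in powers of $\phi'(T)$ produces coefficient polynomials $g_j' \in D(\psi_{F[T]})$, with $g_1' = \phi'(T)^m h$ (after the identity collapses $h_0^p + \sum_i g_{i,1} h_i^p$) and $g_j' = \sum_i g_{i,j} h_i^p$ for $j \geq 2$. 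The $g_j$ demanded by the lemma will arise from the $g_j'$ after dividing out a common power of $\phi'(T)$, and the obstruction to having $m(g_1) = 0$ will come from the minimality assumption on $\sum_i \deg_{T_1}(g_{i,1})$.

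First, I would carry out the lift carefully. Isotropy of $\sigma$ provides polynomials $h_0, \ldots, h_{\mathfrak{i}_1-1} \in F[T]$, not all divisible by $\phi'(T)$, giving the desired identity with some $m \geq 1$. Using that $h_i \equiv h_i' \pmod{\phi'(T)}$ upgrades in characteristic $p$ to $h_i^p \equiv (h_i')^p \pmod{\phi'(T)^p}$, I may assume $\deg_{T_1}(h_i) < p$ for every $i$ without altering the identity. Since the left-hand side lies in $D(\psi_{F[T]})$ and $\psi_{F(\phi)} \simeq \phi_1$ is anisotropic (Corollary \ref{Corfirstkernel}), Proposition \ref{PropCPapp} forces $m + v_{\phi'(T)}(h) \equiv 0 \pmod{p}$, and absorbing excess $\phi'(T)$ into $h$ reduces to the case $m = p$ (with $h = 0$ allowed). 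Applying Proposition \ref{PropCPapp} once more to each $g_j' \in D(\psi_{F[T]})$ yields $m(g_j') \equiv 0 \pmod{p}$, so dividing by $\phi'(T)^r$ with $r = \min_j m(g_j')$ produces polynomials $g_j \in D(\psi_{F[T]})$ via Cassels--Pfister (Theorem \ref{ThmCasselsPfister}); these satisfy property (1), and at least one of them has $m(g_j) = 0$.

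The main obstacle will be establishing $m(g_1) > 0$, which forces the smallest index $k$ with $m(g_k) = 0$ to satisfy $k \geq 2$, and in particular $p > 2$. I would choose $l$ to maximize $\deg_{T_1}(g_{l,1} h_l^p)$ among indices $l \geq 1$ with $\overline{h_l} \neq 0$; such an $l$ exists when $h \neq 0$, because $\deg_{T_1}(h_0^p) \leq p(p-1) < p^2 \leq \deg_{T_1}(\phi'(T)^p h)$ cannot account for the right-hand side of the identity on its own. The bound $\deg_{T_1}(h_l) \leq p-1$ then yields $\deg_{T_1}(g_{l,1}) \geq \deg_{T_1}(h) + p$. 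On the other hand, minimality of $\sum_i \deg_{T_1}(g_{i,1})$ under the valid substitution $g_{l,j} \leftrightarrow g_j$ gives $\deg_{T_1}(g_{l,1}) \leq \deg_{T_1}(g_1) = (p-r)p + \deg_{T_1}(h)$. Combining these forces $r \leq p-1$, i.e., $m(g_1) = p - r \geq 1$. The degenerate case $h = 0$ gives $g_1' = 0$, for which $m(g_1) = \infty$ holds trivially. Taking $k$ as the least index with $m(g_k) = 0$ then yields $2 \leq k \leq p-1$, completing the argument.
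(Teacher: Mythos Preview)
Your proposal is correct and follows the same approach as the paper: lift the isotropy relation for $\sigma$ to a polynomial identity in $F[T]$, use it to swap $f_l$ for the new element $f = h_0^p + \sum_i h_i^p f_i$, expand in powers of $\phi'(T)$ to produce the $g_j'$, divide out the common power $\phi'(T)^r$, and then combine the degree-in-$T_1$ bound coming from the identity with the minimality hypothesis on $\sum_i \deg_{T_1}(g_{i,1})$ to force $m(g_1) > 0$. The only cosmetic slip is writing ``$m(g_1) = p - r$'' rather than $m(g_1) \geq p - r$ (since $m(h)$ need not vanish), and noting that $r \equiv 0 \pmod p$ with $r \leq p-1$ actually pins down $r = 0$; neither point affects the conclusion.
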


We are now ready to prove Lemma \ref{LemMTL4} and thus complete the proof of Theorem \ref{ThmMainthm}. If $p=2$, then the statement was already proved in Lemma \ref{LemMTL5}. Suppose now that $p>2$, and assume for the sake of contradiction that $\sigma$ is isotropic. Let $g_j$ ($1 \leq j < p)$ and $k$ be as in the statement of Lemma \ref{LemMTL5}. By condition (2) of the lemma, we can, for each $l < k$, write $g_l = \phi'(T)^{m_l}h_l$ for some positive integer $m_l$ and some polynomial $h_l$. By a now familiar application of Proposition \ref{PropCPapp}, we have $m_l \equiv 0 \pmod{p}$ for every such $l$. In particular, the $m_l$ are all strictly larger than $k$. Now, by condition (1) of the lemma, the element
\begin{equation*} \phi'(T)^k\big(g_k + g_{k-1}\phi'(T) + \hdots + g_{\mathfrak{i}_1-1}\phi'(T)^{p-k-1} + h_1\phi'(T)^{m_1 - k} + \hdots + h_{k-1}\phi'(T)^{m_{k-1} - k}\big) \end{equation*}
lies in $D(\phi_{F[T]})$. Using the very same argument as that used to prove Lemma \ref{LemMTL2} above (and the fact that the integers $m_l - k$ ($l<k$) are all positive), one readily shows that $b^k\overline{g_k} \in D(\phi_1)$ for every $b \in D(\psi)$. Note, however, that $\overline{g_k} \neq 0$ by condition (3) of Lemma \ref{LemMTL5}. Since $2 \leq k < p$, and since $\phi_1 \simeq \psi_{F(\phi)}$, it follows from Lemma \ref{LemQPcriterion} that $\phi_1$ is a quasi-Pfister $p$-form. But, by Corollary \ref{CorClassificationQPN}, this in turn implies that $\phi$ is a quasi-Pfister $p$-neighbour, thus contradicting our original hypothesis. The lemma and theorem are therefore proved. \end{proof}

\section{Applications of the main theorem} \label{Applications}

We now give the basic applications of Theorem \ref{Maintheorem}.

\subsection{Possible values of the Knebusch splitting pattern} \label{Possiblevalues} Let $\phi$ be a quasilinear $p$-form of dimension $\geq 2$ over $F$. In the previous section we have shown that the first higher anisotropic kernel $\phi_1$ of $\phi$ is divisible by a quasi-Pfister $p$-form of dimension $\geq \witti{1}{\phi}$, provided that $\anispart{\phi}$ is not a quasi-Pfister $p$-neighbour. In the terminology of \S \ref{Higherdivindices}, this amounts to the assertion that if $\qp{h}(\phi) \geq 2$, then $\mathfrak{d}_1(\phi) \geq \mathrm{log}_p\big(\witti{1}{\phi}\big)$ (here we are also making use Corollary \ref{CorClassificationQPN}). By virtue of the inductive nature of the Knebusch splitting tower construction, we also obtain analogous restrictions on the higher isotropy indices $\witti{r}{\phi}$ ($2 \leq r < \qp{h}(\phi)$) in terms of the corresponding higher divisibility indices $\mathfrak{d}_r(\phi)$. Taking the observations of \S \ref{Higherdivindices} into account, our results may be summarised as follows:

\begin{theorem} \label{Thmallvalues} Let $\phi$ be a quasilinear $p$-form over $F$ and let $d=h(\phi) - \qp{h}(\phi)$. Then:
\begin{enumerate} \item $\mathfrak{i}(\phi) = (\witti{1}{\phi},\hdots,\witti{\qp{h}(\phi)}{\phi},p^d -p^{d-1},p^{d-1}-p^{d-2},\hdots,p^2-p,p-1)$.
\item $\witti{\qp{h}}{\phi} = \mydim{\phi} - \wittj{\qp{h}(\phi)-1}{\phi} - p^d < p^{d+1}-p^d$.
\item $\mathfrak{d}(\phi) = (\mathfrak{d}_0(\phi),\hdots,\mathfrak{d}_{\qp{h}(\phi)-1}(\phi),d,d - 1,\hdots,1,0)$.
\item $\mathfrak{d}_0(\phi) \leq \mathfrak{d}_1(\phi) \leq \hdots \leq \mathfrak{d}_{\qp{h}(\phi)}=d$.
\item $\witti{r}{\phi} \equiv 0 \pmod{p^{\mathfrak{d}_{r-1}(\phi)}}$ for all $1 \leq r < \qp{h}(\phi)$. 
\item $\mathfrak{d}_r(\phi) \geq \mathrm{log}_p\big(\witti{r}{\phi}\big)$ for all $1 \leq r < \qp{h}(\phi)$.
\item For every $1 \leq r < \qp{h}(\phi)$, $\witti{r}{\phi} - 1$ is the remainder of $\mydim{\phi} - \wittj{r-1}{\phi} - 1$ modulo $p^{\mathfrak{d}_r(\phi)}$. \end{enumerate}
\begin{proof} Parts (1), (2), (3), (4) and (5) are the statements comprising Lemmas \ref{Lemtruncationsplitting}, \ref{Lemtruncationdivisibility}, \ref{Lemincreasingdivisibility} and Corollary \ref{Cordivisorsofindices}. Since $\witti{r}{\phi} = \witti{1}{\phi_{r-1}}$, $\mathfrak{d}_r(\phi) = \mathfrak{d}_1(\phi_{r-1})$ and $\mydim{\phi_{r-1}} = \mydim{\phi} - \wittj{r-1}{\phi}$ for all $1 \leq r \leq h(\phi)$, parts (6) and (7) follow immediately from Theorem \ref{ThmMainthm} and Corollary \ref{CorClassificationQPN}. \end{proof} \end{theorem}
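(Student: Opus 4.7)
The plan is to read off parts (1)--(5) directly from already-established results, and then derive the genuinely new parts (6) and (7) as immediate corollaries of Theorem \ref{ThmMainthm}, applied inductively along the Knebusch tower.

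For parts (1), (2), (3) I would simply invoke Lemmas \ref{Lemtruncationsplitting} and \ref{Lemtruncationdivisibility}, which describe the tail of $\mathfrak{i}(\phi)$ and $\mathfrak{d}(\phi)$ beyond the index $\qp{h}(\phi)$ in terms of the integer $d$. Part (4) is Lemma \ref{Lemincreasingdivisibility}, applied both to $\phi$ and (implicitly) to its higher anisotropic kernels, and part (5) is Corollary \ref{Cordivisorsofindices}. None of these steps require any new input.

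The core of the proof is part (6). Fix $1 \leq r < \qp{h}(\phi)$. By definition of the quasi-Pfister height, $\phi_{r-1}$ is \emph{not} similar to a quasi-Pfister $p$-form, since otherwise $\qp{h}(\phi) \leq r-1$. By the equivalence of (1) and (8) in Corollary \ref{CorClassificationQPN}, $\phi_{r-1}$ is therefore not a quasi-Pfister $p$-neighbour. Theorem \ref{ThmMainthm} applied to $\phi_{r-1}$ now yields that $(\phi_{r-1})_1 \simeq \phi_r$ is divisible by an $s$-fold quasi-Pfister $p$-form, where $s$ is the smallest nonnegative integer with $p^s \geq \witti{1}{\phi_{r-1}} = \witti{r}{\phi}$. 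Since $\mathfrak{d}_r(\phi) = \mathfrak{d}_0(\phi_r)$ by definition, this gives $\mathfrak{d}_r(\phi) \geq s \geq \log_p(\witti{r}{\phi})$, which is (6).

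For part (7) I would combine (6) with a dimension count. By Proposition \ref{PropWittdecomposition} (or Remark \ref{RemsKnebusch}(2)), $\mydim{\phi_r} = \mydim{\phi} - \wittj{r}{\phi}$. Since $\phi_r$ is divisible by a quasi-Pfister $p$-form of dimension $p^{\mathfrak{d}_r(\phi)}$, we have $\mydim{\phi_r} \equiv 0 \pmod{p^{\mathfrak{d}_r(\phi)}}$, and writing $\wittj{r}{\phi} = \wittj{r-1}{\phi} + \witti{r}{\phi}$ and subtracting $1$ yields
\begin{equation*}
\mydim{\phi} - \wittj{r-1}{\phi} - 1 \equiv \witti{r}{\phi} - 1 \pmod{p^{\mathfrak{d}_r(\phi)}}.
\end{equation*}
Finally, (6) gives $1 \leq \witti{r}{\phi} \leq p^{\mathfrak{d}_r(\phi)}$, so $0 \leq \witti{r}{\phi} - 1 < p^{\mathfrak{d}_r(\phi)}$, confirming that $\witti{r}{\phi}-1$ is genuinely the remainder, not merely congruent to it.

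No step is a real obstacle once Theorem \ref{ThmMainthm} is in hand; the only point that deserves care is the initial observation that for $r < \qp{h}(\phi)$ the form $\phi_{r-1}$ is not a quasi-Pfister $p$-neighbour, since this is exactly the hypothesis needed to apply the main theorem to each higher anisotropic kernel. Everything else is essentially bookkeeping, packaging a single structural result about $\phi_1$ into a statement about the full sequences $\mathfrak{i}(\phi)$ and $\mathfrak{d}(\phi)$.
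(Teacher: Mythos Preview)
Your approach is essentially identical to the paper's: parts (1)--(5) are read off from the same lemmas, and parts (6)--(7) come from applying Theorem \ref{ThmMainthm} to $\phi_{r-1}$ along the Knebusch tower, using Corollary \ref{CorClassificationQPN} to verify the non-neighbour hypothesis.

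There is one small logical slip worth fixing. You argue that $\phi_{r-1}$ is not similar to a quasi-Pfister $p$-form (true, since $r-1 < \qp{h}(\phi)$), and then invoke the equivalence (1) $\Leftrightarrow$ (8) of Corollary \ref{CorClassificationQPN} to conclude that $\phi_{r-1}$ is not a quasi-Pfister $p$-neighbour. But condition (8) concerns $(\phi_{r-1})_1 = \phi_r$, not $\phi_{r-1}$ itself; a form can perfectly well be a quasi-Pfister neighbour without being similar to a quasi-Pfister form. The correct argument is: since $r < \qp{h}(\phi)$, the form $\phi_r$ is not similar to a quasi-Pfister $p$-form, hence by (1) $\Leftrightarrow$ (8) applied to $\phi_{r-1}$, the latter is not a quasi-Pfister $p$-neighbour. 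With this one-word fix (replacing $\phi_{r-1}$ by $\phi_r$ in the first sentence of your argument for (6)), the proof is complete and matches the paper's.
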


In order to highlight the general shape of the Knebusch splitting pattern exposed by Theorem \ref{Thmallvalues}, it is worth writing down the following result explicitly (here the notation $a \mid b$ means that $a$ divides $b$):

\begin{corollary} \label{Corincreasing} Let $\phi$ be a quasilinear $p$-form over $F$. Then $\witti{1}{\phi} \leq p^{\mathfrak{d}_1(\phi)} \mid \witti{2}{\phi} \leq p^{\mathfrak{d}_2(\phi)} \mid \hdots \mid \witti{\qp{h}(\phi)-1}{\phi} \leq p^{\mathfrak{d}_{\qp{h}(\phi)-1}(\phi)} \mid \witti{\qp{h}(\phi)}{\phi}$. \end{corollary}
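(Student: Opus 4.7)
The statement is an immediate repackaging of Theorem \ref{Thmallvalues}, so the plan is essentially to interleave two of its assertions and verify that the indexing ranges cover the entire chain. No new input is needed beyond what has already been established.

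First I would note that the chain alternates between two types of relations: an upper bound $\witti{r}{\phi} \leq p^{\mathfrak{d}_r(\phi)}$ at each term indexed by $1 \leq r \leq \qp{h}(\phi)-1$, and a divisibility $p^{\mathfrak{d}_{r-1}(\phi)} \mid \witti{r}{\phi}$ at each term indexed by $2 \leq r \leq \qp{h}(\phi)$. The bounds are just the contrapositive of part (6) of Theorem \ref{Thmallvalues}: rewriting $\mathfrak{d}_r(\phi) \geq \mathrm{log}_p\big(\witti{r}{\phi}\big)$ as $\witti{r}{\phi} \leq p^{\mathfrak{d}_r(\phi)}$ covers every upper-bound link in the claimed chain.

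Next, for the divisibility links I would appeal to part (5) of Theorem \ref{Thmallvalues} (equivalently, Corollary \ref{Cordivisorsofindices}), which gives $\witti{r}{\phi} \equiv 0 \pmod{p^{\mathfrak{d}_{r-1}(\phi)}}$. The only mild subtlety is the range: to obtain the last divisibility $p^{\mathfrak{d}_{\qp{h}(\phi)-1}(\phi)} \mid \witti{\qp{h}(\phi)}{\phi}$ one needs to invoke the relation at $r = \qp{h}(\phi)$ itself, which is precisely the range covered by Corollary \ref{Cordivisorsofindices}. Hence every divisibility symbol in the chain is accounted for.

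Assembling these in order as $r$ runs from $1$ up to $\qp{h}(\phi)$ yields the displayed chain, and there is no serious obstacle to overcome; the content lies entirely in Theorems \ref{ThmMainthm} and \ref{Thmallvalues}, which have already been proved.
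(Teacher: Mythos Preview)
Your proposal is correct and matches the paper's approach: the corollary is stated immediately after Theorem \ref{Thmallvalues} without a separate proof, being a direct repackaging of parts (5) and (6). Your observation that the final divisibility $p^{\mathfrak{d}_{\qp{h}(\phi)-1}(\phi)} \mid \witti{\qp{h}(\phi)}{\phi}$ requires the full range of Corollary \ref{Cordivisorsofindices} (up to $r = \qp{h}(\phi)$) rather than part (5) of Theorem \ref{Thmallvalues} (which stops at $r < \qp{h}(\phi)$) is a good catch; one minor quibble is that ``contrapositive'' is not the right word for exponentiating an inequality, but the mathematics is sound.
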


\begin{remark} In the special case where $p=2$, the chain of inequalities $\witti{1}{\phi} \leq \witti{2}{\phi} \leq \hdots \leq \witti{\qp{h}(\phi)}{\phi}$ was previously obtained in \cite[Thm. 9.5]{Scully2} using Proposition \ref{Propcomparison}. Here, we have given a more precise and natural explanation of this phenomenon. \end{remark}

We now show that, as far as the Knebusch splitting pattern is concerned, one cannot do any better than Theorem \ref{Thmallvalues} in general:

\begin{proposition} \label{Propexistenceofvalues} Let $n$ be any positive integer. Suppose that we are given a nonnegative integer $k \leq n$ and two sequences $(\mathfrak{d}_0,\mathfrak{d}_1,\hdots,\mathfrak{d}_k=d)$ and $(\mathfrak{i}_1,\hdots,\mathfrak{i}_k)$ of $k+1$ and $k$ nonnegative integers, respectively, such that the following conditions hold:
\begin{enumerate} \item[$\mathrm{(i)}$] $\mathfrak{i}_k = n - \sum_{j=1}^{k-1}\mathfrak{i}_j - p^{d} < p^{d + 1} - p^{d}$.
\item[$\mathrm{(ii)}$] $\mathfrak{d}_0 \leq \mathfrak{d}_1 \leq \hdots \leq \mathfrak{d}_k=d$.
\item[$\mathrm{(iii)}$] $1 \leq \mathfrak{i}_r \equiv 0 \pmod{p^{\mathfrak{d}_{r-1}}}$ for all $1 \leq r < k$.
\item[$\mathrm{(iv)}$] $\mathfrak{d}_r \geq \mathrm{log}_p(\mathfrak{i}_r)$ for all $1 \leq r < k$. 
\item[$\mathrm{(v)}$] For every $1 \leq r < k$, $\mathfrak{i}_r - 1$ is the remainder of $n - (\sum_{j=1}^{r-1} \mathfrak{i}_j) - 1$ modulo $p^{\mathfrak{d}_r}$. \end{enumerate}
Then there exists a (purely transcendental) field extension $L$ of $F$ and an anisotropic quasilinear $p$-form $\phi$ of dimension $n$ over $L$ such that:
\begin{enumerate} \item $\qp{h}(\phi) = k$.
\item $h(\phi) = k+d$.
\item $\mathfrak{d}(\phi) = (\mathfrak{d}_0(\phi),\hdots,\mathfrak{d}_{k-1}(\phi),d,d-1,\hdots,1,0)$.
\item $\mathfrak{d}_r(\phi) \geq \mathfrak{d}_r$ for all $0 \leq r < k$.
\item $\mathfrak{i}(\phi) = (\mathfrak{i}_1,\hdots,\mathfrak{i}_k,p^d - p^{d-1},p^{d-1} - p^{d-2},\hdots,p^2 - p,p-1)$. \end{enumerate}
\begin{proof} We argue by induction on $k$. If $k=0$, then Proposition \ref{Propi1bound} shows that we can take $L = F(T)$ and $\phi = \pfister{T_1,\hdots,T_d}$, where $T = (T_1,\hdots,T_{d})$ is a $d$-tuple of algebraically independent variables over $F$. Suppose now that $k>0$, and let $n' = \frac{n - \mathfrak{i}_1}{p^{\mathfrak{d}_1}}$ and $\mathfrak{i}_r' = \frac{\mathfrak{i}_{r+1}}{p^{\mathfrak{d}_1}}$ for all $1 \leq r < k$. By our hypotheses, these ratios are, in fact, positive integers. Setting $\mathfrak{d}_r' = \mathfrak{d}_{r+1} - \mathfrak{d}_1$ for all $0 \leq r < k$, and putting $d' = \mathfrak{d}_{k-1}'$, conditions (i)-(v) then imply the following:
\begin{enumerate} \item[$\mathrm{(i')}$] $\mathfrak{i}'_{k-1} = n' - \sum_{j=1}^{k-2}\mathfrak{i}_j' - p^{d'} < p^{d'+1} - p^{d'}$.
\item[$\mathrm{(ii')}$] $\mathfrak{d}_0' \leq \mathfrak{d}_1' \leq \hdots \leq \mathfrak{d}_{k-1}' = d'$.
\item[$\mathrm{(iii')}$] $1 \leq \mathfrak{i}_r' \equiv 0 \pmod{p^{\mathfrak{d}_{r-1}'}}$ for all $1 \leq r < k-1$.
\item[$\mathrm{(iv')}$] $\mathfrak{d}_r' \geq \mathrm{log}_p(\mathfrak{i}_r')$ for all $1 \leq r < k-1$.
\item[$\mathrm{(v')}$] For every $1 \leq r < k-1$, $\mathfrak{i}_r' - 1$ is the remainder of $n' - (\sum_{j=1}^{r-1} \mathfrak{i}_j') - 1$ modulo $p^{\mathfrak{d}_r'}$. \end{enumerate}
By the induction hypothesis, there exists a (purely transcendental) field extension $L_0$ of $F$ and an anisotropic quasilinear $p$-form $\psi$ of dimension $n'$ over $L_0$ such that
\begin{enumerate} \item[$\mathrm{(1')}$] $\qp{h}(\psi) = k-1$.
\item[$\mathrm{(2')}$] $h(\psi) = k - 1 +d'$.
\item[$\mathrm{(3')}$] $\mathfrak{d}(\psi) = (\mathfrak{d}_0(\psi),\mathfrak{d}_1(\psi),\hdots,\mathfrak{d}_{k-2}(\psi),d',d' - 1,\hdots,1,0)$.
\item[$\mathrm{(4')}$] $\mathfrak{d}_r(\psi) \geq \mathfrak{d}_r'$ for all $0 \leq r <k-1$.
\item[$\mathrm{(5')}$] $\mathfrak{i}(\psi) = (\mathfrak{i}_1',\hdots,\mathfrak{i}_{k-1}',p^{d'} - p^{d'-1},p^{d'-1} - p^{d'-2},\hdots,p^2-p,p-1)$. \end{enumerate}
Consider now the form $\sigma = \psi_{L_1} \perp \form{T_0}$ over the rational function field $L_1 = L_0(T_0)$. By $(3')$, $(5')$ and Lemma \ref{Lemaddavariable}, we have
\begin{enumerate} \item[$\mathrm{(a)}$] $\mathfrak{d}(\sigma) = (0,\mathfrak{d}_0(\psi),\mathfrak{d}_1(\psi),\hdots,\mathfrak{d}_{k-2}(\psi),d',d'-1,\hdots,1,0)$.
\item[$\mathrm{(b)}$] $\mathfrak{i}(\sigma) = (1,\mathfrak{i}_1',\hdots,\mathfrak{i}'_{k-1},p^{d'} - p^{d'-1},p^{d'-1} - p^{d'-2},\hdots,p^2-p,p-1)$. \end{enumerate}
We would like to modify this further. Consider next the product $\tau = \pfister{T_1,\hdots,T_{\mathfrak{d}_1}} \otimes \sigma_L$ over $L = L_1(T)$, where $T = (T_1,\hdots,T_{\mathfrak{d}_1})$ is a $\mathfrak{d}_1$-tuple of algebraically independent variables over $L_1$. Then, by (a), (b) and Lemma \ref{LemgenericQPmultiple}, we have
\begin{enumerate} \item[$\mathrm{(c)}$] $\mathfrak{d}(\tau) = (\mathfrak{d}_1, \mathfrak{d}_1(\psi) + \mathfrak{d}_1,\mathfrak{d}_2(\psi) + \mathfrak{d}_1,\hdots,\mathfrak{d}_{k-2}(\psi) + d_1,d,d -1,\hdots,1,0)$.
\item[$\mathrm{(d)}$] $\mathfrak{i}(\tau) = (p^{\mathfrak{d}_1},\mathfrak{i}_2,\hdots,\mathfrak{i}_k,p^d - p^{d-1},p^{d-1} - p^{d-2},\hdots,p^2-p,p-1)$. \end{enumerate}
Now, by (iv), we have $\mathfrak{i}_1 = p^{\mathfrak{d}_1} - s$ for some $0 \leq s < p^{\mathfrak{d}_1}$. By (ii) and (iii), $s$ is divisible by $p^{\mathfrak{d}_0}$. Let $\phi$ be any codimension-$s$ subform of $\tau$ which is divisible by $\pfister{T_1,\hdots,T_{\mathfrak{d}_0}}$. Clearly $\phi$ is anisotropic, and by (c), (d) and Proposition \ref{Propsspneighbours}, we have
\begin{enumerate} \item[$\mathrm{(e)}$] $\mathfrak{d}(\phi) = (\mathfrak{d}_0(\phi),\mathfrak{d}_1(\psi) + \mathfrak{d}_1,\hdots,\mathfrak{d}_{k-2}(\psi) + \mathfrak{d}_1,d,d -1,\hdots,1,0)$.
\item[$\mathrm{(f)}$] $\mathfrak{i}(\phi) = (\mathfrak{i}_1,\mathfrak{i}_2,\hdots,\mathfrak{i}_k,p^d-p^{d-1},p^{d-1} - p^{d-2},\hdots,p^2-p,p-1)$. \end{enumerate}
The second statement shows that $\phi$ satisfies conditions (2) and (5). At the same time, since $\mathfrak{d}_0(\phi) \geq \mathfrak{d}_0$ by construction, and since $\mathfrak{d}_2(\psi) + \mathfrak{d}_1 \geq \mathfrak{d}_r' + \mathfrak{d}_1 = \mathfrak{d}_{r+1}$ for all $0 \leq r < k-1$ by $(4')$, (e) shows that (3) and (4) are also satisfied. Finally, since $\mathfrak{i}_k < p^{d+1}-p^d$, Proposition \ref{Propi1bound} shows that $\qp{h}(\phi) = k$, i.e., that (1) holds for $\phi$. The pair $(L,\phi)$ therefore has all the desired properties. \end{proof} \end{proposition}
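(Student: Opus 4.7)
The plan is to proceed by induction on the integer $k$. For the base case $k=0$, the hypotheses reduce to saying that the prescribed form must be similar to an anisotropic $d$-fold quasi-Pfister $p$-form, and the characterisation in Proposition \ref{Propi1bound} tells us precisely what to take: the form $\pfister{T_1,\hdots,T_d}$ over the rational function field $F(T_1,\hdots,T_d)$ realises all the claimed invariants.

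For the inductive step, the natural idea is to ``strip off'' the first quasi-Pfister divisor of the target form. Namely, set $n' = (n-\mathfrak{i}_1)/p^{\mathfrak{d}_1}$, $\mathfrak{i}_r' = \mathfrak{i}_{r+1}/p^{\mathfrak{d}_1}$ and $\mathfrak{d}_r' = \mathfrak{d}_{r+1}-\mathfrak{d}_1$; a short check using (i)--(v) shows that these are nonnegative integers satisfying the analogous conditions for the smaller value $k-1$, and the induction hypothesis then produces an anisotropic form $\psi$ over a purely transcendental extension $L_0/F$ with the reduced invariants. The form $\phi$ realising the original data will then be built from $\psi$ in three explicit steps.

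First, I would add a single free ``tail'' variable: form $\sigma = \psi_{L_1} \perp \form{T_0}$ over $L_1 = L_0(T_0)$, and invoke Lemma \ref{Lemaddavariable} to see that this shifts the Knebusch and divisibility sequences by prepending the entries $1$ and $0$, respectively. Next, to multiply every index by $p^{\mathfrak{d}_1}$ I would tensor with a generic anisotropic $\mathfrak{d}_1$-fold quasi-Pfister form: put $\tau = \pfister{T_1,\hdots,T_{\mathfrak{d}_1}} \otimes \sigma_L$ over $L = L_1(T_1,\hdots,T_{\mathfrak{d}_1})$, and apply Lemma \ref{LemgenericQPmultiple} to compute $\mathfrak{i}(\tau)$ and $\mathfrak{d}(\tau)$; this brings the leading isotropy index up to $p^{\mathfrak{d}_1}$ and the leading divisibility up to $\mathfrak{d}_1$. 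The only remaining discrepancy is that we want leading isotropy index $\mathfrak{i}_1 \leq p^{\mathfrak{d}_1}$ (with $s := p^{\mathfrak{d}_1} - \mathfrak{i}_1$ divisible by $p^{\mathfrak{d}_0}$ by (ii)--(iv)), so I would pass to any codimension-$s$ subform $\phi \subset \tau$ which remains divisible by $\pfister{T_1,\hdots,T_{\mathfrak{d}_0}}$. Such a $\phi$ is automatically a codimension-$s$ neighbour of $\tau$ in the sense of \S \ref{Neighbours1}, so Proposition \ref{Propsspneighbours} describes its Knebusch splitting pattern and divisibility sequence: the only change from $\tau$ is that the first isotropy index drops from $p^{\mathfrak{d}_1}$ to $\mathfrak{i}_1$, while the divisibility sequence inherits the lower bound $\mathfrak{d}_0$ at position $0$ and agrees with $\tau$ elsewhere.

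The bulk of the work is then routine verification that the form $\phi$ so constructed satisfies the five conclusions (1)--(5). Conclusions (2) and (5) follow directly from chaining the computations of Lemmas \ref{Lemaddavariable}, \ref{LemgenericQPmultiple} and Proposition \ref{Propsspneighbours}; conclusion (1) follows from (5) together with the inequality $\mathfrak{i}_k < p^{d+1}-p^d$ in (i) and the characterisation of quasi-Pfister forms in Proposition \ref{Propi1bound}; conclusions (3) and (4) come from combining the divisibility computations with the induction hypothesis. The main conceptual hurdle to watch out for, rather than any single hard step, is ensuring that the constructions preserve anisotropy at each stage (this is where genericity of the extensions $L_0 \subset L_1 \subset L$ plays a role, via Lemma \ref{Lemseparableextensions} and Lemma \ref{Lemdivisibilityindexstability}) and that the inequality $\mathfrak{d}_r(\phi) \geq \mathfrak{d}_r$ in (4) survives the tensor product step --- this is where the inductive bound $\mathfrak{d}_r(\psi) \geq \mathfrak{d}_r'$ must be added to $\mathfrak{d}_1$ and matched against $\mathfrak{d}_{r+1}$ using (ii).
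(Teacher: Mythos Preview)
Your proposal is correct and follows essentially the same argument as the paper: induction on $k$, with the base case handled by a generic quasi-Pfister form, and the inductive step carried out by the three-move construction $\psi \rightsquigarrow \psi \perp \form{T_0} \rightsquigarrow \pfister{T_1,\hdots,T_{\mathfrak{d}_1}} \otimes (\psi \perp \form{T_0}) \rightsquigarrow$ codimension-$s$ neighbour divisible by $\pfister{T_1,\hdots,T_{\mathfrak{d}_0}}$, invoking Lemmas \ref{Lemaddavariable}, \ref{LemgenericQPmultiple} and Proposition \ref{Propsspneighbours} in exactly that order. The verifications you flag (anisotropy at each stage, the inequality in (4) via adding $\mathfrak{d}_1$ to the inductive bound) are precisely those the paper carries out.
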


\begin{remark} In general, it is not possible to arrange it so that $\mathfrak{d}_r(\phi) = \mathfrak{d}_r$ for all $0 \leq r < k$ in the statement of Proposition \ref{Propexistenceofvalues}. For example, suppose that $p=2$, and take $n = 2^{s+1} - 2$ for some $s \geq 3$, $k = 1$, $\mathfrak{d}_0 = 0$, $\mathfrak{d}_1 = s$, $\mathfrak{i}_0 = 0$, $\mathfrak{i}_1 = 2^s - 2$. As the reader will readily verify, these integers satisfy conditions (i)-(v) of the Proposition. On the other hand, let $(L,\phi)$ be any pair consisting of a field extension $L$ of $F$ and an anisotropic form $\phi$ of dimension $2^{s+1} - 2$ over $L$ such that $\witti{1}{\phi} = 2^s -2$. By Theorem \ref{Thmmaxsplitting} below (see also \cite[Thm. 9.6]{Scully2}), $\phi$ is necessarily a quasi-Pfister 2-neighbour, and therefore satisfies conditions (1)-(5) of the proposition (see Corollary \ref{CorClassificationQPN}). We claim, however, that $\mathfrak{d}_0(\phi) > 0$, i.e., that $\phi$ is divisible by a binary form. Indeed, note that, since $\phi$ is a quasi-Pfister 2-neighbour, there exists an anisotropic $(s+1)$-fold bilinear Pfister form $\mathfrak{c}$ over $L$ and a codimension-2 subform $\mathfrak{b} \subset \mathfrak{c}$ such that $\phi \simeq \phi_{\mathfrak{b}}$. Since $\mathrm{det}(\mathfrak{c}) = 1$, we have $\mathfrak{c} \simeq \mathfrak{b} \perp \lambda \pfister{\mathrm{det}(\mathfrak{b})}_b$ for some $\lambda \in L^*$. In particular, we have $\mathfrak{b}_{L_{\mathrm{det}(\mathfrak{b})}} \sim 0$. By Corollary \ref{CorBilindex}, it follows that $\witti{0}{\phi_{L_{\mathrm{det}(\mathfrak{b})}}} \geq \frac{1}{2}\mydim{\phi}$, and so $\phi$ is divisible by $\pfister{\mathrm{det}(\mathfrak{b})}$ (see Lemma \ref{LemIsotropypurelyinseparable} (6)). \end{remark}

Theorem \ref{Thmallvalues} and Proposition \ref{Propexistenceofvalues} thus give a complete solution to the problem of determining the possible values of the Knebusch splitting pattern for quasilinear $p$-forms. In particular, we have an answer to Question \ref{Quevalues} in the totally singular case. As noted in Example \ref{Exnongeneric}, the Knebusch and full splitting patterns need not agree in general for quasilinear $p$-forms. In \S \ref{Excellentconnections} below, we will consider the problem of determining the possible values of the full splitting pattern in the case where $p=2$.

\subsection{Quasilinear $p$-forms with maximal splitting} Let $\phi$ be an anisotropic quasilinear $p$-form of dimension $\geq 2$ over $F$ and write $\mydim{\phi} = p^n + m$ for uniquely determined integers $n \geq 0$ and $1 \leq m \leq p^{n+1} - p^n$. By Theorem \ref{Thmallvalues} (see also \cite[Cor. 6.8]{Scully1}), we have $\witti{1}{\phi} \leq m$. If equality holds here, then we say that $\phi$ has \emph{maximal splitting}. The basic examples of forms having this property are given by anisotropic quasi-Pfister $p$-neighbours (see Corollary \ref{CorClassificationQPN}). It is interesting to ask here to what extent this property characterises quasi-Pfister $p$-neighbours. Given Theorem \ref{ThmMainthm}, we can now prove the following general result:

\begin{theorem} \label{Thmmaxsplitting} Let $\phi$ be an anisotropic quasilinear $p$-form of dimension $\geq 2$ over $F$, and $n$ be the smallest nonnegative integer such that $p^{n+1} \geq \mydim{\phi}$. If $\phi$ has maximal splitting, and if either
\begin{enumerate} \item $p>2$ and $\mydim{\phi}> p^n + p^{n-1}$, or
\item $p=2$ and $\mydim{\phi} > 2^n + 2^{n-2}$, \end{enumerate}
then $\phi$ is a quasi-Pfister $p$-neighbour.
\begin{proof} By Theorem \ref{ThmMainthm}, we may assume that $\mathfrak{d}_1(\phi) \geq \witti{1}{\phi}$. Suppose first that $p>2$. Since $\phi$ has maximal splitting, we have $\witti{1}{\phi} > p^{n-1}$ by (1), and so $\mathfrak{d}_1(\phi) \geq p^n$. On the other hand, we have $\mydim{\phi_1} = \mydim{\phi} - \witti{1}{\phi} = p^n$ (see Remark \ref{RemsKnebusch} (2)). It follows that $\phi_1$ is similar to an $n$-fold quasi-Pfister $p$-form, and so $\phi$ is a quasi-Pfister $p$-neighbour by Corollary \ref{CorClassificationQPN}. If $p=2$, the same argument (and (2)) shows that $\phi_1$ is a form of dimension $2^n$ which is divisible by an $(n-1)$-fold quasi-Pfister 2-form. Since every binary form is similar to a quasi-Pfister 2-form in this case, $\phi_1$ is, in fact, similar to an $n$-fold quasi-Pfister 2-form, and we now conclude as before.   \end{proof} \end{theorem}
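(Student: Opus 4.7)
My plan is to invoke Theorem \ref{ThmMainthm} directly: under its conclusion, the first higher anisotropic kernel $\phi_1$ carries a large quasi-Pfister divisor, which (given the dimension constraints imposed by the hypotheses) forces $\phi_1$ itself to be similar to a full quasi-Pfister $p$-form. An appeal to Corollary \ref{CorClassificationQPN} then identifies $\phi$ as a quasi-Pfister $p$-neighbour.

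More concretely, I would first write $\mydim{\phi} = p^n + m$ with $1 \leq m \leq p^{n+1} - p^n$. Maximal splitting says $\witti{1}{\phi} = m$, so $\mydim{\phi_1} = p^n$ by Remark \ref{RemsKnebusch}(2). Arguing by contradiction, suppose $\phi$ is \emph{not} a quasi-Pfister $p$-neighbour. Theorem \ref{ThmMainthm} then produces an $s$-fold quasi-Pfister $p$-form dividing $\phi_1$, where $s$ is minimal with $p^s \geq m$. Since the dimension of any such divisor must divide $\mydim{\phi_1} = p^n$, we automatically have $s \leq n$.

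For case (1), the hypothesis $\mydim{\phi} > p^n + p^{n-1}$ translates to $m > p^{n-1}$, so $p^s \geq m > p^{n-1}$ forces $s \geq n$, hence $s = n$; comparing dimensions, $\phi_1$ is \emph{similar} to an $n$-fold quasi-Pfister $p$-form, and the equivalence of conditions (1) and (8) in Corollary \ref{CorClassificationQPN} supplies the required contradiction. For case (2), the hypothesis $\mydim{\phi} > 2^n + 2^{n-2}$ gives $m > 2^{n-2}$ and hence $s \geq n-1$. The case $s = n$ is handled exactly as in (1); if instead $s = n-1$, then $\phi_1 \simeq \pi \otimes \tau$ for an $(n-1)$-fold quasi-Pfister $2$-form $\pi$ and a binary form $\tau \simeq \form{a,b}$. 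The crucial characteristic-$2$ observation is that $\tau \simeq a\pfister{b/a}$ is similar to a $1$-fold quasi-Pfister $2$-form, so $\phi_1$ is in fact similar to an $n$-fold quasi-Pfister $2$-form, and Corollary \ref{CorClassificationQPN} concludes the argument as before.

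The main obstacle is not really in the proof of Theorem \ref{Thmmaxsplitting} itself -- which, modulo Theorem \ref{ThmMainthm}, is a short dimension count plus the binary-form upgrade above -- but rather in establishing Theorem \ref{ThmMainthm}. The characteristic-$2$ upgrade also explains why the numerical hypothesis in (2) can be relaxed by one power of $2$ relative to (1): the extra slack is absorbed by the fact that every binary form is similar to a quasi-Pfister form when $p=2$, a phenomenon with no analogue for odd primes.
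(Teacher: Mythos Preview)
Your proof is correct and follows essentially the same route as the paper: invoke Theorem~\ref{ThmMainthm} (under the standing assumption that $\phi$ is not already a quasi-Pfister $p$-neighbour), use the dimension hypotheses to force the quasi-Pfister divisor of $\phi_1$ to have foldness at least $n$ (respectively $n-1$ when $p=2$), and finish via Corollary~\ref{CorClassificationQPN} together with the observation that every binary form in characteristic~$2$ is similar to a $1$-fold quasi-Pfister form. Your explicit contradiction framing and separate treatment of the subcases $s=n$ and $s=n-1$ when $p=2$ make the argument slightly more transparent than the paper's compressed version, but the substance is identical.
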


\begin{remark} The statement of Theorem \ref{Thmmaxsplitting} was originally conjectured by Hoffmann in \cite[Rem. 7.32]{Hoffmann2} (see also \cite[Que. 7.6]{Scully2}). The result is the best possible, in the sense that one has examples of anisotropic quasilinear $p$-forms with maximal splitting which are not quasi-Pfister $p$-neighbours in every dimension omitted in the statement of the theorem (see \cite[Ex. 7.31]{Hoffmann2}). In the special case where $p=2$, Theorem \ref{Thmmaxsplitting} was previously established in \cite[Thm. 9.6]{Scully2} using Proposition \ref{Propcomparison}. Here, we obtain a more natural explanation of this phenomenon by way of Theorem \ref{ThmMainthm}. Note that the $p=2$ case of the theorem is a direct analogue of a conjecture of Hoffmann in the theory of nonsingular quadratic forms which remains open, even over fields of characteristic different from 2 (see \cite[\S 4]{Hoffmann1}, \cite[Conj. 1.6]{IzhboldinVishik}). \end{remark}

\section{Some remarks on the splitting of symmetric bilinear forms in characteristic 2} \label{Bilinearforms}

We now provide a brief and relatively informal discussion on some implications of Theorem \ref{ThmMainthm} for the Knebusch splitting theory of symmetric bilinear forms in characteristic 2. The basic details of this theory, which was initiated by Laghribi in \cite{Laghribi6}, are outlined in \S \ref{SSPbilinear} below, and we refer there (and to \S \ref{BilBasicnotions}) for the relevant notation and terminology. Throughout this section, we assume that $p=2$. By a \emph{quasi-Pfister form} (resp. \emph{neighbour}), we will mean a quasi-Pfister 2-form (resp. 2-neighbour).

Let $\mathfrak{b}$ be an anisotropic bilinear form over $F$. By Lemma \ref{Lembilheight}, we have $h(\mathfrak{b}) \leq h(\phi_{\mathfrak{b}})$. Moreover, if $h(\mathfrak{b}) > 0$, then Lemma \ref{Lembilineari1} shows that $\frac{1}{2}\witti{1}{\phi_{\mathfrak{b}}} \leq \witti{1}{\mathfrak{b}} \leq \witti{1}{\phi_{\mathfrak{b}}}$, where $\phi_{\mathfrak{b}}$ denotes the quasilinear quadratic form on $V_\mathfrak{b}$ defined by the assignment $v \mapsto \mathfrak{b}(v,v)$. As per Remark \ref{Remunclear}, the precise nature of the relationship between $\witti{1}{\mathfrak{b}}$ and $\witti{1}{\phi_{\mathfrak{b}}}$ seems to be unknown, but one can easily produce examples which show that the given upper and lower bound on $\witti{1}{\mathfrak{b}}$ can both be met in practice. This suggests that prior knowledge of the Knebusch splitting pattern of $\phi_{\mathfrak{b}}$ provides only limited information concerning that of $\mathfrak{b}$. With the results of the previous section in hand, however, we can now clarify the situation a little further:

\begin{proposition} \label{Propbilineari1i2comparison} Let $\mathfrak{b}$ be an anisotropic bilinear form over $F$ such that $h(\mathfrak{b}) >1$ and write $\mydim{\mathfrak{b}} = 2^n + m$ for uniquely determined integers $n \geq 1$ and $1 \leq m \leq 2^n$. Then $h(\phi_{\mathfrak{b}}) > 1$, and exactly one of the following holds:
\begin{enumerate} \item $2\witti{1}{\mathfrak{b}} < \wittj{2}{\phi_{\mathfrak{b}}}$.
\item $\witti{1}{\mathfrak{b}} = \witti{1}{\phi_{\mathfrak{b}}} = \witti{2}{\phi_{\mathfrak{b}}} = 2^{\mathfrak{d}_1(\phi_\mathfrak{b})}$.
\item $\phi_{\mathfrak{b}}$ is a quasi-Pfister neighbour, $m > 2^{n-1}$ and $2^{n-2} + \frac{m}{2} \leq \witti{1}{\mathfrak{b}} \leq m$. \end{enumerate}
\begin{proof} As mentioned above, the inequality $h(\phi_{\mathfrak{b}}) > 1$ holds by Lemma \ref{Lembilheight}. Suppose now that $2\witti{1}{\mathfrak{b}} \geq \wittj{2}{\phi_{\mathfrak{b}}}$. If $\witti{2}{\phi_{\mathfrak{b}}} \geq \witti{1}{\phi_{\mathfrak{b}}}$, then it follows from Lemma \ref{Lembilineari1} that $\witti{1}{\mathfrak{b}} = \witti{1}{\phi_{\mathfrak{b}}} = \witti{2}{\phi_{\mathfrak{b}}}$. By Corollary \ref{Corincreasing}, we are therefore in case (2). On the other hand, if $\witti{2}{\phi_{\mathfrak{b}}} < \witti{1}{\phi_{\mathfrak{b}}}$, then the same corollary implies that $\qp{h}(\phi_{\mathfrak{b}}) \leq 1$. By Corollary \ref{CorClassificationQPN}, this implies that $\phi_{\mathfrak{b}}$ is a quasi-Pfister neighbour and that $\witti{1}{\phi_{\mathfrak{b}}} = m$ and $\witti{2}{\phi_{\mathfrak{b}}} = 2^{n-1}$. We are therefore in case (3), and so the result follows. \end{proof} \end{proposition}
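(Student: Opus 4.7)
The first claim $h(\phi_{\mathfrak{b}}) > 1$ is immediate from Lemma~\ref{Lembilheight}, which gives $h(\phi_{\mathfrak{b}}) \geq h(\mathfrak{b}) > 1$. For the trichotomy, the plan is to assume (1) fails, so
\[
2\witti{1}{\mathfrak{b}} \geq \wittj{2}{\phi_{\mathfrak{b}}} = \witti{1}{\phi_{\mathfrak{b}}} + \witti{2}{\phi_{\mathfrak{b}}},
\]
and then dichotomize on the relationship between $\witti{1}{\phi_{\mathfrak{b}}}$ and $\witti{2}{\phi_{\mathfrak{b}}}$. The main engine will be the sandwich $\tfrac{1}{2}\witti{1}{\phi_{\mathfrak{b}}} \leq \witti{1}{\mathfrak{b}} \leq \witti{1}{\phi_{\mathfrak{b}}}$ from Lemma~\ref{Lembilineari1}, combined with the structural restriction on the higher isotropy indices of $\phi_{\mathfrak{b}}$ provided by Corollary~\ref{Corincreasing}.

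In the first subcase $\witti{2}{\phi_{\mathfrak{b}}} \geq \witti{1}{\phi_{\mathfrak{b}}}$, combining the above display with $\witti{1}{\mathfrak{b}} \leq \witti{1}{\phi_{\mathfrak{b}}}$ yields
\[
2\witti{1}{\phi_{\mathfrak{b}}} \geq 2\witti{1}{\mathfrak{b}} \geq \witti{1}{\phi_{\mathfrak{b}}} + \witti{2}{\phi_{\mathfrak{b}}} \geq 2\witti{1}{\phi_{\mathfrak{b}}},
\]
forcing every inequality to be an equality. This gives $\witti{1}{\mathfrak{b}} = \witti{1}{\phi_{\mathfrak{b}}} = \witti{2}{\phi_{\mathfrak{b}}}$. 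To pin the common value to $2^{\mathfrak{d}_1(\phi_{\mathfrak{b}})}$, I apply Corollary~\ref{Corincreasing} directly: the chain $\witti{1}{\phi_{\mathfrak{b}}} \leq 2^{\mathfrak{d}_1(\phi_{\mathfrak{b}})} \mid \witti{2}{\phi_{\mathfrak{b}}}$ collapses once the two higher isotropy indices coincide, yielding $\witti{1}{\phi_{\mathfrak{b}}} = 2^{\mathfrak{d}_1(\phi_{\mathfrak{b}})}$. This is case (2).

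In the remaining subcase $\witti{2}{\phi_{\mathfrak{b}}} < \witti{1}{\phi_{\mathfrak{b}}}$, Corollary~\ref{Corincreasing} asserts that the sequence of higher isotropy indices is non-decreasing up through position $\qp{h}(\phi_{\mathfrak{b}})$, so a strict descent at the transition from index $1$ to index $2$ can only happen when $\qp{h}(\phi_{\mathfrak{b}}) \leq 1$. Corollary~\ref{CorClassificationQPN} then tells me that $\phi_{\mathfrak{b}}$ is a quasi-Pfister neighbour, with $\witti{1}{\phi_{\mathfrak{b}}} = m$ and $\witti{2}{\phi_{\mathfrak{b}}} = 2^{n-1}$; the strict inequality $\witti{2}{\phi_{\mathfrak{b}}} < \witti{1}{\phi_{\mathfrak{b}}}$ reads $m > 2^{n-1}$. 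The upper bound $\witti{1}{\mathfrak{b}} \leq m$ is again from Lemma~\ref{Lembilineari1}, and substituting the exact values of $\witti{1}{\phi_{\mathfrak{b}}}$ and $\witti{2}{\phi_{\mathfrak{b}}}$ into the failure of (1) yields $\witti{1}{\mathfrak{b}} \geq \tfrac{1}{2}(m + 2^{n-1}) = 2^{n-2} + \tfrac{m}{2}$. This is case (3), and mutual exclusivity of the three cases is automatic from the structure of the dichotomy. I do not expect any real obstacle: the argument is essentially bookkeeping orchestrated by the structural results of \S\ref{Possiblevalues}, with the only delicate point being the passage from $\witti{2}{\phi_{\mathfrak{b}}} < \witti{1}{\phi_{\mathfrak{b}}}$ to $\qp{h}(\phi_{\mathfrak{b}}) \leq 1$ via the monotonicity part of Corollary~\ref{Corincreasing}.
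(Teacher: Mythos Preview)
Your proof is correct and follows essentially the same approach as the paper's: assume (1) fails, split into the two subcases $\witti{2}{\phi_{\mathfrak{b}}} \geq \witti{1}{\phi_{\mathfrak{b}}}$ and $\witti{2}{\phi_{\mathfrak{b}}} < \witti{1}{\phi_{\mathfrak{b}}}$, and handle each via Lemma~\ref{Lembilineari1}, Corollary~\ref{Corincreasing}, and Corollary~\ref{CorClassificationQPN}. You spell out a bit more than the paper does (the equality chain in the first subcase, and the explicit derivation of $m > 2^{n-1}$ and the bounds on $\witti{1}{\mathfrak{b}}$ in the second), but the argument is the same.
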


This yields the following:

\begin{corollary} \label{Corbilinearpattern} Let $\mathfrak{b}$ be an anisotropic bilinear form over $F$ such that $h(\mathfrak{b}) > 1$ and write $\mydim{\mathfrak{b}} = 2^n + m$ for uniquely determined integers $n \geq 1$ and $1 \leq m \leq 2^n$. Then $h(\phi_{\mathfrak{b}}) > 1$, and exactly one of the following holds:
\begin{enumerate} \item $\phi_{\mathfrak{b}_1}$ is a neighbour of $(\phi_{\mathfrak{b}})_1$.
\item $\witti{1}{\mathfrak{b}} = \witti{1}{\phi_{\mathfrak{b}}} = \witti{2}{\phi_{\mathfrak{b}}} = 2^{\mathfrak{d}_1(\phi_\mathfrak{b})}$ and $\phi_{\mathfrak{b}_1}$ is a near neighbour of $(\phi_{\mathfrak{b}})_1$.
\item $\phi_\mathfrak{b}$ is a quasi-Pfister neighbour, $m > 2^{n-1}$ and $2^{n-2} + \frac{m}{2} \leq \witti{1}{\mathfrak{b}} \leq m$.  \end{enumerate}
\begin{proof} We apply Proposition \ref{Propbilineari1i2comparison} to the form $\mathfrak{b}$. By Proposition \ref{PropBildecomposition} (2), $\phi_{\mathfrak{b}_1}$ is a codimension-$d$ subform of $(\phi_{\mathfrak{b}})_1$, where $d = 2\witti{1}{\mathfrak{b}} - \witti{1}{\phi_{\mathfrak{b}}}$. In particular, if $d < \witti{2}{\phi_{\mathfrak{b}}}$, then $\phi_{\mathfrak{b}_1}$ is a neighbour of $(\phi_{\mathfrak{b}})_1$. In other words, if $2\witti{1}{\mathfrak{b}} < \wittj{2}{\phi_{\mathfrak{b}}}$, then (1) holds. On the other hand, if $2\witti{1}{\mathfrak{b}} \geq \wittj{2}{\phi_{\mathfrak{b}}}$, then Proposition \ref{Propbilineari1i2comparison} implies that either (3) holds, or else $\witti{1}{\mathfrak{b}} = \witti{1}{\phi_{\mathfrak{b}}} = \witti{2}{\phi_{\mathfrak{b}}} = 2^{d_1(\phi_\mathfrak{b})}$. In the latter situation, we have $\mydim{\mathfrak{b}_1} = \mydim{\mathfrak{b}} - 2\witti{1}{\mathfrak{b}} = \mydim{\phi_{\mathfrak{b}}} - \wittj{2}{\phi_{\mathfrak{b}}} = \mydim{(\phi_{\mathfrak{b}})_1} - \witti{1}{(\phi_{\mathfrak{b}})_1}$, so that $\phi_{\mathfrak{b}_1}$ is a near neighbour of $(\phi_{\mathfrak{b}})_1$. We are therefore in case (2), whence the result. \end{proof} \end{corollary}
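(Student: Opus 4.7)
My plan is to feed the trichotomy of Proposition \ref{Propbilineari1i2comparison} into the subform structure of $\phi_{\mathfrak{b}_1}$ inside $(\phi_{\mathfrak{b}})_1$. The key link is Proposition \ref{PropBildecomposition}, which exhibits $\phi_{\mathfrak{b}_1}$ as a subform of $(\phi_{\mathfrak{b}})_1$ of codimension
\[
d \;\coloneqq\; 2\witti{1}{\mathfrak{b}} - \witti{1}{\phi_\mathfrak{b}};
\]
Lemma \ref{Lembilineari1} guarantees $d \geq 0$, while the inequality $h(\phi_\mathfrak{b}) > 1$ comes for free from $h(\mathfrak{b}) > 1$ together with Lemma \ref{Lembilheight}. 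With this setup, each of the three cases of Proposition \ref{Propbilineari1i2comparison} will translate directly into a statement about how $d$ compares to $\witti{1}{(\phi_\mathfrak{b})_1} = \witti{2}{\phi_\mathfrak{b}}$.

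Carrying out the three cases in turn: in the first case of Proposition \ref{Propbilineari1i2comparison}, where $2\witti{1}{\mathfrak{b}} < \wittj{2}{\phi_\mathfrak{b}}$, I compute
\[
d \;=\; 2\witti{1}{\mathfrak{b}} - \witti{1}{\phi_\mathfrak{b}} \;<\; \wittj{2}{\phi_\mathfrak{b}} - \witti{1}{\phi_\mathfrak{b}} \;=\; \witti{2}{\phi_\mathfrak{b}} \;=\; \witti{1}{(\phi_\mathfrak{b})_1},
\]
so $\phi_{\mathfrak{b}_1}$ is a subform of $(\phi_{\mathfrak{b}})_1$ of codimension strictly less than $\witti{1}{(\phi_\mathfrak{b})_1}$, hence a neighbour in the sense of \S \ref{Neighbours1}; this is case (1) of the corollary. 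In the second case, the equalities $\witti{1}{\mathfrak{b}} = \witti{1}{\phi_\mathfrak{b}} = \witti{2}{\phi_\mathfrak{b}}$ give $d = \witti{1}{\phi_\mathfrak{b}} = \witti{1}{(\phi_\mathfrak{b})_1}$, so the codimension matches the first higher isotropy index of $(\phi_\mathfrak{b})_1$ exactly, and $\phi_{\mathfrak{b}_1}$ is a near neighbour rather than a neighbour; this is case (2). The third case of the corollary is simply the third case of Proposition \ref{Propbilineari1i2comparison} copied across. Mutual exclusivity of the three cases of the corollary is inherited from mutual exclusivity in Proposition \ref{Propbilineari1i2comparison}, since the dichotomy ``neighbour vs.\ near neighbour'' in (1)--(2) tracks the strict-vs.-equal dichotomy between $2\witti{1}{\mathfrak{b}}$ and $\wittj{2}{\phi_\mathfrak{b}}$.

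There is essentially no obstacle at this stage: once Proposition \ref{Propbilineari1i2comparison} is in hand, the corollary amounts to a codimension count inside $(\phi_{\mathfrak{b}})_1$. The real technical content sits upstream, in Theorem \ref{ThmMainthm}: it is what forces the inequality $\witti{1}{\phi_\mathfrak{b}} \leq \witti{2}{\phi_\mathfrak{b}}$ (via Corollary \ref{Corincreasing}) which underlies the case analysis of Proposition \ref{Propbilineari1i2comparison}, and thereby makes a clean trichotomy -- rather than a messier comparison -- available here.
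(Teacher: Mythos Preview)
Your proof is correct and follows essentially the same route as the paper's: establish via Proposition \ref{PropBildecomposition} that $\phi_{\mathfrak{b}_1}$ sits inside $(\phi_{\mathfrak{b}})_1$ with codimension $d = 2\witti{1}{\mathfrak{b}} - \witti{1}{\phi_{\mathfrak{b}}}$, then feed the trichotomy of Proposition \ref{Propbilineari1i2comparison} through the neighbour/near-neighbour dichotomy by comparing $d$ with $\witti{2}{\phi_{\mathfrak{b}}}$. Your added remarks (that $d \geq 0$ by Lemma \ref{Lembilineari1}, and the explicit sentence on mutual exclusivity) are helpful glosses but do not change the argument.
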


In the statement of Corollary \ref{Corbilinearpattern}, (3) may be regarded as an extreme case. The situation is thus as follows: We know that $\frac{1}{2}\witti{1}{\phi_{\mathfrak{b}}} \leq \witti{1}{\mathfrak{b}} \leq \witti{1}{\phi_{\mathfrak{b}}}$. If $h(\mathfrak{b})>1$ and if we are \emph{not} in case (3), then $\mathfrak{b}_1$ is either a neighbour or a near neighbour of $(\phi_{\mathfrak{b}})_1$, the latter case occuring only when $\witti{1}{\mathfrak{b}} = \witti{1}{\phi_{\mathfrak{b}}} = \witti{2}{\phi_{\mathfrak{b}}} = 2^{\mathfrak{d}_1(\phi_\mathfrak{b})}$. Appealing to the relevant statement among those found in Proposition \ref{Propsspneighbours} and Corollary \ref{Corsspnearneighbours}, and applying Lemma \ref{Lembilineari1} to the form $\mathfrak{b}_1$, we then obtain restrictions on the invariant $\witti{2}{\mathfrak{b}}$ in terms of the sequence $\mathfrak{i}((\phi_{\mathfrak{b}})_1)$. Thus, repeating this process as many times as is possible, we can, in principle, obtain non-trivial information concerning the entire Knebusch splitting pattern of $\mathfrak{b}$ in terms of $\mathfrak{i}(\phi_{\mathfrak{b}})$. Of course, these ideas are mostly of interest in situations which are distant from ``generic''. We do not attempt to make this precise, but we give here an explicit example which serves to illustrate the broader philosophy. More specifically, we will examine some implications of the preceding discussion for the problem of classifying bilinear forms of height 2 in characteristic 2. In spite of the simple description of anisotropic bilinear forms of height 1 (Proposition \ref{Propheight1bilinear}), this problem remains wide open at present. In fact, the analogous problem in characteristic $\neq 2$ is also open. There, the expected classification in even dimensions is as follows (recall here that the terms ``quadratic form'' and ``symmetric bilinear form'' are synonymous in characteristic $\neq 2$):

\begin{conjecture}[{cf. \cite[Conj. 7]{Kahn1}}] \label{Conjheight2} Let $\phi$ be an anisotropic quadratic form of even dimension over a field of characteristic $\neq 2$ with $h(\phi) = 2$. If $\mathrm{deg}(\phi) = d$, then exactly one of the following holds:
\begin{enumerate} \item $\phi \simeq \pi \otimes \psi$ for some $(d-1)$-fold Pfister form $\pi$ and 4-dimensional form $\psi$ of nontrivial discriminant.
\item $\phi \simeq \pi \otimes \psi$ for some $(d-2)$-fold Pfister form $\pi$ and 6-dimensional form $\psi$ of trivial discriminant.
\item $\phi$ is an excellent form\footnote{That is, similar to $\pi \otimes \sigma'$, where $\pi$ is a $d$-fold Pfister form and $\sigma'$ is the pure part of some $(n-d)$-fold Pfister form $\sigma$; equivalently, $\phi_1$ is defined over the base field.} of dimension $2^{n+1} - 2^{d}$ for some $n > d$.\end{enumerate} \end{conjecture}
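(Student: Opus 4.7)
Set $F_1 = F(\phi)$. Since $\phi$ has even dimension and height $2$, the form $\phi_1$ is an anisotropic form of height $1$ over $F_1$, and by Knebusch's classical classification of height-one forms it is similar to an anisotropic Pfister form $\pi'$. By the definition of degree, $\pi'$ is a $d$-fold Pfister form, so $\dim \phi_1 = 2^d$ and hence
\begin{equation*}
\dim \phi \;=\; 2^d + 2\witti{1}{\phi}.
\end{equation*}
The problem therefore splits into two tasks: (a) determine the possible values of $\witti{1}{\phi}$, and (b) recover the structure of $\phi$ from that of $\phi_1$ and its relation to $\pi'$.

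For (a), the starting point is Karpenko's theorem (the characteristic $\neq 2$ case of Conjecture \ref{ConjHoffmanns}): writing $\dim \phi = 2^n + m$ with $1 \leq m \leq 2^n$, one has $\witti{1}{\phi} - 1 \equiv m - 1 \pmod{2^s}$ for some $s \leq n$. Substituting $\dim \phi = 2^d + 2\witti{1}{\phi}$ and analysing the resulting congruences cuts the admissible dimensions down to the union
\begin{equation*}
\{\,2^{d+1}\,\}\;\cup\;\{\,3 \cdot 2^{d-1}\,\}\;\cup\;\{\,2^{n+1} - 2^d \;:\; n > d\,\}
\end{equation*}
together with a handful of ``stray'' intermediate values. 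The stray values should then be excluded by invoking Vishik's Excellent Connections theorem, whose gaps in the full splitting pattern are precisely the extra constraint needed to rule them out.

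For (b), one argues case by case. In case (3), with $\dim \phi = 2^{n+1} - 2^d$, the strategy is to show that $\pi'$ is defined over $F$, i.e., that it is the scalar extension of an anisotropic $d$-fold Pfister form $\sigma$ over $F$, and that $\phi$ arises as the appropriate higher anisotropic kernel of an $(n+1)$-fold Pfister form containing $\sigma$; Knebusch's characterisation of excellent forms then delivers the conclusion. In cases (1) and (2) the target is an explicit Pfister divisor of $\phi$ over $F$: one tries to extract a $(d-1)$- or $(d-2)$-fold Pfister form from the subform structure visible through $\phi_1$ via Fitzgerald--Knebusch-type descent, while tracking the discriminant of the $4$- or $6$-dimensional complementary factor, which is forced by whether $\phi$ lies in $I^d$ or not.

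The principal obstacle is task (b) in cases (1) and (2): descending a Pfister divisor from $F_1$ back to $F$. Vishik's motivic machinery yields very tight constraints on the Chow motive of $X_\phi$, but converting such motivic data into a concrete Pfister divisor over the base field seems to require genuinely new ideas beyond Steenrod operations and the existing motivic decompositions. It is essentially this descent obstruction that has kept Conjecture \ref{Conjheight2} open; I would expect progress to come either from a deeper understanding of the $J$-invariant of quadrics of height~$2$, or from a direct algebraic descent argument in the spirit of Izhboldin's low-dimensional analyses, perhaps modelled on the unconditional descent that becomes available in the totally singular setting via Theorem \ref{Maintheorem}.
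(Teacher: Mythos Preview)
The statement you are attempting to prove is a \emph{conjecture}, not a theorem, and the paper does not prove it. Immediately after stating Conjecture~\ref{Conjheight2}, the paper remarks that ``the dimension-theoretic part of Conjecture~\ref{Conjheight2} has been verified by Vishik (see \cite[Thm.~3.1]{Vishik3}), [but] the general statement is only known in some special cases.'' There is therefore no proof in the paper against which to compare your attempt.

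Your proposal is not really a proof at all but a sketch of a strategy, and you yourself concede this at the end when you write that the descent obstruction in cases (1) and (2) ``has kept Conjecture~\ref{Conjheight2} open.'' Your outline of task~(a) --- determining the admissible dimensions via Karpenko's theorem and Vishik's excellent connections --- is broadly in line with how the dimension-theoretic part was in fact settled by Vishik, so that portion is reasonable as a summary of known results (though your description of ``stray values'' being excluded is vague and would need to be made precise). Your diagnosis of task~(b) as the genuine obstruction is also correct and matches the state of the art. But packaging an acknowledged open problem as a ``proof proposal'' is misleading: what you have written is a survey of the expected line of attack together with an honest admission that the key step is missing. That is useful context, but it is not a proof, and it should not be labelled as one.
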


While the dimension-theoretic part of Conjecture \ref{Conjheight2} has been verified by Vishik (see \cite[Thm. 3.1]{Vishik3}), the general statement is only known in some special cases. Returning to characteristic 2, Corollary \ref{Corbilinearpattern} now yields the following:

\begin{corollary} \label{Corheight2even} Let $\mathfrak{b}$ be an anisotropic bilinear form of even dimension over $F$ such that $h(\mathfrak{b}) = 2$. If $\mathrm{deg}(b) = d$, then exactly one of the following holds:
\begin{enumerate} \item $\mydim{\mathfrak{b}} = 2^{d + 1}$, $\qp{h}(\phi_{\mathfrak{b}}) = 2$ and $\witti{1}{\phi_{\mathfrak{b}}} = \witti{2}{\phi_{\mathfrak{b}}} = 2^{d-1}$.
\item $d >1$, $\mydim{\mathfrak{b}} = 2^{d} + 2^{d-1} $, $\qp{h}(\phi_{\mathfrak{b}}) = 2$ and $\witti{1}{\phi_{\mathfrak{b}}} = \witti{2}{\phi_{\mathfrak{b}}} = 2^{d -2}$.
\item $\phi_\mathfrak{b}$ is a quasi-Pfister neighbour and $\mydim{\mathfrak{b}} \in \lbrace 2^{d+1} \rbrace \cup \lbrace 2^{d+1} -2^i\;|\;1 \leq i \leq 2^{d-1}\rbrace$.
\item $\phi_\mathfrak{b}$ is a quasi-Pfister neighbour and $2^{n+1} -2^d \leq \mydim{\mathfrak{b}} \leq 2^{n+1}$ for some $n > d$.
\end{enumerate}
\begin{proof} Since $h(\mathfrak{b}) = 2$, $\mathfrak{b}_1$ is a Pfister form of dimension $2^d$. By Theorem \ref{ThmVishikholes}, we either have $\mydim{\mathfrak{b}} \geq 2^{d+1}$ or $\mydim{\mathfrak{b}} = 2^{d+1}-2^i$ for some $1 \leq i \leq 2^{d-1}$. Now, write $\mydim{\mathfrak{b}} = 2^n + m$ for uniquely determined integers $n \geq 1$ and $1 \leq m \leq 2^n$. By Corollary \ref{Corbilinearpattern}, one of the following holds:
\begin{enumerate} \item[$\mathrm{(i)}$] $\phi_{\mathfrak{b}_1}$ is a neighbour of $(\phi_{\mathfrak{b}})_1$.
\item[$\mathrm{(ii)}$] $\witti{1}{\mathfrak{b}} = \witti{1}{\phi_{\mathfrak{b}}} = \witti{2}{\phi_{\mathfrak{b}}} = 2^{\mathfrak{d}_1(\phi_\mathfrak{b})}$ and $\phi_{\mathfrak{b}_1}$ is a near neighbour of $(\phi_{\mathfrak{b}})_1$.
\item[$\mathrm{(iii)}$] $\phi_\mathfrak{b}$ is a quasi-Pfister neighbour, $m > 2^{n-1}$ and $2^{n-2} + \frac{m}{2} \leq \witti{1}{\mathfrak{b}} \leq m$.\end{enumerate} 
If (i) holds, then we have $\mydim{(\phi_{\mathfrak{b}})_1} - \witti{1}{(\phi_{\mathfrak{b}})_1} < \mydim{\phi_{\mathfrak{b}_1}} \leq \mydim{(\phi_{\mathfrak{b}})_1}$. But, since $\mydim{\phi_{\mathfrak{b}_1}} = 2^{d}$, Theorem \ref{ThmHoffmannsconjecture} (more precisely, Hoffmann and Laghribi's upper bound on $\mathfrak{i}_1$ applied to the form $(\phi_{\mathfrak{b}})_1$) then implies that $d = n$ and $\phi_{\mathfrak{b}_1} \simeq (\phi_{\mathfrak{b}})_1$. In view of Corollary \ref{CorClassificationQPN}, we are therefore in case (3). If (ii) holds, then there are three possibilities: Either we are in case (1), case (2), or we have $\mydim{\mathfrak{b}} = 2^{n} + 2^{n-1}$, $d = n-1$ and $\witti{1}{\mathfrak{b}} = \witti{1}{\phi_{\mathfrak{b}}} = \witti{2}{\phi_{\mathfrak{b}}} = 2^{n-1}$. In the latter scenario, Corollary \ref{CorClassificationQPN} implies that $\phi_\mathfrak{b}$ is a quasi-Pfister neighbour, and so we are in case (4). Finally, if (iii) holds, then $\phi_\mathfrak{b}$ is a quasi-Pfister neighbour, $m>2^{n-1}$ and, since $2\witti{1}{\mathfrak{b}} = 2^n + m - 2^{d}$, we have $2^{n-1} \leq 2^n - 2^{d} \leq m$. We are thus again in case (4), and the result follows. \end{proof} \end{corollary}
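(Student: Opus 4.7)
The plan is to invoke Corollary \ref{Corbilinearpattern}, which partitions the analysis into three cases according to how the quasilinear quadratic form $\phi_{\mathfrak{b}_1}$ relates to $(\phi_{\mathfrak{b}})_1$. Since $h(\mathfrak{b}) = 2$ and $\deg(\mathfrak{b}) = d$, the form $\mathfrak{b}_1$ is a bilinear Pfister form of dimension $2^d$ by the classification of anisotropic bilinear forms of height $1$ (Proposition \ref{Propheight1bilinear}), and so $\phi_{\mathfrak{b}_1}$ is a quasi-Pfister form of dimension $2^d$. Alongside Corollary \ref{Corbilinearpattern} I would call on Theorem \ref{ThmVishikholes}, which restricts the possible values of $\mydim{\mathfrak{b}}$ to either $\mydim{\mathfrak{b}} \geq 2^{d+1}$ or $\mydim{\mathfrak{b}} = 2^{d+1} - 2^i$ for appropriate $i$.

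First I would treat the ``neighbour'' case (i) of Corollary \ref{Corbilinearpattern}. Writing $\mydim{(\phi_{\mathfrak{b}})_1} = 2^k + l$ with $1 \leq l \leq 2^k$, the neighbour inequality $2^d > \mydim{(\phi_{\mathfrak{b}})_1} - \witti{1}{(\phi_{\mathfrak{b}})_1}$ combined with the Hoffmann--Laghribi bound $\witti{1}{(\phi_{\mathfrak{b}})_1} \leq l$ (Theorem \ref{ThmHoffmannsconjecture}) forces $2^d > 2^k$, and then $\mydim{\phi_{\mathfrak{b}_1}} \leq \mydim{(\phi_{\mathfrak{b}})_1} \leq 2^{k+1} \leq 2^d$ gives $\mydim{(\phi_{\mathfrak{b}})_1} = 2^d = \mydim{\phi_{\mathfrak{b}_1}}$. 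Writing $\mydim{\mathfrak{b}} = 2^n + m$ with $1 \leq m \leq 2^n$, a short dimension count then pins $n = d$ and $\witti{1}{\phi_{\mathfrak{b}}} = m$, so $\phi_{\mathfrak{b}}$ has maximal splitting. By Corollary \ref{CorClassificationQPN}, $\phi_{\mathfrak{b}}$ is a quasi-Pfister neighbour, and Theorem \ref{ThmVishikholes} pins down $\mydim{\mathfrak{b}}$, placing me in case (3) of the corollary.

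Next I would handle the ``near neighbour'' case (ii), where $\witti{1}{\phi_{\mathfrak{b}}} = \witti{2}{\phi_{\mathfrak{b}}} = 2^s$ with $s = \mathfrak{d}_1(\phi_{\mathfrak{b}})$. The near-neighbour relation combined with Corollary \ref{Corfirstkernel} identifies $(\phi_{\mathfrak{b}})_2$ with a base change of the quasi-Pfister form $\phi_{\mathfrak{b}_1}$; since the dimensions coincide, this base change is anisotropic, hence $(\phi_{\mathfrak{b}})_2$ is itself a quasi-Pfister form of dimension $2^d$. In particular, $\qp{h}(\phi_{\mathfrak{b}}) = 2$ and $\mydim{\mathfrak{b}} = 2^d + 2^{s+1}$. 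Theorem \ref{ThmVishikholes} then leaves only two admissible possibilities: $s+1 = d$, giving $\mydim{\mathfrak{b}} = 2^{d+1}$ and case (1); or $s+1 = d-1$, giving $\mydim{\mathfrak{b}} = 2^d + 2^{d-1}$ with $d > 1$ and case (2). The remaining case (iii) of Corollary \ref{Corbilinearpattern} asserts outright that $\phi_{\mathfrak{b}}$ is a quasi-Pfister neighbour, landing me in case (3) or case (4) directly, again guided by the dimensional restriction of Theorem \ref{ThmVishikholes}.

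The main obstacle I foresee is in case (ii), where I must justify that the base change of the quasi-Pfister form $\phi_{\mathfrak{b}_1}$ to the second field in the Knebusch splitting tower of $\phi_{\mathfrak{b}}$ is anisotropic and isomorphic to $(\phi_{\mathfrak{b}})_2$. This requires tracking the near-neighbour relation across the passage from $F(\mathfrak{b})$ to $F(\phi_{\mathfrak{b}})$ and leveraging the compatibility of the bilinear and quasilinear Knebusch splitting towers that is built into the proof of Corollary \ref{Corbilinearpattern}.
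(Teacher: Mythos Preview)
Your overall strategy matches the paper's: invoke Corollary~\ref{Corbilinearpattern} and analyze its three cases using the dimension constraints from Theorem~\ref{ThmVishikholes} and the Hoffmann--Laghribi bound. Case~(i) is handled the same way. Your treatment of case~(ii) is actually a bit cleaner than the paper's: by observing via Corollary~\ref{Corfirstkernel} that $(\phi_{\mathfrak{b}})_2 \simeq (\phi_{\mathfrak{b}_1})_{F_2}$ is an anisotropic quasi-Pfister form of dimension $2^d$, you establish $\qp{h}(\phi_{\mathfrak{b}}) = 2$ directly and thereby exclude a priori the third numerical possibility ($\mydim{\mathfrak{b}} = 2^n + 2^{n-1}$, $d = n-1$) that the paper lists and then routes to case~(4). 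Your worry about this step is unfounded: the near-neighbour relation gives $\phi_{\mathfrak{b}_1}$ as an honest codimension-$\witti{2}{\phi_{\mathfrak{b}}}$ subform of $(\phi_{\mathfrak{b}})_1$ over $F_1 = F(\phi_{\mathfrak{b}})$ (see the proof of Corollary~\ref{Corbilinearpattern}), so Corollary~\ref{Corfirstkernel} applies immediately over $F_1$, and the dimension count forces anisotropy of the base change.

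Two small points to tighten. In case~(i), ``maximal splitting'' alone does not give the quasi-Pfister neighbour conclusion via Corollary~\ref{CorClassificationQPN}; what you actually have is $(\phi_{\mathfrak{b}})_1 \simeq \phi_{\mathfrak{b}_1}$ (equal dimensions, one contained in the other), which is quasi-Pfister, so condition~(8) of Corollary~\ref{CorClassificationQPN} applies. In case~(iii), you should pin down case~(4) rather than leave it as ``case~(3) or~(4)'': since $\mydim{\mathfrak{b}_1} = 2^d$ gives $2\witti{1}{\mathfrak{b}} = 2^n + m - 2^d$, the bound $2^{n-2} + m/2 \leq \witti{1}{\mathfrak{b}}$ forces $d < n$, and then $\witti{1}{\mathfrak{b}} \leq m$ gives $2^n - 2^d \leq m$, so $2^{n+1} - 2^d \leq \mydim{\mathfrak{b}} \leq 2^{n+1}$, which is exactly case~(4).
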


The reader will immediately observe basic parallels between Corollary \ref{Corheight2even} and the expected classification of anisotropic quadratic forms of height 2 in characteristic $\neq 2$. For instance, case (1) should correspond here to case (1) of Conjecture \ref{Conjheight2}:

\begin{conjecture} \label{Conjgoodformsheight2} Let $\mathfrak{b}$ be an anisotropic bilinear form over $F$ such that $h(\mathfrak{b}) = \qp{h}(\phi_{\mathfrak{b}}) = 2$ and $\mydim{\mathfrak{b}} = 2^{d + 1}$, where $\mathrm{deg}(\mathfrak{b}) = d$. Then $\mathfrak{b} = \pi \otimes \mathfrak{c}$ for some $(d-1)$-fold Pfister form $\pi$ and some 4-dimensional form $\mathfrak{c}$ of nontrivial determinant. \end{conjecture}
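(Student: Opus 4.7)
The plan is to combine Theorem \ref{Maintheorem} with the forced splitting data of $(\mathfrak{b}, \phi_\mathfrak{b})$, and then to descend from $F_1 = F(\phi_\mathfrak{b})$ back to $F$. From Corollary \ref{Corheight2even}(1) and Corollary \ref{Corbilinearpattern}(2), the hypotheses pin down
\[
\witti{1}{\phi_\mathfrak{b}} = \witti{2}{\phi_\mathfrak{b}} = \witti{1}{\mathfrak{b}} = 2^{d-1},
\]
with $\mathfrak{b}_1$ a bilinear $d$-fold Pfister form $\pfister{a_1,\ldots,a_d}_b$ over $F_1$, and $\phi_{\mathfrak{b}_1} \simeq \pfister{a_1,\ldots,a_d}$ realised as a codimension-$2^{d-1}$ subform of $(\phi_\mathfrak{b})_1$ (up to similarity) in the manner of a near neighbour.

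Next I would apply Theorem \ref{Maintheorem} to $\phi_\mathfrak{b}$: since $\qp{h}(\phi_\mathfrak{b}) = 2$, $\phi_\mathfrak{b}$ is not a quasi-Pfister neighbour by Corollary \ref{CorClassificationQPN}, and therefore $(\phi_\mathfrak{b})_1$ is divisible over $F_1$ by some $(d-1)$-fold quasi-Pfister form $\pi_0$. By Corollary \ref{Corpropertyofsimform}, $\pi_0$ divides $\simform{(\phi_\mathfrak{b})_1}$. Exploiting that $\phi_{\mathfrak{b}_1}$ is itself a $d$-fold quasi-Pfister form sitting as a codimension-$2^{d-1}$ subform of $(\phi_\mathfrak{b})_1$, I would try to show that $\pi_0$ can be chosen to divide $\phi_{\mathfrak{b}_1}$ as well, so that, after reindexing the slots, $\pi_0 \simeq \pfister{a_1,\ldots,a_{d-1}}$ and the corresponding bilinear $(d-1)$-fold Pfister form $\pi_1 = \pfister{a_1,\ldots,a_{d-1}}_b$ divides $\mathfrak{b}_1$ over $F_1$.

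The crux is to descend $\pi_1$ to a bilinear $(d-1)$-fold Pfister form $\pi$ over $F$ already dividing $\mathfrak{b}$. Concretely, one would track the slot parameters of $\pi_1$ through the generic-point presentation of $F_1$ given in \S\ref{Functionfields}, use Cassels-Pfister (Theorem \ref{ThmCasselsPfister}) and the mod-$p$ multiplicity constraint of Proposition \ref{PropCPapp} to clear denominators, and then appeal to the bilinear analogue of Corollary \ref{CordivbyQP} to upgrade the quasi-Pfister divisibility obtained at the level of $\phi_\mathfrak{b}$ to a bilinear Pfister divisibility at the level of $\mathfrak{b}$. This is the principal obstacle I anticipate: Theorem \ref{Maintheorem} only delivers divisibility of the first higher anisotropic kernel by a quasi-Pfister form over $F_1$, whereas the conjecture demands divisibility of $\mathfrak{b}$ by a \emph{bilinear} Pfister form over $F$, and the bridge between these two statements does not appear to follow formally from any result proved in the paper.

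Finally, once $\mathfrak{b} \simeq \pi \otimes \mathfrak{c}$ with $\dim \mathfrak{c} = 4$ is established, the nontriviality of $\det(\mathfrak{c})$ can be argued by contradiction: if $\det(\mathfrak{c}) = 1$, a Witt-equivalence argument (comparing $\pi \otimes \mathfrak{c}$ with known decompositions of 4-dimensional bilinear forms of trivial determinant, in the spirit of the Vishik-type computation in the proof of Lemma \ref{LemVishiksexample}) should force $\mathfrak{b}_1$ to have degree strictly larger than $d$, or else $\mathfrak{b}$ to be similar to a $d$-fold bilinear Pfister form and thus of height $1$, either possibility contradicting the hypotheses.
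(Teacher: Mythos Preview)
The statement you are attempting to prove is labelled a \emph{Conjecture} in the paper, not a theorem; the paper does not claim to prove it. What the paper does establish is the lemma immediately following Conjecture~\ref{Conjgoodformsheight2}, which shows that this conjecture is equivalent to Conjecture~\ref{Conjh2good} (the quasilinear analogue) and to the existence of a codimension-$2^{d-1}$ subform $\psi \subset \phi_{\mathfrak{b}}$ with $\mathrm{ndeg}(\psi) = 2^{d+1}$. That equivalence is proved by showing that if $\phi_{\mathfrak{b}}$ is divisible by a $(d-1)$-fold quasi-Pfister form $\sigma$, then $\mathfrak{b}_{F(\sigma)} \sim 0$, and then invoking the Aravire--Baeza linkage result \cite{AravireBaeza1} together with Theorem~\ref{ThmVishikholes} to force the desired bilinear Pfister divisibility over $F$. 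So the paper's contribution is a reduction, not a resolution.

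Your proposal, by contrast, attempts an unconditional proof, and you have correctly identified where it breaks down. Theorem~\ref{Maintheorem} gives divisibility of $(\phi_{\mathfrak{b}})_1$ by a $(d-1)$-fold quasi-Pfister form over $F_1 = F(\phi_{\mathfrak{b}})$, but descending this to a divisibility of $\phi_{\mathfrak{b}}$ (let alone $\mathfrak{b}$) over $F$ is precisely the content of Conjecture~\ref{Conjh2good}, which the paper leaves open. The Cassels--Pfister and multiplicity arguments you sketch are the right ingredients for the kind of descent carried out in the proof of Theorem~\ref{ThmMainthm}, but that proof only descends to $F_1$, not to $F$ itself; pushing further would require new ideas beyond what the paper provides. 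Your final paragraph on nontriviality of $\det(\mathfrak{c})$ is fine and matches the paper's observation that otherwise $\mathfrak{b}$ would be similar to a Pfister form and hence of height~$1$.
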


This conjecture is also directly analogous to Conjecture \ref{Conjh2good}. In fact, we have:

\begin{lemma} Let $\mathfrak{b}$ be an anisotropic bilinear form over $F$ such that $h(\mathfrak{b}) = \qp{h}(\phi_{\mathfrak{b}}) = 2$ and $\mydim{\mathfrak{b}} = 2^{d + 1}$, where $\mathrm{deg}(\mathfrak{b}) = d$. Then the following are equivalent:
\begin{enumerate} \item Conjecture \ref{Conjgoodformsheight2} holds for $\mathfrak{b}$.
\item Conjecture \ref{Conjh2good} holds for the pair $(\phi_{\mathfrak{b}},d)$.
\item There exists a subform $\psi \subset \phi_{\mathfrak{b}}$ of codimension $2^{d-1}$ such that $\mathrm{ndeg}(\psi) = 2^{d-1}$. \end{enumerate}
\begin{proof} The equivalence of (2) and (3) has already been shown in Lemma \ref{Lemh2good}, while the implication $(1) \Rightarrow (2)$ is trivial. We now sketch the proof of $(2) \Rightarrow (1)$, making free use of known results from the theory of symmetric bilinear forms in characteristic 2. Without loss of generality, we may assume that $1 \in D(\phi_{\mathfrak{b}})$. Suppose now that (2) holds, i.e., that $\phi_\mathfrak{b} \simeq \sigma \otimes \tau$ for some $(d-1)$-fold quasi-Pfister form $\sigma$ and some form $\tau$ of dimension 4. We claim that $\mathfrak{b}_{F(\sigma)} \sim 0$. Since $h(\mathfrak{b}) = 2$, the Arason-Pfister Hauptsatz (which is a special case of Theorem \ref{ThmVishikholes}) implies that it will be sufficient to check that $\witti{0}{\mathfrak{b}_{F(\sigma)}} > \witti{1}{\mathfrak{b}}$. By Corollary \ref{CorBilindex}, we have $\witti{0}{\mathfrak{b}_{F(\sigma)}} \geq \frac{1}{2}\witti{0}{(\phi_{\mathfrak{b}})_{F(\sigma)}} = \frac{1}{2}(\frac{1}{2}\mydim{\phi_{\mathfrak{b}}}) = 2^{d-1} = \witti{1}{\mathfrak{b}}$. If equality holds here, then we necessarily have that:
\begin{enumerate} \item $\anispart{(\mathfrak{b}_{F(\sigma)})}$ is a $d$-fold Pfister form.
\item $\phi_{\anispart{(\mathfrak{b}_{F(\sigma)})}} \simeq \anispart{((\phi_{\mathfrak{b}})_{F(\sigma)})}$. \end{enumerate} 
But this implies that $\mathrm{ndeg}((\phi_{\mathfrak{b}})_{F(\sigma)})) = 2^d = \frac{1}{4}\mathrm{ndeg}(\phi_{\mathfrak{b}})$, which contradicts Lemma \ref{Lemisotropyff} (3). Our claim is therefore proved. Now, by \cite[Cor. 3.3]{AravireBaeza1}, it follows that there exists an anisotropic $d$-fold Pfister form $\eta$ over $F$ such that $\mathrm{deg}(\mathfrak{b} \perp \eta) \geq d+1$. But, since $1 \in D(\phi_{\mathfrak{b}}) \cap D(\phi_{\eta})$, $\mathfrak{b} \perp \eta$ is isotropic, and so $\mydim{\anispart{(\mathfrak{b} \perp \eta)}} < 2^{d+1} + 2^d$. By Theorem \ref{ThmVishikholes}, it follows that $\anispart{(\mathfrak{b} \perp \eta)}$ is similar to an $(d+1)$-fold Pfister form. By a standard linkage result (see \cite[Prop. 6.21]{EKM}), we deduce that $\mathfrak{b} \simeq \pi \otimes \mathfrak{c}$ for some $(d-1)$-fold Pfister form $\pi$ and 4-dimensional form $\mathfrak{c}$ over $F$. As $\mathfrak{b}$ is not similar to a Pfister form, $\mathfrak{c}$ necessarily has nontrivial determinant. \end{proof} \end{lemma}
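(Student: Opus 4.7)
The plan is to dispatch $(2) \Leftrightarrow (3)$ via Lemma \ref{Lemh2good}, handle $(1) \Rightarrow (2)$ by passing to diagonal forms, and concentrate the real argument on $(2) \Rightarrow (1)$. For the easy direction, if $\mathfrak{b} \simeq \pi \otimes \mathfrak{c}$ with $\pi$ a $(d-1)$-fold bilinear Pfister form and $\mathfrak{c}$ four-dimensional, then passing to the associated quasilinear quadratic form yields $\phi_{\mathfrak{b}} \simeq \phi_{\pi} \otimes \phi_{\mathfrak{c}}$, where $\phi_{\pi}$ is a $(d-1)$-fold quasi-Pfister form and $\phi_{\mathfrak{c}}$ has dimension $4$, which is exactly Conjecture \ref{Conjh2good} for the pair $(\phi_{\mathfrak{b}}, d)$. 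Lemma \ref{Lemh2good} then supplies $(2) \Leftrightarrow (3)$ at no additional cost, so everything reduces to $(2) \Rightarrow (1)$.

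For this, assume $\phi_{\mathfrak{b}} \simeq \sigma \otimes \tau$ with $\sigma$ a $(d-1)$-fold quasi-Pfister form and $\tau$ of dimension $4$, normalized so that $1 \in D(\phi_{\mathfrak{b}})$. The central intermediate claim is that $\mathfrak{b}_{F(\sigma)}$ is metabolic, where $F(\sigma)$ denotes the function field of the associated bilinear Pfister quadric. Since $h(\mathfrak{b}) = 2$ and $\mathrm{deg}(\mathfrak{b}) = d$, Theorem \ref{ThmVishikholes} (the Hauptsatz-type statement for bilinear forms in characteristic $2$) reduces the claim to the strict inequality $\witti{0}{\mathfrak{b}_{F(\sigma)}} > \witti{1}{\mathfrak{b}} = 2^{d-1}$. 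The weaker non-strict bound $\witti{0}{\mathfrak{b}_{F(\sigma)}} \geq 2^{d-1}$ follows by combining Corollary \ref{CorBilindex} with the observation that $(\phi_{\mathfrak{b}})_{F(\sigma)}$, being divisible by the now-isotropic quasi-Pfister form $\sigma_{F(\sigma)}$, has Witt index at least $\tfrac{1}{2}\mydim{\phi_{\mathfrak{b}}} = 2^d$ (Lemma \ref{LemIsotropypurelyinseparable} (6)). To upgrade this to a strict inequality, I would argue by contradiction: equality would force $\anispart{(\mathfrak{b}_{F(\sigma)})}$ to be a $d$-fold bilinear Pfister form and hence $\anispart{((\phi_{\mathfrak{b}})_{F(\sigma)})}$ to be a $d$-fold quasi-Pfister form of dimension $2^d$, yielding $\mathrm{ndeg}((\phi_{\mathfrak{b}})_{F(\sigma)}) = 2^d$. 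But $\mathrm{ndeg}(\phi_{\mathfrak{b}}) = 2^{d+2}$ (from $h(\phi_{\mathfrak{b}}) = d+2$, as read off from Corollary \ref{Corheight2even} (1) and Corollary \ref{Corheight=ndegree}), which contradicts the factor-of-two lower bound in Lemma \ref{Lemisotropyff} (3).

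Once $\mathfrak{b}_{F(\sigma)} \sim 0$ is established, I would invoke \cite[Cor. 3.3]{AravireBaeza1} to produce an anisotropic $d$-fold bilinear Pfister form $\eta$ over $F$ with $\mathrm{deg}(\mathfrak{b} \perp \eta) \geq d+1$. Since $1 \in D(\phi_{\mathfrak{b}}) \cap D(\phi_{\eta})$, the sum $\mathfrak{b} \perp \eta$ is isotropic, so its anisotropic kernel has dimension strictly less than $2^{d+1} + 2^d$. A second application of Theorem \ref{ThmVishikholes} then forces $\anispart{(\mathfrak{b} \perp \eta)}$ to be similar to a $(d+1)$-fold Pfister form, at which point the standard linkage principle \cite[Prop. 6.21]{EKM} delivers a decomposition $\mathfrak{b} \simeq \pi \otimes \mathfrak{c}$ with $\pi$ a $(d-1)$-fold Pfister form and $\mathfrak{c}$ four-dimensional; nontriviality of $\mathrm{det}(\mathfrak{c})$ follows because $h(\mathfrak{b}) = 2$ precludes $\mathfrak{b}$ from being similar to a Pfister form. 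The hardest point will be ruling out the equality case in the Witt-index bound over $F(\sigma)$: this is precisely where the quasilinear norm-degree machinery of the paper must be imported into an otherwise bilinear-form argument, via Corollary \ref{CorBilindex} and the factor-of-two principle in Lemma \ref{Lemisotropyff} (3).
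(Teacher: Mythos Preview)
Your proposal is correct and follows essentially the same approach as the paper's proof: reduce to $(2)\Rightarrow(1)$, show $\mathfrak{b}_{F(\sigma)}\sim 0$ by combining Corollary~\ref{CorBilindex} with a norm-degree contradiction via Lemma~\ref{Lemisotropyff}~(3), then invoke \cite[Cor.~3.3]{AravireBaeza1}, Theorem~\ref{ThmVishikholes}, and linkage. One minor point: your citation of Lemma~\ref{LemIsotropypurelyinseparable}~(6) for the isotropy of $(\phi_{\mathfrak{b}})_{F(\sigma)}$ is slightly off, since that lemma concerns degree-$p$ purely inseparable extensions rather than function fields; Corollary~\ref{Corisotropyff}~(1) is the correct reference here, but the underlying fact is the same.
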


Similarly, one would expect case (2) of Corollary \ref{Corheight2even} to correspond to case (2) of Conjecture \ref{Conjheight2}. Note here, however, that if $\mathfrak{b}$ is either (i) the product of a $(d-1)$-fold Pfister form and a 4-dimensional form of nontrivial determinant, or (ii) a generalised Albert form of dimension $2^d + 2^{d-1}$, then $\phi_{\mathfrak{b}}$ can be a quasi-Pfister neighbour, and so $\mathfrak{b}$ may, in fact, belong to the class of forms described in Corollary \ref{Corheight2even} (3). Furthermore, it is appropriate to separate the latter case from cases (1) and (2). Indeed, for any $1 \leq i \leq 2^{d-1}$, one can easily construct forms of dimension $2^{d+1} - 2^i$, degree $d$ and height 2 as the difference of two $d$-fold Pfister forms having $(i-1)$ common slots. Case (3) of Corollary \ref{Corheight2even} can therefore occur in every permitted dimension. We thus have an entirely new class of height 2 forms of even dimension in characteristic 2. Moreover, as pointed out by Laghribi (see \cite[Thm. 1.1]{Laghribi7}), new forms of height 2 and degree $d \geq 1$ also appear in case (4) of Corollary \ref{Corheight2even}. These forms are of dimension $2^{n+1}$ ($n>d$) and are divisible by suitable Pfister forms of degree ${d-1}$. Of course, we also have here the direct analogues of those forms (of dimension $2^{n+1} - 2^d$) appearing in case (3) of Conjecture \ref{Conjheight2}. It seems to be unknown, however, if any other dimensions can occur in the situation of Corollary \ref{Corheight2even} (4). As for the classification of odd-dimensional anisotropic bilinear forms of height 2, we have: 

\begin{corollary} \label{Corheight2odd} Let $\mathfrak{b}$ be an anisotropic bilinear form of odd dimension over $F$ such that $h(\mathfrak{b}) = 2$. Then $\mydim{\mathfrak{b}_1} = 2^d-1$ for some $d\geq 2$, and exactly one of the following holds:
\begin{enumerate} \item $\mydim{\mathfrak{b}} = 5$, $\qp{h}(\phi_{\mathfrak{b}}) = 3$ and $\witti{1}{\phi_{\mathfrak{b}}} = \witti{2}{\phi_{\mathfrak{b}}} = 1$.
\item $\phi_\mathfrak{b}$ is a quasi-Pfister neighbour and $\mydim{\mathfrak{b}} = 2^{d+1} - 2^i \pm 1$ for some $1 \leq i \leq d$.
\item $\phi_\mathfrak{b}$ is a quasi-Pfister neighbour and $2^n -2^d - 1\leq \mydim{\mathfrak{b}} \leq 2^{n+1}-1$ for some $n>d$.\end{enumerate}
\begin{proof} Since $h(\mathfrak{b}) = 2$, $\mathfrak{b}_1$ is similar to a codimension-1 subform of a Pfister form by Proposition \ref{Propheight1bilinear}. As such, we have $\mydim{\mathfrak{b}_1} = 2^d - 1$ for some $d \geq 2$. Let $\mathfrak{c} = \anispart{(\mathfrak{b} \perp \form{\mathrm{det}(\mathfrak{b})}_b)}$. Note that, for any extension $L$ of $F$, we have $\mydim{\anispart{(\mathfrak{b}_L)}} = \mydim{\anispart{(\mathfrak{c}_L)}} \pm 1$. In particular, if $(F_r)$ denotes the Knebusch splitting tower of $\mathfrak{c}$, then $\mydim{\anispart{(\mathfrak{b}_{F_{h(\mathfrak{c})-1}})}} = 2^{\mathrm{deg}(\mathfrak{c})} \pm 1$. By the Arason-Pfister Hauptsatz (see Theorem \ref{ThmVishikholes}), it follows that $\mathrm{deg}(\mathfrak{c}) \geq d$. Now, write $\mydim{\mathfrak{b}} = 2^n + m$ for uniquely determined integers $n \geq d$ and $1 \leq m \leq 2^n$. By Corollary \ref{Corbilinearpattern}, one of the following holds:
\begin{enumerate} \item[$\mathrm{(i)}$] $\phi_{\mathfrak{b}_1}$ is a neighbour of $(\phi_{\mathfrak{b}})_1$.
\item[$\mathrm{(ii)}$] $\witti{1}{\mathfrak{b}} = \witti{1}{\phi_{\mathfrak{b}}} = \witti{2}{\phi_{\mathfrak{b}}} = 2^{\mathfrak{d}_1(\phi_\mathfrak{b})}$ and $\phi_{\mathfrak{b}_1}$ is a near neighbour of $(\phi_{\mathfrak{b}})_1$.
\item[$\mathrm{(iii)}$] $\phi_\mathfrak{b}$ is a quasi-Pfister neighbour, $m > 2^{n-1}$ and $2^{n-2} + \frac{m}{2} \leq \witti{1}{\mathfrak{b}} \leq m$.\end{enumerate}
If (i) holds, then we have $\mydim{(\phi_{\mathfrak{b}})_1} - \witti{1}{(\phi_{\mathfrak{b}})_1} < \mydim{\phi_{\mathfrak{b}_1}} \leq \mydim{(\phi_{\mathfrak{b}})_1}$. Since $\mydim{\phi_{\mathfrak{b}_1}} = 2^d -1$, Theorem \ref{ThmHoffmannsconjecture} (more precisely, Hoffmann and Laghribi's upper bound on $\mathfrak{i}_1$ applied to the form $(\phi_\mathfrak{b})_1$) then implies that we necessarily have $\mydim{(\phi_{\mathfrak{b}})_1} = 2^d$. Since $\phi_{\mathfrak{b}_1}$ is both a quasi-Pfister neighbour and a codimension-1 neighbour of $(\phi_{\mathfrak{b}})_1$, it follows that the latter form is similar to a quasi-Pfister form. By Corollary \ref{CorClassificationQPN}, this means that $\phi_\mathfrak{b}$ is a quasi-Pfister neighbour. Furthermore, we must have that $n = d$. Since $\mydim{\mathfrak{b}} = \mydim{\mathfrak{c}} \pm 1$, and since $\mathrm{deg}(\mathfrak{c}) \geq d$, it follows from Theorem \ref{ThmVishikholes} that $\mydim{\mathfrak{b}} = 2^{d+1} - 2^i \pm 1$ for some $1 \leq i \leq d$, and we are therefore in case (2). Now, if (ii) holds, then another application of Theorem \ref{ThmHoffmannsconjecture} shows that $\mydim{\mathfrak{b}} = 2^{d} +1$ and $\witti{1}{\mathfrak{b}} = \witti{1}{\phi_{\mathfrak{b}}} = \witti{2}{\phi_{\mathfrak{b}}} = 1$. If $d>2$, then Theorem \ref{ThmVishikholes} implies that $\mydim{\mathfrak{c}} = 2^d$, and so $\mathfrak{c}$ is a Pfister form. But this implies that $\phi_\mathfrak{b}$ is a quasi-Pfister neighbour, which contradicts the fact that $\witti{2}{\phi_{\mathfrak{b}}} = 1$ (see Corollary \ref{CorClassificationQPN}). We thus conclude that $d=2$, and so we are in case (1). Finally, as in the proof of Corollary \ref{Corheight2even}, the reader will immediately verify that we are in case (3) whenever (iii) holds. 
\end{proof} \end{corollary}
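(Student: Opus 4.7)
The strategy is to combine the classification of height-$1$ anisotropic bilinear forms with the trichotomy provided by Corollary \ref{Corbilinearpattern}. First, since $h(\mathfrak{b}) = 2$, the form $\mathfrak{b}_1$ is an anisotropic bilinear form of height $1$. By Proposition \ref{Propheight1bilinear}, it is either a Pfister form or similar to a codimension-$1$ subform of one; as $\mydim{\mathfrak{b}_1} = \mydim{\mathfrak{b}} - 2\witti{1}{\mathfrak{b}}$ is odd, we are in the latter case, whence $\mydim{\mathfrak{b}_1} = 2^d - 1$ for some $d \geq 2$. To bring the gap theorem for even-dimensional bilinear forms into play, I would introduce the auxiliary form $\mathfrak{c} = \anispart{(\mathfrak{b} \perp \form{\mathrm{det}(\mathfrak{b})}_b)}$: at each stage of the Knebusch tower, the anisotropic kernels of $\mathfrak{b}$ and $\mathfrak{c}$ differ in dimension by exactly $1$. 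In particular, $\mathrm{deg}(\mathfrak{c}) \geq d$, so Theorem \ref{ThmVishikholes} will constrain $\mydim{\mathfrak{c}}$ (and hence $\mydim{\mathfrak{b}}$) to lie in narrow intervals.

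Next, I would write $\mydim{\mathfrak{b}} = 2^n + m$ with $1 \leq m \leq 2^n$ and apply Corollary \ref{Corbilinearpattern} to split into three cases: (i) $\phi_{\mathfrak{b}_1}$ is a neighbour of $(\phi_{\mathfrak{b}})_1$; (ii) $\phi_{\mathfrak{b}_1}$ is a near neighbour of $(\phi_{\mathfrak{b}})_1$ with $\witti{1}{\mathfrak{b}} = \witti{1}{\phi_{\mathfrak{b}}} = \witti{2}{\phi_{\mathfrak{b}}} = 2^{\mathfrak{d}_1(\phi_{\mathfrak{b}})}$; and (iii) $\phi_{\mathfrak{b}}$ is a quasi-Pfister neighbour with $m > 2^{n-1}$ and $\witti{1}{\mathfrak{b}}$ in a prescribed interval. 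In case (i), the bound $\mydim{(\phi_{\mathfrak{b}})_1} - \witti{1}{(\phi_{\mathfrak{b}})_1} < \mydim{\phi_{\mathfrak{b}_1}} = 2^d - 1$ combined with Theorem \ref{ThmHoffmannsconjecture} applied to $(\phi_{\mathfrak{b}})_1$ should pin down $\mydim{(\phi_{\mathfrak{b}})_1} = 2^d$; since $\phi_{\mathfrak{b}_1}$ is a quasi-Pfister neighbour sitting inside $(\phi_{\mathfrak{b}})_1$ in codimension $1$, the latter is itself similar to a quasi-Pfister form, and Corollary \ref{CorClassificationQPN} then promotes $\phi_{\mathfrak{b}}$ to a quasi-Pfister neighbour with $n = d$. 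Combining this with $\mathrm{deg}(\mathfrak{c}) \geq d$ and Theorem \ref{ThmVishikholes} yields $\mydim{\mathfrak{c}} = 2^{d+1} - 2^i$ for some $1 \leq i \leq d$, whence $\mydim{\mathfrak{b}} = 2^{d+1} - 2^i \pm 1$, giving conclusion (2). In case (ii), a parallel application of Theorem \ref{ThmHoffmannsconjecture} should force $\mydim{\mathfrak{b}} = 2^d + 1$ with $\witti{1}{\phi_{\mathfrak{b}}} = \witti{2}{\phi_{\mathfrak{b}}} = 1$; when $d > 2$, Theorem \ref{ThmVishikholes} would make $\mathfrak{c}$ an anisotropic Pfister form, turning $\phi_{\mathfrak{b}}$ into a quasi-Pfister neighbour and contradicting $\witti{2}{\phi_{\mathfrak{b}}} = 1$ via Corollary \ref{CorClassificationQPN}, so $d = 2$ and we land in conclusion (1). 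Finally, in case (iii), combining the prescribed bounds on $\witti{1}{\mathfrak{b}}$ with $\mydim{\mathfrak{b}_1} = 2^d - 1$ forces $n > d$ and places $\mydim{\mathfrak{b}}$ in the range required by conclusion (3).

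The main obstacle, I expect, will be the coordination in case (i) between the quasi-Pfister structure extracted from the pair $\bigl(\phi_{\mathfrak{b}_1},(\phi_{\mathfrak{b}})_1\bigr)$ and the gap theorem applied to the auxiliary form $\mathfrak{c}$. Deducing that $(\phi_{\mathfrak{b}})_1$ is itself similar to a quasi-Pfister form from the fact that $\phi_{\mathfrak{b}_1}$ is a codimension-$1$ quasi-Pfister neighbour inside it requires careful use of Theorem \ref{ThmHoffmannsconjecture} to rule out every other dimension, and tracking the parity through the passage $\mathfrak{b} \leftrightarrow \mathfrak{c}$ so as to produce precisely the dimension list $2^{d+1} - 2^i \pm 1$ will require delicate bookkeeping along the Knebusch tower.
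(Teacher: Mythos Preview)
Your proposal is correct and follows essentially the same route as the paper's own proof: the same auxiliary form $\mathfrak{c}$, the same appeal to Corollary \ref{Corbilinearpattern} for the trichotomy, and the same use of Theorem \ref{ThmHoffmannsconjecture} together with Theorem \ref{ThmVishikholes} to handle each branch. The anticipated obstacle in case (i) is exactly the step the paper singles out, and your resolution matches it.
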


In characteristic $\neq 2$, it is expected that any odd-dimensional anisotropic quadratic form of height 2 is excellent of dimension $2^{n+1} - 2^d +1$ for some integers $n\geq d>1$, with one exceptional case occuring in dimension 5. Again, although we see obvious parallels between this conjectural statement and Corollary \ref{Corheight2odd}, we remark here that new values can appear among the dimensions of odd-dimensional forms of height 2 in characteristic 2. For example, it is easy to check that any codimension-1 subform of a generalised Albert form belonging to class (3) of Corollary \ref{Corheight2even} has height 2. Finally, let us note that, in the situation of Corollary \ref{Corheight2even} (resp. Corollary \ref{Corheight2odd}), we have $\qp{h}(\phi_{\mathfrak{b}}) \leq 2$ (resp. $\qp{h}(\phi_{\mathfrak{b}}) \leq 3$) in all cases. More generally, we have:

\begin{corollary} \label{Corqphbound} Let $\mathfrak{b}$ be an anisotropic bilinear form over $F$ such that $h(\mathfrak{b}) >1$.
\begin{enumerate} \item If $\mydim{\mathfrak{b}}$ is even, then $\qp{h}(\phi_{\mathfrak{b}}) \leq 2h(\mathfrak{b}) - 2$.
\item If $\mydim{\mathfrak{b}}$ is odd, then $\qp{h}(\phi_{\mathfrak{b}}) \leq 2h(\mathfrak{b}) - 1$. \end{enumerate}
\begin{proof} We proceed by induction on $h(\mathfrak{b})$. The case where $h(\mathfrak{b}) = 1$ follows from Proposition \ref{Propheight1bilinear} and Corollary \ref{CorClassificationQPN}. Suppose now that $h(\mathfrak{b}) > 1$. If $\qp{h}(\phi_{\mathfrak{b}}) \leq 1$, then there is nothing to prove. We can therefore assume that $\qp{h}(\phi_{\mathfrak{b}}) > 1$, or, equivalently, that $\phi_{\mathfrak{b}}$ is not a quasi-Pfister neighbour (see Corollary \ref{CorClassificationQPN}). By Corollary \ref{Corbilinearpattern}, it follows that $\phi_{\mathfrak{b}_1}$ is either a neighbour or a near neighbour of $(\phi_{\mathfrak{b}})_1$. Thus, by Proposition \ref{Propsspneighbours} and Corollaries \ref{Corsspnearneighbours} and \ref{CorClassificationQPN}, we have that $\qp{h}(\phi_{\mathfrak{b}_1}) \geq \qp{h}\big((\phi_{\mathfrak{b}})_1\big) - 1$. By the induction hypothesis, we therefore have
\begin{equation*} \qp{h}(\phi_{\mathfrak{b}}) = \qp{h}\big((\phi_{\mathfrak{b}})_1\big) + 1 \leq \qp{h}(\phi_{\mathfrak{b}_1}) + 2 \leq 2h(\mathfrak{b}_1) + 2 - \epsilon = 2h(\mathfrak{b}) - \epsilon, \end{equation*}
where $\epsilon = 2$ (resp. $\epsilon = 1$) if $\mydim{\mathfrak{b}}$ is even (resp. odd). This proves the result. \end{proof} \end{corollary}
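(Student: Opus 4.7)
I will proceed by induction on $h(\mathfrak{b})$, handling the case $h(\mathfrak{b}) = 1$ first as the base (although the statement is required only for $h(\mathfrak{b}) > 1$, the base case is still needed to start the recursion). By Proposition \ref{Propheight1bilinear}, an anisotropic bilinear form of height $1$ is, up to similarity, a bilinear Pfister form when $\mydim{\mathfrak{b}}$ is even and a codimension-$1$ subform of such when $\mydim{\mathfrak{b}}$ is odd. In either case $\phi_\mathfrak{b}$ is (up to scalar) a quasi-Pfister form or a quasi-Pfister neighbour, so Corollary \ref{CorClassificationQPN} yields $\qp{h}(\phi_\mathfrak{b}) = 0$ in the first case and $\qp{h}(\phi_\mathfrak{b}) \leq 1$ in the second, matching the desired bound at $h(\mathfrak{b}) = 1$.

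For $h(\mathfrak{b}) > 1$, my first observation is that if $\phi_\mathfrak{b}$ is itself a quasi-Pfister neighbour, then $\qp{h}(\phi_\mathfrak{b}) \leq 1$ by Corollary \ref{CorClassificationQPN}, which is trivially below the required bound. I may therefore assume $\phi_\mathfrak{b}$ is not a quasi-Pfister neighbour; in this case the definition of the quasi-Pfister height gives $\qp{h}(\phi_\mathfrak{b}) = \qp{h}((\phi_\mathfrak{b})_1) + 1$. The plan is now to bound the right-hand side by $\qp{h}(\phi_{\mathfrak{b}_1}) + 2$, so that applying the induction hypothesis to $\mathfrak{b}_1$ (for which $h(\mathfrak{b}_1) = h(\mathfrak{b}) - 1$) completes the argument.

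The key input for this comparison is Corollary \ref{Corbilinearpattern}: under my assumption that $\phi_\mathfrak{b}$ is not a quasi-Pfister neighbour, only cases (1) and (2) of that corollary can occur, telling me that $\phi_{\mathfrak{b}_1}$ is a neighbour or a near neighbour of $(\phi_\mathfrak{b})_1$. In the neighbour case I appeal to Proposition \ref{Propsspneighbours}, which asserts that the splitting pattern and higher divisibility sequence of $\phi_{\mathfrak{b}_1}$ coincide with those of $(\phi_\mathfrak{b})_1$ from the second term onwards; in the near neighbour case I will use Corollary \ref{Corsspnearneighbours}, whose two possible conclusions give either the clean shift $\mathfrak{i}(\phi_{\mathfrak{b}_1}) = \mathfrak{i}((\phi_\mathfrak{b})_2)$ or a controlled compression of $\mathfrak{i}((\phi_\mathfrak{b})_1)$ at some interior position. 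In all these scenarios, reading off which higher anisotropic kernels are similar to quasi-Pfister forms via Corollary \ref{CorClassificationQPN} should yield the desired comparison $\qp{h}(\phi_{\mathfrak{b}_1}) \geq \qp{h}((\phi_\mathfrak{b})_1) - 1$.

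Chaining the preceding estimate with the induction hypothesis -- and using that $\mydim{\mathfrak{b}_1} \equiv \mydim{\mathfrak{b}} \pmod{2}$, since the bilinear Witt decomposition splits off hyperbolic planes and therefore removes an even number of basis vectors -- yields
\begin{equation*}
\qp{h}(\phi_\mathfrak{b}) = \qp{h}((\phi_\mathfrak{b})_1) + 1 \leq \qp{h}(\phi_{\mathfrak{b}_1}) + 2 \leq 2h(\mathfrak{b}_1) - \epsilon + 2 = 2h(\mathfrak{b}) - \epsilon,
\end{equation*}
where $\epsilon = 2$ or $1$ according as $\mydim{\mathfrak{b}}$ is even or odd. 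The principal technical obstacle is the verification of $\qp{h}(\phi_{\mathfrak{b}_1}) \geq \qp{h}((\phi_\mathfrak{b})_1) - 1$, especially in the near neighbour scenario of Corollary \ref{Corsspnearneighbours}~(2): there one must confirm that the index-merging at position $s$ cannot wipe out multiple consecutive quasi-Pfister layers in the Knebusch tower of $(\phi_\mathfrak{b})_1$. The rigidity of the quasi-Pfister classification in Corollary \ref{CorClassificationQPN} applied to the forms appearing at positions $s$ and $s+1$ of that tower should be exactly what prevents this pathology.
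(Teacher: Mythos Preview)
Your proposal is correct and follows essentially the same approach as the paper's own proof: induction on $h(\mathfrak{b})$, reduction to the case where $\phi_{\mathfrak{b}}$ is not a quasi-Pfister neighbour, application of Corollary~\ref{Corbilinearpattern} to see that $\phi_{\mathfrak{b}_1}$ is a (near) neighbour of $(\phi_{\mathfrak{b}})_1$, and then the comparison $\qp{h}(\phi_{\mathfrak{b}_1}) \geq \qp{h}((\phi_{\mathfrak{b}})_1) - 1$ via Proposition~\ref{Propsspneighbours} and Corollaries~\ref{Corsspnearneighbours} and~\ref{CorClassificationQPN}, followed by the same inequality chain. One minor imprecision: the Witt decomposition in Proposition~\ref{PropBildecomposition} splits off both hyperbolic \emph{and} metabolic planes, but since both are $2$-dimensional your parity conclusion $\mydim{\mathfrak{b}_1} \equiv \mydim{\mathfrak{b}} \pmod 2$ is unaffected.
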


\begin{remark} Standard consideration of ``generic'' forms shows that the bounds of Corollary \ref{Corqphbound} cannot be improved in general.  \end{remark}

\section{Full splitting of quasilinear quadratic forms} \label{Excellentconnections} 

Having determined all possible \emph{standard} splitting patterns of quasilinear $p$-forms (Theorem \ref{Thmallvalues}, Proposition \ref{Propexistenceofvalues}), we now turn our attention towards the problem of obtaining a similar result for the \emph{full} splitting pattern. Here, we restrict our considerations to the case of quasilinear \emph{quadratic} forms, where we can take direct inspiration from the following theorem of Vishik (which may be deduced from the existence of ``excellent connections'' in the integral Chow motives of anisotropic quadrics over fields of characteristic $\neq 2$ -- see \cite[Thm. 1.3]{Vishik5}):

\begin{theorem}[Vishik, \cite{Vishik5}] \label{ThmVishikstheorem} Let $\phi$ be an anisotropic quadratic form of dimension $\geq 2$ over a field $k$ of characteristic $\neq 2$, and write $\mydim{\phi} - \witti{1}{\phi} = 2^{r_1} - 2^{r_2} + \hdots + (-1)^{s-1}2^{r_s}$ for uniquely determined integers $r_1>r_2>\hdots>r_{s-1} > r_s + 1 >1$. Let $1 \leq l \leq s$, and put $D_l = \sum_{i=1}^{l-1}(-1)^{i-1}2^{r_i - 1} + \epsilon(l) \sum_{j=l}^s(-1)^{j-1}2^{r_j}$, where $\epsilon(l) = 1$ (resp. $\epsilon(l) = 0$) if $l$ is even (resp. odd). Then, for any field extension $L$ of $k$, we either have $\witti{0}{\phi_L} \geq D_l + \witti{1}{\phi}$ or $\witti{0}{\phi_L} \leq D_l$. 
\begin{proof} This is nothing else but a restatement of \cite[Prop. 2.6]{Vishik5} in terms of Witt indices. To see how it may be derived from \cite[Thm 2.1]{Vishik5} in further detail, we refer the reader to \cite[Proof of Thm. 1.2]{Scully2}. \end{proof}\end{theorem}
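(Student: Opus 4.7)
The plan is to deduce this statement as a direct translation of Vishik's Excellent Connections theorem for integral Chow motives of anisotropic smooth projective quadrics in characteristic $\neq 2$. Since $\phi$ is anisotropic of dimension $\geq 2$ and $\mathrm{char}(k) \neq 2$, the form $\phi$ is necessarily nonsingular and so $X_\phi$ is a smooth projective quadric, which brings the full machinery of Chow motives to bear. The entire argument therefore reduces to the interface between the motivic structure of $X_\phi$ and the isotropy behaviour of $\phi$ under arbitrary scalar extension.

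First, I would recall the well-established dictionary (built from Karpenko's theory of upper motives together with Rost nilpotence) between the isotropy indices of $\phi$ over field extensions of $k$ and the Tate summands appearing in the motivic decomposition of $X_\phi$. Over a splitting field, $M(X_\phi)$ decomposes as a direct sum of Tate motives $\mathbb{Z}(i)[2i]$ for $0 \leq i \leq \mydim{X_\phi}$, and for each extension $L/k$ the integer $\witti{0}{\phi_L}$ encodes precisely which ``outermost'' Tate summands (paired symmetrically from the two ends) split off from the anisotropic part of $M(X_\phi)$ over $L$.

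Next, I would invoke Vishik's Excellent Connections theorem (\cite[Thm. 2.1]{Vishik5}), which asserts that certain pairs of Tate summands appearing in the motivic decomposition of $M(X_\phi)$ over the separable closure are \emph{excellently connected}: they must either both appear or both be absent in any motivic decomposition of $M(X_\phi)_L$ for every field extension $L/k$. The positions of these paired summands are dictated by the alternating binary expansion $\mydim{\phi} - \witti{1}{\phi} = 2^{r_1} - 2^{r_2} + \cdots + (-1)^{s-1}2^{r_s}$, and the integers $D_l$ in the statement are precisely the positions of one member of each excellent pair, with $D_l + \witti{1}{\phi} - 1$ giving the position of its partner. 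The dichotomy $\witti{0}{\phi_L} \leq D_l$ or $\witti{0}{\phi_L} \geq D_l + \witti{1}{\phi}$ then falls out immediately, since a Witt index strictly between these bounds would correspond to splitting off one member of an excellent pair without the other.

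The hardest part of this proposal is the combinatorial bookkeeping needed to reconcile Vishik's motivic statement (naturally indexed by the $2$-adic data underlying the excellent pairs) with the explicit closed-form expression for $D_l$ given here, which involves alternating signs and the parity switch $\epsilon(l)$. This translation has essentially been carried out already in \cite[Proof of Thm. 1.2]{Scully2}, which is the reference that the author invokes. The deeper and genuinely difficult input, of course, is Vishik's theorem itself, whose proof rests on the action of Steenrod operations on modulo-$2$ Chow groups and is, for this reason, presently available only in characteristic $\neq 2$; reproducing it from scratch here would not be feasible, so the proposed proof is genuinely a ``restatement'' argument relying entirely on Vishik's work as a black box.
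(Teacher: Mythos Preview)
Your proposal is correct and takes essentially the same approach as the paper: both treat the statement as a direct restatement of Vishik's Excellent Connections result \cite[Thm.~2.1, Prop.~2.6]{Vishik5} in terms of Witt indices, with the combinatorial translation deferred to \cite[Proof of Thm.~1.2]{Scully2}. Your write-up is in fact more expansive than the paper's one-line citation, but the underlying argument is identical.
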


\begin{examples} \label{ExsExcellentconnections} Let $\phi$ be an anisotropic quadratic form of dimension $\geq 2$ over a field $k$ of characteristic $\neq 2$, and write $\mydim{\phi} = 2^n + m$ for uniquely determined integers $n \geq 0$ and $1 \leq m \leq 2^n$.
\begin{enumerate} \item For $l=1$, Theorem \ref{ThmVishikstheorem} asserts that $\witti{0}{\phi_L} \geq \witti{1}{\phi}$ whenever $\phi_L$ is isotropic.
\item For $l=2$, Theorem \ref{ThmVishikstheorem} asserts that, for any field extension $L$ of $k$, we either have $\witti{0}{\phi_L} \geq m$ or $\witti{0}{\phi_L} \leq m - \witti{1}{\phi_L}$. 
\item For $l=s$, Theorem \ref{ThmVishikstheorem} asserts that, for any field extension $L$ of $k$, we either have $\witti{0}{\phi_L} \geq \frac{1}{2}(\mydim{\phi}+\witti{1}{\phi}-2^{r_s})$ or $\witti{0}{\phi_L} \leq \frac{1}{2}(\mydim{\phi}-\witti{1}{\phi}-2^{r_s})$. Taking $L = \overline{k}$ (so that $\witti{0}{\phi_L} = \big[\frac{1}{2}\mydim{\phi}\big]$), we see that $\witti{1}{\phi}\leq 2^{r_s}$, which gives a proof of Hoffmann's Conjecture \ref{ConjHoffmanns} in this setting (see \cite[Thm 2.5.]{Vishik5}) \end{enumerate} \end{examples}

We expect, in fact, that Theorem \ref{ThmVishikstheorem} extends verbatim to our setting. Henceforth, let us assume that $p=2$. We state the following:

\begin{conjecture} \label{ConjExcellentconnections} Let $\phi$ be an anisotropic quasilinear quadratic form of dimension $\geq 2$ over $F$, and write $\mydim{\phi} - \witti{1}{\phi} = 2^{r_1} - 2^{r_2} + \hdots + (-1)^{s-1}2^{r_s}$ for uniquely determined integers $r_1>r_2>\hdots>r_{s-1} > r_s + 1 >1$. Let $1 \leq l \leq s$, and put $D_l = \sum_{i=1}^{l-1}(-1)^{i-1}2^{r_i - 1} + \epsilon(l) \sum_{j=l}^s(-1)^{j-1}2^{r_j}$, where $\epsilon(l) = 1$ (resp. $\epsilon(l) = 0$) if $l$ is even (resp. odd). If $L$ is any field extension of $F$, then we either have $\witti{0}{\phi_L} \geq D_l + \witti{1}{\phi}$ or $\witti{0}{\phi_L} \leq D_l$. \end{conjecture}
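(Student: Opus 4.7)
My plan is induction on $h(\phi)$ (equivalently, on $\mydim{\phi}$), with the cases $l=1$ and $l=2$ of the conjecture already in hand: for $l=1$ one has $D_1=0$, so the statement reduces to Lemma \ref{Lemminimalityi1}; for $l=2$ the statement is precisely Theorem \ref{Thmouterexcellentconnections}. These settle the base of the induction together with the first two values of $l$ for any $\phi$.

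For the inductive step with $l\geq 3$, I fix a field extension $L/F$ with $\phi_L$ not split, and I may assume $\witti{0}{\phi_L}\geq\witti{1}{\phi}$ (else $\witti{0}{\phi_L}=0\leq D_l$). Passing to $K=L(\phi)$, the decomposition in Proposition \ref{PropWittdecomposition} applied to $\phi_{F(\phi)}$ yields the identity
\begin{equation*}
\witti{0}{\phi_K}\;=\;\witti{1}{\phi}+\witti{0}{(\phi_1)_K},
\end{equation*}
which translates the problem into a statement about $(\phi_1)_K$. Proposition \ref{Propcomparison} then gives
\begin{equation*}
\witti{0}{\phi_K}-\witti{1}{\phi}\;\geq\;\min\Bigl\{\witti{0}{\phi_L},\;\Bigl[\tfrac{\mydim{\phi}-\witti{1}{\phi}+1}{2}\Bigr]\Bigr\};
\end{equation*}
in the ``lower half'' regime $\witti{0}{\phi_L}\leq\bigl[(\mydim{\phi}-\witti{1}{\phi}+1)/2\bigr]$, this tells us that $\witti{0}{(\phi_1)_K}\geq\witti{0}{\phi_L}$, so any gap statement for $\phi_1$ over $K$, shifted by $\witti{1}{\phi}$, would imply the desired gap statement for $\phi_L$.

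The engine of the argument is then the inductive hypothesis applied to $\phi_1$, which has strictly smaller height. The decisive structural input is Theorem \ref{Maintheorem}: $\phi_1$ is divisible by a quasi-Pfister form $\pi$ with $\mydim{\pi}\geq\witti{1}{\phi}$. Via Corollary \ref{CordivbyQP}, one has $D(\pi)\subseteq G(\phi_1)$, so whenever $\pi$ remains anisotropic over the relevant extension of $F(\phi)$, the corresponding isotropy index of $\phi_1$ is a multiple of $\mydim{\pi}$; in particular, the widths of the gaps furnished by the inductive hypothesis for $\phi_1$ comfortably absorb any shift by $\witti{1}{\phi}$.

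The main obstacle will be the combinatorial reconciliation between the excellent expansions of $N=\mydim{\phi}-\witti{1}{\phi}=\mydim{\phi_1}$ and of $N'=N-\witti{1}{\phi_1}$: for each $l\geq 3$, one must identify an index $l'$ so that the inductive gap $(D_{l'}^{\phi_1},D_{l'}^{\phi_1}+\witti{1}{\phi_1})$ for $(\phi_1)_K$, after translation by $\witti{1}{\phi}$, covers $(D_l^{\phi},D_l^{\phi}+\witti{1}{\phi})$. A secondary and considerably more serious difficulty lies in the ``upper half'' regime $\witti{0}{\phi_L}>\bigl[(\mydim{\phi}-\witti{1}{\phi}+1)/2\bigr]$, where Proposition \ref{Propcomparison} is silent; here one would need to exploit the near-symmetry $l\leftrightarrow s+1-l$ built into the definition of $D_l$, or to develop a dual comparison result governing the upper part of the splitting pattern. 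I expect that this last step is the true heart of the problem and would be the analogue, in the quasilinear setting, of the information Vishik extracts motivically via Steenrod squares on integral Chow motives of anisotropic quadrics.
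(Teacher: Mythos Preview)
The statement you are addressing is \emph{Conjecture}~\ref{ConjExcellentconnections}: the paper does not prove it. What the paper establishes is exactly your base cases $l=1$ (Lemma~\ref{Lemminimalityi1}) and $l=2$ (Theorem~\ref{Thmouterexcellentconnections}), together with one further piece of evidence: the case $l=s$ when $L$ is the function field of an integral quadric (Proposition~\ref{Propmaximalec}). The latter argument is quite specific---it reduces to a purely inseparable quadratic extension of $F(\phi)$ and then invokes the bound $\mydim{\anispart{(\phi_1)_{L(\phi)}}}\geq\tfrac{1}{2}\mydim{\phi_1}$ from Lemma~\ref{LemIsotropypurelyinseparable}~(5). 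For general $l\geq 3$ and general $L$, the paper offers nothing beyond the reduction of Lemma~\ref{LemExcellentconnections} (which you do not invoke but which is close in spirit to your first paragraph).

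Your inductive plan has, in addition to the ``upper half'' obstruction you yourself flag, a logical gap already in the ``lower half'' regime. Proposition~\ref{Propcomparison} gives only the one-sided inequality $\witti{0}{(\phi_1)_K}\geq\witti{0}{\phi_L}$. Suppose, towards a contradiction, that $\witti{0}{\phi_L}=D_l+t$ with $1\leq t<\witti{1}{\phi}$. You can then push $\witti{0}{(\phi_1)_K}$ above $D_l$, and the inductive hypothesis for $\phi_1$ may force $\witti{0}{(\phi_1)_K}$ to be even larger; but a large value of $\witti{0}{(\phi_1)_K}$ says nothing about $\witti{0}{\phi_L}$, because the inequality points the wrong way. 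In the paper's $l=s$ argument this is exactly where the extra hypothesis on $L$ is used: one needs an \emph{upper} bound on $\witti{0}{(\phi_1)_{L(\phi)}}$ (there, $\tfrac{1}{2}\mydim{\phi_1}$, coming from the quadratic-extension structure) to close the contradiction. Without such a bound for general $L$, the induction does not go through, and the divisibility of $\phi_1$ by a large quasi-Pfister form from Theorem~\ref{Maintheorem} does not by itself supply one. Your final paragraph is therefore accurate: the missing ingredient is precisely a quasilinear substitute for the motivic information Vishik uses, and the conjecture remains open.
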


This expectation is partly justified by:

\begin{proposition} Conjecture \ref{ConjExcellentconnections} holds for $l \leq 2$.
\begin{proof} As in Example \ref{ExsExcellentconnections} (1) (resp. (2)), the $l=1$ (resp. $l=2$) case is nothing else but Lemma \ref{Lemminimalityi1} (resp. Theorem \ref{Thmouterexcellentconnections}). \end{proof} \end{proposition}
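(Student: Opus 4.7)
The plan is to reduce both cases directly to results already established in the text, the only real work being to unpack the combinatorial definition of $D_l$.

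For $l=1$, I would first observe that the outer sum in the formula for $D_l$ runs over the empty range and $\epsilon(1)=0$, so $D_1=0$. The asserted dichotomy then reads either $\witti{0}{\phi_L} \geq \witti{1}{\phi}$ or $\witti{0}{\phi_L} = 0$; in other words, whenever $\phi_L$ is isotropic, its isotropy index is at least $\witti{1}{\phi}$. This is precisely the content of Lemma \ref{Lemminimalityi1}.

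For $l=2$, the key step is to identify $D_2 + \witti{1}{\phi}$ with the invariant $m$ appearing in Theorem \ref{Thmouterexcellentconnections}. Write $\mydim{\phi} = 2^n + m$ with $1 \leq m \leq 2^n$. The case $l=2$ only arises when $s \geq 2$, i.e., when $\mydim{\phi} - \witti{1}{\phi}$ is not a power of $2$. Since Theorem \ref{ThmHoffmannsconjecture} gives $\witti{1}{\phi} \leq m$, we always have $\mydim{\phi} - \witti{1}{\phi} \geq 2^n$; excluding the degenerate case $\mydim{\phi} - \witti{1}{\phi} = 2^n$ (which forces $s=1$), the remaining strict inequality $2^n < \mydim{\phi} - \witti{1}{\phi} < 2^{n+1}$ forces $r_1 = n+1$ in the alternating representation. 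The tail $2^{r_2} - 2^{r_3} + \hdots + (-1)^{s-1}2^{r_s}$ then equals $2^{n+1} - (\mydim{\phi} - \witti{1}{\phi})$, giving
\[ D_2 = 2^n - \bigl(2^{n+1} - (\mydim{\phi} - \witti{1}{\phi})\bigr) = m - \witti{1}{\phi}. \]
The predicted dichotomy therefore becomes $\witti{0}{\phi_L} \geq m$ or $\witti{0}{\phi_L} \leq m - \witti{1}{\phi}$, which is exactly the content of Theorem \ref{Thmouterexcellentconnections}.

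The main (and really the only) obstacle is notational bookkeeping; no new mathematical input is required beyond what has already been proved, namely Lemma \ref{Lemminimalityi1} for $l=1$ and Theorem \ref{Thmouterexcellentconnections} for $l=2$ (the latter itself resting on the comparison result of Proposition \ref{Propcomparison} and, ultimately, on Totaro's incompressibility theorem). The interesting conjectural content of Conjecture \ref{ConjExcellentconnections} only begins at $l \geq 3$, where a genuinely new argument, presumably modelled on the motivic input of \cite{Vishik5}, will be needed.
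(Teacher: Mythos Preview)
Your proposal is correct and follows exactly the same route as the paper: the $l=1$ case is Lemma \ref{Lemminimalityi1} and the $l=2$ case is Theorem \ref{Thmouterexcellentconnections}. You have additionally spelled out the combinatorial identification $D_1=0$ and $D_2=m-\witti{1}{\phi}$ (using $r_1=n+1$ via the bound $\witti{1}{\phi}\leq m$), which the paper leaves implicit by pointing to Example \ref{ExsExcellentconnections}; this is helpful bookkeeping but not a different argument.
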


At present, we do not have a general approach to the $l>2$ case of Conjecture \ref{ConjExcellentconnections}. Using Theorem \ref{ThmHoffmannsconjecture}, however, we can provide further evidence for the $l=s$ case. First, it is worth stating here the following lemma:

\begin{lemma} \label{LemExcellentconnections} In order to prove Conjecture \ref{ConjExcellentconnections}, we may assume that $\witti{1}{\phi_L} \geq \witti{1}{\phi}$.
\begin{proof} Suppose that the statement of the conjecture fails to hold. In other words, suppose that $\witti{1}{\phi} > 1$ and $\witti{0}{\phi_L} = D_l + t$ for some $1 \leq t < \witti{1}{\phi}$. Note that we necessarily have $r_s>0$ by Theorem \ref{ThmHoffmannsconjecture}. Now, let $\sigma = \anispart{(\phi_L)}$. Since $D(\sigma)$ is spanned by elements of $D(\phi)$, there exists a subform $\rho \subset \phi$ such that $\sigma \simeq \rho_L$ (see Lemma \ref{Lemexistenceofforms}). Let $\psi$ be any codimension-($t-1$) subform of $\phi$ containing $\rho$. By construction, we have $D(\psi_L) \subseteq D(\phi_L) = D(\sigma) = D(\rho_L) \subseteq D(\psi_L)$, whence $D(\psi_L) = D(\sigma)$, or, equivalently, $\anispart{(\psi_L)} \simeq \sigma$. In particular, we have $\witti{0}{\psi_L} = D_l + 1$. On the other hand, since $t<\witti{1}{\phi}$, $\psi$ is a neighbour of $\phi$, and so $\mydim{\psi} - \witti{1}{\psi} = \mydim{\phi} - \witti{1}{\phi} = 2^{r_1} - 2^{r_2} + \hdots + (-1)^{s-1}2^{r_s}$ (Proposition \ref{Propsspneighbours}). We therefore conclude that the statement of the conjecture also fails for the triple $(\psi,l,L)$. Since we are looking to produce a contradiction, we can replace $\phi$ by $\psi$ in order to arrive at the case where $t=1$. In this case, we claim that $\witti{1}{\phi_L} \geq \witti{1}{\phi}$. To see this, note first that $L(\phi)$ is $L$-isomorphic to a purely transcendental extension of $L(\sigma)$ (Remark \ref{Remsff} (3)). In view of Lemma \ref{Lemseparableextensions}, it follows that $\witti{1}{\phi_L} = \witti{1}{\sigma_{L(\phi)}}$. In particular, we have $\witti{0}{\phi_{L(\phi)}} = \witti{0}{\phi_L} + \witti{0}{\sigma_{L(\phi)}} = \witti{0}{\phi_L} + \witti{1}{\phi_L}$. The statement of Proposition \ref{Propcomparison} may therefore be rewritten here as 
\begin{equation} \label{eq8.1} \witti{1}{\phi_L} + \witti{0}{\phi_L} - \witti{1}{\phi} \geq \mathrm{min} \bigg\lbrace \witti{0}{\phi_L},\bigg[\frac{\mydim{\phi} - \witti{1}{\phi} + 1}{2}\bigg]\bigg \rbrace. \end{equation}
But, since $r_s>0$, we have $\witti{0}{\phi_L} = D_l + 1 \leq 2^{r_1-1} - 2^{r_2 -1} + \hdots + (-1)^{s-1}2^{r_s-1} = \frac{1}{2}\big(\mydim{\phi} - \witti{1}{\phi}\big)$. Inequality \eqref{eq8.1} therefore yields the desired assertion, and so the lemma is proved. \end{proof} \end{lemma}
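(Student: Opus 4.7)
The plan is to replace any putative counterexample to Conjecture \ref{ConjExcellentconnections} by one for which the extra condition $\witti{1}{\phi_L} \geq \witti{1}{\phi}$ holds. Suppose then that $(\phi, l, L)$ is such a counterexample, so that $\witti{0}{\phi_L} = D_l + t$ for some $1 \leq t < \witti{1}{\phi}$. In particular $\witti{1}{\phi} \geq 2$, and Theorem \ref{ThmHoffmannsconjecture} then forces $\mydim{\phi} - \witti{1}{\phi}$ to be even, which upon inspection of the alternating expansion forces $r_s \geq 1$.

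The first step is to pass to a suitable subform of $\phi$ in order to arrange $t = 1$. Setting $\sigma = \anispart{(\phi_L)}$, the subspace $D(\sigma) \subseteq L$ is spanned by elements of $D(\phi)$, so Lemma \ref{Lemexistenceofforms} supplies a subform $\rho \subset \phi$ with $\rho_L \simeq \sigma$. Complete $\rho$ to a codimension-$(t-1)$ subform $\psi \subset \phi$ (any such completion will do). Since $t - 1 < \witti{1}{\phi}$, $\psi$ is a neighbour of $\phi$, so Proposition \ref{Propsspneighbours} yields $\mydim{\psi} - \witti{1}{\psi} = \mydim{\phi} - \witti{1}{\phi}$; in particular, the integers $r_i$ and the quantity $D_l$ are the same for $\psi$ as for $\phi$. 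By construction $\anispart{(\psi_L)} \simeq \sigma$, whence $\witti{0}{\psi_L} = D_l + 1$, and $(\psi, l, L)$ is again a counterexample. Replacing $\phi$ by $\psi$, we may assume $t = 1$.

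The main step is to apply Proposition \ref{Propcomparison}. By Remark \ref{Remsff} (3), $L(\phi)$ is equivalent over $L$ to a purely transcendental extension of $L(\sigma)$, so Lemma \ref{Lemseparableextensions} yields
\[ \witti{0}{\phi_{L(\phi)}} \;=\; \witti{0}{\phi_L} + \witti{0}{\sigma_{L(\phi)}} \;=\; \witti{0}{\phi_L} + \witti{1}{\phi_L}. \]
Proposition \ref{Propcomparison} then reads
\[ \witti{1}{\phi_L} + \witti{0}{\phi_L} - \witti{1}{\phi} \;\geq\; \min\!\left\{\witti{0}{\phi_L},\, \left[\tfrac{\mydim{\phi} - \witti{1}{\phi} + 1}{2}\right]\right\}\!. \]
The desired inequality $\witti{1}{\phi_L} \geq \witti{1}{\phi}$ follows as soon as the minimum on the right is attained by $\witti{0}{\phi_L}$, i.e., as soon as $\witti{0}{\phi_L} \leq [(\mydim{\phi} - \witti{1}{\phi} + 1)/2]$.

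This remaining numerical estimate is the main obstacle, but it is elementary: a direct computation shows that the increment $D_{l+1} - D_l$ equals $2^{r_l - 1} - 2^{r_{l+1}} + 2^{r_{l+2}} - \cdots$, which is nonnegative because $r_l > r_{l+1}$ and the tail is an alternating sum of strictly decreasing powers of $2$. Hence the sequence $D_l$ is non-decreasing and $D_l \leq D_s = \tfrac{1}{2}(\mydim{\phi} - \witti{1}{\phi}) - 2^{r_s - 1}$. Since $r_s \geq 1$, this gives $\witti{0}{\phi_L} = D_l + 1 \leq \tfrac{1}{2}(\mydim{\phi} - \witti{1}{\phi}) \leq [(\mydim{\phi} - \witti{1}{\phi} + 1)/2]$, completing the reduction.
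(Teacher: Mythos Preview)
Your proof is correct and follows the same approach as the paper: reduce to $t=1$ by passing to a codimension-$(t-1)$ neighbour containing a descent of $\anispart{(\phi_L)}$, then apply Proposition~\ref{Propcomparison} and check the numerical bound $\witti{0}{\phi_L}\le \tfrac{1}{2}(\mydim{\phi}-\witti{1}{\phi})$. The only difference is that you make the last step explicit by proving the monotonicity $D_1\le D_2\le\cdots\le D_s$ and computing $D_s=\tfrac{1}{2}(\mydim{\phi}-\witti{1}{\phi})-2^{r_s-1}$, whereas the paper simply asserts the bound; your elaboration is correct and fills a small gap.
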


We can now prove:

\begin{proposition} \label{Propmaximalec} Conjecture \ref{ConjExcellentconnections} holds when $l=s$ and $L=F(Q)$ is the function field of any integral (affine or projective) quadric $Q$ over $F$.
\begin{proof} In view of Lemma \ref{Lemseparableextensions}, the statement of the conjecture is stable under replacing $F$ by any separable extension of itself. We may therefore assume that $L$ is a purely inseparable quadratic extension of $F$. By Lemma \ref{LemExcellentconnections}, we may also assume that $\witti{1}{\phi_L} \geq \witti{1}{\phi}$. Suppose now that the statement fails to hold, so that $\witti{1}{\phi} > 1$ and $\witti{0}{\phi_L} = D_s + t$ for some $1 \leq t < \witti{1}{\phi}$. We then have
\begin{equation*} \mydim{\anispart{(\phi_L)}} = \begin{cases} 2^{r_1 - 1} - 2^{r_2-1} + \hdots - 2^{r_{s-2}-1} + 2^{r_{s-1} - 1} + \witti{1}{\phi} - t & \text{if $s$ is even}\\   
2^{r_1 - 1} - 2^{r_2-1} + \hdots - 2^{r_{s-1} - 1} + 2^{r_s} + \witti{1}{\phi} - t & \text{if $s$ is odd.} \end{cases} \end{equation*}
Since $\witti{1}{\phi_L} \geq \witti{1}{\phi}$, and since $1 \leq \witti{1}{\phi} - t <\witti{1}{\phi}$, Theorem \ref{ThmHoffmannsconjecture} implies that 
\begin{equation*} \witti{1}{\phi_L} \geq \begin{cases} 2^{r_{s-1} - 1} + \witti{1}{\phi} - t & \text{if $s$ is even}\\   
2^{r_s} + \witti{1}{\phi} - t & \text{if $s$ is odd}, \end{cases} \end{equation*}
from which we conclude that
\begin{equation*} \witti{0}{\phi_L} + \witti{1}{\phi_L} \geq \begin{cases} 2^{r_1-1} - 2^{r_2-1} + \hdots + 2^{r_{s-1}-1} + \witti{1}{\phi} & \text{if $s$ is even}\\   
2^{r_1-1} - 2^{r_2-1} + \hdots + 2^{r_{s-2}-1} + \witti{1}{\phi} & \text{if $s$ is odd}. \end{cases} \end{equation*}
But $\witti{0}{\phi_L} + \witti{1}{\phi_L} = \witti{0}{\phi_{L(\phi)}} = \witti{1}{\phi} + \witti{0}{(\phi_1)_{L(\phi)}}$ (see the proof of Lemma \ref{LemExcellentconnections}), and so we have
\begin{equation*} \witti{0}{(\phi_1)_{L(\phi)}} \geq \begin{cases} 2^{r_1-1} - 2^{r_2-1} + \hdots + 2^{r_{s-1}-1} & \text{if $s$ is even}\\   
2^{r_1-1} - 2^{r_2-1} + \hdots + 2^{r_{s-2}-1} & \text{if $s$ is odd}. \end{cases} \end{equation*}
Either way, we see that $\witti{0}{(\phi_1)_{L(\phi)}} > 2^{r_1-1} - 2^{r_2-1} + \hdots + (-1)^{s-1}2^{r_s-1} = \frac{1}{2}\big(\mydim{\phi} - \witti{1}{\phi}\big) = \frac{1}{2}\mydim{\phi_1}$. Since $L(\phi)$ is a purely inseparable quadratic extension of $F(\phi)$, this is impossible by Lemma \ref{LemIsotropypurelyinseparable} (5), and so the result follows. \end{proof} \end{proposition}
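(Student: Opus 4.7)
The plan is to argue by contradiction, in the spirit of Theorem \ref{Thmouterexcellentconnections} but now invoking the full strength of Theorem \ref{ThmHoffmannsconjecture} (Hoffmann's conjecture for totally singular forms) and exploiting the fact that a purely inseparable quadratic extension $L/F$ descends to a purely inseparable quadratic extension $L(\phi)/F(\phi)$.

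First I would reduce to the case $L=F_a$ for some $a\in F\setminus F^2$. The function field of any integral quadric $Q$ over $F$ is $F$-isomorphic to a (possibly trivial) quadratic extension of a purely transcendental extension of $F$. By Lemmas \ref{LemPlaceisotropy} and \ref{Lemseparableextensions}, the statement of Conjecture \ref{ConjExcellentconnections} is invariant under separable and purely transcendental extensions of the base (in particular, the numerical data $r_1,\hdots,r_s$, $\witti{1}{\phi}$ and $D_s$ are preserved, while $\phi_L$ stays anisotropic in the separable case, so that $\witti{0}{\phi_L}=0\leq D_s$). Only the purely inseparable quadratic case then remains.

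Suppose, for contradiction, that $\witti{0}{\phi_L}=D_s+t$ for some $1\leq t<\witti{1}{\phi}$; by Lemma \ref{LemExcellentconnections}, we may further assume $\witti{1}{\phi_L}\geq\witti{1}{\phi}$. Set $u=\witti{1}{\phi}-t$, and let $B=2^{r_{s-1}-1}$ (resp. $B=2^{r_s}$) if $s$ is even (resp. odd). A direct expansion of $\mydim{\anispart{(\phi_L)}}=\mydim{\phi}-\witti{0}{\phi_L}$ yields
\begin{equation*} \mydim{\anispart{(\phi_L)}}=\begin{cases} 2^{r_1-1}-2^{r_2-1}+\hdots+2^{r_{s-1}-1}+u, & s\text{ even,}\\ 2^{r_1-1}-2^{r_2-1}+\hdots-2^{r_{s-1}-1}+2^{r_s}+u, & s\text{ odd.}\end{cases} \end{equation*}
Using $r_{s-1}>r_s+1$ together with Theorem \ref{ThmHoffmannsconjecture} applied to $\phi$ itself (which gives $u<\witti{1}{\phi}\leq 2^{r_s}\leq B$), one checks that $\mydim{\anispart{(\phi_L)}}\equiv B+u\pmod{2B}$ with $B+u<2B$. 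Applying Theorem \ref{ThmHoffmannsconjecture} now to $\anispart{(\phi_L)}$, the Hoffmann-witnessing power of two cannot be $\leq B$: indeed, any such $2^k$ would force $\witti{1}{\phi_L}-1\leq u-1$, contradicting $\witti{1}{\phi_L}\geq\witti{1}{\phi}>u$. Hence the witnessing $2^k\geq 2B$, and the corresponding residue is at least $B+u-1$, giving $\witti{1}{\phi_L}\geq B+u$.

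To conclude, since $L(\phi)$ is $L$-equivalent to the function field over $L$ of $\anispart{(\phi_L)}$ (Remark \ref{Remsff}(3)), Lemma \ref{Lemseparableextensions} yields $\witti{0}{\phi_{L(\phi)}}=\witti{0}{\phi_L}+\witti{1}{\phi_L}\geq D_s+B+\witti{1}{\phi}$, and hence $\witti{0}{(\phi_1)_{L(\phi)}}\geq D_s+B$. A short binary check confirms $D_s+B>\tfrac{1}{2}\mydim{\phi_1}$, the surplus being $2^{r_{s-1}-1}-2^{r_s-1}$ (resp. $2^{r_s-1}$) in the even (resp. odd) case. But $L(\phi)=F(\phi)_a$ is a purely inseparable quadratic extension of $F(\phi)$, so Lemma \ref{LemIsotropypurelyinseparable}(5) applied to $\phi_1$ forces $\witti{0}{(\phi_1)_{L(\phi)}}\leq\tfrac{1}{2}\mydim{\phi_1}$, the desired contradiction. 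The main obstacle lies in extracting the inequality $\witti{1}{\phi_L}\geq B+u$ from Theorem \ref{ThmHoffmannsconjecture}: the latter only asserts existence of \emph{some} witnessing power of two, so one must rule out every small candidate using the reduction $\witti{1}{\phi_L}\geq\witti{1}{\phi}$ and then carefully inspect the binary expansion of $\mydim{\anispart{(\phi_L)}}-1$ to identify the next admissible residue as exactly $B+u-1$.
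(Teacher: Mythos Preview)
Your argument is correct and follows essentially the same route as the paper's proof: reduce to a purely inseparable quadratic extension, invoke Lemma~\ref{LemExcellentconnections} to assume $\witti{1}{\phi_L}\geq\witti{1}{\phi}$, apply Theorem~\ref{ThmHoffmannsconjecture} to $\anispart{(\phi_L)}$ to force $\witti{1}{\phi_L}\geq B+u$, and then derive $\witti{0}{(\phi_1)_{L(\phi)}}>\tfrac{1}{2}\mydim{\phi_1}$, contradicting Lemma~\ref{LemIsotropypurelyinseparable}~(5). Your presentation is in fact slightly more explicit than the paper's at the key step: you spell out why the witnessing power of two in Hoffmann's conjecture for $\anispart{(\phi_L)}$ must be at least $2B$ (using $\witti{1}{\phi}\leq 2^{r_s}\leq B$, itself a consequence of Theorem~\ref{ThmHoffmannsconjecture} applied to $\phi$), whereas the paper simply asserts the resulting inequality.
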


In general, it suffices to prove Conjecture \ref{ConjExcellentconnections} in the case where $L$ is a finite purely inseparable extension of $F$. Proposition \ref{Propmaximalec} shows that the $l=s$ case of the conjecture holds for degree 2 purely inseparable extensions. More generally, the proposition covers the case where $\mathrm{ndeg}(\phi_L) = \frac{1}{2}\mathrm{ndeg}(\phi_L)$. Indeed, using Lemma \ref{Lemseparableextensions} and \ref{LemIsotropypurelyinseparable} (3), one can easily reduce this case to that of a quadratic extension.

\appendix

\section{Symmetric bilinear forms in characteristic 2} \label{AppBilinearforms} We collect here some some basic facts from the theory of symmetric bilinear forms over fields of characteristic 2 which are needed in \S \ref{Bilinearforms} above. Throughout this section, we assume that $p=2$.

\subsection{Basic notions} \label{BilBasicnotions} Let $\mathfrak{b}$ be a symmetric bilinear form on a finite-dimensional $F$-vector space $V$. We will say that $\mathfrak{b}$ is \emph{nondegenerate} if the $F$-linear map $l \colon V \rightarrow V^\vee$ which sends $v \in V$ to $l(v) \colon w \mapsto \mathfrak{b}(v,w)$ is bijective, and that $\mathfrak{b}$ is \emph{alternating} if $\mathrm{dim}_kV \geq 1$ and $\mathfrak{b}(v,v) = 0$ for all $v \in V$. In what follows, a \emph{symmetric bilinear form over $F$} will always mean a \emph{nondegenerate} symmetric bilinear form on some finite-dimensional $F$-vector space.

Let $\mathfrak{b}$ be a symmetric bilinear form over $F$. The underlying $F$-vector space of $\mathfrak{b}$ will be denoted by $V_{\mathfrak{b}}$. Its will be called the \emph{dimension} of $\mathfrak{b}$ and will be denoted by $\mydim{\mathfrak{b}}$. Given a field extension $L$ of $F$, we will write $\mathfrak{b}_L$ for the unique symmetric bilinear form on the $L$-vector space $V_{\mathfrak{b}}\otimes_F L$ such that $\mathfrak{b}_L(v\otimes 1, w\otimes 1) = \mathfrak{b}(v,w)$ for all $(v,w) \in V_{\mathfrak{b}} \times V_{\mathfrak{b}}$. The \emph{determinant} of $\mathfrak{b}$, denoted $\mathrm{det}(\mathfrak{b})$, is defined as the image of $\mathrm{det}(l)$ in the square class group $F^*/(F^*)^2$ (where $l \colon V \xrightarrow{\sim} V^\vee$ is as above).

Let $\mathfrak{c}$ be another symmetric bilinear form over $F$. If there exists a bijective $F$-linear map $f \colon V_{\mathfrak{b}} \rightarrow V_{\mathfrak{c}}$ such that $\mathfrak{c}(f(v),f(w)) = \mathfrak{b}(v,w)$ for all $(v,w) \in V_{\mathfrak{b}} \times V_{\mathfrak{b}}$, then we say that $\mathfrak{b}$ and $\mathfrak{c}$ are \emph{isomorphic} and write $\mathfrak{b} \simeq \mathfrak{c}$. Note that in this case, we necessarily have $\mathrm{det}(\mathfrak{b}) = \mathrm{det}(\mathfrak{c})$. If $\mathfrak{b} \simeq a\mathfrak{c}$ for some $a \in F^*$ (where $a\mathfrak{c}$ denotes the symmetric bilinear form on $V_{\mathfrak{c}}$ given by $(v,w) \mapsto a\mathfrak{c}(v,w)$), then we say that $\mathfrak{b}$ and $\mathfrak{c}$ are similar. The \emph{sum} $\mathfrak{b} \perp \mathfrak{c}$ (resp. the \emph{product} $\mathfrak{b} \otimes \mathfrak{c}$) is defined as the unique symmetric bilinear form on $V_{\mathfrak{b}} \oplus V_{\mathfrak{c}}$ (resp. $V_{\mathfrak{b}} \otimes_F V_{\mathfrak{c}}$) such that $(\mathfrak{b} \perp \mathfrak{c})\big((v_1,v_2),(w_1,w_2)\big) = \mathfrak{b}(v_1,w_1) + \mathfrak{c}(v_2,w_2)$ (resp. $(\mathfrak{b}\otimes \mathfrak{c})(v_1\otimes v_2,w_1 \otimes w_2) = \mathfrak{b}(v_1,w_1)\mathfrak{c}(v_2,w_2)$) for all $(v_1,w_1) \in V_{\mathfrak{b}} \times V_{\mathfrak{b}}$ and all $(v_2,w_2) \in V_{\mathfrak{c}} \times V_{\mathfrak{c}}$. Given a positive integer $n$, $n \cdot \mathfrak{b}$ will denote the orthogonal sum of $n$ copies of $\mathfrak{b}$ (this is not the same as $n\mathfrak{b}$). If there exists a symmetric bilinear form $\mathfrak{d}$ over $F$ such that $\mathfrak{b} = \mathfrak{c} \perp \mathfrak{d}$ (resp. $\mathfrak{b} = \mathfrak{c} \otimes \mathfrak{d}$), then we will say that $\mathfrak{c}$ is a \emph{subform} of $\mathfrak{b}$ (resp. that $\mathfrak{b}$ is \emph{divisible} by $\mathfrak{c}$).

Given elements $a_1,\hdots,a_n \in F^*$, we write $\form{a_1,\hdots,a_n}_b$ for the symmetric bilinear form $\big((\mu_1,\hdots,\mu_n),(\lambda_1,\hdots,\lambda_n)\big) \mapsto \sum_{i=1}^na_i\mu_i\lambda_i$ on the $F$-vector space $F^{\oplus n}$. For any $a\in F^*$, the 2-dimensional symmetric bilinear form $\form{a,a}_b$ will simply be denoted by $\mathbb{M}_a$. Forms of this type are called \emph{metabolic planes}. The \emph{hyperbolic plane}, denoted $\mathbb{H}$, is defined as the 2-dimensional symmetric bilinear form $\big((v_1,v_2),(w_1,w_2)\big) \mapsto v_1w_2 + v_2w_1$ on the $F$-vector space $F \oplus F$.

The assignment $v \mapsto \mathfrak{b}(v,v)$ defines a quadratic form $\phi_{\mathfrak{b}}$ on $V_{\mathfrak{b}}$. If $\mydim{\mathfrak{b}} \geq 1$ and $\mathfrak{b}$ is \emph{not} alternating, then $\phi_{\mathfrak{b}}$ is quasilinear in the sense of \S \ref{Basicnotions}. Conversely, it is clear that every quasilinear quadratic form over $F$ is isomorphic the diagonal part of some non-alternating symmetric bilinear form defined on its underlying space. Henceforth, by a \emph{bilinear form over $F$}, we will mean a symmetric bilinear form over $F$ which is \emph{not alternating}. Note that this class includes all forms of the shape $\form{a_1,\hdots,a_n}_b$, but does not include the hyperbolic plane $\mathbb{H}$.

Let $\mathfrak{b}$ be a bilinear form over $F$. We say that $\mathfrak{b}$ is \emph{isotropic} if $\mydim{\mathfrak{b}} \geq 1$ and $\phi_{\mathfrak{b}}$ isotropic, and that $\mathfrak{b}$ is \emph{anisotropic} otherwise. We can now state the following refined version of the classical Witt decomposition theorem:

\begin{proposition}[{cf. \cite{Milnor}, \cite[Prop. 5.15]{Laghribi6}}] \label{PropBildecomposition} Let $\mathfrak{b}$ be a bilinear form over $F$. Then there exist a unique pair of nonnegative integers $(s,t)$, elements $a_1,\hdots,a_t \in F^*$ and, up to isomorphism, a unique anisotropic bilinear form $\anispart{\mathfrak{b}}$ such that:
\begin{enumerate} \item $\mathfrak{b} \simeq \anispart{\mathfrak{b}} \perp (s\cdot \mathbb{H}) \perp \mathbb{M}_{a_1} \perp \hdots \perp \mathbb{M}_{a_t}$.
\item $\anispart{(\phi_{\mathfrak{b}})} \simeq \phi_{\anispart{\mathfrak{b}}} \perp \form{a_1,\hdots,a_t}$. \end{enumerate} \end{proposition}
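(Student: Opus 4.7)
My strategy is to establish (1) first by induction on $\mydim{\mathfrak{b}}$, then to upgrade any such decomposition to one satisfying (2) by choosing $t$ minimal, and finally to deduce uniqueness from Witt cancellation in the Witt group of symmetric bilinear forms (which is valid in characteristic $2$).

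For existence, I would proceed by induction on $\mydim{\mathfrak{b}}$. If $\mathfrak{b}$ is anisotropic, take $s = t = 0$ and $\anispart{\mathfrak{b}} = \mathfrak{b}$. Otherwise, pick $v \in V_{\mathfrak{b}} \setminus \lbrace 0 \rbrace$ with $\mathfrak{b}(v,v) = 0$, and, using nondegeneracy, a vector $w$ with $\mathfrak{b}(v,w) = 1$. The $2$-dimensional subspace $W$ spanned by $v$ and $w$ then has Gram matrix of determinant $1$, so $\mathfrak{b} \simeq \mathfrak{b}|_W \perp \mathfrak{b}|_{W^\perp}$, and the inductive hypothesis applies to the orthogonal complement. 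A direct basis change (setting $v' = w$, $w' = cv + w$ with $c = \mathfrak{b}(w,w)$) verifies that $\mathfrak{b}|_W \simeq \mathbb{H}$ when $c = 0$ and $\mathfrak{b}|_W \simeq \mathbb{M}_c$ otherwise.

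To secure (2), I would pick a decomposition of the form (1) for which $t$ is minimal. Since $\phi_{\mathbb{H}}$ is the zero form and $\form{a,a} \sim \form{a}$ for every $a \in F^*$, the anisotropic kernel of $\phi_{\mathfrak{b}}$ coincides with that of $\phi_{\anispart{\mathfrak{b}}} \perp \form{a_1,\dots,a_t}$; thus (2) is equivalent to showing that this latter form is already anisotropic. Suppose it is not: then some $a_i$ admits a relation
\begin{equation*}
a_i = \phi_{\anispart{\mathfrak{b}}}(v) + \sum_{j \neq i} \lambda_j^2 a_j.
\end{equation*}
In the model case where the sum on the right-hand side is empty, a direct computation shows that the pair $u = (v, e_1)$ and $u' = (0, a_i^{-1}(e_1 + e_2))$ (with $e_1, e_2$ the standard basis of $\mathbb{M}_{a_i}$) spans a hyperbolic plane in $\anispart{\mathfrak{b}} \perp \mathbb{M}_{a_i}$ whose orthogonal complement is isomorphic to $\anispart{\mathfrak{b}}$ itself, yielding an isomorphism $\anispart{\mathfrak{b}} \perp \mathbb{M}_{a_i} \simeq \anispart{\mathfrak{b}} \perp \mathbb{H}$. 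The general case is handled analogously by building an isotropic vector involving the vector $v$ together with a $\lambda_j$-weighted combination of basis vectors from the various $\mathbb{M}_{a_j}$, and carefully computing the orthogonal complement of the resulting hyperbolic plane. Either way, one exchanges an $\mathbb{M}$-summand for an $\mathbb{H}$-summand, contradicting the minimality of $t$.

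For uniqueness, I would appeal to Witt cancellation for nondegenerate symmetric bilinear forms (valid in arbitrary characteristic): since both $\mathbb{H}$ and every $\mathbb{M}_a$ are trivial in the Witt group of bilinear forms over $F$, the class of $\mathfrak{b}$ in this group equals that of $\anispart{\mathfrak{b}}$, and two anisotropic bilinear forms that coincide in the Witt group are isomorphic. Once $\anispart{\mathfrak{b}}$ is thereby fixed, condition (2) determines the $F^2$-classes of the $a_i$ modulo $D(\phi_{\anispart{\mathfrak{b}}})$ (and hence the integer $t$), and $s$ is then pinned down by comparing dimensions. The main obstacle is the normalisation step: in the special case $a_i \in D(\phi_{\anispart{\mathfrak{b}}})$ the orthogonal complement of the constructed hyperbolic plane is exactly $\anispart{\mathfrak{b}}$ on the nose, but in the general case one must keep careful track of the basis change to verify that the complement is again of the shape $\anispart{\mathfrak{b}} \perp (\text{metabolic planes})$ with a strictly shorter list of slots. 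The underlying subtlety is that in characteristic $2$ the distinction between $\mathbb{H}$ and $\mathbb{M}_a$ is genuine, and condition (2) precisely records how many of each are forced by the pair $(\anispart{\phi_{\mathfrak{b}}}, \phi_{\anispart{\mathfrak{b}}})$.
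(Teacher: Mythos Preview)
The paper does not give its own proof of this proposition; it is stated with a citation to Milnor and to Laghribi (\cite{Milnor}, \cite[Prop.~5.15]{Laghribi6}) and then used freely. So there is no in-paper argument to compare against.

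Your argument is essentially correct. The inductive splitting-off of a plane, the identification $\begin{psmallmatrix}0&1\\1&c\end{psmallmatrix}\simeq \mathbb{M}_c$ via $v'=w$, $w'=cv+w$, and the uniqueness of $\anispart{\mathfrak{b}}$ via the Witt group are all fine (note that you only use the weak statement ``two anisotropic forms with the same Witt class are isomorphic'', which holds in characteristic $2$ even though full Witt cancellation fails). One suggestion: rather than treating the ``general case'' of the normalisation step as a separate computation, isolate the clean lemma you have actually proved in the model case, namely that for \emph{any} bilinear form $\mathfrak{c}$ and any $a\in D(\phi_{\mathfrak{c}})$ one has $\mathfrak{c}\perp\mathbb{M}_a\simeq\mathfrak{c}\perp\mathbb{H}$ (your explicit hyperbolic pair $(v,e_1)$, $(0,a^{-1}(e_1+e_2))$ works verbatim, and the map $x\mapsto\big(x,\,a^{-1}\mathfrak{c}(v,x)(e_1+e_2)\big)$ is an isometry from $\mathfrak{c}$ onto the orthogonal complement). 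Then, when $a_i=\phi_{\anispart{\mathfrak{b}}}(v)+\sum_{j\neq i}\lambda_j^2a_j$, simply apply this lemma with $\mathfrak{c}=\anispart{\mathfrak{b}}\perp\big(\perp_{j\neq i}\mathbb{M}_{a_j}\big)$ and $a=a_i$; this immediately trades $\mathbb{M}_{a_i}$ for $\mathbb{H}$ without any further bookkeeping, contradicting minimality of $t$.
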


The form $\anispart{b}$ is called the \emph{anisotropic kernel} of $\mathfrak{b}$. If $\mydim{\anispart{\mathfrak{b}}} \leq 1$, then we will say that $\mathfrak{b}$ is \emph{split}. Given another bilinear form $\mathfrak{c}$ over $F$, we will write $\mathfrak{b} \sim \mathfrak{c}$ whenever $\anispart{\mathfrak{b}} \simeq \anispart{\mathfrak{c}}$. The integer $s+t$ appearing in the statement of Proposition \ref{PropBildecomposition} is called the \emph{Witt index} of $\mathfrak{b}$, and is denoted $\witti{0}{\mathfrak{b}}$. Clearly $\witti{0}{\mathfrak{b}}$ is bounded from above by $\mydim{\mathfrak{b}}/2$. At the same time, the proposition immediately yields:

\begin{corollary} \label{CorBilindex} Let $\mathfrak{b}$ be a bilinear form over $F$. Then $\frac{1}{2}\witti{0}{\phi_{\mathfrak{b}}} \leq  \witti{0}{\mathfrak{b}} \leq \witti{0}{\phi_{\mathfrak{b}}}$. \end{corollary}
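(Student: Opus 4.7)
The proof is a direct computation based on Proposition \ref{PropBildecomposition}, so my plan is simply to trace through what that decomposition says about $\phi_{\mathfrak{b}}$.

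First, I would apply Proposition \ref{PropBildecomposition}(1) to write
\begin{equation*}
\mathfrak{b} \simeq \anispart{\mathfrak{b}} \perp (s\cdot \mathbb{H}) \perp \mathbb{M}_{a_1} \perp \hdots \perp \mathbb{M}_{a_t}
\end{equation*}
for the unique pair $(s,t)$ of nonnegative integers and the $a_i \in F^*$ provided by the proposition, noting that by construction $\witti{0}{\mathfrak{b}} = s+t$.

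Next, since $\phi_{\mathfrak{b}}(v) = \mathfrak{b}(v,v)$ and this assignment is additive with respect to orthogonal sums, I would compute the diagonal quadratic form of each block separately. In characteristic 2, evaluation of $\mathbb{H}$ on the diagonal gives $2v_1v_2 = 0$, so $\phi_{\mathbb{H}} \simeq 2\cdot\form{0}$; and for a metabolic plane, $\phi_{\mathbb{M}_{a_i}}(v_1,v_2) = a_i(v_1+v_2)^2$, so $\phi_{\mathbb{M}_{a_i}} \simeq \form{a_i,0}$. Combining, I obtain
\begin{equation*}
\phi_{\mathfrak{b}} \simeq \phi_{\anispart{\mathfrak{b}}} \perp \form{a_1,\hdots,a_t} \perp (2s+t)\cdot\form{0}.
\end{equation*}
By Proposition \ref{PropBildecomposition}(2), the form $\phi_{\anispart{\mathfrak{b}}} \perp \form{a_1,\hdots,a_t}$ is isomorphic to $\anispart{(\phi_{\mathfrak{b}})}$, hence is anisotropic. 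Therefore $\witti{0}{\phi_{\mathfrak{b}}} = 2s+t$.

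The two inequalities of the corollary then follow directly by comparing the integers $s+t$ and $2s+t$: we have
\begin{equation*}
\witti{0}{\mathfrak{b}} = s+t \;\leq\; 2s+t = \witti{0}{\phi_{\mathfrak{b}}} \;\leq\; 2(s+t) = 2\witti{0}{\mathfrak{b}},
\end{equation*}
which is exactly the chain $\tfrac{1}{2}\witti{0}{\phi_{\mathfrak{b}}} \leq \witti{0}{\mathfrak{b}} \leq \witti{0}{\phi_{\mathfrak{b}}}$. There is no real obstacle here; the whole content is that each hyperbolic plane contributes 1 to $\witti{0}{\mathfrak{b}}$ but 2 to $\witti{0}{\phi_{\mathfrak{b}}}$, whereas each metabolic plane contributes 1 to each, so the discrepancy between the two indices is controlled by the number $s$ of hyperbolic summands.
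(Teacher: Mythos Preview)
Your argument is correct and is exactly the intended one: the paper states that the corollary ``immediately yields'' from Proposition~\ref{PropBildecomposition}, and you have simply made explicit the computation that each hyperbolic plane contributes $2$ to $\witti{0}{\phi_{\mathfrak{b}}}$ and each metabolic plane contributes $1$, whence $\witti{0}{\phi_{\mathfrak{b}}} = 2s+t$ while $\witti{0}{\mathfrak{b}} = s+t$.
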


The most important examples of bilinear forms are the \emph{Pfister forms}. Here, we say that $\mathfrak{b}$ is an \emph{$n$-fold} (resp. \emph{$1$-fold}) \emph{Pfister form} if $\mathfrak{b} \simeq \pfister{a_1,\hdots,a_n}_b \coloneqq \form{1,a_1}_b \otimes \hdots \otimes \form{1,a_n}_b$ for some $a_1,\hdots,a_n \in F^*$ (resp. $\mathfrak{b} \simeq \form{1}_b$). Note that if $\mathfrak{b}$ is a Pfister form, then we can write $\mathfrak{b} \simeq \form{1}_b \perp \mathfrak{b}'$ for some bilinear form $\mathfrak{b}'$ over $F$. If $\mathfrak{b}$ is additionally \emph{anisotropic}, then it follows from the Witt cancellation theorem (see \cite[Cor. 1.28]{EKM}) that $\mathfrak{b}'$ is uniquely determined up to isomorphism. In this case $\mathfrak{b'}$ is called the \emph{pure subform} of $\mathfrak{b}$. Finally, note that if $\mathfrak{b} \simeq \pfister{a_1,\hdots,a_n}_b$ (resp. $\mathfrak{b} \simeq \form{1}_b$), then the associated quasilinear quadratic form $\phi_{\mathfrak{b}}$ is quasi-Pfister (see \S \ref{QPforms}), being isomorphic to $\pfister{a_1,\hdots,a_n}$ (resp. $\form{1}$).

\subsection{Knebusch splitting of symmetric bilinear forms in characteristic 2} \label{SSPbilinear} In \cite{Laghribi6}, Laghribi has initiated a splitting theory of bilinear forms in characteristic 2 by way of analogy with the Knebusch splitting theory of quadratic forms. More explicitly, one proceeds here as follows:

Let $\mathfrak{b}$ be a bilinear form over $F$. Set $F_0 = F$, $\mathfrak{b}_0 = \anispart{\mathfrak{b}}$, and inductively define $F_r = F_{r-1}(\phi_{\mathfrak{b}_{r-1}})$, $\mathfrak{b}_r = \anispart{(\mathfrak{b}_{F_r})}$, with the understanding that this (finite) process stops when we reach the first non-negative integer $h(\mathfrak{b})$ such that $\mathfrak{b}_{h(\mathfrak{b})}$ is split. The integer $h(\mathfrak{b})$ and the tower of fields $F = F_0 \subset F_1 \subset \hdots \subset F_{h(\mathfrak{b})}$ will be called the \emph{height} and \emph{Knebusch splitting tower} of $\mathfrak{b}$, respectively. For $1 \leq r \leq h(\mathfrak{b})$, the anisotropic form $\mathfrak{b}_r$ is called the \emph{r-th higher anisotropic kernel} of $\phi$. The \emph{r-th higher Witt index of $\mathfrak{b}$}, denoted $\witti{r}{\mathfrak{b}}$, is defined as the difference $\witti{0}{\mathfrak{b}_{k_r}} - \witti{0}{\mathfrak{b}_{k_{r-1}}}$. The sequence $\mathfrak{i}(\mathfrak{b}) = \big(\witti{1}{\mathfrak{b}},\hdots,\witti{h(\mathfrak{b})}{\mathfrak{b}}\big)$ will be called the \emph{Knebusch splitting pattern} of $\mathfrak{b}$\footnote{See Footnote \ref{Footnote3}.}. Note that, by the inductive nature of the construction, we have $\witti{r}{\mathfrak{b}} = \witti{1}{\mathfrak{b}_{r-1}}$ for every $1 \leq r \leq h(\mathfrak{b})$. Since $F_1 = F(\phi_{\anispart{\mathfrak{b}}})$, Corollary \ref{CorBilindex} immediately implies:

\begin{lemma} \label{Lembilineari1} Let $\mathfrak{b}$ be an anisotropic bilinear form of dimension $\geq 2$ over $F$. Then $\frac{1}{2}\witti{1}{\phi_{\mathfrak{b}}} \leq \witti{1}{\mathfrak{b}} \leq \witti{1}{\phi_{\mathfrak{b}}}$. \end{lemma}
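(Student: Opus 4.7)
The plan is to unwind the definitions of the two invariants and then apply Corollary \ref{CorBilindex} to the first higher anisotropic kernel of $\mathfrak{b}$. Concretely, since $\mathfrak{b}$ is anisotropic of dimension $\geq 2$, we have $\mathfrak{b}_0 = \anispart{\mathfrak{b}} = \mathfrak{b}$ and the first field in its Knebusch splitting tower is $F_1 = F(\phi_{\mathfrak{b}})$. By construction, $\witti{1}{\mathfrak{b}} = \witti{0}{\mathfrak{b}_{F_1}}$. On the other hand, since $\phi_{\mathfrak{b}}$ is anisotropic (being the diagonal part of an anisotropic bilinear form), we also have $\witti{1}{\phi_{\mathfrak{b}}} = \witti{0}{(\phi_{\mathfrak{b}})_{F(\phi_{\mathfrak{b}})}} = \witti{0}{(\phi_{\mathfrak{b}})_{F_1}}$.

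The key observation is now that the formation of the diagonal part commutes with scalar extension, i.e., $\phi_{\mathfrak{b}_{F_1}} = (\phi_{\mathfrak{b}})_{F_1}$. Applying Corollary \ref{CorBilindex} to the bilinear form $\mathfrak{b}_{F_1}$ over the field $F_1$ therefore gives
\begin{equation*}
\tfrac{1}{2}\witti{0}{(\phi_{\mathfrak{b}})_{F_1}} \;\leq\; \witti{0}{\mathfrak{b}_{F_1}} \;\leq\; \witti{0}{(\phi_{\mathfrak{b}})_{F_1}},
\end{equation*}
which is precisely the desired chain of inequalities once the two outer terms are reinterpreted as $\tfrac{1}{2}\witti{1}{\phi_{\mathfrak{b}}}$ and $\witti{1}{\phi_{\mathfrak{b}}}$ respectively.

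In summary, there is no genuine obstacle here: the statement is essentially a direct consequence of the definitions of the Knebusch towers of $\mathfrak{b}$ and $\phi_{\mathfrak{b}}$ together with the pointwise comparison supplied by Corollary \ref{CorBilindex}. The only minor point to verify cleanly is that $\phi_{\mathfrak{b}}$ remains anisotropic and (non-alternating, so still a legitimate quasilinear quadratic form) after scalar extension to $F_1$, which is immediate from Proposition \ref{PropBildecomposition} and the fact that anisotropy of $\mathfrak{b}$ is defined via that of $\phi_{\mathfrak{b}}$.
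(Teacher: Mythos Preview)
Your argument is correct and is exactly the approach the paper takes: the lemma is presented there as an immediate consequence of Corollary \ref{CorBilindex}, applied to $\mathfrak{b}_{F_1}$ with $F_1 = F(\phi_{\mathfrak{b}})$, after identifying $\witti{1}{\mathfrak{b}} = \witti{0}{\mathfrak{b}_{F_1}}$ and $\witti{1}{\phi_{\mathfrak{b}}} = \witti{0}{(\phi_{\mathfrak{b}})_{F_1}}$.

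One small correction to your closing remarks: the form $\phi_{\mathfrak{b}}$ does \emph{not} remain anisotropic over $F_1 = F(\phi_{\mathfrak{b}})$ --- it becomes isotropic there (see Remark \ref{Remsff} (4)), and that is precisely why $\witti{1}{\phi_{\mathfrak{b}}} > 0$. What you actually need (and what is true) is only that $\mathfrak{b}_{F_1}$ is still non-alternating, i.e., that $(\phi_{\mathfrak{b}})_{F_1}$ is not identically zero; this is clear since $\phi_{\mathfrak{b}}$ is a nonzero form over $F$. That suffices for Corollary \ref{CorBilindex} to apply.
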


\begin{remark} \label{Remunclear} To the author's knowledge, there are no known results indicating the precise nature of the relationship between $\witti{1}{\mathfrak{b}}$ and $\witti{1}{\phi_{\mathfrak{b}}}$.  \end{remark}

By similar reasoning, we also have:

\begin{lemma} \label{Lembilheight} Let $\mathfrak{b}$ be a bilinear form over $F$. Then $h(\mathfrak{b}) \leq h(\phi_{\mathfrak{b}})$.
\begin{proof} We proceed by induction on $\mydim{\mathfrak{b}}$. The case where $\mydim{\mathfrak{b}} = 1$ is trivial. Suppose now that $\mydim{\mathfrak{b}} >1$, and let $L$ be any extension of $F$ such that $\mathfrak{b}_L$ is isotropic. Then, by the induction hypothesis, we have $h(\mathfrak{b}_L) = h\big(\anispart{(\mathfrak{b}_L)}\big) \leq h(\phi_{\anispart{(\mathfrak{b}_L)}})$. On the other hand, Proposition \ref{PropBildecomposition} (2) implies that $\phi_{\anispart{(\mathfrak{b}_L)}}$ is a subform of $\anispart{((\phi_{\mathfrak{b}})_L)}$, and so $h(\phi_{\anispart{(\mathfrak{b}_L)}}) \leq h\big(\anispart{((\phi_{\mathfrak{b}})_L)}\big)$ by Corollary \ref{Corheight=ndegree}. We therefore conclude that $h(\mathfrak{b}_L) \leq h\big(\anispart{((\phi_{\mathfrak{b}})_L)}\big)$. Now, if $\mathfrak{b}$ is split, then applying the above to $L=F$ yields the desired inequality $h(\mathfrak{b}) \leq h(\anispart{(\phi_{\mathfrak{b}})}) = h(\phi_{\mathfrak{b}})$. If $\mathfrak{b}$ is not split, then taking $L = F(\phi_{\anispart{\mathfrak{b}}})$, we compute
\begin{equation*} h(\mathfrak{b}) = h(\mathfrak{b}_1) + 1 = h(\mathfrak{b}_L) + 1 \leq h\big(\anispart{((\phi_{\mathfrak{b}})_L)}\big) + 1 = h((\phi_{\mathfrak{b}})_1) + 1 = h(\phi_{\mathfrak{b}}), \end{equation*}
and so the result also holds in this case. \end{proof} \end{lemma}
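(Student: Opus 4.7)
My plan is to proceed by induction on $\mydim{\mathfrak{b}}$, the base case $\mydim{\mathfrak{b}} \leq 1$ being trivial since both heights are $0$. For the inductive step, I may assume $\mathfrak{b}$ is not split, and pass to the first field $L = F_1 = F(\phi_{\anispart{\mathfrak{b}}})$ in the Knebusch splitting tower of $\mathfrak{b}$, where the anisotropic kernel $\anispart{(\mathfrak{b}_L)} \simeq \mathfrak{b}_1$ has strictly smaller dimension than $\mathfrak{b}$.

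First, the induction hypothesis gives $h(\mathfrak{b}_L) \leq h(\phi_{\mathfrak{b}_L})$. To move from $h(\phi_{\mathfrak{b}_L})$ to $h((\phi_{\mathfrak{b}})_L)$, I would use Proposition \ref{PropBildecomposition} (2) over $L$: it realises $\phi_{\anispart{(\mathfrak{b}_L)}}$ as a subform of $\anispart{((\phi_{\mathfrak{b}})_L)}$, the extra diagonal part coming from metabolic planes appearing in the Witt-type decomposition of $\mathfrak{b}_L$. Since a subform inclusion of anisotropic quasilinear $p$-forms automatically gives an inclusion of norm fields, Corollary \ref{Corheight=ndegree} then yields $h(\phi_{\mathfrak{b}_L}) \leq h((\phi_{\mathfrak{b}})_L)$, and chaining gives $h(\mathfrak{b}) = h(\mathfrak{b}_L) + 1 \leq h((\phi_{\mathfrak{b}})_L) + 1$.

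It remains to show $h((\phi_{\mathfrak{b}})_L) \leq h(\phi_{\mathfrak{b}}) - 1$, which is the genuine content of the argument. The crucial observation is that $\phi_{\mathfrak{b}}$ becomes isotropic over $L$: indeed, Proposition \ref{PropBildecomposition} (2) applied over $F$ shows $\phi_{\anispart{\mathfrak{b}}} \subseteq \anispart{(\phi_{\mathfrak{b}})}$, and $\phi_{\anispart{\mathfrak{b}}}$ is isotropic over its own function field. Lemma \ref{Lemisotropyff} (parts (3)--(5)) then forces $\mathrm{ndeg}((\phi_{\mathfrak{b}})_L) = \frac{1}{p}\mathrm{ndeg}(\phi_{\mathfrak{b}})$; by Corollary \ref{Corheight=ndegree}, this is exactly the desired strict drop $h((\phi_{\mathfrak{b}})_L) = h(\phi_{\mathfrak{b}}) - 1$.

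The main obstacle I anticipate is precisely this last step. For a generic field extension that merely makes a quasilinear $p$-form isotropic, the norm degree need not strictly decrease, so one could easily prove only $h((\phi_{\mathfrak{b}})_L) \leq h(\phi_{\mathfrak{b}})$, which is too weak to close the induction. What rescues the argument is that $L$ is not arbitrary but is the function field of a concrete subform of $\anispart{(\phi_{\mathfrak{b}})}$, and Lemma \ref{Lemisotropyff} supplies the sharp value $\frac{1}{p}\,\mathrm{ndeg}(\phi_{\mathfrak{b}})$ in exactly this situation.
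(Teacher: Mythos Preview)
Your proposal is correct and follows essentially the same inductive strategy as the paper's proof: pass to $L = F(\phi_{\anispart{\mathfrak{b}}})$, apply induction to the smaller form $\mathfrak{b}_1 = \anispart{(\mathfrak{b}_L)}$, and use Proposition~\ref{PropBildecomposition}~(2) together with Corollary~\ref{Corheight=ndegree} to compare $h(\phi_{\mathfrak{b}_1})$ with $h((\phi_{\mathfrak{b}})_L)$.

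Two small remarks. First, there is a notational slip: when you write ``the induction hypothesis gives $h(\mathfrak{b}_L) \leq h(\phi_{\mathfrak{b}_L})$'' and then speak of moving ``from $h(\phi_{\mathfrak{b}_L})$ to $h((\phi_{\mathfrak{b}})_L)$'', note that $\phi_{\mathfrak{b}_L} = (\phi_{\mathfrak{b}})_L$ on the nose. What you intend (and what your subsequent explanation makes clear) is $\phi_{\anispart{(\mathfrak{b}_L)}} = \phi_{\mathfrak{b}_1}$, to which induction applies since $\dim\mathfrak{b}_1 < \dim\mathfrak{b}$. Second, your final step via Lemma~\ref{Lemisotropyff} is correct but more elaborate than necessary. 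Since height depends only on the anisotropic part, one may assume $\mathfrak{b}$ is anisotropic from the outset; then $\phi_{\mathfrak{b}}$ is anisotropic as well, $L = F(\phi_{\mathfrak{b}})$ is \emph{literally} the first field in the Knebusch tower of $\phi_{\mathfrak{b}}$, and $\anispart{((\phi_{\mathfrak{b}})_L)} = (\phi_{\mathfrak{b}})_1$ by definition, giving $h((\phi_{\mathfrak{b}})_L) = h(\phi_{\mathfrak{b}}) - 1$ immediately. This is how the paper handles it. Your route through the norm degree is a legitimate alternative that has the mild advantage of not requiring the preliminary reduction to anisotropic $\mathfrak{b}$.
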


The classification of bilinear forms of height 1 in characteristic 2 has been established in by Laghribi in \cite{Laghribi6}. This result is directly analogous to the well-known classification of nonsingular quadratic forms of height 1 (see \cite[Prop 25.6, Ex. 28.2]{EKM}), and, in the even-dimensional case, to Proposition \ref{Propi1bound} above:

\begin{proposition}[{\cite[Thm. 4.1]{Laghribi6}}] \label{Propheight1bilinear} Let $\mathfrak{b}$ be an anisotropic bilinear form of even (rep. odd) dimension $\geq 2$ over $F$. Then $h(\mathfrak{b}) = 1$ if and only if $\mathfrak{b}$ is similar to a Pfister form (resp. the pure subform of a Pfister form). \end{proposition}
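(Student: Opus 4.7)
The plan is to prove both implications by exploiting the interplay between the bilinear form $\mathfrak{b}$ and its diagonal quasilinear quadratic form $\phi_{\mathfrak{b}}$, using the characterisations of quasi-Pfister forms and quasi-Pfister neighbours established earlier in the paper.

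For the \emph{if} direction, after a similarity reduction we may assume $\mathfrak{b}$ equals an anisotropic bilinear Pfister form $\pi$ in the even case, or its pure subform $\pi'$ in the odd case. The diagonal $\phi_\pi$ is then a quasi-Pfister $2$-form, and in the odd case $\phi_{\pi'}$ is a quasi-Pfister $2$-neighbour of $\phi_\pi$ by Corollary \ref{CorClassificationQPN}. In either case $\phi_\pi$ becomes isotropic over $F_1 = F(\phi_\mathfrak{b})$, so the bilinear form $\pi_{F_1}$ is isotropic. Invoking the classical Pfister property that an isotropic bilinear Pfister form in characteristic $2$ is metabolic, we obtain $\anispart{\pi_{F_1}} = 0$. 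In the even case this immediately gives $\mathfrak{b}_{F_1}$ split. In the odd case, the isomorphism $\pi_{F_1} \simeq \form{1}_b \perp \pi'_{F_1}$ together with the fact that $W(F_1)$ has exponent $2$ in characteristic $2$ forces $\anispart{\pi'_{F_1}} \simeq \form{1}_b$, so $\pi'_{F_1}$ is split as well. Either way $h(\mathfrak{b}) \leq 1$, and since $\mathfrak{b}$ is anisotropic of dimension $\geq 2$, we conclude $h(\mathfrak{b}) = 1$.

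For the \emph{only if} direction, the hypothesis $h(\mathfrak{b}) = 1$ says $\mathfrak{b}_{F_1}$ is split, so Proposition \ref{PropBildecomposition}(1) decomposes it as a possibly empty anisotropic $1$-dimensional summand together with $s$ hyperbolic planes $\mathbb{H}$ and $t$ metabolic planes $\mathbb{M}_{a_i}$, with $s+t = \lfloor \mydim{\mathfrak{b}}/2 \rfloor$. Passing to diagonals via $\phi_{\mathbb{H}} = \form{0,0}$ and $\phi_{\mathbb{M}_a} = \form{a,a}$, a direct count gives $\witti{1}{\phi_\mathfrak{b}} \geq \lfloor \mydim{\phi_\mathfrak{b}}/2 \rfloor$. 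In the even case this must be an equality by Proposition \ref{Propi1bound}, and the same result then forces $\phi_\mathfrak{b}$ to be similar to a quasi-Pfister form. In the odd case the Hoffmann-Laghribi upper bound on $\mathfrak{i}_1$ constrains $\mydim{\phi_\mathfrak{b}}$ to be $2^{r+1}-1$ with maximal splitting, so Theorem \ref{Thmmaxsplitting} combined with Corollary \ref{CorClassificationQPN} (together with a direct argument in the small case $\mydim{\phi_\mathfrak{b}} = 3$) yields that $\phi_\mathfrak{b}$ is similar to the pure subform of a quasi-Pfister form.

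The final step is to lift this quasilinear conclusion on $\phi_\mathfrak{b}$ to a bilinear conclusion on $\mathfrak{b}$. Given a candidate bilinear Pfister form $\tilde\pi$ (respectively its pure subform $\tilde\pi'$) whose diagonal matches $\phi_\mathfrak{b}$ up to a scalar $c$, both $\mathfrak{b}$ and $c\tilde\pi$ (respectively $c\tilde\pi'$) become split over $F_1$, so their orthogonal sum lies in the kernel of the restriction map $W(F) \to W(F_1)$. I would then appeal to the description of this Witt kernel for function-field extensions of quasi-Pfister forms, due to Aravire and Baeza, which forces $\mathfrak{b}$ and the candidate to be Witt-equivalent over $F$; since $W(F)$ has exponent $2$ and both forms are anisotropic of the same dimension, they are then isomorphic. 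I anticipate that this Witt-kernel input will be the main obstacle, as it is the essential ingredient external to the excerpt and is precisely the extra structure needed to pass from the quasilinear diagonal of $\mathfrak{b}$ to its full bilinear structure.
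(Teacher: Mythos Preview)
The paper does not give its own proof of this proposition; it is simply quoted from Laghribi's paper \cite{Laghribi6} as a known result. So there is no ``paper's proof'' to compare against, and your proposal must be judged on its own merits.

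Your ``if'' direction is correct and standard: an isotropic bilinear Pfister form is metabolic, and the Witt-group computation for the pure subform is fine since $W(F)$ has exponent $2$ in characteristic $2$.

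Your diagonal-level analysis in the ``only if'' direction is also correct. In the even case, the combination of Corollary~\ref{CorBilindex}, Proposition~\ref{PropBildecomposition}(2), and Proposition~\ref{Propi1bound} cleanly forces $\phi_{\mathfrak{b}}$ to be similar to a quasi-Pfister form. In the odd case, the numerology you describe does pin down $\mydim{\phi_{\mathfrak{b}}} = 2^{r+1}-1$ with maximal splitting, and invoking Theorem~\ref{Thmmaxsplitting} here is legitimate (there is no circularity: that theorem is proved in \S\ref{Applications} without any appeal to the appendix).

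The genuine gap is in your final lifting step. Knowing that $\mathfrak{b} \perp c\tilde{\pi}$ lies in $\ker\bigl(W(F) \to W(F_1)\bigr)$ does \emph{not} by itself force $\mathfrak{b} \sim c\tilde{\pi}$ in $W(F)$: the kernel of restriction to the function field of a quasi-Pfister form typically contains many anisotropic forms of dimension $2^{n+1}$ that are not Witt-trivial. What you actually need is a structural statement about the anisotropic forms of dimension exactly $2^n$ in this kernel (namely, that they are all similar to bilinear Pfister forms), and you should apply it to $\mathfrak{b}$ directly rather than to $\mathfrak{b} \perp c\tilde{\pi}$. This refined statement \emph{is} essentially what Laghribi proves, but it is the whole content of the result, not a corollary of a generic Witt-kernel description. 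Your instinct that external input of this type is required is right; the specific argument you sketch, however, does not close the gap.
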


In particular, if $\mathfrak{c}$ is an even-dimensional bilinear form of height $>0$ over $F$, then $\mathfrak{c}_{h(\mathfrak{c})-1}$ is similar to a Pfister form, and therefore has dimension equal to a power of 2. This allows one to define the \emph{degree} $\mathrm{deg}(\mathfrak{b})$ of an arbitrary bilinear form $\mathfrak{b}$ over $F$ as follows:

\begin{enumerate} \item If $\mydim{\mathfrak{b}}$ is odd, then $\mathrm{deg}(\mathfrak{b}) = 0$.
\item If $\mydim{\mathfrak{b}}$ is even and $h(\mathfrak{b}) = 0$, then $\mathrm{deg}(\mathfrak{b}) = \infty$
\item If $\mydim{\mathfrak{b}}$ is even $h(\mathfrak{b}) > 0 $, then $\mathrm{deg}(\mathfrak{b}) = \mathrm{log}_2(\mydim{\mathfrak{b}_{h(\mathfrak{b})-1}})$. \end{enumerate}

We  conclude this appendix by mentioning the following deep result concerning the degree invariant, which may be deduced from its characteristic-0 analogue due to Karpenko (see \cite{Karpenko2}):

\begin{theorem}[{see \cite[Thm. 4.5, Prop. 5.7]{Laghribi6}}] \label{ThmVishikholes} Let $\mathfrak{b}$ be an anisotropic bilinear form of even dimension over $F$ such that $h(\mathfrak{b}) > 0$. Then either:
\begin{enumerate} \item $\mydim{\mathfrak{b}} \geq 2^{\mathrm{deg}(\mathfrak{b}) + 1}$, or
\item $\mydim{\mathfrak{b}} = 2^{\mathrm{deg}(\mathfrak{b})+1} - 2^i$ for some $1 \leq i \leq \mathrm{deg}(\mathfrak{b})$. \end{enumerate} \end{theorem}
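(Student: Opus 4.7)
The plan is to deduce the theorem from its characteristic-zero counterpart, namely Karpenko's theorem on the possible dimensions of anisotropic quadratic forms in a given power of the fundamental ideal, by transferring the problem to characteristic zero via a Cohen ring lift. Write $d = \mathrm{deg}(\mathfrak{b})$ for brevity.

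First, I would use Proposition \ref{Propheight1bilinear} and an easy induction on the height to observe that $\mathfrak{b}$ represents an element of $I_b^d(F)$, the $d$-th power of the fundamental ideal in the Witt ring of symmetric bilinear forms over $F$. Here, $I_b^d(F)$ is generated by the classes of $d$-fold bilinear Pfister forms. The key point is that $\mathfrak{b}_{h(\mathfrak{b})-1}$ is similar to such a Pfister form by Proposition \ref{Propheight1bilinear}, and each step of the Knebusch splitting tower modifies the Witt class of $\mathfrak{b}$ only by adding a form which itself lies in $I_b^d(F)$ (built from Pfister forms of degree $\geq d$).

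Second, I would lift $\mathfrak{b}$ to characteristic zero. Let $A$ be a Cohen ring for $F$: a complete discrete valuation ring of characteristic $0$ with residue field $F$ and fraction field $K = \mathrm{Frac}(A)$. Writing $\mathfrak{b} \simeq \form{a_1, \ldots, a_n}_b$ with $a_i \in F^*$, choose unit lifts $\tilde{a}_i \in A^*$ and define $\tilde{\mathfrak{b}} = \form{\tilde{a}_1, \ldots, \tilde{a}_n}_b$ over $A$, a nondegenerate bilinear (equivalently, in characteristic $0$, quadratic) form over $K$ of the same dimension as $\mathfrak{b}$. Anisotropy of $\tilde{\mathfrak{b}}_K$ follows by a standard specialization argument: any isotropic vector over $K$ can be rescaled to primitive integral coordinates, and its image under reduction would be a nonzero isotropic vector of $\mathfrak{b}$ over $F$, contradicting anisotropy. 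Moreover, each bilinear Pfister form over $F$ involved in expressing the Witt class of $\mathfrak{b}$ modulo $I_b^{d+1}(F)$ lifts, slot by slot, to a Pfister form over $K$, so that $\tilde{\mathfrak{b}}_K$ lies in $I^d(K)$.

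Third, I would invoke Karpenko's theorem \cite{Karpenko2}, which asserts that the dimension of any anisotropic quadratic form in $I^d(K)$ over a field $K$ of characteristic $\neq 2$ lies in the set $\{2^{d+1} - 2^i : 1 \leq i \leq d\} \cup \{n : n \geq 2^{d+1}\}$. Since $\mydim{\tilde{\mathfrak{b}}_K} = \mydim{\mathfrak{b}}$, this immediately yields the stated dichotomy.

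The main obstacle is the verification that $\tilde{\mathfrak{b}}_K$ genuinely lies in $I^d(K)$. While it is transparent that an individual bilinear Pfister form lifts to a Pfister form, the statement $\mathfrak{b} \in I_b^d(F)$ only tells us, a priori, that the Witt class of $\mathfrak{b}$ is congruent to a sum of $d$-fold bilinear Pfister forms, and the higher-order correction terms must be lifted coherently along with the main Pfister generators. One way to control this is to argue incrementally through the Knebusch splitting tower of $\mathfrak{b}$, lifting each higher anisotropic kernel together with a compatible chain of function-field extensions built from the $A$-lifts of the corresponding quasilinear quadratic forms $\phi_{\mathfrak{b}_r}$; this reduces the question at each stage to the lifting of a single Pfister form, which is immediate.
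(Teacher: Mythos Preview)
The paper does not prove this theorem; it merely records it, citing Laghribi \cite[Thm.~4.5, Prop.~5.7]{Laghribi6} and noting that it may be deduced from Karpenko's characteristic-zero result \cite{Karpenko2}. Your overall strategy --- lift to characteristic zero via a Cohen ring and invoke Karpenko --- is exactly the approach indicated by the paper, and is indeed the route taken in \cite{Laghribi6}.

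There is, however, a genuine gap in your first step. You claim that an induction on $h(\mathfrak{b})$, using Proposition~\ref{Propheight1bilinear} at the base, shows $\mathfrak{b}\in I_b^d(F)$. But the inductive step does not go through: passing to $F_1=F(\phi_{\mathfrak{b}})$ and knowing $\mathfrak{b}_{F_1}\in I_b^d(F_1)$ does not yield $\mathfrak{b}\in I_b^d(F)$. The phrase ``each step of the Knebusch splitting tower modifies the Witt class of $\mathfrak{b}$ only by adding a form which itself lies in $I_b^d(F)$'' has no clear meaning here, since $\mathfrak{b}_r$ lives over $F_r$, not over $F$, and there is no evident descent. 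The implication $\deg(\mathfrak{b})\geq d\Rightarrow \mathfrak{b}\in I_b^d(F)$ is precisely the bilinear degree conjecture in characteristic~$2$; it is a theorem, but its proof requires Kato's computation of the graded Witt ring $I_b^n/I_b^{n+1}$ in terms of logarithmic differential forms (or an equivalent Milnor-conjecture-type input), not an elementary induction on height. This is the content of \cite[Prop.~5.7]{Laghribi6}, which you should invoke rather than attempt to reprove.

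Your final paragraph correctly identifies that the compatibility of the lift with the $I^d$-filtration is the crux, but the proposed workaround --- lifting the entire Knebusch tower to characteristic zero --- does not clearly circumvent the problem. Even if one constructs a compatible tower $K=K_0\subset K_1\subset\cdots$ with places to the $F_r$, one obtains only $\dim\anispart{(\tilde{\mathfrak{b}}_{K_r})}\geq\dim\mathfrak{b}_r$ by specialization, and the $K_r$ need not form the Knebusch tower of $\tilde{\mathfrak{b}}$; extracting $\tilde{\mathfrak{b}}\in I^d(K)$ from this still requires the degree conjecture over $K$ (Orlov--Vishik--Voevodsky). The cleanest fix is to accept $J_d(F)=I_b^d(F)$ as input from Kato's theorem, write $[\mathfrak{b}]$ as a combination of $d$-fold bilinear Pfister forms in $W(F)$, and then argue that the specialization map $W(K)\to W(F)$ and its interaction with the $I$-filtration force any diagonal lift of $\mathfrak{b}$ to lie in $I^d(K)$ (this is the content of \cite[Thm.~4.5]{Laghribi6}).
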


{\bf Acknowledgements.} This work was carried out while I was a visiting postdoctoral fellow at Max-Planck-Institut f\"{u}r Mathematik in Bonn. I would like to thank this institution for its support and hospitality throughout the duration of my stay. \\

\bibliographystyle{alphaurl}

\begin{thebibliography}{EKM08}

\bibitem[AB03]{AravireBaeza1}
R.~Aravire and R.~Baeza.
\newblock {\em Linkage of fields in characteristic 2}.
\newblock Comm. Algebra, {\bf 31} (2003), no. 1, 463--473.

\bibitem[EKM08]{EKM}
R.~Elman, N.~Karpenko, and A.~Merkurjev.
\newblock {\em The algebraic and geometric theory of quadratic forms}.
\newblock AMS Colloquium Publications, {\bf 56}, American Mathematical Society, 2008.
  
\bibitem[Gro61]{EGAII}
A.~Grothendieck.
\newblock {\em \'{E}l\'{e}ments de g\'{e}om\'{e}trie alg\'{e}brique (r\'{e}dig\'{e}s avec la collaboration de Jean Dieudonn\'{e}): II. \'{E}tude globale \'{e}l\'{e}mentaire de quelques classes de morphismes}.
\newblock Publications Math\'{e}matiques de l'IH\'{E}S, {\bf 8} (1961), 5--222.

\bibitem[Hau13a]{Haution1}
O.~Haution.
\newblock {\em On the first Steenrod square for Chow groups}.
\newblock Amer. J. Math., {\bf 135} (2013), 53--63.

\bibitem[Hau13b]{Haution2}
O.~Haution.
\newblock {\em Detection by regular schemes in degree two}.
\newblock Preprint, 2013, \href {http://arxiv.org/abs/1307.5628v1} {\path{arXiv:1307.5628v1}}.

\bibitem[HL04]{HoffmannLaghribi1}
D.W. Hoffmann and A.~Laghribi.
\newblock {\em Quadratic forms and Pfister neighbors in characteristic 2}.
\newblock Trans. Amer. Math. Soc., {\bf 356} (2004), no. 10, 4019--4053.

\bibitem[HL06]{HoffmannLaghribi2}
D.W. Hoffmann and A.~Laghribi.
\newblock {\em Isotropy of quadratic forms over the function field of a quadric in
  characteristic 2}.
\newblock J. Algebra, {\bf 295} (2006), no. 2, 362--386.

\bibitem[Hof95]{Hoffmann1}
D.W. Hoffmann.
\newblock {\em Isotropy of quadratic forms over the function field of a quadric}.
\newblock {Math. Z.}, {\bf 220} (1995), no.3, 461--476.

\bibitem[Hof04]{Hoffmann2}
D.W. Hoffmann.
\newblock {\em Diagonal forms of degree $p$ in characteristic $p$}.
\newblock Algebraic and arithmetic theory of quadratic forms, 135--183, Contemp.
  Math., {\bf 344}, Amer. Math. Soc., 2004.
 
\bibitem[Izh04]{Izhboldin1}
O.~Izhboldin.
\newblock {\em Virtual Pfister neighbors and first Witt index. With an appendix by Nikita Karpenko}.
\newblock Geometric methods in the algebraic theory of quadratic
  forms, 131--142, Lecture Notes in Math., {\bf 1835}, Springer, 2004. 

\bibitem[IV00]{IzhboldinVishik}
Oleg Izhboldin and Alexander Vishik.
\newblock {\em Quadratic forms with absolutely maximal splitting}.
\newblock Quadratic forms and their applications (Dublin, 1999), 103--125, Contemp. Math., {\bf 272}, Amer. Math. Soc., 2000.

\bibitem[Kah96]{Kahn1}
B. Kahn.
\newblock {\em Formes quadratiques de hauteur et de degr'{e} 2}.
\newblock Indag. Math. (N.S.), {\bf 7} (1996), no. 1, 47--66.

\bibitem[Kar03]{Karpenko1}
N.A. Karpenko.
\newblock {\em On the first Witt index of quadratic forms}.
\newblock Invent. Math., {\bf 153} (2003), no.2, 455--462.

\bibitem[Kar04]{Karpenko2}
N.A. Karpenko.
\newblock {\em Holes in $I^n$}.
\newblock Ann. Sci. \'{E}cole Norm. Sup. (4), {\bf 37} (2004), no. 6, 973--1002.

\bibitem[KM03]{KarpenkoMerkurjev}
Nikita Karpenko and Alexander Merkurjev.
\newblock {\em Essential dimension of quadrics}.
\newblock Invent. Math., {\bf 153} (2003), no.2, 361--372.

\bibitem[Kne76]{Knebusch1}
M.~Knebusch.
\newblock {\em Generic splitting of quadratic forms. I}.
\newblock Proc. London Math. Soc. (3), {\bf 33} (1976), no. 1, 65--93.

\bibitem[Kne77]{Knebusch2}
M.~Knebusch.
\newblock {\em Generic splitting of quadratic forms. II}.
\newblock Proc. London Math. Soc. (3), {\bf 34} (1977), no. 1, 1--31.

\bibitem[Lag02]{Laghribi1}
A.~Laghribi.
\newblock {\em On the generic splitting of quadratic forms in characteristic 2}.
\newblock Math. Z., {\bf 240} (2002), no. 4, 711--730.

\bibitem[Lag04a]{Laghribi2}
A.~Laghribi.
\newblock {\em Quasi-hyperbolicity of totally singular quadratic forms}.
\newblock Algebraic and arithmetic theory of quadratic forms, 135--183, Contemp.
  Math., {\bf 344}, Amer. Math. Soc., 2004.

\bibitem[Lag04b]{Laghribi3}
A.~Laghribi.
\newblock {\em On splitting of totally singular quadratic forms}.
\newblock Rend. Circ. Mat. Palermo (2), {\bf 53} (2004), no. 3, 325--336.

\bibitem[Lag04c]{Laghribi4}
A.~Laghribi.
\newblock {\em Autour des formes quadratiques quasi-voisines}.
\newblock Homology Homotopy Appl., {\bf 6} (2004), no. 1, 5--16.

\bibitem[Lag06]{Laghribi5}
A.~Laghribi.
\newblock {\em The norm theorem for totally singular quadratic forms}.
\newblock Rocky Mountain J. Math., {\bf 36} (2006), no. 3, 575--592.

\bibitem[Lag07]{Laghribi6}
A.~Laghribi.
\newblock {\em Sur le d\'{e}ploiement des formes bilin\'{e}aires en caract\'{e}ristique 2}.
\newblock Pacific J. Math., {\bf 232} (2007), no. 1, 207--232.

\bibitem[Lag11]{Laghribi7}
A.~Laghribi.
\newblock {\em Les formes bilin\'{e}aries at quadratiques bonnes de hauteur 2 en caract\'{e}ristique 2}.
\newblock Math. Z., {\bf 269} (2011), no. 3--4, 671--685.

\bibitem[Lan02]{Lang}
S.~Lang.
\newblock {\em Algebra. Revised Third Edition}.
\newblock Graduate Texts in Mathematics, {\bf 211}, Springer Verlag, 2002.

\bibitem[Mac39]{MacLane}
S.~MacLane.
\newblock {\em Modular Fields. I}.
\newblock Duke Math. J., {\bf 5} (1939), no. 2, 372--393.

\bibitem[Mil71]{Milnor}
J.~Milnor.
\newblock {\em Symmetric inner products in characteristic 2}.
\newblock Prospects in mathematics (Proc. Sympos., Princeton Univ, Princeton, N.J., 1970), 59--75, Ann. of Math. Studies, {\bf 70}, Princeton Univ. Press, Princeton, N.J., 1971.

\bibitem[Scu13]{Scully1}
S.~Scully.
\newblock {\em Rational maps between quasilinear hypersurfaces}.
\newblock Compos. Math., {\bf 149} (2013), no.3, 333--355.

\bibitem[Scu14]{Scully2}
S.~Scully.
\newblock {\em On the splitting of quasilinear $p$-forms}.
\newblock J. Reine Angew. Math., to appear, 2014, DOI: 10.1515/crelle-2013-0121.

\bibitem[Tot08]{Totaro1}
B.~Totaro.
\newblock {\em Birational geometry of quadrics in characteristic 2}.
\newblock J. Algebraic Geom., {\bf 17} (2008), no. 3, 577--597.

\bibitem[Tot09]{Totaro2}
B.~Totaro.
\newblock {\em Birational geometry of quadrics}.
\newblock Bull. Soc. Math. France, {\bf 137} (2009), no. 2, 253--276.

\bibitem[Vis98]{Vishik1}
A.~Vishik.
\newblock {\em Integral motives of quadrics}.
\newblock MPIM Preprint, 1998-13.

\bibitem[Vis99]{Vishik2}
A.~Vishik.
\newblock {\em Direct summands in the motives of quadrics}.
\newblock Preprint, 1999, \url{https://www.maths.nottingham.ac.uk/personal/av/papers.html}.

\bibitem[Vis00]{Vishik3}
A.~Vishik.
\newblock {\em On the dimension of anisotropic forms in $I^n$}.
\newblock MPIM preprint, 2000-11.

\bibitem[Vis04]{Vishik4}
A.~Vishik.
\newblock {\em Motives of quadrics with applications to the theory of quadratic
  forms}.
\newblock Geometric methods in the algebraic theory of quadratic
  forms, 25--101, Lecture Notes in Math., {\bf 1835}, Springer, 2004.
  
\bibitem[Vis11]{Vishik5}
A.~Vishik.
\newblock {\em Excellent connections in the motives of quadrics}.
\newblock Ann. Sci. \'{E}c. Norm. Sup\'{e}r. (4), {\bf 44} (2011), no. 1, 183--195.

\end{thebibliography}

\end{document}